\renewcommand\paragraph{\@startsection{paragraph}{4}{\z@}%
                                    {1ex \@plus1ex \@minus.2ex}%
                                    {-1em}%
                                    {\normalfont\normalsize\bfseries}}
\newtheorem{theorem}{Theorem}
\newtheorem{proposition}{Proposition}
\newtheorem{lemma}{Lemma}
\newtheorem{fact}{Fact} 
\theoremstyle{definition}
\newtheorem{claim}{Claim}
\newtheorem{definition}{Definition}
\newtheorem{assumption}{Assumption}
\theoremstyle{remark}
\newtheorem{remark}{Remark}
  \let\Cref\crtCref
  \let\cref\crtcref
\crefname{claim}{claim}{claims}
\crefname{assumption}{assumption}{assumptions}
\crefname{fact}{fact}{facts}
\providecommand{\R}{\ensuremath{\mathbb{R}}}
\providecommand{\N}{\ensuremath{\mathbb{N}}}
\renewcommand{\O}{\mathbb{O}} 
\providecommand{\calM}{\mathcal{M}}
\providecommand{\calN}{\mathcal{N}}
\providecommand{\calQ}{\mathcal{Q}}
\providecommand{\calZ}{\mathcal{Z}}
\renewcommand{\vec}[1]{\ensuremath{{#1}}}
\providecommand{\mat}[1]{\ensuremath{{#1}}}
\providecommand{\mA}{\mat{A}} 
\providecommand{\mB}{\mat{B}}
\providecommand{\mD}{\mat{D}}
\providecommand{\mE}{\mat{E}}
\providecommand{\mI}{\mat{I}}
\providecommand{\mL}{\mat{L}} 
\providecommand{\mM}{\mat{M}}
\providecommand{\mQ}{\mat{Q}} 
\providecommand{\mR}{\mat{R}}
\providecommand{\mS}{\mat{S}} 
\providecommand{\mU}{\mat{U}} 
\providecommand{\mV}{\mat{V}}
\providecommand{\mX}{\mat{X}}
\providecommand{\mZ}{\mat{Z}}
\providecommand{\mGamma}{\mat{\Gamma}}
\providecommand{\va}{\vec{a}}
\providecommand{\ve}{\vec{e}}
\providecommand{\vm}{\vec{m}}
\providecommand{\vr}{\vec{r}}
\providecommand{\vu}{\vec{u}} 
\providecommand{\vv}{\vec{v}}
\providecommand{\vx}{\vec{x}} 
\providecommand{\vz}{\vec{z}}
\providecommand{\rvar}[1]{\ensuremath{\bm{#1}}}
\providecommand{\rA}{\rvar{A}} 
\providecommand{\rB}{\rvar{B}}
\providecommand{\rL}{\rvar{L}} 
\providecommand{\rM}{\rvar{M}} 
\providecommand{\rO}{\rvar{O}} 
\providecommand{\rQ}{\rvar{Q}} 
\providecommand{\rR}{\rvar{R}}
\providecommand{\rS}{\rvar{S}} 
\providecommand{\rU}{\rvar{U}} 
\providecommand{\rV}{\rvar{V}}
\providecommand{\rZ}{\rvar{Z}}
\providecommand{\rs}{\rvar{s}}
\providecommand{\rv}{\rvar{v}}
\providecommand{\rw}{\rvar{w}}
\providecommand{\ry}{\rvar{y}}
\providecommand{\rz}{\rvar{z}}
\newcommand{\ip}[2]{\left\langle {#1}, {#2} \right\rangle} 
\DeclareMathOperator{\diag}{Diag}
\DeclareMathOperator{\Tr}{Tr}
\DeclareMathOperator{\fr}{Fr}
\newcommand{\explain}[2]{\overset{\text{\tiny{#1}}}{#2}} 
\providecommand{\bydef}{\explain{def}{=}}
\newcommand*\diff{\mathop{}\!\mathrm{d}}
\newcommand{\unif}[1]{\mathsf{Unif}(#1)} 
\newcommand{\E}{\mathbb{E}} 
\renewcommand{\P}{\mathbb{P}} 
\newcommand{\Q}{\mathbb{Q}} 
\newcommand{\Var}{\mathrm{Var}}
\newcommand{\Cov}{\mathrm{Cov}}
\newcommand{\gauss}[2]{\mathsf{N}\!\left( #1,#2 \right)} 
\newcommand{\gausss}[2]{\mathsf{N}( #1,#2 )} 
\newcommand{\pc}{\overset{\P}{\rightarrow}} 
\newcommand{\dc}{\explain{d}{\rightarrow}}
\newcommand{\ls}{\underset{\dim\rightarrow\infty}{\lim\sup\;}} 
\newcommand{\li}{\underset{\dim\rightarrow\infty}{\lim\inf\;}} 
\newcommand{\lm}{\lim_{\dim\rightarrow\infty}} 
\renewcommand{\tilde}{\widetilde}
\renewcommand{\hat}{\widehat}
\DeclareFontFamily{U}{mathx}{\hyphenchar\font45}
\DeclareFontShape{U}{mathx}{m}{n}{
      <5> <6> <7> <8> <9> <10>
      <10.95> <12> <14.4> <17.28> <20.74> <24.88>
      mathx10
      }{}
\DeclareSymbolFont{mathx}{U}{mathx}{m}{n}
\DeclareMathAccent{\widecheck}{0}{mathx}{"71}
\DeclareMathAccent{\wideparen}{0}{mathx}{"75}
\renewcommand{\bar}{\overline}
\providecommand{\tv}{\mathrm{d}_\mathrm{TV}}
\providecommand{\ind}{\mathbbm{1}}
\providecommand{\ssize}{n}
\renewcommand{\dim}{p}
\newcommand{\rnk}{k}
\newcommand{\priv}{w}
\providecommand{\mSigma}{\mat{\Sigma}}
\providecommand{\rLambda}{\bm{\Lambda}}
\providecommand{\mSigmaR}{\mSigma_{\mathrm{rk}}}
\providecommand{\mSigmaRb}{\bar{\mSigma}_{\mathrm{rk}}}
\DeclarePairedDelimiterX{\tfx}[2]{[}{]}{%
  #1\;\delimsize\|\;#2%
}
\newcommand{\tf}{\mathsf{T}\tfx}
\DeclarePairedDelimiterX{\tpx}[2]{[}{]}{%
  #1\;\delimsize\|\;#2%
}
\DeclarePairedDelimiter{\norm}{\lVert}{\rVert}
\DeclarePairedDelimiterX{\renyix}[3]{[}{]}{%
  #2\;\delimsize\|\;#3%
}
\newcommand{\renyi}{\mathsf{D}}
\newcommand{\rdv}[3]{\renyi_{#1}\renyix{#1}{#2}{#3}}
\newcommand{\indep}{\perp\!\!\!\perp}
\newcommand{\mref}{\ref}
\newcommand{\meqref}{\ref}
\let\oldparagraph\paragraph
\renewcommand{\paragraph}[1]{\oldparagraph{#1.}}
\title{High-Dimensional Asymptotics of Differentially Private PCA}
\author{Youngjoo Yun\thanks{Department of Statistics, University of Wisconsin–Madison. Email: \tt{yyun25@wisc.edu}} \and Rishabh Dudeja\thanks{Department of Statistics, University of Wisconsin–Madison. Email: \tt{rdudeja@wisc.edu}}}
\begin{document}
\maketitle
\begin{abstract}
In differential privacy, random noise is introduced to privatize summary statistics of a sensitive dataset before releasing them. The noise level determines the privacy loss, which quantifies how easily an adversary can detect a target individual's presence in the dataset using the published statistic. Most privacy analyses provide upper bounds on the privacy loss. Sometimes, these bounds offer weak privacy guarantees unless the noise level is so high that it overwhelms the meaningful signal. It is unclear whether such high noise levels are necessary or a limitation of loose and pessimistic privacy bounds. This paper explores whether it is possible to obtain sharp privacy characterizations that determine the exact privacy loss of a mechanism on a given dataset. We study this problem in the context of differentially private principal component analysis (PCA), where the goal is to privatize the leading principal components of a dataset with $n$ samples and $p$ features. We analyze the exponential mechanism in a model-free setting and provide sharp utility and privacy characterizations in the high-dimensional limit ($p \rightarrow \infty$). We show that in high dimensions, detecting a target individual's presence using privatized PCs is exactly as hard as distinguishing between two Gaussians with slightly different means, where the mean difference depends on certain spectral properties of the dataset. Our analysis combines the hypothesis-testing formulation of privacy guarantees proposed by Dong, Roth, and Su (2022) with Le Cam's contiguity arguments.
\end{abstract}

\doparttoc 
\faketableofcontents 
\part{} 
\parttoc 

\section{Introduction}\label{sec:introduction}
In many applications, we analyze high-dimensional datasets containing sensitive information, such as genetic data, electronic health records, and opinion polls on controversial issues. In such cases, even publishing aggregate statistics of the dataset can create privacy risks for participants. Indeed, the work of \citet{homer2008resolving} shows that one can infer whether a target individual is in a DNA database from commonly published aggregate statistics.

\paragraph{Differential Privacy} To address this concern, Differential Privacy (DP), introduced by \citet{dwork2006calibrating}, provides a formal mathematical framework for privacy-preserving statistical analysis of sensitive datasets. Instead of publishing the statistic as it is, one releases a privatized version, which is a random, noisy version of the statistic. The noise prevents adversaries from inferring sensitive information about individuals in the dataset. Study participants are reassured that their data is safe by a privacy guarantee. This is a formal information-theoretic impossibility result showing that the noise level used for privatization is sufficiently high to ensure that no adversary can reliably detect even the presence or absence of a target individual in the dataset, let alone their personal information. 

\paragraph{Limitations of Existing Privacy Bounds} In differential privacy, most existing privacy analyses are examples of non-asymptotic upper \emph{bounds} on the privacy loss of a privatized statistic, which quantify the ease with which an adversary can detect the presence or absence of a target individual in the dataset using the published value of the privatized statistic. These bounds can sometimes be very pessimistic, prescribing excessive amounts of noise that overwhelm meaningful signals in the statistic to achieve reasonable privacy guarantees. This paper studies a prototypical situation where this happens: differentially private principal component analysis (PCA). Here, we are given a dataset $\mX$ consisting of $n$ samples, each represented by a $p$-dimensional feature vector. The goal is to privatize the top $k$ principal components (PCs or eigenvectors) of the data covariance matrix $\mSigma(\mX)$. A popular method for doing so is via the exponential mechanism \citep{mcsherry2007mechanism,chaudhuri2013near}, which generates the privatized PCs by sampling them from a Gibbs distribution with the following density with respect to the uniform (or Haar) distribution on $\O(\dim,\rnk)$ (the set of $\dim \times \rnk$ column-orthogonal matrices):
\begin{align} \label{eq:gibbs-intro}
    \nu(\diff \mV | \mSigma(\mX), \rnk, \beta) \propto e^{ \frac{\dim\beta}{2} \Tr[\mV^\top \mSigma(\mX) \mV]}.
\end{align}
The parameter $\beta \geq 0$ controls the noise level used for privatization. For large values of the noise parameter $\beta$, the privatized PCs contain very little noise and are close to the true PCs, but offer weak privacy protection. For small values of $\beta$, the privatized PCs are extremely noisy, but offer strong privacy protection.
\paragraph{An Illustrative Example} To illustrate the pessimism of existing privacy bounds, we apply the exponential mechanism on a subset of the 1000 Genomes dataset \citep{10002015global} consisting of $n = 2504$ samples with $p = 200$ genetic features (single nucleotide polymorphisms) per sample. As shown in the left-hand plot of \Cref{fig:proj}, the data points, when projected onto the true (non-private) PCs, organize into clusters, reflecting the ancestry of the subjects.  We compare this plot with the corresponding plots for the privatized PCs in \Cref{fig:proj}.
\begin{itemize}
\item At small values of $\beta$, such as  $\beta=0.01$, existing privacy bounds \citep{chaudhuri2013near} show that the privatized PCs satisfy a strong $0.16$-DP guarantee. Roughly speaking, this guarantee states that detecting the presence or absence of a target individual in the dataset from the privatized PCs is at least as difficult as distinguishing between $\mathrm{Bern}(0.46)$ and $\mathrm{Bern}(0.54)$ (based on a single sample) \citep{wasserman2010statistical,dong2022gaussian,kairouz2015composition}. This is, information-theoretically, a very challenging testing problem. However, at such low values of $\beta$, the noise in the privatized PCs overwhelms the signal, and the clustering pattern in the PCA plot is completely lost. 
\item At higher values of $\beta$,  such as $\beta = 0.2$, the privatized PCs remain noisy and fail to capture the clustering in the dataset. At this point, existing privacy bounds \citep{chaudhuri2013near} already become quite weak and provide a $3.19$-DP guarantee for the privatized PCs, which translates to distinguishing between $\mathrm{Bern}(0.04)$ and $\mathrm{Bern}(0.96)$, a relatively easy testing problem: the Neyman-Pearson test can reliably distinguish these distributions with a power of $96\%$ and a false rejection rate of $5\%$.
\item To capture the clustering pattern in the dataset, one needs to use a high value of $\beta$ such as $\beta = 1.2$. However, at such a high value of $\beta$, existing privacy bounds are essentially vacuous and provide a $19.17$-DP guarantee (testing between Bern($5\times10^{-9}$) and Bern(1)).
\end{itemize}
Therefore, in this example, existing non-asymptotic privacy bounds fail to provide a reasonable privacy
guarantee while simultaneously preserving the signal in the PCs.
\begin{figure}
    \centering
    \includegraphics[width=\linewidth]{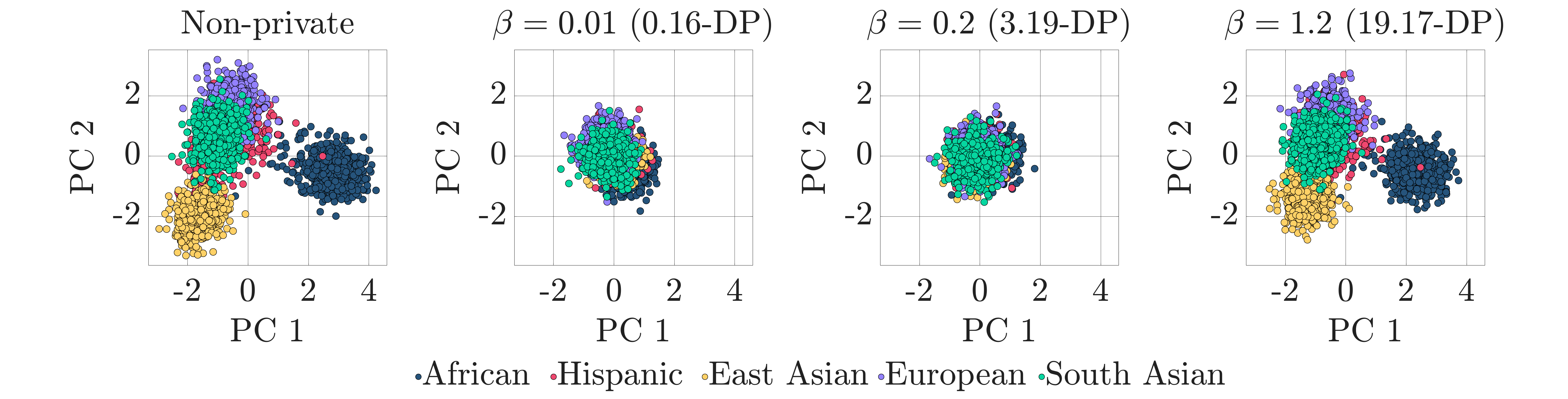}
    \caption{Projections of the 1000 Genomes dataset onto the span of the first $\rnk=2$ PCs. Left to right: (1) non-private PCs, (2--4)  privatized PCs with $\beta=0.01$, $\beta=0.2$, and $\beta = 1.2,$ respectively. }\label{fig:proj}
\end{figure}
\paragraph{Our Contributions}  {One possible explanation for the limitations of existing non-asymptotic privacy bounds depicted in \Cref{fig:proj} is that they might be extremely loose upper bounds on the actual privacy loss of the exponential mechanism on the given dataset. This work investigates whether it is possible to obtain sharp privacy characterizations (as opposed to privacy bounds) that pin down the exact privacy loss of a mechanism on a given dataset. For more complex mechanisms, such as the exponential mechanism, obtaining sharp privacy characterizations seems challenging. To overcome this difficulty, we take inspiration from a growing line of work on high-dimensional or mean-field asymptotics in statistics (see \citep{montanari2018mean} for a recent review), which has shown that it is often possible to obtain exact characterizations of the estimation error for popular estimators in the high-dimensional limit $\dim \rightarrow \infty$. In fact, \citet{dwork2024differentially} and \citet{bombari2025better} have recently exploited tools from high-dimensional asymptotics to obtain exact characterizations of the \emph{estimation error} of popular privatized estimators for linear regression. We build on this line of work and show that it is also possible to obtain sharp characterizations of the privacy loss in the high-dimensional limit. Our main contributions are as follows.} 

\begin{itemize}
\item \emph{Utility or Estimation Error Analysis:} We characterize the exact limit of the estimation error of the exponential mechanism as $\dim \rightarrow \infty$ (\Cref{thm:utility}). We find that the exponential mechanism exhibits several interesting phase transitions. 
\item \emph{Privacy Analysis:}  We provide a sharp privacy analysis of the exponential mechanism in the high-dimensional limit (\Cref{thm:privacy}). At a high level, our result says that detecting the presence or absence of a target individual in the dataset based on the output of the exponential mechanism on a dataset $X$ is \emph{exactly} as hard as distinguishing between $\gauss{0}{1}$ and $\gauss{\sigma_\beta}{1}$, where the parameter $\sigma_\beta$ has an explicit formula in terms of the noise parameter $\beta$, as well as the asymptotic spectral properties of the dataset $\mX$. Our characterization shows that the exponential mechanism exhibits an interesting \emph{privacy plateau}, a regime where decreasing $\beta$ increases the noise in the privatized PCs and degrades the utility, without any asymptotic improvements in the privacy guarantee.
\item \emph{A Sampling Algorithm for the Exponential Mechanism:} We also design a sampling algorithm for the exponential mechanism. This algorithm generates an approximate sample from the Gibbs distribution \eqref{eq:gibbs-intro} and achieves vanishing total variation error as $\dim \rightarrow \infty$, whenever the exponential mechanism has a non-trivial utility (\Cref{thm:sampling}). The algorithm also helps us analyze the exponential mechanism and shows that in high dimensions, the noise introduced by the exponential mechanism can be approximated by a non-isotropic Gaussian distribution. The variance of this noise in each direction is calibrated according to the sensitivity of the leading PCs in that direction. As a result, there is more noise in highly sensitive directions and less noise in the remaining directions.
\end{itemize}
Our results above hold in a \emph{model-free setting} (that is, we do not assume that the dataset is generated from a statistical model). Even though our results are asymptotic, our experiments on the 1000 Genomes dataset suggest that they might provide good approximations to the finite-$\dim$ utility and privacy guarantees of the exponential mechanism for applications where an asymptotic privacy guarantee may be a reasonable compromise for improved utility. 

We also address the problem of data normalization, which arises while using our results to privatize PCs for real-world datasets. Like many prior works on differentially private PCA in a model-free setting \citep{chaudhuri2013near,  amin2019differentially, leake2021sampling, mangoubi2022re, dwork2014analyze, gonem2018smooth}, we assume the dataset is centered and normalized so that every data point has a bounded norm. To apply such procedures to real-world datasets, one must preprocess them to satisfy these assumptions. In addition, one must account for the privacy loss introduced during preprocessing, as it is often data-dependent. In  \Cref{app:rank},
we analyze a natural data normalization procedure based on rank transformation, which is often used in the statistics literature to handle non-Gaussian, heavy-tailed, or contaminated data (see e.g., \citep{xue2012regularized,liu2012high,mitra2014multivariate,cape2024robust,liao2025testing}). We also provide an end-to-end privacy guarantee for the resulting algorithm (see \Cref{thm:privacy-rank}).

\paragraph{Proof Techniques} Our work is inspired by a line of work \citep{wasserman2010statistical,kairouz2015composition,dong2022gaussian}, which explores connections between differential privacy and hypothesis testing. Specifically, we rely on the insights of \citet{dong2022gaussian} who show that the exact privacy guarantee of a mechanism can be captured by the trade-off function (or the level v.s. optimal power curve) for the hypothesis testing task of distinguishing between output distributions of the mechanism on a given dataset and a neighboring dataset, constructed by adding or removing a single data point from the given dataset. Computing these trade-off functions exactly for complicated mechanisms seems difficult. Hence, we resort to asymptotic arguments. We show that in the high-dimensional regime, the output distributions of the exponential mechanism on two neighboring datasets are mutually contiguous and hence, the corresponding trade-off function can be derived using classical techniques due to \citet{le2012asymptotic}. Finally, our proofs also require certain probabilistic results about the Gibbs distribution in \eqref{eq:gibbs-intro}. This distribution has been studied in a line of work in probability \cite{guionnet2005fourier, guionnet2021asymptotics,husson2025spherical}. In particular, a result by \citet{guionnet2005fourier} on the asymptotics of the log-normalizing constant of the Gibbs distribution, as well as a subsequent refinement of their result by \citet{guionnet2021asymptotics}, play a crucial role in the proof of our utility result. Our privacy theorem requires CLT-type results for privatized PCs drawn from the Gibbs distribution in \eqref{eq:gibbs-intro}, which don’t appear to be readily available in the literature. To address this, we will build on the proofs of \citet{guionnet2005fourier} and design a simple sampling algorithm for the exponential mechanism. This algorithm approximates the distribution of the privatized PCs in total variation distance by a non-linear transformation of independent Gaussian random variables. CLTs for the latter can be easily derived using the second-order Poincaré inequality developed by \citet{chatterjee2009fluctuations}.

\paragraph{Notations} We end the introduction by defining some frequently used notations.

\indent\emph{Asymptotics.} We use the standard order notations $O(\cdot)$ and $o(\cdot)$, and use them interchangeably with $\lesssim$ and $\ll,$ respectively. 

\indent\emph{Common Sets.} We use $\R$ and $\N$ to denote the set of real numbers and the set of natural numbers (zero excluded), respectively. For $\dim, \rnk \in \N$,  $\O(\dim,\rnk)$ denotes the set of $\dim \times \rnk$ matrices with orthonormal columns,  $\O(\rnk)$ is the set of $\rnk \times \rnk$ orthogonal matrices, and  $[\dim]$ 
denotes the set $\{1, 2, \dotsc,\dim\}.$

\indent\emph{Linear algebra.} We index the entries of a vector and a matrix using subscripts: $\vv_i$ denotes the $i$th entry of a vector $\vv,$ and $\mV_{ij}$ denotes the $ij$th entry of a matrix $\mV.$ For $\dim,\rnk\in\N,$ we use $\mI_\rnk\in\R^{\rnk\times\rnk}$ to denote the identity matrix, $0_{\dim\times\rnk}\in\R^{\dim\times\rnk}$ to denote a matrix of 0s, and $1_\rnk\in\R^\rnk$ to denote a vector of 1s. For a matrix $\mV$, $\vv_i$ denotes its $i$th row, unless specified otherwise. For matrix and vector norms, $\|\vv\|$ for a vector $\vv$ denotes its $\ell_2$ norm; $\|\mV\|$ and $\|\mV\|_{\fr}$ for a matrix $\mV$ denotes its operator norm and Frobenius norm, respectively. For a symmetric matrix $S$ with eigendecomposition $S=U\Lambda U^\top,$ $(S)_+$ denotes the positive part of $S$, which is the matrix $U\max\{\Lambda,0\} U^\top,$ where $\max\{\cdot,0\}$ is applied to each eigenvalue. We let $\lambda_i(\mS)$ and $\vu_i(\mS)$ denote its $i$th largest eigenvalue and the corresponding eigenvector, respectively. If $\mS$ is additionally positive semi-definite, we let $\mS^{1/2}$ denote its symmetric square root, which is defined as $\mS^{1/2} \explain{def}{=} \mU \Lambda^{1/2} \mU^\top.$ 

\indent\emph{Probability.} 
$\gauss{\vm}{\mS}$ denotes the Gaussian distribution with mean vector $\vm$ and covariance matrix $\mS,$ and  $\xi_{\dim,\rnk}$ denotes the uniform (Haar) distribution on $\O(\dim,\rnk)$.  A random matrix $\rO \sim \xi_{\dim, \rnk} \bydef \unif{\O(\dim,\rnk)}$ if and only if $\rO^\top \rO = \mI_{\rnk}$ and $\rO \explain{d}{=} \mA \rO$ for any deterministic orthogonal matrix $\mA \in \O(\dim)$ \citep[Section 1]{meckes2019random}. 
For two probability measures $\mu$ and $\nu$, $\mu \otimes \nu$ denotes their product measure. 
For random variables $(\rz_1, \dotsc, \rz_\dim)$, we use the shorthand statement $\rz_i \explain{$\indep$}{\sim} \mu_i$ for $i \in[\dim]$ to denote that $(\rz_1, \dotsc, \rz_\dim) \sim \mu_1 \otimes \mu_2 \otimes \dotsb \otimes \mu_\dim,$ where the symbol $\indep$ denotes independent sampling of random variables.  We also  frequently use the usual stochastic order notations $O_\P(\cdot), o_\P(\cdot).$  
\section{Preliminaries}\label{sec:preliminaries}
\paragraph{Setup} We are interested in the setting where we run principal component analysis (PCA) on a sensitive dataset $\mX$: 
\begin{align*}
    \mX\bydef \{\vx_1, \vx_2, \dotsc,\vx_\ssize\},
\end{align*}
consisting of $\ssize$ data points,  represented using $\dim$ dimensional feature vectors $\vx_{1:\ssize} \in \R^\dim$. We assume that this dataset has been appropriately centered and normalized so that the data points satisfy the norm constraint:
\begin{align*}
    \|\vx_i\| \leq \sqrt{\dim} \quad \forall \; i \; \in \; [\ssize].
\end{align*}
Let $\mSigma(\mX)$ denote the sample covariance matrix of the dataset, with eigendecomposition:
\begin{align*}
    \mSigma(\mX)\bydef\frac{1}{|\mX|}\sum_{\vx\in\mX}\vx\vx^\top  = \sum_{i=1}^\dim \lambda_i \vu_i \vu_i^\top,
\end{align*}
where $\lambda_{1} \geq \lambda_2 \geq \dotsb \geq \lambda_\dim$ denote the eigenvalues of $\mSigma(X)$ and $\vu_1, \vu_2, \dotsc, \vu_\dim$  the corresponding eigenvectors. For a given $\rnk \in \N$, our goal is to publish the top $\rnk$ principal components (PCs) $\mU_\star \in \O(\dim,\rnk)$, which are the top $\rnk$ eigenvectors of $\mSigma(\mX)$:
\begin{align*}
    \mU_\star  \bydef\begin{bmatrix}\vu_1& \vu_2& \cdots &\vu_\rnk\end{bmatrix}.
\end{align*}
However, publishing $\mU_\star$ may reveal sensitive information about the individuals in our dataset to a potential adversary. Thus, our goal in this work is to construct privatized PCs $\rV \in \O(\dim,\rnk)$ that can be safely released and serve as a reasonable approximation for $\mU_\star$.

\paragraph{The Exponential Mechanism} A privatized statistic is essentially a randomized noisy version of the non-private statistic of interest. The noise prevents an adversary from reliably inferring sensitive information about the individuals in the dataset from the released value of the statistic.  This paper studies a specific method to construct privatized PCs called the exponential mechanism, which was introduced by \citet{mcsherry2007mechanism} and first analyzed in the context of PCA by \citet{chaudhuri2013near}.
\begin{definition}[Exponential Mechanism for Differentially Private PCA {\citep{mcsherry2007mechanism,chaudhuri2013near}}]\label{def:Gibbs}
    For $\beta \geq 0$, $k \in \N$, and $\Sigma \in \R^{\dim \times \dim}$,  let $\nu(\cdot \mid \mSigma, \rnk, \beta)$ denote the Gibbs distribution on $\O(\dim,\rnk)$ with density:
\begin{align} \label{eq:gibbs-def}
    \frac{\diff \nu(\mV \mid \Sigma, \beta, \rnk)}{\diff \xi_{\dim,\rnk}} & = \frac{1}{Z(\mSigma, \beta, \rnk)}  \cdot \exp\left( \frac{p\beta}{2} \Tr[ \mV^\top \mSigma \mV]\right),
\end{align}
where $\xi_{\dim,\rnk} \bydef \unif{\O(\dim,\rnk)}$ and $Z(\mSigma,\beta, \rnk)$ denotes the normalizing constant:
\begin{align*}
Z(\mSigma,\beta,\rnk) \explain{def}{=}  \E_{\rV \sim \xi_{\dim,\rnk}} \left[ \exp\left( \frac{p\beta}{2}  \Tr[ \rV^\top \mSigma\rV]\right) \right].
\end{align*}
The exponential mechanism (\Cref{alg:ExpM}) for differentially private PCA takes three inputs: a dataset $\mX$, noise parameter $\beta \geq 0$, and rank (number of PCs) $\rnk \in \N$. It generates the privatized PCs $\rV$ by sampling them from the Gibbs distribution $\nu(\cdot \mid \mSigma(\mX), \beta, \rnk)$, where $\mSigma(\mX)$ is the covariance matrix of $\mX$.
\begin{algorithm}[H]
\caption{\textsc{Exponential Mechanism}($X,\beta,\rnk$)}
\label{alg:ExpM}
\begin{algorithmic}
\STATE {\textit{Input:}} Dataset $\mX \subset \R^{p}$, noise parameter $\beta\geq0$, rank (number of PCs) $\rnk\in\N.$
\STATE {\textit{Output:}} Privatized PCs $\rV \in \O(\dim,\rnk).$
\begin{itemize}
\item Sample {orthonormal vectors} $\rV$ from $\nu(\cdot \mid \mSigma(\mX), \beta, \rnk ).$
\end{itemize}
\STATE \textit{Return:} Privatized PCs $\rV$.
\end{algorithmic}
\end{algorithm}
\end{definition}
As illustrated in \Cref{fig:proj}, the parameter $\beta$ controls the amount of noise introduced in the privatized PCs and hence manages the trade-off between the utility (measured by the error between the true and privatized PCs) and the privacy guarantee:
\begin{itemize}
    \item For large values of $\beta$, the Gibbs distribution $\nu(\cdot \mid \mSigma(\mX),  \beta, \rnk)$ concentrates on the directions $\mV \in \O(\dim, \rnk)$ that maximize the captured variance $\Tr[\mV^\top \mSigma(\mX) \mV]$, which coincide with the true PCs $\mU_\star$. In this situation, the privatized PCs $\rV$ contain very little noise, and the error between the privatized PCs and the true PCs is small. The small noise level also means that the privacy guarantee for large $\beta$ is weaker. 
    \item As $\beta$ decreases, the privatized PCs become more and more noisy, and the error between the true PCs and the privatized PCs increases. The increasing amount of noise ensures that the privacy guarantee becomes stronger. In the extreme case when $\beta = 0$, the privatized PCs $\rV \sim \unif{\O(\dim,\rnk)}$ consist of pure noise and are perfectly private since they carry absolutely no information about the dataset. 
\end{itemize}

    

\paragraph{Differential Privacy} To characterize the privacy of a privatized statistic $\rO(\cdot)$ (such as the privatized PCs produced by the exponential mechanism), we adopt the differential privacy framework of \citet{dwork2006calibrating}. In this framework, a randomized statistic $\rO(\cdot)$ is private if it ensures that an adversary cannot reliably infer whether a target individual is present in the dataset $\mX$ or not from $\rO(\mX)$, the value of the privatized statistic on dataset $\mX$. Detecting the presence or absence of a target individual requires an adversary to distinguish between the distributions of $\rO(\cdot)$ on two datasets: one containing the target individual and one without. Such datasets are known as \emph{neighboring datasets}, formally defined below\footnote{The differential privacy literature considers two definitions of neighboring datasets: the \emph{add/remove} definition \citep[Definition 2.4]{dwork2014algorithmic} and the \emph{swap} definition \citep{dwork2006calibrating}. Under the add/remove definition, a neighboring dataset is obtained by adding or removing a single data point. Under the swap definition, it is obtained by replacing a single data point with a new one. This paper adopts the add/remove definition. For a comparison of the two definitions, see \citet{kuleszamean}.}.

\begin{definition}[Neighboring datasets] Two datasets $\mX,\tilde{\mX}\subset \{x \in \R^\dim: \|x\| \leq \sqrt{\dim}\}$ are neighboring if they differ by a single individual, or, equivalently, if  $\tilde{\mX}=\mX\cup\{\vx\}$ for some $\vx\in\R^{\dim}$ satisfying $\|\vx\|\leq\sqrt{\dim}$ or $\tilde{\mX}=\mX\backslash\{\vx\}$ for some $\vx\in\mX$. We use  $\calN(\mX)=\calN_+(\mX)\cup\calN_-(\mX)$ to denote the set of all neighboring datasets of $\mX$, where $\calN_+(\mX)$ and $\calN_-(\mX)$ correspond to the datasets obtained by adding and subtracting, respectively, a data point from $\mX$.
\end{definition}

In differential privacy, privacy guarantees are expressed as indistinguishability results between the distributions of a privatized statistic on neighboring datasets. Hence, they provide formal upper bounds on any adversary's ability to detect the presence or absence of a target individual. This paper employs two popular notions of indistinguishability to derive privacy guarantees: one based on trade-off functions introduced by \citet{dong2022gaussian} and the other based on Rényi divergence introduced by \citet{mironov2017renyi}.
\paragraph{Trade-off Functions and Differential Privacy} A line of work \citep{wasserman2010statistical,kairouz2015composition,dong2022gaussian} has explored connections between differential privacy and hypothesis testing, culminating in the work of \citet{dong2022gaussian} who propose the following notion of trade-off functions for expressing privacy guarantees.\begin{definition}[Trade-off function {\citep{dong2022gaussian}}]\label{def:tf}
    For any distributions $\nu_0$ and $\nu_1$ on some common space $\mathcal{V},$ the trade-off function $\tf{\nu_0}{\nu_1}:[0,1]\rightarrow[0,1]$ is defined as:
    \begin{align*}
        \tf{\nu_0}{\nu_1}(\alpha)\bydef\underset{\phi:\mathcal{V}\rightarrow[0,1]}{\min}\left\{1-\E_{\rV\sim\nu_1}[\phi(\rV)]\;:\;\E_{\rV\sim\nu_0}[\phi(\rV)]\leq\alpha\right\}.
    \end{align*}
In other words, for $\alpha \in [0,1]$, $\tf{\nu_0}{\nu_1}(\alpha)$ is the Type II error (false acceptance rate) of the optimal (most powerful) level-$\alpha$ test for distinguishing the null hypothesis $H_0: \rV \sim \nu_0$ from the alternative $H_1: \rV \sim \nu_1$. With some abuse of notation, for two random variables $\rV_0$ and $\rV_1$ with laws $\nu_0$ and $\nu_1$ respectively, we will use $\tf{\rV_0}{\rV_1}$ to denote the trade-off function $\tf{\nu_0}{\nu_1}$.
\end{definition}
Many popular differential privacy guarantees can be expressed using trade-off functions:
\begin{itemize}
    \item The earliest differential privacy (DP) guarantee was $\epsilon$-DP, introduced by \citet{dwork2006calibrating}. For any $\epsilon > 0$, a privatized statistic $\rO(\cdot)$ is $\epsilon$-DP if for any two neighboring datasets $\mX,\tilde{\mX}$ and any event $E$, $\P(\rO(X) \in E) \leq e^\epsilon \cdot \P(\rO(\tilde{X}) \in E)$. It is well-known \citep{wasserman2010statistical,dong2022gaussian,kairouz2015composition} that $\rO(\cdot)$ is $\epsilon$-DP if and only if for any two neighboring datasets $\mX,\tilde{\mX}$, 
    \begin{align*}
    \tf{\rO(\mX)}{\rO(\tilde{\mX})}(\alpha)\geq \tf*{\mathrm{Bern}\left(\frac{1}{1+e^\epsilon}\right)}{\mathrm{Bern}\left(\frac{e^\epsilon}{1+e^\epsilon}\right)}(\alpha)\quad\forall\:\alpha\in[0,1].
\end{align*}
This reformulation in terms of trade-off functions provides an operational interpretation for an $\epsilon$-DP privacy guarantee: for an adversary, detecting the presence or absence of a target individual in the dataset based on the published value of a $\epsilon$-DP statistic with $\epsilon = 0.1$ is at least as hard as distinguishing between $\mathrm{Bern}(0.48)$ and $\mathrm{Bern}(0.52)$.
\item The $\mu$-Gaussian Differential Privacy ($\mu$-GDP) guarantee of \citet{dong2022gaussian} is also expressed in terms of trade-off functions. For any $\mu\geq0,$ a privatized statistic $\rO(\cdot)$ is $\mu$-GDP if for any two neighboring datasets $\mX,\tilde{\mX}$,  
\begin{align*}
    \tf{\rO(\mX)}{\rO(\tilde{\mX})}(\alpha)\geq \tf{\gauss{0}{1}}{\gauss{\mu}{1}}(\alpha)\quad\forall\:\alpha\in[0,1].
\end{align*}
Operationally, this means that for an adversary detecting the presence or absence of a target individual in the dataset based on the published value of a $\mu$-GDP statistic is at least as hard as distinguishing between $\gauss{0}{1}$ and $\gauss{\mu}{1}$.
\end{itemize}

\begin{wrapfigure}[10]{r}{0.5\linewidth}
  \centering
  \vspace{-10mm}
  \includegraphics[width=\linewidth]{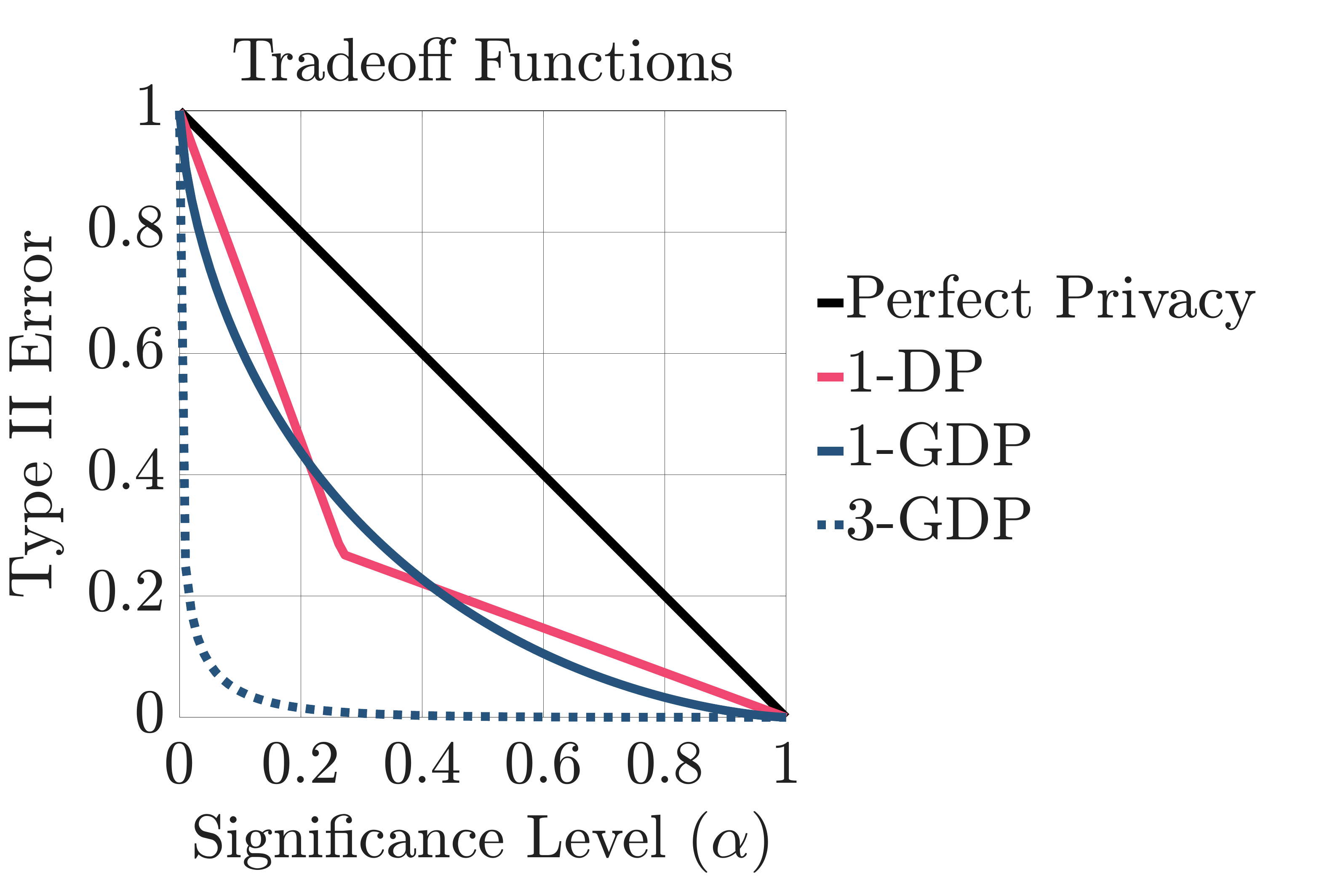}
    \caption{Visualization of trade-off-function lower bounds implied by various privacy guarantees.}
    \label{fig:tf}
\end{wrapfigure}


A useful feature of trade-off-function-based privacy formulations is that it allows one to visualize often incomparable definitions of privacy together in a single plot, as shown in \Cref{fig:tf}.

\paragraph{Rényi Differential Privacy} The following definition, due to \citet{mironov2017renyi}, introduces a different notion of privacy  which uses the Rényi divergence as the notion of indistinguishability. 
\begin{definition}[Rényi Divergence and Rényi Differential Privacy {\citep{mironov2017renyi}}]\label{def:Rényi}
    For any $\alpha\in(1,\infty)$ and any two distributions $\nu_0$ and $\nu_1$, the Rényi divergence of order $\alpha$ of $\nu_1$ with respect to $\nu_0$ is defined as:
\begin{align*}
    \rdv{\alpha}{\nu_1}{\nu_0} \bydef \begin{cases} \frac{1}{\alpha - 1} \ln \E_{\rV \sim \nu_0} \left[ \left(\frac{\diff \nu_1}{\diff \nu_0}(\rV) \right)^\alpha \right] & \text{ if } \nu_1 \ll \nu_0, \\ \infty & \text{ otherwise}. \end{cases}
\end{align*}

With some abuse of notation, for two random variables $\rV_0$ and $\rV_1$ with laws $\nu_0$ and $\nu_1$ respectively, we will use $\rdv{\alpha}{\rV_0}{\rV_1}$ to denote the Rényi divergence $\rdv{\alpha}{\nu_0}{\nu_1}$. For $\alpha > 1$ and $\epsilon > 0$, a privatized statistic $\rO(\cdot)$ is said to be $(\alpha,\epsilon)$-Rényi differentially private ($(\alpha,\epsilon)$-RDP) if $\rdv{\alpha}{\rO(\tilde{\mX})}{\rO(\mX)}\leq \epsilon$ for any two neighboring datasets $\mX$ and $\tilde{\mX}$. 
\end{definition}

\section{Main Results}\label{sec:main-results}

Since we analyze the exponential mechanism in the high-dimensional limit $\dim \rightarrow \infty$, we will assume that we run the exponential mechanism on a sequence of datasets $X^{(p)} \subset \R^p$ with increasing dimension $p$. To ensure that the estimation error and privacy guarantee of the exponential mechanism converge to well-defined limits, we will need to assume that certain properties of the dataset $X^{(p)}$ stabilize and converge to well-defined limits as $\dim \rightarrow \infty$. We formally state our assumptions below. 

\begin{assumption} \label{assump:data} The sequence of datasets $\{X^{(\dim)}: \dim \in \N\}$ with $X^{(\dim)} \subset \R^\dim$ satisfies:
\begin{enumerate}
    \item \emph{Sample Size Scaling:} $|\mX^{(\dim)}|/\dim^{3/2} \rightarrow \theta$ for some $\theta>0$ as $\dim \rightarrow \infty$, where  $|\mX^{(\dim)}|$  denotes the number of data points in $\mX^{(\dim)}$ (or the sample size).
    \item \emph{Convergence of Spectrum:} For some constant $\rnk \in \N$ (independent of $p$), the largest $\rnk+1$ eigenvalues of the covariance matrix $\mSigma(\mX^{(p)})$ converge to some finite limits $\gamma_{1:\rnk+1}$ as $\dim \rightarrow \infty$:
    \begin{align*}
        \lambda_i(\mSigma(\mX^{(p)})) & \rightarrow \gamma_{i} \in [0,\infty) \quad \forall \; i \; \in \; [\rnk+1],
    \end{align*}
    where $\lambda_i(\mSigma(\mX^{(p)}))$ for $i\in[\dim]$ denotes the $i$th largest eigenvalue of $\mSigma(\mX^{(p)}).$ Moreover, the asymptotic spectral gap $\Delta \bydef \gamma_{\rnk} - \gamma_{\rnk+1}$ is strictly positive. Furthermore, we require that $\mu_{\mSigma(\mX^{(\dim)})}$, the empirical distribution of the smallest $\dim - \rnk$ eigenvalues of  $\mSigma(\mX^{(p)})$ converges weakly to some limiting spectral measure $\mu$ as $\dim \rightarrow \infty$:
    \begin{align*}
        \mu_{\mSigma(\mX^{(\dim)})} & \bydef \frac{1}{\dim-\rnk} \sum_{i=k+1}^{\dim} \delta_{\lambda_i(\mSigma(\mX^{(p)}))} \rightarrow \mu.
    \end{align*}
    \item \emph{Normalization:} Each data point $\vx \in \mX^{(p)}$ satisfies $\|x \| \leq \sqrt{p}.$
\end{enumerate}
\end{assumption}
Going forward, for notational convenience, we will simply refer to the observed (sequence of) dataset(s) as $\mX$, rather than $\mX^{(\dim)}$, keeping the dependence on $\dim$ implicit. 

\paragraph{Hilbert transform} Before stating our results, we will need to introduce the Hilbert transform $H_\mu$ of a probability distribution $\mu,$ which is the function:
\begin{align*}
    H_\mu: \R\backslash \mathrm{supp}(\mu)\rightarrow\R,\quad H_\mu(\lambda) \bydef \int_{\R} \frac{\diff \mu(t) }{\lambda - t}.
\end{align*}
In our setting, $\mu$ will represent limiting spectral measure of the data covariance matrix $\mSigma(\mX)$ (\Cref{assump:data} (item 2)). Since $\mu$ is unknown, we estimate the Hilbert transform of $\mu$ by its empirical counterpart $H_{\mSigma(\mX)}:$
\begin{align} \label{eq:empirical-hilb}
    H_{\mSigma(\mX)}(\lambda) & \bydef \frac{1}{\dim} \sum_{i=1}^{\dim - \rnk} \frac{1}{\lambda - \lambda_{\rnk+i}} \quad \forall \; \lambda \in (\lambda_{\rnk+1}, \infty),
\end{align}
where $\lambda_{1:\dim}$ are the eigenvalues of $\mSigma(\mX).$ Notice that under \Cref{assump:data}, $H_{\mSigma(\mX)}(\lambda)$ is a consistent estimator for $H_{\mu}(\lambda)$ for any $\lambda>\gamma_{\rnk+1}$ (see \Cref{lem:misc_HK}).
It is well-known that the performance of many differentially private methods for PCA depends on the eigengap $\lambda_{\rnk} - \lambda_{\rnk+1}$ between the $\rnk$th and $\rnk+1$th eigenvalues of $\mSigma(\mX)$ \citep{dwork2014analyze, gonem2018smooth, chaudhuri2013near, mangoubi2022re}. In our work, the Hilbert transform values $H_\mu(\gamma_{i})$ and their empirical estimates $H_{\mSigma(\mX)}(\lambda_i)$ for $i \in [\rnk]$ will play an important role in determining the utility and privacy loss of the exponential mechanism. These can be viewed as an average of inverse-eigengaps between the $i$th eigenvalue and the smallest $\dim-\rnk$ eigenvalues, an average-case analog to the worst-case inverse-eigengap $1/(\lambda_\rnk-\lambda_{\rnk+1}).$

\subsection{Utility Analysis} \label{sec:utility-disc}
Our first main result is an asymptotic characterization of the utility or performance of the exponential mechanism (\Cref{alg:ExpM}). We quantify the utility of the privatized PCs $\rV$ returned by the exponential mechanism using the \emph{overlap matrix} $\mU_{\star}^\top \rV \rV^\top \mU_{\star}$, which is a measure of similarity between the privatized PCs and the true PCs $\mU_\star$. 

\begin{theorem}\label{thm:utility} Consider a sequence of datasets $X \subset \R^\dim$ which satisfies \Cref{assump:data}, a sequence of noise parameters $\beta_{\dim} \rightarrow \beta \in [0,\infty)$ as $\dim \rightarrow \infty$, and a fixed rank $\rnk \in \N$ (independent of $\dim$). Let $\mU_{\star} \in \R^{\dim \times \rnk}$ denote the true PCs $(\text{the matrix of the first $\rnk$ eigenvectors of $\mSigma(\mX)$})$ and $\rV$ denote the output of the exponential mechanism run on inputs $(\mX, \beta_{\dim},\rnk)$. Then, 
\begin{align*}
   \mU_{\star}^\top \rV \rV^\top \mU_{\star}  \pc \diag\left(1 - \frac{H_\mu(\gamma_1)}{\beta}, 1 - \frac{H_\mu(\gamma_2)}{\beta}, \dotsc, 1-\frac{H_\mu(\gamma_\rnk)}{\beta} \right)_+ \quad \text{ as $\dim \rightarrow \infty$}.
\end{align*}
\end{theorem}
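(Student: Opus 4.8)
The plan is to reduce the theorem to the asymptotics of the log-normalizing constant of the Gibbs distribution, established by \citet{guionnet2005fourier,guionnet2021asymptotics}, and to read off the overlap matrix by differentiating with respect to a rank-$\rnk$ perturbation of the covariance, using convexity to exchange the limit and the derivative. By rotational invariance of $\xi_{\dim,\rnk}$ we may assume $\mSigma \bydef \mSigma(\mX)$ is diagonal, so $\mU_{\star} = [\ve_1 \; \cdots \; \ve_\rnk]$ and the overlap $\rW \bydef \mU_{\star}^\top \rV \rV^\top \mU_{\star}$ satisfies $0 \preceq \rW \preceq \mI_\rnk$. For a symmetric $\rnk \times \rnk$ matrix $\mD$ in a fixed neighborhood of $0$, the perturbed matrix $\mSigma + \mU_{\star}\mD\mU_{\star}^\top$ has the same bottom $\dim-\rnk$ eigenvalues as $\mSigma$ and top block $\diag(\lambda_1,\dots,\lambda_\rnk)+\mD$, which by \Cref{assump:data} stays separated from the bulk. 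Since $\Tr[\rV^\top \mU_{\star}\mD\mU_{\star}^\top \rV] = \langle \mD, \rW\rangle$, the function
\begin{align*}
f_\dim(\mD) \bydef \tfrac{1}{\dim}\log Z\big(\mSigma + \mU_{\star}\mD\mU_{\star}^\top,\, \beta_\dim,\, \rnk\big)
\end{align*}
equals $\tfrac{1}{\dim}\log Z(\mSigma,\beta_\dim,\rnk)$ plus a (normalized) cumulant generating function of $\rW$ under $\nu(\cdot \mid \mSigma,\beta_\dim,\rnk)$; in particular $f_\dim$ is convex in $\mD$ and $\nabla f_\dim(0) = \tfrac{\beta_\dim}{2}\,\E_{\rV\sim\nu}[\rW]$.

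The heart of the argument is that $f_\dim$ converges pointwise, on a neighborhood of $0$, to the explicit limit $f(\mD) = \sum_{i=1}^\rnk J\big(\widetilde\gamma_i(\mD); \beta, \mu\big)$, where $\widetilde\gamma_1(\mD) \geq \cdots \geq \widetilde\gamma_\rnk(\mD)$ are the eigenvalues of $\diag(\gamma_1,\dots,\gamma_\rnk)+\mD$ and $J(\cdot\,;\beta,\mu)$ is the rate function of the rank-one spherical integral; this follows from \citep{guionnet2005fourier} and the refinement of \citep{guionnet2021asymptotics}, using that $H_{\mSigma(\mX)} \to H_\mu$ (\Cref{lem:misc_HK}) and handling $\beta_\dim \to \beta$ by continuity. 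Because $f$ is a \emph{symmetric} function of the perturbed outliers, it is continuously differentiable at $\mD = 0$ even when $\gamma_1,\dots,\gamma_\rnk$ have ties, with $\nabla f(0) = \diag\big(J'(\gamma_1;\beta,\mu),\dots,J'(\gamma_\rnk;\beta,\mu)\big)$; the off-diagonal entries of the limiting gradient therefore vanish. Since the $f_\dim$ are convex and converge pointwise to a function differentiable at $0$, we get $\nabla f_\dim(0) \to \nabla f(0)$ by the standard convex-analytic fact, hence $\E_{\rV\sim\nu}[\rW] \to \tfrac{2}{\beta}\,\diag\big(J'(\gamma_i;\beta,\mu)\big)_{i=1}^\rnk$. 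A direct computation with the formula of \citet{guionnet2005fourier}, which has a phase transition at $\beta = H_\mu(\gamma_i)$ between a bulk-dominated regime (where $J$ is flat in $\gamma_i$) and an outlier-dominated regime (where $\partial_{\gamma_i} J = \tfrac12(\beta - H_\mu(\gamma_i))$), gives $\tfrac{2}{\beta}J'(\gamma_i;\beta,\mu) = \big(1 - H_\mu(\gamma_i)/\beta\big)_+$, matching the claimed limit. (The case $\beta = 0$ is immediate, since then $\rV$ is Haar-distributed.)

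It remains to upgrade convergence of $\E_\nu[\rW]$ to convergence in probability of $\rW$. Since $\rW \succeq 0$, any diagonal entry with $\beta \leq H_\mu(\gamma_i)$ has vanishing mean and so tends to $0$ in probability by Markov's inequality, and every off-diagonal entry is controlled through $|\rW_{ij}| \leq \sqrt{\rW_{ii}\rW_{jj}}$. The remaining entries occur only when $\beta > H_\mu(\gamma_1)$, i.e.\ in the regime where the exponential mechanism has non-trivial utility; there I would invoke the sampler of \Cref{thm:sampling}, which couples $\rV$ up to $o(1)$ total variation error to a smooth, well-conditioned function of independent Gaussians, and then apply Gaussian concentration — equivalently the second-order Poincaré inequality of \citet{chatterjee2009fluctuations} — to show each such entry concentrates around its mean. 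Combining the two cases yields $\rW \pc \diag\big((1 - H_\mu(\gamma_i)/\beta)_+\big)_{i=1}^\rnk$.

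\textbf{Main obstacle.} The crux is the derivative computation in the middle step: extracting $J'(\gamma_i;\beta,\mu)$ from the Guionnet(--Husson) rate function and, in particular, reproducing the phase transition $(\cdot)_+$ exactly, while making the convex-analytic transfer of gradients rigorous — this needs the convergence $f_\dim \to f$ to hold on a full neighborhood of $\mD = 0$ (uniformly enough, and along $\beta_\dim \to \beta$) and careful control of the regularity of $J$ near the transition. The concentration step in the non-trivial-utility regime, which leans on \Cref{thm:sampling} and a second-order Poincaré argument, is the other substantial ingredient.
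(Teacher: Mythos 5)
Your plan for computing $\E[\rW]$ is essentially the paper's: both pass through the log-normalizing constant of the Gibbs measure evaluated at $\mSigma + \mU_\star \mD \mU_\star^\top$ (equivalently the paper's $M_\dim(t)$ with $\mD = t\mB$), invoke \Cref{fact:guionnet}, and differentiate the explicit limit at $\mD=0$ via a spectral-function calculus that exploits symmetry so that ties among $\gamma_1,\dots,\gamma_\rnk$ are harmless. Your convex-analytic transfer of gradients ($f_\dim$ convex, $f_\dim \to f$ pointwise on a neighborhood, $f$ differentiable at $0$ $\Rightarrow$ $\nabla f_\dim(0)\to\nabla f(0)$) is a clean alternative to the paper's more hands-on Claims 1--2, and the phase transition $f_\mu'(\gamma)=(\beta-H_\mu(\gamma))_+$ does reproduce the $(\cdot)_+$ exactly.

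The gap is in upgrading $\E[\rW]\to \mD$ to $\rW \pc \mD$. Markov handles the entries whose limit is $0$, and Cauchy--Schwarz then kills off-diagonals involving such an index, but for a diagonal entry $\rW_{ii}$ with $\beta > H_\mu(\gamma_i)$ (and any remaining off-diagonal) you propose to invoke \Cref{thm:sampling} plus second-order Poincar\'e. This fails on two counts. First, \Cref{thm:sampling} is only stated for $\beta > H_\mu(\gamma_\rnk)$, which is strictly stronger than $\beta > H_\mu(\gamma_1)$; in the regime $H_\mu(\gamma_1) < \beta \leq H_\mu(\gamma_\rnk)$ some entries of $\mD$ are non-trivial yet the sampler's approximation guarantee is unavailable. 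Second, and more seriously, the proof of \Cref{thm:sampling} (via \Cref{prop:Z-dist-approx} and \Cref{lem:Z^T*Z-exp}) itself cites \Cref{thm:overlap}, which is the very statement you are trying to prove; the dependence is circular.

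The paper sidesteps both problems by not stopping at $M_\dim'(0)$: since $M_\dim(t)\to M(t)$ converges on an open interval of $t$ around $0$ (your $f_\dim \to f$ on a neighborhood of $\mD = 0$), one can run a Chernoff bound at a fixed small $t_\star > 0$. Concretely, $\P(\ip{\rW}{\mB} > \ip{\mD}{\mB}+\epsilon) \le \exp\big(-\dim\big(\tfrac{t_\star\beta_\dim}{2}(\ip{\mD}{\mB}+\epsilon)-M_\dim(t_\star)\big)\big)$ is an exact inequality, and since $M(0)=0$, $M'(0)=\tfrac{\beta}{2}\ip{\mD}{\mB}$, the limiting exponent is strictly positive for small enough $t_\star$. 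This gives exponential concentration for all $\beta\in[0,\infty)$ using only the objects you already control, with no appeal to the sampler and no circularity. You should replace your concentration step by this Chernoff argument; the rest of the proposal then goes through and matches the paper's argument in substance.
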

We now discuss some implications of \Cref{thm:utility}.

\paragraph{Overlap Matrix and Common Error Metrics} Many commonly used error  metrics can be related to the overlap matrix $\mU_{\star}^\top \rV \rV^\top \mU_{\star}$. Examples include the error metrics $\|\mU_\star \mU_\star^\top - \rV \rV^\top\|$ and $\|\mU_\star \mU_\star^\top - \rV \rV^\top\|_{\fr}$, which are commonly used to quantify the error between the subspace spanned by the true PCs $\mU_\star$ and the subspace spanned by the privatized PCs $\rV.$ Indeed, it is well-known that these error metrics are related to the overlap matrix  \citep[Lemma 2.5]{chen2021spectral}: 
\begin{align*}
    \|\mU_\star \mU_\star^\top - \rV \rV^\top\|^2 & = \|\mI_\rnk -  \mU_{\star}^\top \rV \rV^\top \mU_{\star}\|, \quad  \|\mU_\star \mU_\star^\top - \rV \rV^\top\|^2_{\fr} = 2 \rnk - 2\Tr[\mU_{\star}^\top \rV \rV^\top \mU_{\star}].
\end{align*}
Hence \Cref{thm:utility} immediately yields the following asymptotic formulas for the estimation error between the true PCs and the privatized PCs:
\begin{subequations}\label{eq:utility-pred}
\begin{align} 
   \lim_{\dim \rightarrow \infty} \E \|\mU_\star \mU_\star^\top - \rV \rV^\top\|^2 & = \min\left(1, \frac{H_\mu(\gamma_\rnk)}{\beta} \right),\\
   \lim_{\dim \rightarrow \infty} \E \|\mU_\star \mU_\star^\top - \rV \rV^\top\|_{\fr}^2  &= 2 \sum_{i=1}^\rnk \min\left(1, \frac{H_\mu(\gamma_i)}{\beta} \right).
\end{align}
\end{subequations}
Estimating the Hilbert transform $H_\mu$ by its empirical counterpart $H_{\mSigma(\mX)}$ defined in \eqref{eq:empirical-hilb}, we obtain the following data-driven asymptotic approximations from \eqref{eq:utility-pred}:
\begin{subequations} \label{eq:utility-pred-final}
\begin{align} 
     \E \|\mU_\star \mU_\star^\top - \rV \rV^\top\|^2 & = \min\left(1, \frac{H_{\mSigma(\mX)}(\lambda_\rnk)}{\beta} \right) + o(1), \\  \E \|\mU_\star \mU_\star^\top - \rV \rV^\top\|_{\fr}^2  &= 2 \sum_{i=1}^\rnk \min\left(1, \frac{H_{\Sigma(\mX)}(\lambda_i)}{\beta} \right) + o(1).
\end{align}
\end{subequations}
\begin{figure}
    \centering
    \includegraphics[width=0.70\linewidth]{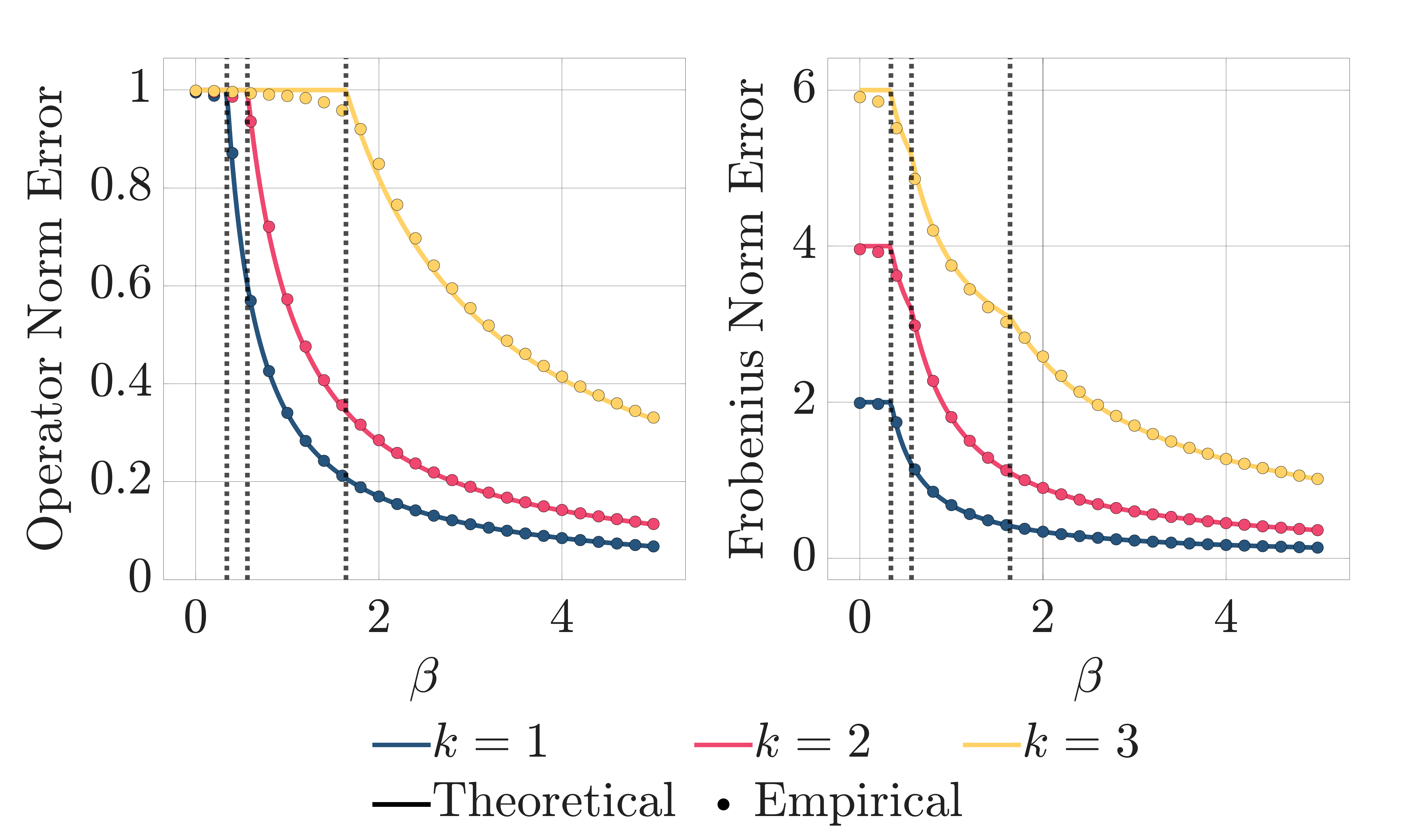}
    \caption{Estimation error of the exponential mechanism in operator norm (left panel) and Frobenius norm (right panel) as a function of $\beta$ for $\rnk = 1,2,3$ on the 1000 Genomes dataset. Solid curves represent theoretical predictions derived from \Cref{thm:utility} and circular markers represent the empirical error, averaged over 30,000 Monte Carlo simulations. The dotted vertical lines represent the threshold values $\beta = H_\mu(\gamma_{1:3})$, at which phase transitions occur.}
    \label{fig:main-utility}
\end{figure}
\Cref{fig:main-utility} compares the finite-$\dim$ performance of the exponential mechanism as measured by $\E \|\mU_\star \mU_\star^\top - \rV \rV^\top\|^2$ and $\E \|\mU_\star \mU_\star^\top - \rV \rV^\top\|^2_{\fr}$ (estimated using 30,000 Monte-Carlo simulations) on the 1000 Genomes dataset with the asymptotic predictions on the RHS of \eqref{eq:utility-pred-final} for $\rnk = 1, 2, 3$. We find that the  finite-$\dim$ performance of the exponential mechanism closely tracks the asymptotic predictions derived from \Cref{thm:utility}.


\paragraph{Phase Transitions} As depicted in \Cref{fig:main-utility}, the asymptotic predictions for estimation error (the RHS of \eqref{eq:utility-pred}), viewed as a function of $\beta$, exhibit some interesting phase transitions (derivative discontinuities). The limiting spectral norm estimation error $\lim_{\dim \rightarrow \infty} \E \|\mU_\star \mU_\star^\top - \rV \rV^\top\|^2$ exhibits a single phase transition at $\beta = H_\mu(\gamma_\rnk)$:
\begin{itemize}
    \item When $\beta \in [0, H_\mu(\gamma_\rnk)]$, the asymptotic estimation spectral norm estimation error is constant and coincides with the estimation error at $\beta = 0$, when the privatized PCs $\rV \sim \unif{\O(\dim,\rnk})$ are pure noise. 
    \item In the regime $\beta >  H_\mu(\gamma_\rnk)$, the exponential mechanism has a non-trivial estimation error. 
\end{itemize}
The asymptotic Frobenius norm estimation error $\lim_{\dim \rightarrow \infty} \E \|\mU_\star \mU_\star^\top - \rV \rV^\top\|^2_{\fr}$ exhibits $\rnk$  phase transitions at $\beta = H_\mu(\gamma_1), \dotsc, H_\mu(\gamma_\rnk)$. One way of interpreting the $i$th phase transition point at $H_\mu(\gamma_i)$ is to observe that \Cref{thm:utility} implies that:
\begin{align*}
    \|\rV^\top \vu_i\|^2 = e_i^\top \mU_\star^\top \rV \rV^\top \mU_\star \ve_i \pc \begin{cases} 0  & \text{ if } \beta \leq  H_\mu(\gamma_i) \\ 1-\frac{H_\mu(\gamma_i)}{\beta} & \text{ if } \beta > H_\mu(\gamma_i) \end{cases} \quad \forall \; i \; \in \; [\rnk],
\end{align*}
where $\vu_1, \dotsc, \vu_k$ denote the true top $k$ PCs (eigenvectors of $\mSigma(\mX)$). Since $\|\rV^\top \vu_i\|^2$ is the norm of the projection of $\vu_i$ on the subspace spanned by the privatized PCs $\rV$, this means that $\vu_i$ is asymptotically orthogonal to this subspace when $\beta \leq H_\mu(\gamma_i)$, and has a non-trivial projection on this subspace when $\beta > H_\mu(\gamma_i)$. Hence, as $\beta$ increases, the exponential mechanism captures the $k$ target PCs one by one, leading to the $k$ phase transitions. In particular, $\beta = H_\mu(\gamma_\rnk)$ is the minimum value of $\beta$ needed to ``capture'' all of the $k$ target PCs $\vu_1, \dotsc, \vu_\rnk$.


\subsection{Privacy Analysis}  \label{sec:privacy}
Next, we present our privacy result for the exponential mechanism. We will quantify the privacy loss of a privatized statistic $\rO(\cdot)$ on a dataset $\mX$ (such as the privatized PCs generated by the exponential mechanism) using the trade-off function and Rényi divergence between the output distribution of the statistic on the dataset $\mX$ and its \emph{worst-case} neighboring dataset:
\begin{align} \label{eq:priv-loss-quantities}
    \inf_{\tilde{\mX} \in \calN(\mX)} \tf{\rO(\mX)}{\rO(\tilde{\mX})}(\alpha) \quad \text{and} \quad \sup_{\tilde{\mX} \in \calN(\mX)} \rdv{\alpha}{\rO(\tilde{\mX})}{\rO(\mX)}.
\end{align}
Since neighboring datasets are constructed by adding or removing a single data point, these quantities measure how hard it is for an adversary to detect if a target individual has entered or left the given dataset (for the worst-case target). At a high level, our privacy result shows that in the high-dimensional limit, detecting the presence or absence of a target individual based on the output of the exponential mechanism on a given dataset $\mX$ (or distinguishing $\mX$ from its worst-case neighboring dataset) is \emph{exactly} as hard as distinguishing $\gauss{0}{1}$ from $\gauss{\rho}{1}$ for some $\rho$ that depends on the asymptotic spectral properties of the dataset $\mX$. We will show that this asymptotic equivalence holds with respect to both trade-off functions and Rényi divergence-based notions of indistinguishability. We formally introduce our asymptotic privacy guarantee in the following definition.

\begin{definition}[Asymptotic Gaussian Differential Privacy] \label{def:AGDP} Consider a randomized algorithm which takes a dataset $\mX \subset \R^\dim$ as an input and returns an output denoted by $\rO(\mX)$. For any $\rho\geq0,$ we say that the algorithm satisfies a (high-dimensional) $\rho$-Asymptotic Gaussian Differential Privacy ($\rho$-AGDP) guarantee on a dataset $\mX \subset \R^\dim$ if:
\begin{align*}
    \li \inf_{\tilde{\mX} \in \calN(\mX)} \tf{\rO(\mX)}{\rO(\tilde{\mX})}(\alpha) & \geq \tf{\gauss{0}{1}}{\gauss{\rho}{1}}(\alpha) \quad \forall \; \alpha \; \in \; [0,1], \\
     \ls\sup_{\tilde{\mX} \in \calN(\mX)} \rdv{\alpha}{\rO(\tilde{\mX})}{\rO(\mX)} & \leq \rdv{\alpha}{\gauss{\rho}{1}}{\gauss{0}{1}} \quad \forall \; \alpha > 1.
\end{align*}
We say that the algorithm satisfies a \underline{sharp}  $\rho$-AGDP guarantee if the statements above hold with an equality. 
\end{definition}
\begin{remark} Prior works have used quantities closely related to those in \eqref{eq:priv-loss-quantities} to measure the privacy loss of a mechanism on a given dataset. \citet{papernot2016semi} and follow-up work \citep{papernot2018scalable} develop bounds on a variant of the Rényi divergence between the output distribution of a mechanism on a dataset $\mX$ and its worst-case neighboring dataset to obtain improved data-dependent privacy analysis of differentially private deep learning systems. A related framework is per-instance differential privacy \citep{wang2019per}, which measures the privacy loss incurred by an individual $x$ in the dataset $\mX$ using the approximate max-divergence between the output distribution of the statistic on the dataset $\mX$ and its neighboring dataset $\mX \backslash \{x\}$, where the individual $x$ has been removed.
\end{remark}

The following theorem formally states our privacy result.
\begin{theorem}\label{thm:privacy}
   Consider a sequence of datasets $X \subset \R^\dim$ which satisfies \Cref{assump:data}, a sequence of noise parameters $\beta_{\dim} \rightarrow \beta\in(H_\mu(\gamma_k),\infty)$ as $\dim \rightarrow \infty$, and a fixed rank $\rnk \in \N$ (independent of $\dim$). Then, the exponential mechanism (\Cref{alg:ExpM}) run on inputs $(\mX, \beta_{\dim},\rnk)$ satisfies a \underline{sharp}  $\sigma_\beta$-AGDP guarantee with
   \begin{align} \label{eq:priv-func}
        {\sigma^2_\beta}\bydef \begin{cases} \frac{1}{2 \Delta \theta^2} \cdot \frac{\left(\beta-H_{\mu}(\gamma_\rnk)\right)^2}{2({\beta-H_{\mu}(\gamma_\rnk)})+ \Delta H'_{\mu}(\gamma_\rnk)}&\text{if}\quad \beta \geq -\Delta H'_{\mu}(\gamma_\rnk)+H_{\mu}(\gamma_{\rnk})\\
        -\frac{1}{2\theta^2}H'_{\mu}(\gamma_\rnk)&\text{if}\quad \beta<-\Delta H'_{\mu}(\gamma_\rnk)+H_{\mu}(\gamma_{\rnk}).
    \end{cases}
    \end{align}
\end{theorem}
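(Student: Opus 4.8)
The plan is to leverage the connection between differential privacy and hypothesis testing from \citet{dong2022gaussian}: to establish a sharp $\sigma_\beta$-AGDP guarantee, it suffices to show that for the worst-case neighboring dataset $\tilde{\mX} \in \calN(\mX)$, the trade-off function $\tf{\rV(\mX)}{\rV(\tilde{\mX})}$ and the Rényi divergences converge to those of the Gaussian pair $(\gauss{0}{1}, \gauss{\sigma_\beta}{1})$. The key structural observation is that a neighboring dataset changes $\mSigma(\mX)$ by a rank-one perturbation of operator norm $O(1/\dim)$ (adding or removing a single $\vx$ with $\|\vx\| \le \sqrt{\dim}$ changes $\mSigma$ by $\pm \vx\vx^\top / |\mX|$, and $|\mX| \asymp \dim^{3/2}$, so the perturbation has norm $O(\dim^{-1/2})$ --- but the Gibbs density has the factor $\dim\beta/2$ in the exponent, so the effect on the log-likelihood ratio is $O(\dim^{1/2}) \cdot O(\dim^{-1/2}) = O(1)$, i.e., nondegenerate). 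This is exactly the regime where Le Cam's contiguity theory applies: the log-likelihood ratio $\log(\diff\nu(\cdot\mid\mSigma(\tilde{\mX}))/\diff\nu(\cdot\mid\mSigma(\mX)))$ should be asymptotically Gaussian, and the two measures should be mutually contiguous, in which case the limiting trade-off function is that of a Gaussian location problem with separation equal to the limiting standard deviation of the log-likelihood ratio.

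First I would write the log-likelihood ratio explicitly. Using \eqref{eq:gibbs-def}, for $\tilde{\mX} = \mX \cup \{\vx\}$ (the add case; the remove case is symmetric),
\begin{align*}
\log \frac{\diff\nu(\rV \mid \mSigma(\tilde{\mX}), \beta, \rnk)}{\diff\nu(\rV \mid \mSigma(\mX), \beta, \rnk)} = \frac{\dim\beta}{2}\Tr[\rV^\top(\mSigma(\tilde{\mX}) - \mSigma(\mX))\rV] - \log\frac{Z(\mSigma(\tilde{\mX}))}{Z(\mSigma(\mX))}.
\end{align*}
The perturbation term is $\frac{\dim\beta}{2|\tilde{\mX}|}\|\rV^\top\vx\|^2$ up to lower-order corrections, so everything reduces to understanding the fluctuations of the quadratic form $\|\rV^\top \vx\|^2$ when $\rV$ is drawn from the Gibbs distribution, both under the null ($\rV \sim \nu(\cdot\mid\mSigma(\mX))$) and the alternative. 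This is where I would invoke the sampling algorithm of \Cref{thm:sampling} (and the CLT machinery behind it, via the second-order Poincaré inequality of \citet{chatterjee2009fluctuations} applied to the Gaussian-transform representation): it gives a total-variation-accurate Gaussian approximation to $\rV$, from which one extracts that $\|\rV^\top\vx\|^2$ is asymptotically Gaussian with a mean and variance expressible through the limiting spectral data $(\mu, \gamma_{1:\rnk+1}, \theta, \Delta)$. The two-case structure of $\sigma^2_\beta$ in \eqref{eq:priv-func} should emerge here: the threshold $\beta = -\Delta H'_\mu(\gamma_\rnk) + H_\mu(\gamma_\rnk)$ is presumably the point at which the variance contribution from the ``bulk'' directions (captured by $H'_\mu(\gamma_\rnk)$) stops dominating and the contribution tied to the top-$\rnk$/($\rnk{+}1$) spectral gap $\Delta$ takes over --- an additional phase transition in the privacy curve, analogous to (but distinct from) the utility phase transitions. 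The worst-case direction $\vx$ in $\calN(\mX)$ would be identified by maximizing the resulting asymptotic separation over $\|\vx\| \le \sqrt{\dim}$; I expect it to align with the leading eigendirections of $\mSigma(\mX)$, and the factor $1/\theta^2$ and $1/(2\Delta\theta^2)$ to come from the $|\mX| \asymp \theta\dim^{3/2}$ normalization squared.

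Given the asymptotic normality of the log-likelihood ratio and mutual contiguity, the conclusion follows from a standard Le Cam argument: (i) for the trade-off function, Le Cam's third lemma gives the joint limiting behavior of the LLR under both hypotheses, the Neyman--Pearson test is asymptotically a threshold test on a Gaussian, and its level-vs-power curve converges to $\tf{\gauss{0}{1}}{\gauss{\sigma_\beta}{1}}$; (ii) for Rényi divergence, contiguity plus uniform integrability of the relevant likelihood-ratio powers lets one pass to the limit in $\rdv{\alpha}{\cdot}{\cdot}$, and the Rényi divergence between two unit-variance Gaussians with mean gap $\sigma_\beta$ is $\frac{\alpha\sigma_\beta^2}{2}$, matching the claimed bound with equality (hence \emph{sharp}). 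The $\liminf/\limsup$ over $\calN(\mX)$ in \Cref{def:AGDP} is handled by showing the worst-case neighbor achieves the stated $\sigma_\beta$ and no neighbor does worse (better privacy), using that the map $\vx \mapsto$ (asymptotic separation) is continuous and its sup over the ball $\|\vx\|\le\sqrt\dim$ converges.

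\textbf{Main obstacle.} The hard part will be establishing the CLT for the quadratic form $\|\rV^\top\vx\|^2$ (equivalently, for $\Tr[\rV^\top M \rV]$ for rank-one $M$) under the Gibbs measure \eqref{eq:gibbs-def}, with enough uniformity in $\vx$ over the unit ball and in $\beta_\dim \to \beta$, \emph{and} with the right constants --- the paper itself flags that such CLT-type results "don't appear to be readily available." This requires carefully tracking how the Gibbs tilting (which concentrates $\rV$ near the top-$\rnk$ eigenspace once $\beta > H_\mu(\gamma_\rnk)$) reshapes the fluctuations relative to the Haar case, controlling the ratio of normalizing constants $Z(\mSigma(\tilde\mX))/Z(\mSigma(\mX))$ to the needed precision (here the refined asymptotics of \citet{guionnet2021asymptotics} on $\log Z$ should be essential), and verifying the second-order Poincaré inequality hypotheses for the Gaussian-coordinate representation of the sampler uniformly. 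A secondary obstacle is cleanly deriving the closed form \eqref{eq:priv-func} and its threshold, i.e., doing the variance bookkeeping so that the two regimes and the roles of $\Delta$, $H'_\mu(\gamma_\rnk)$, $\theta$ come out exactly as stated.
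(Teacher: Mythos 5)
Your overall architecture matches the paper's: neighboring datasets induce a rank-one perturbation of $\mSigma(\mX)$ whose effect on the log-likelihood ratio is $O(1)$, the fluctuations of the quadratic form $\Tr[\rV^\top \vx\vx^\top \rV]$ under the Gibbs measure are extracted via the sampling algorithm of \Cref{thm:sampling} together with the second-order Poincar\'e inequality, mutual contiguity plus Le Cam's third lemma gives the limiting trade-off function, and uniform integrability upgrades this to the R\'enyi statement. You also correctly flag the CLT for the quadratic form as the main technical burden, and that is indeed where the paper spends its effort (\Cref{prop:Lnull-dc}).

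There is, however, one concrete gap in your plan: your proposed treatment of the normalizing-constant ratio $\ln Z(\mSigma(\tilde{\mX}),\beta_\dim,\rnk) - \ln Z(\mSigma(\mX),\beta_\dim,\rnk)$. You say the refined asymptotics of \citet{guionnet2021asymptotics} "should be essential" here, but \Cref{fact:guionnet} only pins down $\ln Z$ to precision $o(\dim)$, so the difference of two such quantities is only controlled to $o(\dim)$ --- far from the $o(1)$ precision that Le Cam's first lemma would require to identify the deterministic shift in the LLR. No result of the required precision is available in the literature. The paper sidesteps this entirely via the contiguity criterion of \citet[Lemma 3.1]{mukherjee2013statistics} (\Cref{fact:Mukherjee}): one writes the LLR as $L_\dim(\rV) + c_\dim$ with $c_\dim$ an unknown deterministic constant absorbing both normalizing constants, verifies tightness of $L_\dim$ under \emph{both} measures and a Gaussian limit under the null, and the lemma then \emph{forces} $c_\dim \to -v/2$ for free. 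Without this device (or an equivalent one), your route stalls exactly at the point you identify as the "needed precision" issue. Two smaller remarks: the worst-case data point is not simply a leading eigendirection but the specific mixture $\vx_\star = \sqrt{\dim}(\sqrt{t_\star}\,\vu_\rnk + \sqrt{1-t_\star}\,\vu_{\rnk+1})$ straddling the spectral gap, and the two-case threshold in \eqref{eq:priv-func} arises because the unconstrained maximizer $t_\star$ of a concave quadratic exits $[0,1]$ precisely at $\beta = H_\mu(\gamma_\rnk) - \Delta H_\mu'(\gamma_\rnk)$ (\Cref{prop:var-lim-v2}), rather than from a bulk-versus-edge variance crossover as you conjecture.
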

Next, we highlight some important features of \Cref{thm:privacy}. 
\paragraph{A Privacy Plateau} Notice that \Cref{thm:privacy} applies only when $\beta > H_\mu(\gamma_\rnk)$. While it might be possible to extend our proof techniques to cover the $\beta < H_\mu(\gamma_\rnk)$ case, we restrict our focus to the $\beta > H_\mu(\gamma_\rnk)$ case because as discussed in \Cref{sec:utility-disc}, $\beta = H_\mu(\gamma_\rnk)$ is the minimum value of $\beta$ needed to ``capture'' all of the top $k$ target PCs. This means that the strongest privacy guarantee that can be achieved by the exponential mechanism while still capturing all $k$ target PCs is $\sigma_{\min}$-AGDP with:
\begin{align*}
    \sigma_{\min}^2 = -\tfrac{1}{2\theta^2}H'_{\mu}(\gamma_\rnk).
\end{align*}


\Cref{fig:privacy-parameter} plots the achieved AGDP parameter $\sigma_\beta$ as a function of $\beta$ for the 1000 Genomes dataset for $\rnk = 1, 2, 3$ (again the limiting Hilbert transform $H_\mu$ and its derivative are estimated via the empirical Hilbert transform \eqref{eq:empirical-hilb} and its derivative, respectively). A surprising feature of this plot is the presence of a \emph{``privacy plateau''} when $$\beta \in (H_\mu(\gamma_\rnk), -\Delta H'_{\mu}(\gamma_\rnk)+H_{\mu}(\gamma_{\rnk})].$$ In this regime, decreasing $\beta$ increases the noise in the privatized PCs, resulting in deteriorating utility, but does not lead to better asymptotic privacy guarantees. 
\begin{figure}
    \centering
    \includegraphics[width=\linewidth]{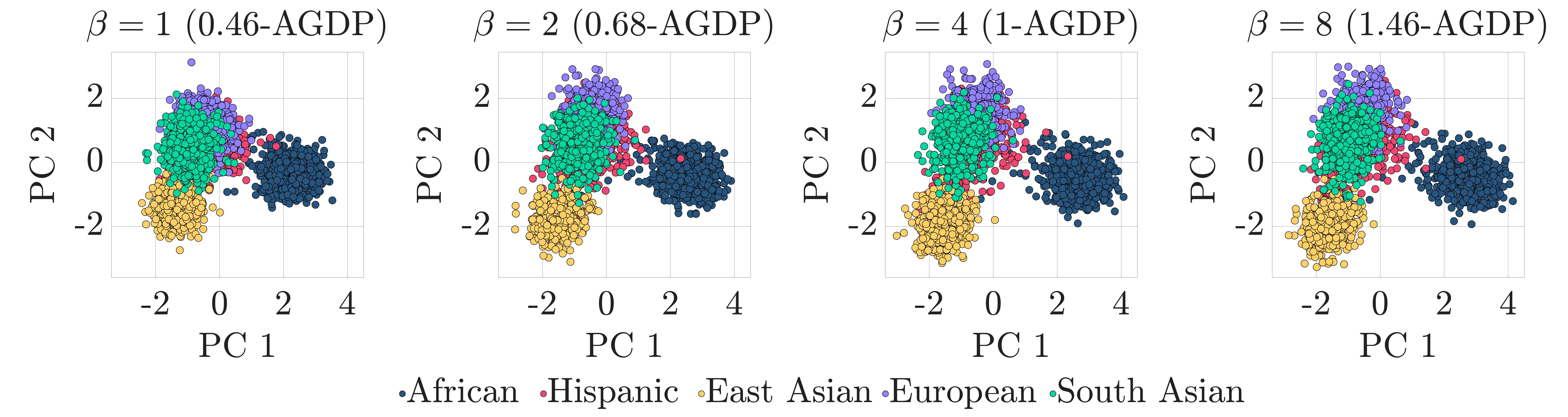}
    \caption{Projections of the 1000 Genomes dataset onto the first $\rnk=2$ privatized PCs for $\beta=1,$ $\beta=2,$ $\beta=4,$ and $\beta=8.$ The privatized PCs with these $\beta$ values satisfy reasonable $\sigma$-AGDP guarantees with $\sigma=0.46,$ $\sigma=0.68,$ $\sigma=1,$ and $\sigma=1.46,$ respectively.} \label{fig:privacy-projections}
\end{figure}
In \Cref{fig:privacy-projections} (three right panels), we also plot the projections of the 1000 Genomes dataset on the privatized PCs with $\beta=1,$ $\beta=2,$ $\beta=4,$ and $\beta=8.$ At these values of $\beta$, \Cref{thm:privacy} shows that the privatized PCs satisfy a reasonable $\sigma$-AGDP guarantee with $\sigma=0.46,$ $\sigma=0.68,$ $\sigma=1,$ and $\sigma=1.46,$ respectively, while still preserving the clustering structure in the dataset (compare with non-private PCs in \Cref{fig:proj}).

\begin{wrapfigure}{r}{0.40\linewidth}
  \centering
  \vspace{-5mm}
  \includegraphics[width=\linewidth]{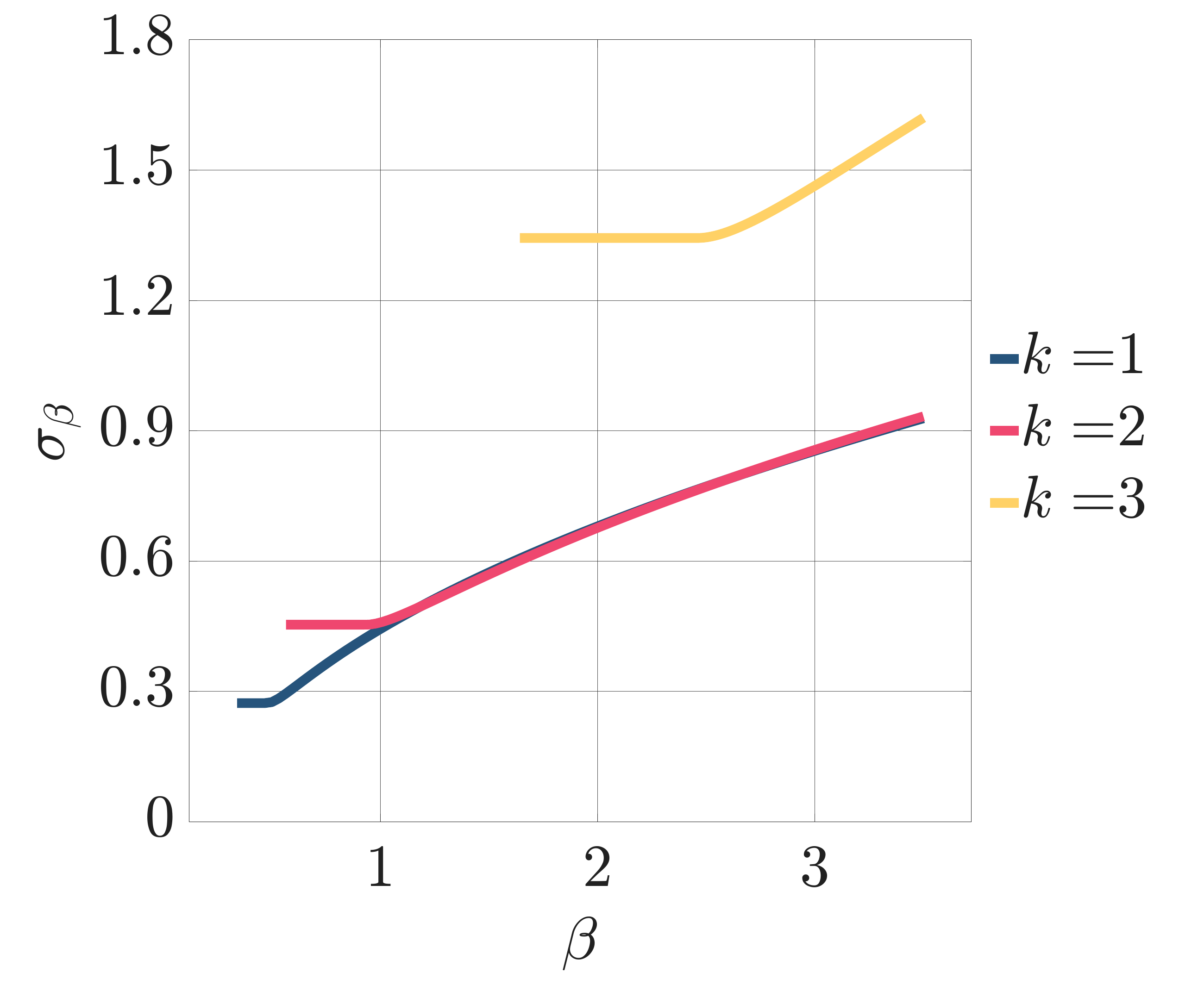}
  \caption{Noise parameter $\beta$ v.s. privacy parameter $\sigma_{\beta}$ for the 1000 Genomes dataset for $k = 1,2,3$.}
  \label{fig:privacy-parameter}
\end{wrapfigure}

\paragraph{Asymptotic Theory v.s. Finite-$\dim$ Privacy}  We also explore if the asymptotic predictions of \Cref{thm:privacy} provide good approximations to the finite-$\dim$ privacy guarantees of the exponential mechanism on the 1000 Genomes dataset. This requires us to estimate the trade-off function between the output distributions of the exponential mechanism on the 1000 Genomes dataset $\mX$ and its worst-case neighboring dataset:
\begin{align*}
    \inf_{\tilde{\mX} \in \calN(\mX)} \tf{\nu(\cdot \mid \mSigma(\mX), \beta,\rnk)}{\nu(\cdot \mid \mSigma(\tilde{\mX}), \beta,\rnk)}.
\end{align*}
The optimization over neighboring datasets makes it challenging to derive empirical estimates of the above quantity. However, the proof of \Cref{thm:privacy} (see \Cref{prop:var-lim-v2} in \Cref{sec:privacy-proof}) shows that if $\{(\lambda_i, \vu_i): i \in [\rnk]\}$ denote the top-$\rnk$ eigenpairs of $\mSigma(\mX)$, in the high-dimensional limit, the neighboring dataset  $\tilde{\mX}_\star \bydef \mX \cup \{\vx_{\star}\}$ with:
\begin{align*}
   \vx_\star  &\bydef  \sqrt{\dim} \left(  \sqrt{t_\star}  \vu_{\rnk} + \sqrt{1-t_\star}  \vu_{\rnk+1} \right),\\
   t_\star &\explain{def}{=} \min \left(\frac{{\beta-H_{\mSigma(\mX)}(\lambda_\rnk)}}{2(\beta-H_{\mSigma(\mX)}(\lambda_\rnk)) + (\lambda_\rnk - \lambda_{\rnk+1}) H'_{\mSigma(\mX)}(\lambda_\rnk)} , 1\right),
\end{align*}
is asymptotically the worst-case neighboring dataset. Hence, we expect that:
\begin{align} \label{eq:tf-opt-approx}
    \inf_{\tilde{\mX} \in \calN(\mX)} \tf{\nu(\cdot \mid \mSigma(\mX), \beta,\rnk)}{\nu(\cdot \mid \mSigma(\tilde{\mX}), \beta,\rnk)} & \approx \tf{\nu(\cdot \mid \mSigma(\mX), \beta,\rnk)}{\nu(\cdot \mid \mSigma(\tilde{\mX}_\star), \beta,\rnk)}.
\end{align}
The finite-$\dim$ trade-off function on the RHS can be easily estimated by simulating a large number of samples from $\nu(\cdot \mid \mSigma(\mX), \beta,\rnk)$ and $\nu(\cdot \mid \mSigma(\tilde{\mX}_\star), \beta,\rnk)$  (see \Cref{sec:figures-info}
for more details).  \Cref{fig:privacy-tf} compares this trade-off function  (estimated using 30,000 Monte-Carlo samples) on the 1000 Genomes dataset with the asymptotic predictions of \Cref{thm:privacy} and the non-asymptotic privacy bounds of \citet{chaudhuri2013near}. 

\begin{figure}
    \centering
    \includegraphics[width=0.9\linewidth]{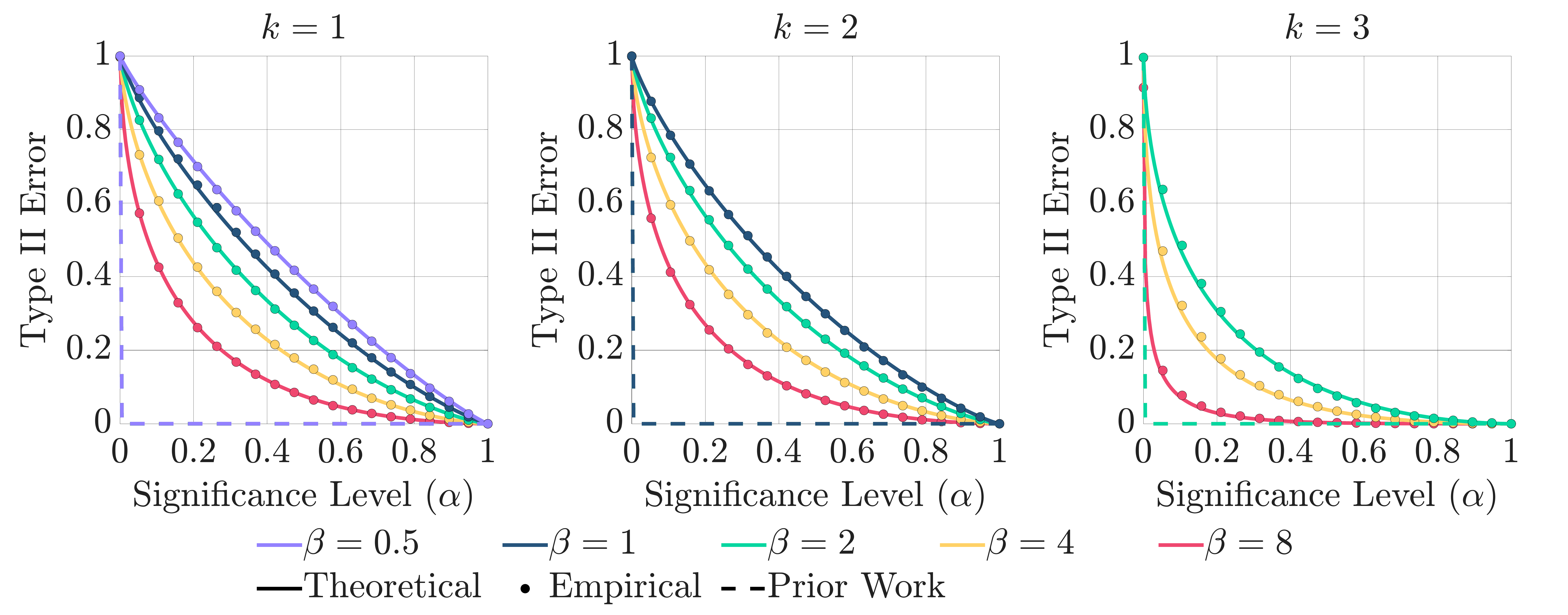}
    \caption{Trade-off functions for the exponential mechanism on the 1000 Genomes dataset for rank $\rnk \in \{1, 2, 3\}$ and $\beta\in\{0.5,1,2,4,8\}.$  Solid curves: theoretical predictions from \Cref{thm:privacy}. For $\rnk=2$ and $\rnk=3,$ \Cref{thm:privacy} applies only for $\beta\in\{1,2,4,8\}$ and $\beta\in\{2,4,8\},$ respectively, since other $\beta$ values are below the threshold value $H_\mu(\gamma_\rnk)$. Circular markers: empirically estimated trade-off functions (using 30,000 Monte Carlo samples). Dashed curves:  non-asymptotic privacy bounds from prior work \citep{chaudhuri2013near}, which are essentially vacuous for these values of $\beta$ and coincide with the axes. }\label{fig:privacy-tf}
\end{figure}
We find that the asymptotic predictions of \Cref{thm:privacy} closely track the empirical trade-off functions, whereas the non-asymptotic privacy bounds from prior work appear to be overly pessimistic. This suggests that the asymptotic predictions of \Cref{thm:privacy} can provide useful approximations to the finite-$\dim$ privacy guarantees of the exponential mechanism, especially in situations where non-asymptotic privacy bounds are too pessimistic.
\paragraph{An Important Caveat} It is important to note that releasing data-driven estimates of privacy loss (as depicted in \Cref{fig:privacy-tf}) or using them to calibrate the noise parameter $\beta$ can create privacy risks. These data-driven estimates are intended as confidential information available only to the data curator to help them understand the exact privacy cost of releasing the privatized PCs for a pre-chosen noise parameter $\beta$. From a mathematical standpoint, sharp utility and privacy characterizations (such as those in \Cref{thm:utility} and \Cref{thm:privacy}) enable a fine-grained comparison of different mechanisms. Such characterizations are also the first step towards understanding whether the privacy-utility trade-off of a mechanism is Pareto-optimal or whether another mechanism can improve both its utility and privacy loss on a given dataset. From a practical perspective, an interesting problem for future work is to develop differentially private estimates of the privacy loss that can be safely released. Prior works \citep{papernot2016semi,papernot2018scalable,redberg2021privately} on privatizing data-dependent privacy bounds suggest this could be possible and may provide good starting points.

\section{Related Work}

The problem of differentially private PCA has been studied by many different works, which we discuss below. We will use the notations $\tilde{O}(\cdot)$ and $\tilde{\Omega}(\cdot)$ to suppress logarithmic factors while discussing prior work. 

\paragraph{Model-Free Differentially Private PCA} Several works \citep{chaudhuri2013near, kapralov2013differentially, amin2019differentially, leake2021sampling, mangoubi2022re, dwork2014analyze, wei2016analysis, gonem2018smooth, hardt2013beyond, hardt2012beating, hardt2014noisy, tran2025spectral,d2025tight} have studied differentially private PCA in the model-free setting of this paper, obtaining non-asymptotic bounds on the privacy and utility of various mechanisms. Specifically, under (pure) $\epsilon$-DP \cite{dwork2006calibrating}, several mechanisms \citep{chaudhuri2013near,kapralov2013differentially,amin2019differentially,leake2021sampling,mangoubi2022re} give accurate approximations to the true PCs using $\ssize = \tilde{O}(\dim^2)$ samples, with a lower bound due to \citet{chaudhuri2013near} showing that this is optimal in the sense that any $\epsilon$-DP mechanism must use $\ssize = \tilde{\Omega}(\dim^2)$ samples to achieve low error (assuming a constant spectral gap). In contrast, under the more relaxed privacy notion of $(\epsilon,\delta)$-DP \citep{dwork2006our}, some works \citep{dwork2014analyze,gonem2018smooth,tran2025spectral} construct private approximations to PCs using $\ssize = \tilde{O}(\dim^{3/2})$ samples. This is the optimal sample complexity for $(\epsilon,\delta)$-DP mechanisms for sufficiently small $\delta$ \citep{dwork2014analyze}. Notably, our work also considers the $\ssize \asymp \dim^{3/2}$ regime, which is information-theoretically optimal for $(\epsilon,\delta)$-DP (up to logarithmic factors). However, some caution is needed since the privacy guarantees proved in our work correspond to asymptotic analogs of Gaussian \citep{dong2022gaussian} and Rényi Differential Privacy \citep{mironov2017renyi}, and it is not immediately clear whether existing lower bounds extend to these privacy notions.

\paragraph{Model-Based Differentially Private PCA}   A different line of work \citep{cai2024optimal,liu2022dp,singhal2021privately,dungler2025iterative} has studied the problem of privatizing PCs in a model-based setting. Here, one assumes that the given dataset $X$ consists of $\ssize$ samples drawn i.i.d. from some distribution with population covariance matrix $\Sigma_\star$. The state-of-the-art results \citep{cai2024optimal,liu2022dp,dungler2025iterative} in the model-based setting construct consistent and $(\epsilon,\delta$)-DP estimators for the population PCs (top $\rnk$ eigenvectors of $\mSigma_\star$) using $\ssize = \tilde{O}(\dim)$ samples. This is the optimal sample complexity for $(\epsilon, \delta)$-DP algorithm for $\delta \leq e^{-\Omega(\dim)}$  \citep{cai2024optimal,liu2022dp}. Notice that the optimal sample complexity in the model-based setting is smaller than in the model-free setting. This difference can, in part, be attributed to the additional assumptions made on the dataset in the model-based setting.

\paragraph{High-dimensional Asymptotics in Differential Privacy} More recently, \citet{dwork2024differentially} and \citet{bombari2025better} have studied the problem of differentially private regression in the high-dimensional asymptotic setting. In this problem, one observes feature vectors $\mX = \{x_1, \dotsc, x_\ssize\} \subset \R^\dim$ and responses $y = \{y_1, \dotsc, y_\ssize\} \subset \R$. The feature vectors are drawn independently from either a (possibly correlated) Gaussian distribution or a sub-Gaussian product distribution. The responses are generated via the linear model $y_i = \ip{x_i}{\theta_\star} + \epsilon_i$, where $\theta_\star$ is the unknown coefficient vector and $\epsilon_1,\dotsc, \epsilon_\ssize$ represent measurement noise.  Under these model assumptions, these works characterize the precise asymptotic estimation error for popular differentially private estimators of $\theta_\star$ in the proportional high-dimensional asymptotic setting $\dim \rightarrow \infty, \ssize/\dim \rightarrow \delta$ for some $\delta \in (0,\infty)$. The focus of these papers is primarily to obtain sharp characterizations of the \emph{estimation error}. The privacy guarantees in these works are worst-case non-asymptotic privacy \emph{bounds}. The focus of this paper is different and complementary. We were particularly interested in obtaining \emph{sharp privacy characterizations} that pin down the exact privacy loss in the high-dimensional limit $\dim \rightarrow \infty$ (in addition to sharp utility characterizations). Moreover, both our privacy and utility results are model-free and apply to deterministic datasets. 

\paragraph{Spherical Integrals and Spin Glasses} The Gibbs distribution in \eqref{eq:gibbs-intro} has also been studied in statistical physics and probability theory \cite{kosterlitz1976spherical,guionnet2005fourier,baik2016fluctuations, baik2017fluctuations, baik2021spherical,baik2018ferromagnetic,landon2020fluctuations,guionnet2021asymptotics,husson2025spherical}, where it is referred to as the spherical Sherrington–Kirkpatrick model \citep{kosterlitz1976spherical}, and its normalizing constant is called a spherical integral \citep{guionnet2005fourier}. In particular, our approach to analyzing this Gibbs distribution builds on the work of \citet{guionnet2005fourier}, and a subsequent refinement of their result by \citet{guionnet2021asymptotics} plays an important role in our proofs. A series of works \cite{hoff2009simulation,kume2006sampling,kapralov2013differentially,kent2018new,leake2021sampling,kent2004simulation} have also studied the problem of sampling from this Gibbs distribution. We have found that the Gibbs sampler developed by \citet{hoff2009simulation} works particularly well in practice, and we use this sampler for all our experimental results. Unfortunately, a mixing time bound for this sampler is not known. The problem of designing samplers for this Gibbs distribution with polynomial mixing time guarantees has also been investigated \citep{kapralov2013differentially,ge2021efficient,leake2021sampling}. Unfortunately, these results are not applicable in our setting, as they either focus on the $k=1$ case \citep{kapralov2013differentially,ge2021efficient} or study a complex analog of the Gibbs distribution \citep{leake2021sampling}, which is supported on $\mathbb{U}(\dim,\rnk)$, the set of complex $ p\times k$ column-orthogonal matrices. 
{\paragraph{Central Limit Theorems for Composition} Our work characterizes the privacy loss of a mechanism by analyzing the asymptotics of the trade-off function associated with it. Asymptotics of trade-off functions have also been studied to analyze the overall privacy of the composition of a sequence of $t$ differentially private mechanisms $\rM_1, \rM_2, \dotsc, \rM_t$, where each mechanism takes as input a dataset $\mX$ and a tuning parameter $\tau$. \citet{dong2022gaussian} show that if each mechanism  satisfies the differential privacy guarantee of the form:
\begin{align*}
    \tf{\rM_i(\mX, \tau)}{\rM_i(\tilde{\mX}, \tau)} \geq \tf{\mu_i}{\nu_i} \quad \forall \; i \; \in \; [t], 
\end{align*}
for any two neighboring datasets $\mX,\tilde{\mX}$ and any choice of the tuning parameter $\tau$, then the composition of these mechanisms $\rM_1 \circ \rM_2 \circ \dotsb \circ \rM_t$ obtained by using the output of the first $i-1$ mechanism to choose the tuning parameter of the $i$th mechanism satisfies the following overall privacy guarantee:
\begin{align*}
    \tf{\rM_1 \circ \dotsb \circ \rM_t (\mX)}{\rM_1 \circ \dotsb \circ \rM_t (\tilde{\mX})} \geq \tf{\mu_1 \otimes \mu_2 \otimes \dotsb \otimes \mu_t}{\nu_1 \otimes \nu_2 \otimes \dotsb \otimes \nu_t},
\end{align*}
for any two neighboring datasets $\mX,\tilde{\mX}$.  Moreover, \citet{dong2022gaussian} show that, under suitable assumptions, as $t \rightarrow \infty$, the trade-off function on the RHS converges to a limiting Gaussian trade-off function. This result holds because the optimal test to distinguish between the two product distributions $\mu_1 \otimes \dotsb \otimes \mu_t$ and $\nu_1 \otimes \dotsb \otimes \nu_t$ is based on the log-likelihood ratio, which is a sum of independent random variables under the null and the alternative, and hence satisfies a CLT leading to a limiting Gaussian trade-off function. Follow-up work by \citet{pandey2025infinitely} characterizes all possible limiting trade-off functions that can arise in this composition setting. Our work shows that limiting Gaussian trade-off functions can arise in high dimensions beyond the composition setting, even for one-shot mechanisms like the exponential mechanism whose output distribution is not a product distribution.}

\section{Proof Overview}
We now discuss a few key ideas involved in the proofs of our main results.

\subsection{Spherical Integrals}
Our starting point is a result of \citet{guionnet2005fourier} and a subsequent refinement of their result by \citet{guionnet2021asymptotics}, who study the asymptotics of $Z(\mSigma,\beta, \rnk)$, the normalizing constant of the Gibbs distribution $\nu(\cdot \mid \mSigma, \beta, \rnk)$ introduced in \Cref{def:Gibbs}:
\begin{align*}
Z(\mSigma,\beta,\rnk) \explain{def}{=}  \E_{\rV \sim \xi_{\dim,\rnk}} \left[ \exp\left( \frac{\dim\beta }{2}  \Tr[ \rV^\top \mSigma \rV]\right) \right].
\end{align*}
This is often called a spherical integral in this literature. As we will see shortly, the normalizing constant encodes many important properties of the Gibbs distribution. These works analyze the asymptotics of the log-normalizing constant under the following assumption on $\mSigma$ (which are compatible with the assumptions imposed on the data covariance  $\mSigma(\mX)$ matrix in  \Cref{assump:data}). \begin{assumption} \label{assump:mat} 
The sequence of symmetric matrices $\mSigma$ satisfies the following requirements:
\begin{enumerate}
    \item For some constant $\rnk \in \N$ (independent of $p$), the largest $\rnk+1$ eigenvalues of $\mSigma$ converge to finite limits $\gamma_{1:\rnk+1}$ as $\dim \rightarrow \infty$:
    \begin{align*}
        \lambda_i(\mSigma) & \rightarrow \gamma_{i} \in [0,\infty) \quad \forall \; i \; \in \; [\rnk+1].
    \end{align*}
    Moreover, the asymptotic spectral gap $\Delta \bydef \gamma_{\rnk} - \gamma_{\rnk+1}$ is strictly positive. 
     \item {As $\dim \rightarrow \infty$, $\mu_{\mSigma}$, the empirical distribution of the smallest $\dim - \rnk$ eigenvalues $\mSigma$ converges weakly to a limiting spectral measure $\mu$ with bounded support:
    \begin{align*}
        \mu_{\mSigma} & \bydef \frac{1}{\dim} \sum_{i=k+1}^{\dim} \delta_{\lambda_i(\mSigma)} \rightarrow \mu,
    \end{align*}
    where $\lambda_{1:\dim}(\mSigma)$ denote the eigenvalues of $\mSigma$.}
\end{enumerate}
\end{assumption}
The following is a restatement of their results\footnote{In \citet[Proposition 2.1]{guionnet2021asymptotics}, $\beta_{\dim}$ does not change with $\dim.$ However, the result still holds as the dependence on $\dim$ can be absorbed into the matrix $\mA.$ Specifically,  $\exp(\dim\beta_{\dim}/2\Tr[\rU^\top\mA\rU])=\exp(\beta \dim/2\Tr[\rU^\top\beta_{\dim}/\beta\mA\rU]),$ and the sequence of matrices $\beta_{\dim}/\beta\mA$ satisfies the same set of assumptions (cf. \Cref{assump:mat}) as $\mSigma$ since $\|\beta_{\dim}/\beta\mA-\mA\|=o(1)$ (see \Cref{lem:misc_conv}).}
. 
\begin{fact}[{\citet[Proposition 2.1]{guionnet2021asymptotics}}]\label{fact:guionnet} Consider any fixed rank $\rnk\in\N$ (independent of $\dim$), a sequence of matrices $\mSigma \in \R^{\dim \times \dim}$ satisfying \Cref{assump:mat}, and a sequence of noise parameters $\beta_p \rightarrow \beta \in [0,\infty).$ Then, 
\begin{align}\label{eq:guionnet}
    \lim_{\dim \rightarrow \infty} \frac{\ln  Z(\mSigma, \beta_\dim, \rnk)}{\dim} & =  \frac{1}{2}\sum_{i=1}^\rnk f_\mu(\gamma_i),
\end{align}
where $f_\mu:(\gamma_{\rnk+1},\infty)\rightarrow\R$ is defined as:
\begin{align*}
    f_\mu(\gamma)=\begin{cases}\beta\gamma-\ln \beta-\int_{\R} \ln(\gamma-\lambda)\diff\mu(\lambda)-1&\text{if }H_\mu(\gamma)\leq\beta\\
    \beta H_\mu^{-1}(\beta)-\ln \beta - \int_{\R}\ln (H_\mu^{-1}(\beta)-\lambda)\diff\mu(\lambda)-1&\text{if }H_\mu(\gamma)>\beta.
    \end{cases}
\end{align*}
In the above display $H_\mu^{-1}$ denotes the inverse function of $H_\mu$.
\end{fact}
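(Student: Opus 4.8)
The plan is to reduce \Cref{fact:guionnet} to the case of a constant (non-$\dim$-dependent) noise level --- which is exactly \citet[Proposition 1]{guionnet2021asymptotics} (refining \citet{guionnet2005fourier}) --- and then to pass this conclusion across the vanishing perturbation induced by letting $\beta_\dim\to\beta$. First I would treat $\beta>0$. Since
\[
  \frac{\dim\beta_\dim}{2}\,\Tr[\mV^\top\mSigma\mV]
  \;=\;
  \frac{\dim\beta}{2}\,\Tr\!\left[\mV^\top\!\left(\tfrac{\beta_\dim}{\beta}\mSigma\right)\!\mV\right]
  \qquad\forall\,\mV\in\O(\dim,\rnk),
\]
one has the exact identity $Z(\mSigma,\beta_\dim,\rnk)=Z(\widetilde{\mSigma}_\dim,\beta,\rnk)$ with $\widetilde{\mSigma}_\dim\bydef(\beta_\dim/\beta)\,\mSigma$. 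It then suffices to check that the sequence $\{\widetilde{\mSigma}_\dim\}$ satisfies \Cref{assump:mat} with the \emph{same} limiting data $(\gamma_{1:\rnk+1},\mu)$, and to apply the constant-$\beta$ result to this sequence.

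The stability of \Cref{assump:mat} under the rescaling $\mSigma\mapsto(\beta_\dim/\beta)\mSigma$ is routine. Since $\lambda_1(\mSigma)\to\gamma_1\in(0,\infty)$ (and $\mSigma\succeq 0$ in our application), the operator norms $\|\mSigma\|$ are uniformly bounded, so $\|\widetilde{\mSigma}_\dim-\mSigma\|=|\beta_\dim/\beta-1|\cdot\|\mSigma\|=o(1)$. Weyl's inequality then gives $\lambda_i(\widetilde{\mSigma}_\dim)\to\gamma_i$ for $i\in[\rnk+1]$, so the leading eigenvalues and the spectral gap $\Delta=\gamma_\rnk-\gamma_{\rnk+1}>0$ are unchanged; and as $\mu_{\widetilde{\mSigma}_\dim}$ is the pushforward of $\mu_{\mSigma}$ under $t\mapsto(\beta_\dim/\beta)t$ with $\beta_\dim/\beta\to1$, the bounded-Lipschitz distance between $\mu_{\widetilde{\mSigma}_\dim}$ and $\mu_{\mSigma}$ vanishes (cf.\ \Cref{lem:misc_conv}), so $\mu_{\widetilde{\mSigma}_\dim}$ still converges weakly to $\mu$. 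Feeding $\{\widetilde{\mSigma}_\dim\}$ into \citet[Proposition 1]{guionnet2021asymptotics} at the constant level $\beta$ yields $\ln Z(\widetilde{\mSigma}_\dim,\beta,\rnk)/\dim\to\tfrac12\sum_{i=1}^{\rnk}f_\mu(\gamma_i)$, which is \eqref{eq:guionnet}. The boundary case $\beta=0$ falls outside the rescaling but is immediate: $0\le\Tr[\mV^\top\mSigma\mV]\le\rnk\,\lambda_1(\mSigma)=O(1)$ uniformly in $\mV$, whence $0\le\ln Z(\mSigma,\beta_\dim,\rnk)\le\tfrac{\dim\beta_\dim}{2}\,\rnk\,\lambda_1(\mSigma)$ and thus $\ln Z(\mSigma,\beta_\dim,\rnk)/\dim=O(\beta_\dim)\to0$; a short computation (using $sH_\mu(s)\to1$ and $\ln H_\mu(s)=-\ln s+o(1)$ as $s\to\infty$) shows $\lim_{\beta\downarrow0}\tfrac12\sum_i f_\mu(\gamma_i)=0$, so the formula stays consistent at $\beta=0$ by continuity.

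The substantial content of \Cref{fact:guionnet} lies entirely in the imported result and I would not reprove it: the Laplace/large-deviation analysis of the rank-$\rnk$ spherical integral $Z(\cdot,\beta,\rnk)$ at a fixed temperature, the identification of its exponential growth rate with the variational quantity $\tfrac12\sum_i f_\mu(\gamma_i)$, and --- crucially --- the phase transition at $H_\mu(\gamma_i)=\beta$, past which a direction drawn from the Gibbs measure $\nu(\cdot\mid\mSigma,\beta,\rnk)$ acquires nontrivial alignment with the eigenvector $\vu_i(\mSigma)$ rather than spreading over the bulk. The only obstacle within our own scope is the stability check above --- that a multiplicative $o(1)$ perturbation of $\mSigma$ preserves \Cref{assump:mat} with unchanged limiting data --- which is elementary via Weyl's inequality and a bounded-Lipschitz estimate.
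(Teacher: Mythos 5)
Your proposal is correct and follows essentially the same route as the paper, which also absorbs $\beta_\dim/\beta$ into the matrix via the identity $Z(\mSigma,\beta_\dim,\rnk)=Z((\beta_\dim/\beta)\mSigma,\beta,\rnk)$ and then verifies that the rescaled sequence still satisfies \Cref{assump:mat} (this is exactly \Cref{lem:misc_conv}, proved via Weyl's inequality and a CDF argument for the bulk spectrum). Your separate treatment of the boundary case $\beta=0$, where the rescaling is unavailable, is a small additional care point that the paper's footnote glosses over, and your continuity computation there is correct.
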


\subsection{Utility Analysis}
Next, we show how our utility result follows from \Cref{fact:guionnet}.  Specifically, we prove the following result about the asymptotics of the overlap matrix $\mU_\star^\top \rV \rV^\top \mU_\star$ between the top $\rnk$ eigenvectors of $\mSigma$ and a sample $\rV$ drawn from the Gibbs distribution $\nu(\cdot \mid \mSigma, \beta, \rnk)$.
\begin{theorem}\label{thm:overlap} 

Consider a sequence of matrices $\mSigma\in\R^{\dim \times \dim}$ which satisfies \Cref{assump:mat}, a sequence of noise parameters $\beta_{\dim} \rightarrow \beta \in [0,\infty)$ as $\dim \rightarrow \infty$, and a fixed rank $\rnk \in \N$ (independent of $\dim$). Let $\mU_{\star} \in \R^{\dim \times \rnk}$ denote the matrix of first $\rnk$ eigenvectors of $\mSigma$ and $\rV$ denote a sample from the Gibbs distribution $\nu( \cdot \mid \mSigma, \beta_\dim, \rnk)$. Then, as $\dim \rightarrow \infty,$
\begin{align*}
    \mU_{\star}^\top \rV \rV^\top \mU_{\star}  \pc \diag\left(1 - \frac{H_\mu(\gamma_1)}{\beta}, 1 - \frac{H_\mu(\gamma_2)}{\beta}, \dotsc, 1-\frac{H_\mu(\gamma_\rnk)}{\beta} \right)_+.
\end{align*}
\end{theorem}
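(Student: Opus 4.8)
The plan is to extract information about the overlap matrix from \Cref{fact:guionnet} by the classical tilting / differentiation trick: perturb the matrix $\mSigma$ and see how the log-normalizing constant responds. Concretely, fix the top eigenvectors $\vu_1, \dotsc, \vu_\rnk$ of $\mSigma$ and for a vector $\vec{s} = (s_1, \dotsc, s_\rnk) \in \R^\rnk$ with small entries, consider the perturbed matrix $\mSigma_{\vec{s}} \bydef \mSigma + \sum_{i=1}^\rnk s_i \vu_i \vu_i^\top$. The key observation is that $\mSigma_{\vec{s}}$ still satisfies \Cref{assump:mat} with the same limiting spectral measure $\mu$ (only $\rnk$ eigenvalues are shifted by $O(1)$ amounts, so the bulk is unchanged) and with shifted top eigenvalues $\gamma_i + s_i$, provided the $s_i$ are small enough that the ordering of the top $\rnk+1$ eigenvalues is preserved and the gap stays positive. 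Moreover, for each fixed $\vec s$,
\begin{align*}
\frac{\partial}{\partial s_i} \ln Z(\mSigma_{\vec s}, \beta_\dim, \rnk) = \frac{\dim \beta_\dim}{2}\, \E_{\rV \sim \nu(\cdot \mid \mSigma_{\vec s}, \beta_\dim, \rnk)}\!\left[ \vu_i^\top \rV \rV^\top \vu_i \right],
\end{align*}
since differentiating under the expectation in the definition of $Z$ brings down a factor $\tfrac{\dim \beta_\dim}{2}\Tr[\rV^\top \vu_i \vu_i^\top \rV] = \tfrac{\dim\beta_\dim}{2}\|\rV^\top \vu_i\|^2$, and reweighting by $e^{\frac{\dim\beta_\dim}{2}\Tr[\rV^\top\mSigma_{\vec s}\rV]}/Z$ is exactly sampling from the tilted Gibbs measure. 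Equivalently, $\frac{1}{\dim}\nabla_{\vec s} \ln Z(\mSigma_{\vec s},\beta_\dim,\rnk)$ is (up to the factor $\beta_\dim/2$) the vector of expected squared projections $\E[\|\rV^\top\vu_i\|^2]$ under the Gibbs law for $\mSigma_{\vec s}$.

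The next step is to pass to the limit. By \Cref{fact:guionnet} applied to $\mSigma_{\vec s}$, for each fixed $\vec s$ in a small neighborhood of the origin,
\begin{align*}
\lim_{\dim\to\infty} \frac{\ln Z(\mSigma_{\vec s}, \beta_\dim, \rnk)}{\dim} = \frac12 \sum_{i=1}^\rnk f_\mu(\gamma_i + s_i) \eqqcolon \frac12 \Phi(\vec s),
\end{align*}
where $\Phi$ is a separable, smooth, concave function of $\vec s$ (concavity of $f_\mu$ and smoothness away from the phase-transition points follow from the explicit formula and standard properties of the Hilbert transform; I would verify $f_\mu' (\gamma) = \beta - H_\mu(\gamma)$ when $H_\mu(\gamma) \le \beta$ and $f_\mu'(\gamma) = 0$ when $H_\mu(\gamma) > \beta$ by direct differentiation, noting that $\tfrac{d}{d\gamma}H_\mu^{-1}(\beta) = 0$). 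Since $\tfrac1\dim \ln Z(\mSigma_{\vec s},\beta_\dim,\rnk)$ is a sequence of convex (in $\vec s$) functions converging pointwise to the differentiable convex limit $\tfrac12\Phi(\vec s)$, a standard convexity lemma (convergence of convex functions implies convergence of gradients at points of differentiability of the limit — e.g. Rockafellar) gives $\frac1\dim \nabla_{\vec s}\ln Z(\mSigma_{\vec s},\beta_\dim,\rnk) \to \tfrac12 \nabla\Phi(\vec s)$. Evaluating at $\vec s = 0$ and combining with the identity from the previous paragraph yields, for each $i\in[\rnk]$,
\begin{align*}
\lim_{\dim\to\infty} \beta_\dim\, \E_{\rV\sim\nu(\cdot\mid\mSigma,\beta_\dim,\rnk)}\!\left[\|\rV^\top\vu_i\|^2\right] = f_\mu'(\gamma_i) = \left(\beta - H_\mu(\gamma_i)\right)\mathbbm{1}\{H_\mu(\gamma_i)\le\beta\},
\end{align*}
so $\E[\|\rV^\top\vu_i\|^2] \to \big(1 - H_\mu(\gamma_i)/\beta\big)_+$, which is the $i$th diagonal entry of the claimed limit.

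It remains to upgrade these first-moment statements to the full statement that the \emph{matrix} $\mU_\star^\top\rV\rV^\top\mU_\star$ converges in probability to a diagonal matrix. For the diagonal entries, convergence in probability follows from convergence of the mean once we control the variance; since $\|\rV^\top\vu_i\|^2 \in [0,1]$ and $\mU_\star^\top\rV\rV^\top\mU_\star \preceq \mI_\rnk$, one route is a second differentiation: $\frac{2}{\dim\beta_\dim}\partial_{s_i}^2 \ln Z = \tfrac{\dim\beta_\dim}{2}\Var_{\nu_{\vec s}}(\|\rV^\top\vu_i\|^2) \ge 0$, and since $\tfrac1\dim\partial_{s_i}^2\ln Z$ stays bounded (the limit $\tfrac12 f_\mu''(\gamma_i)$ is finite away from the transition point, and the convergence of second derivatives again follows from convexity), we get $\Var(\|\rV^\top\vu_i\|^2) = O(1/\dim) \to 0$. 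For the off-diagonal entries $\vu_i^\top\rV\rV^\top\vu_j$ with $i\ne j$, I would introduce an off-diagonal perturbation $\mSigma + s(\vu_i\vu_j^\top + \vu_j\vu_i^\top)$ (or exploit the rotational structure of the Gibbs measure within the top eigenspace) and run the same differentiation argument; by symmetry the relevant limiting derivative vanishes, giving $\E[\vu_i^\top\rV\rV^\top\vu_j]\to 0$ and $O(1/\dim)$ variance, hence convergence in probability to $0$. Assembling the $\rnk^2$ entrywise limits gives convergence in probability of the full overlap matrix (entrywise convergence in probability of a fixed-size matrix is convergence in probability in any norm). \textbf{The main obstacle} I anticipate is the rigorous justification of differentiating through the limit: \Cref{fact:guionnet} is only a pointwise-in-$\vec s$ statement, so interchanging $\lim_\dim$ with $\partial_{\vec s}$ must go through the convexity argument above, which in turn requires knowing that $\vec s \mapsto \tfrac1\dim\ln Z(\mSigma_{\vec s},\beta_\dim,\rnk)$ is genuinely convex (true, since it is a log-moment-generating function in the parameters $s_i$) and that the limit $\tfrac12\Phi$ is differentiable at $\vec s = 0$, which forces the side condition $\beta \ne H_\mu(\gamma_i)$ — exactly the non-degeneracy that the theorem's formula already encodes via the $(\cdot)_+$, but which at the phase-transition points $\beta = H_\mu(\gamma_i)$ would require a separate squeeze argument (sandwiching $\|\rV^\top\vu_i\|^2$ between the overlaps for slightly larger and slightly smaller $\beta$, using monotonicity of the Gibbs family in $\beta$).
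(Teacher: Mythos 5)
Your proposal is built on the same mechanism as the paper's proof: tilt $\mSigma$ in directions supported on the top eigenvectors and read the overlap off the response of the log-normalizing constant, with \Cref{fact:guionnet} supplying the limiting free energy. Your first-moment computation is in substance the same as the paper's \Cref{claim:log-MGF} and \Cref{claim:log-MGF-at0}: the paper packages it as the value and derivative at $t=0$ of the limiting log-MGF $M(t)$, while you recover the derivative via the Rockafellar convex-gradient theorem. Where the two routes diverge is in how they promote the first-moment limit to convergence in probability, and it is precisely there that your argument has a genuine gap.

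You claim that ``the convergence of second derivatives again follows from convexity,'' which is not true. Pointwise convergence of smooth convex functions to a smooth convex limit does yield convergence of \emph{gradients} wherever the limit is differentiable, but it imposes no control on second derivatives. Concretely, the convex functions $g_\dim(s) = \tfrac12 s^2 + \dim^{-1/2}\sqrt{s^2 + \dim^{-2}}$ converge to $\tfrac12 s^2$ together with their gradients (uniformly on compacts), yet $g_\dim''(0) = 1 + \sqrt{\dim} \rightarrow \infty$. Consequently, the bound you need, namely that $\tfrac{1}{\dim}\partial_{s_i}^2 \ln Z$ stays bounded so that $\Var(\|\rV^\top \vu_i\|^2) = O(1/\dim)$, does not follow from what you have established, and the passage from first moments to concentration is not closed.

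The fix is exactly the Chernoff route the paper follows. The quantity $\tfrac{1}{\dim}\ln Z(\mSigma + t\,\mU_\star \mB \mU_\star^\top, \beta_\dim, \rnk) - \tfrac{1}{\dim}\ln Z(\mSigma, \beta_\dim, \rnk)$ is the normalized log-MGF $M_\dim(t)$ of $\ip{\mU_\star^\top \rV \rV^\top \mU_\star}{\mB}$, and pointwise convergence $M_\dim \to M$ with $M(0) = 0$, $M'(0) = \tfrac{\beta}{2}\ip{\mD}{\mB}$ is already enough for a one-sided Markov bound: for any $\epsilon > 0$ one can find $t_\star > 0$ small with $t_\star \tfrac{\beta}{2}(\ip{\mD}{\mB}+\epsilon) - M(t_\star) > 0$, which gives an exponentially small upper-tail probability, and $\mB \mapsto -\mB$ handles the lower tail. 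This also disposes of the ``separate squeeze argument'' you flag for the phase-transition points $\beta = H_\mu(\gamma_i)$: one checks from the explicit formula that $f_\mu'(\gamma) = (\beta - H_\mu(\gamma))_+$, which is continuous across the transition (both branches give $0$ at $H_\mu(\gamma) = \beta$), so $f_\mu$ and hence $M$ are $C^1$ throughout and the Chernoff step needs no special case. Finally, taking the tilt in a general symmetric $\mB \in \R^{\rnk \times \rnk}$, as the paper does, handles diagonal and off-diagonal overlap entries in one stroke; your sketch for off-diagonals (``by symmetry the limiting derivative vanishes'' for the perturbation $\mSigma + s(\vu_i \vu_j^\top + \vu_j \vu_i^\top)$) is delicate if some $\gamma_i$ coincide, since the eigenvalues $\lambda_\ell(\mGamma + s\mB)$ need not be differentiable at $s = 0$ in that case.
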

Notice that our utility result (\Cref{thm:utility}) immediately follows by applying the above theorem to the covariance matrix $\mSigma(\mX)$. 
\begin{proof}[Proof sketch.] We sketch the main ideas here and provide a complete proof in 
\Cref{app:overlap}.
Notice that it is sufficient to show that for any $\epsilon >0$ and any symmetric matrix $B \in \R^{\rnk \times \rnk}$,
\begin{align} \label{eq:target-concentration}
    \lim_{\dim \rightarrow \infty} \P \left( \left| \ip{\mU_{\star}^\top \rV \rV^\top \mU_{\star}}{\mB} - \ip{\mD}{\mB} \right| > \epsilon \right) & = 0 \quad \text{where}\quad\mD \bydef \diag\left(1 - \frac{H_\mu(\gamma_{1:\rnk})}{\beta}\right)_+.
\end{align}
Indeed, taking $B = (e_i e_j^\top + e_j e_i^\top)/2$ for the standard basis vectors $e_{1:\rnk} \in \R^\rnk$ shows the entry-wise convergence of the overlap matrix $\mU_{\star}^\top \rV \rV^\top \mU_{\star}$ to the limit $D$.  To show \eqref{eq:target-concentration}, the main idea will be to study $M_\dim(t)$, the log-MGF (moment generating function) of the random variable $\ip{\mU_{\star}^\top \rV \rV^\top \mU_{\star}}{\mB}$: 
\begin{align*}
    M_\dim(t) \bydef \frac{1}{p} \ln \E_{\rV \sim \nu(\cdot \mid \mSigma, \beta_\dim, \rnk)} \left[ e^{\frac{t\dim\beta_{\dim}}{2}\langle\mU_{\star}^\top \rV \rV^\top \mU_{\star},\mB\rangle}  \right] \quad \forall \; t \; \in \; \R.
\end{align*}
This is because many useful properties of a random variable can be read off from its log-MGF:
\begin{itemize}
    \item \textit{Expectation.} Intuitively, we expect that $\ip{\mU_{\star}^\top \rV \rV^\top \mU_{\star}}{\mB}$, should concentrate to its expectation, which can be obtained from the derivative of the log-MGF at $0$:
    \begin{align*}
       M_\dim^\prime(0) & = \frac{\beta_\dim \cdot \E[\ip{\mU_{\star}^\top \rV \rV^\top \mU_{\star}}{\mB}]}{2} . 
    \end{align*}
   Indeed, we arrived at the claimed formula for the limit of $\ip{\mU_{\star}^\top \rV \rV^\top \mU_{\star}}{\mB}$ (that is, $\ip{\mD}{\mB}$) by inspecting the derivative of the log-MGF. 
    \item \textit{Concentration Estimates.} The assertion \eqref{eq:target-concentration} is a concentration estimate, which can be readily derived from the log-MGF through the  the standard Chernoff argument. 
\end{itemize}
Consequently, the proof boils down to analyzing the asymptotics of the log-MGF. The key step in doing so is to notice that the log-MGF is related to the normalizing constant $Z(\cdot, \beta_\dim,\rnk)$ of the Gibbs distribution. Indeed, recalling the density of the Gibbs measure from \Cref{def:Gibbs}, we find that:
\begin{align*}
    M_\dim(t) 
& \explain{Def. \ref{def:Gibbs}}{=} \frac{1}{\dim}\ln\E_{\rU \sim \xi_{\dim,\rnk}}\left[e^{\frac{\dim\beta_{\dim}}{2}\Tr[\rU^\top \mSigma\rU]} \cdot e^{\frac{t \dim\beta_{\dim}}{2}\langle\mU_{\star}^\top \rU \rU^\top \mU_{\star},\mB\rangle}\right] - \frac{1}{\dim}\ln\E_{\rU \sim \xi_{\dim,\rnk}}\left[e^{\frac{\dim\beta_{\dim}}{2}\Tr[\rU^\top \mSigma\rU]}\right]\\
    &\explain{}{=}{\frac{1}{\dim}\ln\E_{\rU \sim \xi_{\dim,\rnk}}\left[e^{\frac{\dim\beta_{\dim}}{2}\Tr[\rU^\top (\mSigma+t\mU_\star\mB\mU_\star^\top)\rU]}\right]} - {\frac{1}{\dim}\ln\E_{\rU \sim \xi_{\dim,\rnk}}\left[e^{\frac{\dim\beta_{\dim}}{2}\Tr[\rU^\top \mSigma\rU]}\right]} \notag \\
    & \explain{Def. \ref{def:Gibbs}}{=} \frac{\ln Z(\mSigma +t\mU_\star\mB\mU_\star^\top, \beta_\dim, \rnk) }{\dim} - \frac{\ln Z(\mSigma, \beta_\dim, \rnk) }{\dim}.
\end{align*}
Hence, the asymptotics of the log-MGF $M_\dim(t)$ can be readily obtained using the asymptotic formula for the log-normalizing constant $Z(\cdot , \beta_\dim, \rnk)$ from \Cref{fact:guionnet}.
\end{proof}

\subsection{Sampling Algorithm}
\begin{algorithm}
\caption{\textsc{Sampler}($\mSigma,\beta,\rnk$)}
\label{alg:sampler}
\begin{algorithmic}
\STATE {\textit{Input:}} Matrix $\mSigma \in \R^{\dim \times \dim}$ with eigendecomposition $\mSigma = \mU \diag(\lambda_{1:\dim}) \mU^\top$, noise parameter $\beta\geq0$, rank $\rnk\in\N.$
\STATE {\textit{Output:}} An approximate sample $\rV$ from $\nu(\cdot \mid \mSigma, \beta, \rnk)$.
\begin{itemize}
\item Sample a uniformly random orthogonal matrix $\rQ \sim \xi_{\rnk,\rnk} \bydef \unif{\O(\rnk)}$. 
\item Generate a random matrix $\rZ \in \R^{(\dim-\rnk) \times \rnk}$ with entries: $$\rZ_{ij} \explain{}{\sim} \gauss{0}{\frac{1}{\beta \dim( \lambda_j - \lambda_{\rnk+i})}} \quad i \in [\dim-\rnk], \; j \; \in \; [\rnk],$$
sampled independently of each other and $\rQ$.
\item Set:
\begin{align*}
    \rV & \bydef \mU \begin{bmatrix} (I_k - \rZ^\top \rZ)_+^{1/2} \\ \rZ \end{bmatrix}  \rQ.
\end{align*}
\end{itemize}
\STATE \textit{Return:} Sample $\rV$.
\end{algorithmic}
\end{algorithm}
To prove our privacy result, we will also need to analyze fluctuations of certain statistics of the form $f(\rV \rV^\top)$ where $\rV$ is a random sample from the Gibbs measure $\nu(\cdot \mid \mSigma, \beta, \rnk)$. Unfortunately, such results do not follow readily from \Cref{fact:guionnet}. To obtain such results, we will rely on the sampling scheme given in \Cref{alg:sampler}, which constructs an approximate sample from the Gibbs distribution using independent Gaussian random variables and a uniformly random orthogonal matrix. The following result (proved in 
\Cref{app:sampling})
shows that the output distribution of the sampling algorithm approximates the Gibbs distribution in total variation distance.
\begin{theorem}\label{thm:sampling} Consider a sequence of matrices $\mSigma \in \R^{\dim \times \dim}$ which satisfies \Cref{assump:mat}, a sequence of noise parameters $\beta_\dim \rightarrow \beta \in (H_\mu(\gamma_k),\infty)$ as $\dim \rightarrow \infty$, and a fixed rank $\rnk \in \N$ (independent of $\dim$). Let $\hat{\nu}(\cdot \mid \mSigma, \beta_\dim, \rnk)$ denote the output distribution of the sampler (\Cref{alg:sampler}). Then,
\begin{align*}
    \lim_{\dim \rightarrow \infty} \tv[{\nu}(\cdot \mid \mSigma, \beta_\dim, \rnk), \; \hat{\nu}(\cdot \mid \mSigma, \beta_\dim, \rnk)] & = 0. 
\end{align*}
\end{theorem}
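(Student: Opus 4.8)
The plan is to quotient out the right $\O(\rnk)$-symmetry common to both laws, pass to the Grassmannian $\mathrm{Gr}(\rnk,\dim)$, and there compare explicit Lebesgue densities in the ``matrix-ball'' chart. Write $\mSigma=\mU_\Sigma\Lambda\mU_\Sigma^\top$ with $\Lambda=\diag(\lambda_{1:\dim})$, $\Lambda_{\mathrm{top}}=\diag(\lambda_{1:\rnk})$. Since $\Tr[(\mV\mO)^\top\mSigma(\mV\mO)]=\Tr[\mV^\top\mSigma\mV]$ for $\mO\in\O(\rnk)$ and $\xi_{\dim,\rnk}$ is right $\O(\rnk)$-invariant, $\nu(\cdot\mid\mSigma,\beta_\dim,\rnk)$ is right $\O(\rnk)$-invariant; the sampler output $\hat\nu$ is too (\Cref{alg:sampler} ends by right-multiplying with a uniform $\O(\rnk)$ matrix). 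Both therefore disintegrate over $\mathrm{Gr}(\rnk,\dim)$ with the \emph{same} (uniform) conditional law on each $\O(\rnk)$-fibre, so $\tv[\nu,\hat\nu]=\tv[\,\bar\nu,\bar{\hat\nu}\,]$, where $\bar\nu,\bar{\hat\nu}$ are the pushforwards to $\mathrm{Gr}(\rnk,\dim)$. Parametrize a $\rnk$-plane by the bottom block $\rM\in\mathcal{B}_\rnk\bydef\{M\in\R^{(\dim-\rnk)\times\rnk}:M^\top M\prec\mI_\rnk\}$ of the polar-gauge basis $\left(\begin{smallmatrix}(\mI_\rnk-M^\top M)^{1/2}\\ M\end{smallmatrix}\right)$ written in the eigenbasis $\mU_\Sigma$. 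A classical matrix-variate polar-coordinate computation shows the uniform measure on $\mathrm{Gr}(\rnk,\dim)$ has density $\propto\det(\mI_\rnk-M^\top M)^{-1/2}$ on $\mathcal{B}_\rnk$ (remarkably, all $\dim$-dependent exponents cancel), and a direct expansion gives $\Tr[\mV^\top\mSigma\mV]=\Tr\Lambda_{\mathrm{top}}-\sum_{j=\rnk+1}^{\dim}m_j^\top(\Lambda_{\mathrm{top}}-\lambda_j\mI_\rnk)m_j$ with $m_j$ the $j$-th row of $M$. Hence
\begin{align*}
\bar\nu(\diff M)\ \propto\ \det(\mI_\rnk-M^\top M)^{-1/2}\exp\!\Big(-\tfrac{\dim\beta_\dim}{2}\sum_{j=\rnk+1}^{\dim}m_j^\top(\Lambda_{\mathrm{top}}-\lambda_j\mI_\rnk)m_j\Big)\ind_{\mathcal{B}_\rnk}(M)\,\diff M,
\end{align*}
while (this should match \Cref{alg:sampler}) the sampler draws the rows $m_j$ independently from $\gauss{0}{\tfrac{1}{\dim\beta_\dim}(\Lambda_{\mathrm{top}}-\lambda_j\mI_\rnk)^{-1}}$ — well defined since $\Lambda_{\mathrm{top}}-\lambda_j\mI_\rnk\succeq(\lambda_\rnk-\lambda_{\rnk+1})\mI_\rnk\succ0$ for $j>\rnk$ by the spectral-gap assumption — conditioned on $\{M^\top M\prec\mI_\rnk\}$, i.e. $\bar{\hat\nu}(\diff M)\propto\exp(-\tfrac{\dim\beta_\dim}{2}\sum_{j>\rnk}m_j^\top(\Lambda_{\mathrm{top}}-\lambda_j\mI_\rnk)m_j)\ind_{\mathcal{B}_\rnk}(M)\,\diff M$.

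The crucial point is that the two densities differ only by the determinant prefactor: $\frac{\diff\bar\nu}{\diff\bar{\hat\nu}}(M)=\det(\mI_\rnk-M^\top M)^{-1/2}\big/\,\E_{\bar{\hat\nu}}[\det(\mI_\rnk-\rM^\top\rM)^{-1/2}]$, a \emph{low-complexity} statistic depending on $M$ only through the $\rnk=O(1)$ eigenvalues of $M^\top M$. Under $\bar{\hat\nu}$ (before conditioning), $\rM^\top\rM=\sum_{j>\rnk}m_jm_j^\top$ is a sum of independent rank-one Gaussian matrices with $\E[\rM^\top\rM]=\diag\big(\tfrac{1}{\beta_\dim}H_{\mSigma}(\lambda_i)\big)_{i\in[\rnk]}\to\mD_\star\bydef\diag\big(\tfrac{1}{\beta}H_\mu(\gamma_i)\big)_{i\in[\rnk]}$, using $H_{\mSigma}(\lambda_i)\to H_\mu(\gamma_i)$ (\Cref{lem:misc_HK}); Gaussian concentration (or Hanson--Wright) upgrades this to $\rM^\top\rM\pc\mD_\star$, and the conditioning is asymptotically harmless since $\P_{\bar{\hat\nu}}(M^\top M\not\prec\mI_\rnk)\to0$. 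Since $H_\mu$ is nonincreasing and $\beta>H_\mu(\gamma_\rnk)$, we get $H_\mu(\gamma_i)\le H_\mu(\gamma_\rnk)<\beta$ for all $i\le\rnk$, so $\mD_\star\prec\mI_\rnk$ \emph{strictly}; hence $\det(\mI_\rnk-\rM^\top\rM)^{-1/2}\pc\det(\mI_\rnk-\mD_\star)^{-1/2}=:L\in(0,\infty)$. To upgrade to $L^1$: the strict separation makes $\{\lambda_{\max}(\rM^\top\rM)>1-\delta\}$ an event of probability $e^{-\Omega(\dim)}$ under $\bar{\hat\nu}$, and combining this with the integrability of the mild ($-1/2$-power) singularity of $\det(\mI_\rnk-M^\top M)^{-1/2}$ on $\partial\mathcal{B}_\rnk$ and with the fact that distinct eigenvalues of $\rM^\top\rM$ are exponentially unlikely to approach $1$ simultaneously yields $\E_{\bar{\hat\nu}}[\det(\mI_\rnk-\rM^\top\rM)^{-1/2}]\to L$ and, more generally, $\E_{\bar{\hat\nu}}\big|\det(\mI_\rnk-\rM^\top\rM)^{-1/2}/\E_{\bar{\hat\nu}}[\det(\mI_\rnk-\rM^\top\rM)^{-1/2}]-1\big|\to0$. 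Therefore $\tv[\nu,\hat\nu]=\tfrac12\E_{\bar{\hat\nu}}\big|\tfrac{\diff\bar\nu}{\diff\bar{\hat\nu}}(\rM)-1\big|\to0$. (As a sanity check, the same computation under $\bar\nu$ recovers $\rM^\top\rM\pc\mD_\star$, i.e. $\mU_\star^\top\rV\rV^\top\mU_\star\pc\mI_\rnk-\mD_\star=\diag(1-H_\mu(\gamma_i)/\beta)$, which is \Cref{thm:overlap}; this also gives a two-sided route to the boundary bound.)

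The main obstacle is the first step: pinning down the exact pushforward density $\bar\nu$. This needs the matrix-variate polar/$\mathrm{Beta}_\rnk$ Jacobians on the Stiefel manifold, and in particular the verification that the determinant prefactor carries a \emph{$\dim$-independent} exponent — so that, unlike the extensive quadratic term, it is only an $O(1)$ perturbation of the log-density, which is precisely what makes the Gaussian approximation sharp in total variation rather than only in weaker senses — together with a careful reconciliation of this reduced description with the exact operations of \Cref{alg:sampler} (the polar gauge, the measure-zero locus where the top block is singular, and the concluding $\O(\rnk)$-randomization that makes $\hat\nu$ full-dimensional). The remaining technical work is the large-deviation/uniform-integrability estimate near $\partial\mathcal{B}_\rnk$ invoked above; here the hypothesis $\beta>H_\mu(\gamma_\rnk)$ is indispensable, since for smaller $\beta$ the limiting configuration $\mD_\star$ touches or exits the matrix ball, the constraint $M^\top M\prec\mI_\rnk$ becomes active, and both the present argument and the Gaussian approximation itself break down — consistent with the phase transition at $\beta=H_\mu(\gamma_\rnk)$ in \Cref{thm:utility}.
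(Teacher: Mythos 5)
Your reduction is, at its core, the same as the paper's. The Grassmannian-quotient framing and the ``bottom-block'' chart $M$ are a repackaging of the paper's $(\calQ,\calZ)$ decomposition (\Cref{lem:decomp}, \Cref{prop:stat-distr}): both quotient out the right $\O(\rnk)$-action and reduce the comparison to the single factor $\det(\mI_\rnk - Z^\top Z)^{-1/2}$ separating the Gibbs pushforward from the product-Gaussian pushforward. The concentration $\rM^\top\rM \pc \mD_\star \prec \mI_\rnk$ you invoke is exactly the content of \Cref{lem:Z^T*Z-exp} and \Cref{lem:Z^T*Z-sampler}, and your observation that $\beta > H_\mu(\gamma_\rnk)$ is what keeps $\mD_\star$ strictly inside the matrix ball is the same pivot the paper relies on.

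Where you genuinely diverge is the final step. You argue convergence of normalizing constants directly --- $\E_{\bar{\hat\nu}}[\det(\mI_\rnk - \rM^\top\rM)^{-1/2}] \to \det(\mI_\rnk - \mD_\star)^{-1/2}$ --- via a uniform-integrability / large-deviation estimate near $\partial\mathcal{B}_\rnk$, and then apply Scheffé. The paper instead invokes the contiguity-style lemma of Mukherjee et al.\ (\Cref{fact:tv-conv}): write the log-likelihood ratio as $L_\dim(Z) + c_\dim$ with $L_\dim(Z) = -\tfrac12\ln\det(\mI_\rnk - Z^\top Z) + \ln\ind\{Z^\top Z \prec \mI_\rnk\}$, show $L_\dim \pc c$ under one measure and $L_\dim = O_\P(1)$ under the other, and conclude $\tv \to 0$ without ever estimating $c_\dim$ or controlling the expectation of the singular factor. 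That lemma is worth knowing precisely because it sidesteps the uniform-integrability work you left as a sketch; your argument that the $(1-\lambda)^{-1/2}$ singularity is integrable on the ball, plus exponential decay of $\P(\lambda_{\max}(\rM^\top\rM) > 1-\delta)$, is the right idea but would need to be carried out carefully (e.g., that the singularity is dominated uniformly in $\dim$, and that the bad event's exponential smallness survives multiplication by the blow-up) rather than asserted.

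One further inaccuracy to fix: you describe the sampler as drawing the rows of $\rM$ from the Gaussian ``conditioned on $\{M^\top M \prec \mI_\rnk\}$.'' \Cref{alg:sampler} does \emph{not} condition; it uses the positive part $(\mI_\rnk - \rZ^\top\rZ)_+^{1/2}$, so when $\rZ^\top\rZ \not\prec \mI_\rnk$ the output $\rV$ fails $\rV^\top\rV = \mI_\rnk$ and $\hat\nu$ is not supported on $\O(\dim,\rnk)$. Consequently your identity $\tv[\nu,\hat\nu] = \tv[\bar\nu,\bar{\hat\nu}]$ does not literally hold; the paper instead routes through the data processing inequality for total variation ($\tv(\nu_\dim,\hat\nu_\dim) \le \tv(\calZ\#\nu_\dim,\zeta_\dim)$, since both arise as pushforwards of the same map $\calM$), which automatically absorbs the $o(1)$-probability overshoot event. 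Your argument can be patched --- the overshoot event has vanishing probability, so the equality holds up to $o(1)$ --- but the data-processing route is cleaner and worth adopting.
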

We mainly use \Cref{alg:sampler} as a proof technique to derive limit distributions of statistics of the form $f(\rV \rV^\top)$ where $\rV \sim \nu(\cdot \mid \mSigma, \beta, \rnk)$. The basic idea is that since the output distribution of the sampler approximates the Gibbs distribution in total variation distance, it suffices to analyze the limit distribution of $f(\rV \rV^\top)$ when $\rV$ is the output of the sampler. Now, $f(\rV \rV^\top)$ can be expressed as a non-linear transformation of independent Gaussian random variables $\rZ$ used by the sampling algorithm and CLTs for non-linear functions of Gaussian random variables can be easily obtained using the second-order Poincaré inequality \citep{chatterjee2009fluctuations}. 

\paragraph{Understanding the Exponential Mechanism} The sampling scheme in \Cref{alg:sampler} also sheds some light on the nature of noise introduced by the exponential mechanism for privatization. For simplicity, let us consider the case when  $\rnk=1$ and assume that the eigenvalues $\lambda_{1:\dim}$ of the covariance matrix $\mSigma := \mSigma(\mX)$ are distinct. In this setting, \Cref{thm:sampling} suggests that the privatized PC generated by the exponential mechanism can be approximated in the total variation distance by $\rv \in \R^\dim$, the output of the sampling algorithm, which is:
\begin{align*}
    \rv\bydef \rs\sqrt{(1-\|\rz\|^2)_+} \cdot \vu_1^{} + \underbrace{\sum_{i=1}^{\dim-1}\rz_i \cdot \vu_{i+1}}_{\bydef \rw \text{ (noise)}},
\end{align*}
where
\begin{align*}
    \rs\sim\mathrm{Unif}(\{-1,1\}), \; \rz_i\overset{\indep}{\sim} \gauss{0}{\frac{1}{\dim\beta(\lambda_1-\lambda_{i+1})}} \; \forall \;  i\;\in\;[\dim-1].
\end{align*}
In the above display,  $\vu_1^{},\vu_2,\dotsc,\vu_{\dim}$ denote the eigenvectors of $\mSigma$, and $\lambda_1, \lambda_2,\dotsc, \lambda_\dim$ denote the corresponding eigenvalues. The second term in the above display, denoted by $\rw$, can be interpreted as the noise introduced by the exponential mechanism for privatization. Notice that this noise is Gaussian, but it is anisotropic.  The noise in the direction $\vu_{i+1}$ follows the distribution:
\begin{align} \label{eq:exp-mech-aniso}
    \ip{\rw}{\vu_{i+1}} \sim \gauss{0}{\frac{1}{\beta \dim(\lambda_1 - \lambda_{i+1})}} \quad \forall \; i \; \in \; [\dim-1].
\end{align}
The noise variance is largest in the direction $\vu_2$ and smallest in the direction $\vu_\dim$. Using anisotropic noise for privatization can be useful if the statistic of interest has different sensitivities in different directions. This is indeed the case for the leading principal component $\vu_1(\mSigma)$. One way of quantifying the local sensitivity of $\vu_1(\mSigma)$ is to fix a perturbation direction $\mE$ (which represents the perturbation introduced in the sample covariance matrix by the addition/removal of a data point) with $\|\mE\|  = 1$ and to study the change in the leading PC, $\vu_1^{}(\mSigma + t \mE) - \vu_1^{}(\mSigma)$, via its first order Taylor series approximation around $t=0$. Using the Hadamard first variation formula for the eigenvector derivative (see e.g., \citep[Section 1.3.4]{tao2012topics}\footnote{While the Hadamard first variation formula in \citep[Section 1.3.4]{tao2012topics} provides a formula for the derivative of the eigenvalue, an analogous derivation gives the formula for the projection of the eigenvector along another eigenvector.}), 
\begin{align*}
    \vu_1^{}(\mSigma + t \mE) - \vu_1^{}(\mSigma) & \approx t \sum_{i= 1}^{\dim - 1} \frac{\vu_{i+1}^\top \mE\vu_1^{}}{\lambda_1-\lambda_{i+1}} \cdot \vu_{i+1}.
\end{align*}
Hence we expect that, for any $i \in [\dim -1]$, the maximum possible change in the direction $\vu_{i+1}$ is approximately:
\begin{align*}
    \sup_{\|\mE\| = 1} \ip{\vu_1^{}(\mSigma + t \mE) - \vu_1^{}(\mSigma)}{\vu_{i+1}} & \approx \sup_{\|\mE\| = 1} \frac{t\vu_{i+1}^\top \mE\vu_1^{}}{\lambda_1-\lambda_{i+1}} =  \frac{t}{\lambda_1 - \lambda_{i+1}}.
\end{align*}
This suggests that to privatize the leading PC, one should add more noise along $\vu_2$ than $\vu_\dim$ to hide (from the adversary) the larger perturbation introduced along $\vu_2$ compared to $\vu_{\dim}$ by the addition/removal of a data point. This is consistent with how the exponential mechanism calibrates the noise level in different directions (cf. \eqref{eq:exp-mech-aniso}). It would be interesting to study if the exponential mechanism enjoys any optimality properties  as a consequence of this observation.
\subsection{Contiguity Result}
To prove our privacy result, we need to understand the asymptotics of the trade-off function:
\begin{align*}
    \tf{\nu(\cdot \mid \mSigma(\mX), \beta, \rnk)}{\nu(\cdot \mid \mSigma(\tilde{\mX}), \beta, \rnk)}
\end{align*}
between the output distributions of the exponential mechanism on two neighboring datasets $\mX, \tilde{\mX}$ (along with the Rényi divergence between these distributions). We show that in the high-dimensional regime, these output distributions are mutually contiguous and hence, the corresponding trade-off function can be derived using classical techniques due to \citet{le2012asymptotic}. We begin by giving a brief overview of contiguity in \Cref{sec:contiguity-overview} and then present our main contiguity result in \Cref{sec:contiguity-result}.
\subsubsection{Overview of Contiguity} \label{sec:contiguity-overview}
\begin{definition}[{Mutual Contiguity \citep[Section 6.3]{van2000asymptotic}}] For each $\dim \in \N$, let  $\P_{\dim},\Q_{\dim}$ be two probability measures defined on some common space $\mathcal{V}_{\dim}$. The two sequences of probability measures $(\P_{\dim})_{\dim \in \N}$ and $(\Q_{\dim})_{\dim \in \N}$ are mutually contiguous if for any sequence of (measurable) sets $(A_{\dim})_{\dim \in \N}$ with $A_\dim \subset \mathcal{V}_{\dim}$ for each $\dim \in \N$,
\begin{align*}
    \lm \P_{\dim}(A_{\dim})=0\iff \lm \Q_{\dim}(A_{\dim})=0.
\end{align*}
\end{definition}
Recall that a sequence of tests $\phi_\dim: \mathcal{V}_\dim \rightarrow [0,1]$ is consistent and asymptotically level $\alpha$ for distinguishing between $\P_\dim$ as the null hypothesis and $\Q_\dim$ as the alternative hypothesis if:
\begin{align*}
    \ls\E_{\rV \sim \P_\dim}[\phi_\dim(\rV)] \leq \alpha \quad \text{and} \quad \lim_{\dim \rightarrow \infty} \E_{\rV \sim \Q_\dim}[\phi_\dim(\rV)] = 1.
\end{align*}
That is, the sequence of tests asymptotically achieves full power while controlling the false rejection probability at level $\alpha$. An important consequence of mutual contiguity of $\P_\dim, \Q_\dim$ is that it rules out consistent tests for distinguishing between $\P_\dim$ as the null hypothesis and $\Q_\dim$ as the alternative hypothesis (or vice-versa) for any non-trivial level $\alpha \in [0,1)$. Hence, in differential privacy, it is natural to calibrate the noise level so that the output distributions of a mechanism on two neighboring datasets are mutually contiguous, preventing an adversary from consistently distinguishing between them. 

\paragraph{A Sufficient Criterion for Checking Contiguity} Le Cam's first lemma \citep[Section 6.4]{van2000asymptotic} provides several necessary and sufficient criteria for checking contiguity. The following special case will suffice for our purposes.
\begin{fact}[{Special Case of Le Cam's First Lemma \citep[Section 6.4]{van2000asymptotic}}]\label{fact:lecam-1} For each $\dim \in \N$, let  $\P_{\dim},\Q_{\dim}$ be two probability measures defined on some common space $\mathcal{V}_{\dim}$. Suppose that for each $p \in \N$, $\Q_\dim \ll \P_\dim$ and that: 
\begin{align}\label{eq:suff}
    \frac{\diff \Q_{\dim}}{\diff \P_{\dim}}(\rV) & \dc  e^{\rZ} \quad \text{with} \quad \rZ \sim \gauss{-\frac{1}{2}\sigma^2}{\sigma^2} \quad \text{when} \quad \rV \sim \P_\dim
\end{align}
for some $\sigma^2 \in [0,\infty)$. Then, $\P_\dim, \Q_\dim$ are mutually contiguous. 
\end{fact}
Notice that when $\rV \sim \P_\dim$, the likelihood ratio $\tfrac{\diff \Q_{\dim}}{\diff \P_{\dim}}(\rV)$ has expectation $1$. The special relationship between the mean and variance of the limiting Gaussian random variable $\rZ$ in \eqref{eq:suff} ensures that the weak limit of the likelihood ratio also has expectation $1$: $\E[e^{\rZ}] = 1$. In particular, this guarantees that the sequence of likelihood ratios $\tfrac{\diff \Q_{\dim}}{\diff \P_{\dim}}(\rV)$ is uniformly integrable. 
\paragraph{Contiguity and Asymptotic Change of Measure} One of the main applications of mutual contiguity of two probability measures $\P_\dim, \Q_\dim$ is that if one knows the limit distribution of a particular statistic under $\P_\dim$, one can derive its limit distribution under $\Q_\dim$ via an asymptotic change of measure argument. This is the content of Le Cam's third lemma \citep[Section 6.4]{van2000asymptotic}. In our context, we will be interested in computing the limiting trade-off function $\lim_{\dim \rightarrow \infty} \tf{\P_\dim}{\Q_{\dim}}$. Recall that $\tf{\P_\dim}{\Q_{\dim}}(\alpha)$ is the Type II error of the optimal level $\alpha$ test for distinguishing between $\P_\dim$ (as the null) and $\Q_\dim$ (as the alternative). Since the optimal test is based on the likelihood ratio, computing its Type II error requires understanding the limit distribution of the likelihood ratio under the alternative $\Q_\dim$. The following special case of Le Cam's third lemma will allow us to do so.
\begin{fact}[{Special Case of Le Cam's Third Lemma \citep[Section 6.4]{van2000asymptotic}}]\label{fact:lecam-3} Under the assumptions of \Cref{fact:lecam-1}, 

\begin{align}\label{eq:lecam-3}
    \frac{\diff \Q_{\dim}}{\diff \P_{\dim}}(\rV) & \dc  e^{\rZ} \quad \text{with} \quad \rZ \sim \gauss{\frac{1}{2}\sigma^2}{\sigma^2} \quad \text{when} \quad \rV \sim \Q_\dim.
\end{align}
\end{fact}
\begin{proof} We provide a proof to explain how the particular formula for the limit distribution arises. To identify the limit distribution of the likelihood ratio when $\rV \sim \Q_\dim$ we will compute: $$\lim_{\dim \rightarrow \infty} \E_{\rV \sim \Q_\dim}\left[f\left(\frac{\diff \Q_{\dim}}{\diff \P_{\dim}}(\rV)\right)\right]$$ for any bounded and continuous function $f: \R \rightarrow \R$. By change of measure,
\begin{align} \label{eq:exchange-E-lim}
   \lim_{\dim \rightarrow \infty} \E_{\rV \sim \Q_{\dim}}\left[f\left(\frac{\diff\Q_{\dim}}{\diff \P_{\dim}}(\rV)\right)\right]&=  \lim_{\dim \rightarrow \infty} \E_{\rV \sim \P_\dim}\left[f\left(\frac{\diff \Q_{\dim}}{\diff \P_{\dim}}(\rV)\right)\frac{\diff \Q_{\dim}}{\diff \P_{\dim}}(\rV)\right].
\end{align}
By the assumption \eqref{eq:suff} and the continuous mapping theorem, $f\left(\tfrac{\diff \Q_{\dim}}{\diff \P_{\dim}}(\rV)\right)\tfrac{\diff \Q_{\dim}}{\diff \P_{\dim}}(\rV) \dc f(e^{\rZ}) e^{\rZ},$ where $\rZ \sim \gauss{-\tfrac{\sigma^2}{2}}{\sigma^2}$ when $\rV \sim \P_\dim$. Moreover, since the sequence of likelihood ratios is uniformly integrable under $\P_\dim$ (recall the discussion below \Cref{fact:lecam-1}) and $f$ is bounded, the sequence of random variables $f\left(\tfrac{\diff \Q_{\dim}}{\diff \P_{\dim}}(\rV)\right)\tfrac{\diff \Q_{\dim}}{\diff \P_{\dim}}(\rV)$ is also uniformly integrable when $\rV \sim \P_\dim$. Hence, the limit and expectation on the RHS of \eqref{eq:exchange-E-lim} can be safely exchanged:
\begin{align*}
    \lim_{\dim \rightarrow \infty} \E_{\rV \sim \Q_{\dim}}\left[f\left(\frac{\diff\Q_{\dim}}{\diff \P_{\dim}}(\rV)\right)\right]&= \E[f(e^{\rZ}) e^{\rZ}] \quad \text{where} \quad \rZ \sim \gauss{-\tfrac{\sigma^2}{2}}{\sigma^2} \\
    & = \int_{\R} f(e^z)  \cdot \frac{e^z \cdot e^{-\frac{(z+\frac{\sigma^2}{2})^2}{2 \sigma^2}}}{\sqrt{2\pi \sigma^2}} \diff z  \\ &= \int_{\R} f(e^z)  \cdot \frac{ e^{-\frac{(z-\frac{\sigma^2}{2})^2}{2 \sigma^2}}}{\sqrt{2\pi \sigma^2}} \diff z & \text{[completing the square]} \\ &= \E[f(e^{\rZ})] \quad \text{where} \quad \rZ \sim \gauss{\tfrac{\sigma^2}{2}}{\sigma^2}.
\end{align*}
This proves that $\frac{\diff \Q_{\dim}}{\diff \P_{\dim}}(\rV)  \dc  e^{\rZ}$ with $\rZ \sim \gauss{\frac{1}{2}\sigma^2}{\sigma^2}$ when $\rV \sim \Q_\dim$, as claimed. 
\end{proof}
\subsubsection{Contiguity Result}\label{sec:contiguity-result}
The following theorem states our contiguity result and shows that the Gibbs distributions $\nu( \cdot \mid \mSigma, \beta, \rnk)$ and $\nu(\cdot \mid \tilde{\mSigma}, \beta, \rnk)$ are mutually contiguous provided that the perturbation $\mSigma - \tilde{\mSigma}$ is sufficiently small. To state this result, for a matrix $\mSigma$ which satisfies \Cref{assump:mat}, we introduce the variance function $\sigma_{\mSigma}^2: \R^{\dim \times \dim} \times [0,\infty) \to \R$:
\begin{align}\label{eq:var-func}
    \sigma_{\mSigma}^2(\mE,\beta) \explain{def}{=} \frac{1}{2} \sum_{j, \ell = 1}^\rnk K_\mSigma(\lambda_j, \lambda_\ell) \cdot (\vu_j^\top E \vu_\ell)^2  + \sum_{j=1}^\rnk \sum_{i= 1}^{\dim - \rnk} \frac{\beta - {H_{\mSigma}(\lambda_j)}}{\lambda_j - \lambda_{\rnk+i}}  \cdot (\vu_{\rnk+i}^\top E \vu_j)^2,
\end{align}
where $\{(\lambda_{i}, \vu_i): i \in \dim \}$ denote the eigenvalues and corresponding eigenvectors of $\mSigma$ and the functions $H_\mSigma, K_\mSigma$ are defined, for all $\lambda, \lambda^\prime \; \in \; (\lambda_{\rnk+1}, \infty),$ as: 
\begin{align}\label{eq:empirical-HG}
    H_{\mSigma}(\lambda) &\explain{def}{=} \frac{1}{\dim} \sum_{i=1}^{\dim - \rnk} \frac{1}{\lambda - \lambda_{\rnk+i}}, \quad  K_{\mSigma}(\lambda,\lambda^\prime) \explain{def}{=} \frac{1}{\dim} \sum_{i=1}^{\dim-\rnk} \frac{1}{(\lambda - \lambda_{\rnk+i})(\lambda^\prime - \lambda_{\rnk+i})}.
\end{align}
Roughly speaking, $\sigma_{\mSigma}^2(\mE,\beta)$ should be interpreted as a proxy for the variance of the log-likelihood ratio between the Gibbs distributions $\nu( \cdot \mid \mSigma, \beta, \rnk)$ and $\nu(\cdot \mid \tilde{\mSigma}, \beta, \rnk)$ with $\tilde{\mSigma} = \mSigma + \tfrac{E}{\sqrt{\dim}}$
\begin{align*}
    \sigma_{\mSigma}^2(\mE,\beta) & \approx \Var\left[ \ln \frac{\diff \nu(\rV \mid \tilde{\mSigma}, \beta, \rnk)}{\diff \nu( \rV \mid \mSigma, \beta, \rnk)} \right] \quad \text{when}\quad\rV \sim \nu( \cdot \mid \mSigma, \beta, \rnk).
\end{align*}
The following is the statement of the key contiguity result.
\begin{theorem} \label{thm:contiguity} Consider a sequence of matrices $\mSigma\in\R^{\dim \times \dim}$ which satisfies \Cref{assump:mat}, a sequence of noise parameters $\beta_{\dim} \rightarrow \beta \in (H_\mu(\gamma_\rnk),\infty)$ as $\dim \rightarrow \infty$, and a fixed rank $\rnk \in \N$ (independent of $\dim$). Let $\tilde{\mSigma} = \mSigma + \tfrac{1}{\sqrt{\dim}} \mE $ for a symmetric perturbation matrix $\mE \in \R^{\dim \times \dim}$ with $\|\mE\|_{\fr} \lesssim 1.$ Then, the following claims hold.
\begin{enumerate}
\item The Gibbs distributions $\nu(\cdot \mid \mSigma, \beta_\dim, \rnk)$ and $\nu(\cdot \mid \tilde{\mSigma}, \beta_\dim, \rnk)$ are mutually contiguous.
    \item If $\|\mE\|\ll1$ or $\sigma^2_{\mSigma}(\mE, \beta_\dim) \rightarrow 0,$ then:
    \begin{enumerate}
        \item $\lim_{\dim \rightarrow \infty} \tv(\nu(\cdot \mid \mSigma, \beta_\dim, \rnk), \nu(\cdot \mid \tilde{\mSigma}, \beta_\dim, \rnk)) = 0$. 
    \item For any fixed $\alpha > 1$ (independent of $\dim$), $\lim_{\dim \rightarrow \infty} \rdv{\alpha}{\nu(\cdot \mid \tilde{\mSigma}, \beta_\dim, \rnk)}{\nu(\cdot \mid {\mSigma}, \beta_\dim, \rnk)} = 0$.\end{enumerate}
    \item If $\sigma^2_{\mSigma}(\mE, \beta_\dim) \rightarrow v \in (0,\infty)$:
    \begin{enumerate}
        \item For any fixed $\alpha \in [0,1]$ (independent of $\dim$),
        $$\lim_{\dim \rightarrow \infty} \tf{\nu(\cdot \mid \mSigma, \beta_\dim, \rnk)}{\nu(\cdot \mid \tilde{\mSigma}, \beta_\dim, \rnk)}(\alpha) =  \tf{\gauss{0}{1}}{\gausss{\sqrt{v}}{1}}(\alpha).$$
        \item For any fixed $\alpha > 1$ (independent of $\dim$), $$\lim_{\dim \rightarrow \infty} \rdv{\alpha}{\nu(\cdot \mid \tilde{\mSigma}, \beta_\dim, \rnk)}{\nu(\cdot \mid {\mSigma}, \beta_\dim, \rnk)}=  \rdv{\alpha}{\gausss{\sqrt{v}}{1}}{\gauss{0}{1}}.$$
    \end{enumerate}
\end{enumerate}
\end{theorem}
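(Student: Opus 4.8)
The plan is to reduce everything to a central limit theorem for the log-likelihood ratio $L_\dim \bydef \ln \frac{\diff \nu(\cdot \mid \tilde\mSigma, \beta_\dim, \rnk)}{\diff \nu(\cdot \mid \mSigma, \beta_\dim, \rnk)}$ evaluated at $\rV \sim \nu(\cdot \mid \mSigma, \beta_\dim, \rnk)$, and then invoke Le Cam's first and third lemmas. From the density formula in \Cref{def:Gibbs}, $L_\dim = \frac{\dim\beta_\dim}{2}\Tr[\rV^\top(\tilde\mSigma - \mSigma)\rV] - \ln Z(\tilde\mSigma,\beta_\dim,\rnk) + \ln Z(\mSigma,\beta_\dim,\rnk) = \frac{\sqrt{\dim}\,\beta_\dim}{2}\Tr[\rV^\top \mE \rV] - (\ln Z(\tilde\mSigma) - \ln Z(\mSigma))$. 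So the whole problem is to understand the fluctuations of the linear statistic $T_\dim \bydef \frac{\sqrt{\dim}\,\beta_\dim}{2}\Tr[\rV^\top \mE \rV]$ under the Gibbs measure, plus a deterministic normalizing-constant correction. The first step is to replace $\rV \sim \nu(\cdot\mid\mSigma,\beta_\dim,\rnk)$ by the output of \Cref{alg:sampler}; by \Cref{thm:sampling} this changes the law of $(\rV)$ — hence of $T_\dim$ and of $L_\dim$ — by $o(1)$ in total variation, so all distributional limits are preserved. Under the sampler representation $\rV = \mU \begin{bmatrix}(\mI_\rnk - \rZ^\top\rZ)_+^{1/2}\\ \rZ\end{bmatrix}\rQ$, with $\rZ$ having independent Gaussian entries $\rZ_{ij}\sim\gauss{0}{\tfrac{1}{\beta\dim(\lambda_j-\lambda_{\rnk+i})}}$, the statistic $\Tr[\rV^\top\mE\rV]$ becomes an explicit smooth (in fact low-degree-polynomial-like, after dealing with the square root) function of the independent Gaussians $\rZ$ and the independent Haar matrix $\rQ$ on $\O(\rnk)$.

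The second step is the CLT. Expanding $\Tr[\rV^\top\mE\rV]$ in the basis of eigenvectors $\vu_i$ of $\mSigma$ and writing $\rQ$-averaged and $\rQ$-fluctuating pieces, the leading contribution to $T_\dim$ is (i) a quadratic form $\sum_{j,\ell\le\rnk}(\vu_j^\top\mE\vu_\ell)\cdot(\text{entries of }\rZ^\top\rZ\text{ and }\rQ)$ coming from the top block, and (ii) a bilinear term $\sqrt{\dim}\,\beta\sum_{j\le\rnk}\sum_i (\vu_{\rnk+i}^\top\mE\vu_j)\rZ_{ij}$ coming from the off-diagonal. Because $\rZ_{ij}$ has variance $\tfrac{1}{\beta\dim(\lambda_j-\lambda_{\rnk+i})}$, term (ii) is a sum of independent mean-zero Gaussians with total variance $\sum_{j\le\rnk}\sum_i \dim\beta^2(\vu_{\rnk+i}^\top\mE\vu_j)^2\cdot\tfrac{1}{\beta\dim(\lambda_j-\lambda_{\rnk+i})} = \beta\sum_{j,i}\tfrac{(\vu_{\rnk+i}^\top\mE\vu_j)^2}{\lambda_j-\lambda_{\rnk+i}}$; after centering (subtracting the normalizing-constant correction, which removes the expectation of the quadratic part) this is designed to match the second sum in $\sigma^2_\mSigma(\mE,\beta)$ in \eqref{eq:var-func}, while the quadratic top-block fluctuations, together with the $K_\mSigma$ kernel arising from a second-order expansion of $\ln Z$ (cf. the log-MGF computation in the proof of \Cref{thm:overlap}, differentiated twice), produce the first sum with the $K_\mSigma(\lambda_j,\lambda_\ell)$ weights. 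I would make this rigorous via the second-order Poincaré inequality of \citet{chatterjee2009fluctuations}: $T_\dim - (\text{correction})$ is a smooth function $g(\rZ)$ (after integrating out $\rQ$ or conditioning suitably) of a standard Gaussian vector, and the bound controls $\tv$ between $g(\rZ)$ and a Gaussian by $\|\nabla^2 g\|$-type quantities, which here are $O(\dim^{-1/2})$ because the Hessian entries carry the small $\rZ$-variances. This yields $L_\dim \dc \gauss{-v/2}{v}$ under $\nu(\cdot\mid\mSigma)$ when $\sigma^2_\mSigma(\mE,\beta_\dim)\to v\in(0,\infty)$, where the mean $-v/2$ is forced by the fact that $\E_{\nu(\cdot\mid\mSigma)}[e^{L_\dim}]=1$ exactly — the standard "$\exp$-of-log-likelihood-ratio integrates to one" identity pins the Gaussian limit down to $\gauss{-v/2}{v}$ without separately computing the mean.

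The third step packages this into the three claims. Contiguity (claim 1) is Le Cam's first lemma: $L_\dim$ tight (it converges in distribution) immediately gives mutual contiguity of $\nu(\cdot\mid\mSigma)$ and $\nu(\cdot\mid\tilde\mSigma)$ in every regime — this also covers the degenerate cases $v=0$ and $v=\infty$ is excluded by the hypothesis that in claim 1 we only need tightness, which follows from $\sigma^2_\mSigma(\mE,\beta_\dim)=O(1)$ (a consequence of $\|\mE\|_\fr\lesssim1$ and the spectral gap keeping $\lambda_j-\lambda_{\rnk+i}$ bounded below for $j\le\rnk$). When $v=0$ (i.e. $\sigma^2_\mSigma\to0$, which holds in particular if $\|\mE\|\ll1$ since then each term is $o(1)$), $L_\dim\pc0$, so the total variation distance $\to0$ and the order-$\alpha$ Rényi divergence $\to0$ (claim 2); here one also needs a mild uniform-integrability/moment bound on $e^{(\alpha-1)L_\dim}$ to pass from convergence in probability to convergence of the Rényi integral, which again follows from the sampler representation and Gaussian concentration. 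When $v\in(0,\infty)$, $L_\dim\dc\gauss{-v/2}{v}$ under $\nu(\cdot\mid\mSigma)$ and, by Le Cam's third lemma, $L_\dim\dc\gauss{v/2}{v}$ under $\nu(\cdot\mid\tilde\mSigma)$; the trade-off function and the Rényi divergence are continuous functionals of these limiting laws (the trade-off function between $\gauss{0}{1}$ and $\gauss{\sqrt v}{1}$ is exactly the limit by the Neyman–Pearson lemma applied to the Gaussian limit experiment, a standard consequence of asymptotic equivalence of experiments — this is where \citet{le2012asymptotic} enters), giving claim 3.

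I expect the main obstacle to be Step 2, specifically verifying the second-order Poincaré hypotheses uniformly in $\dim$: the square-root factor $(\mI_\rnk - \rZ^\top\rZ)_+^{1/2}$ is not globally smooth, so one must first argue $\|\rZ\|\to0$ with overwhelming probability (true because $\E\|\rZ\|_\fr^2 = \sum_{i,j}\tfrac{1}{\beta\dim(\lambda_j-\lambda_{\rnk+i})}\to\tfrac{1}{\beta}\sum_{j\le\rnk}H_\mu(\gamma_j)<\infty$ times... wait, actually $\to0$ since it is $O(1/\dim)\cdot O(\dim)=O(1)$ — more care is needed: it is $\tfrac1\beta\sum_j H_{\mSigma}(\lambda_j)\cdot\tfrac1\dim\cdot\dim$, hmm), restrict to that event, and mollify the square root there; and one must handle the Haar matrix $\rQ$ on the compact group $\O(\rnk)$, either by conditioning on $\rQ$ and noting the limit is $\rQ$-independent, or by a separate (easy, since $\rnk$ is fixed) concentration argument on $\O(\rnk)$. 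Tracking the exact constants so that the limiting variance is precisely $\sigma^2_\mSigma(\mE,\beta)$ as in \eqref{eq:var-func}, including the cross-terms between the top-block quadratic part and the normalizing-constant correction, will require the careful bookkeeping that I would defer to the appendix.
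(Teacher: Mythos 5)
Your core strategy---reduce to the explicit sampler from \Cref{alg:sampler} via \Cref{thm:sampling}, establish a CLT for the random part of the log-likelihood ratio via the second-order Poincar\'e inequality, and invoke Le Cam theory to pass to trade-off functions---matches the paper's. There are, however, two places where the proposal has gaps.

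The first is a subtlety you partly acknowledge but underestimate. You propose to pin down the Gaussian mean $-v/2$ from the exact identity $\E_{\nu_\dim}[e^{L_\dim}]=1$. Passing from $L_\dim \dc \gauss{m}{v}$ to $m=-v/2$ via that identity requires $\E_{\nu_\dim}[e^{L_\dim}]\to e^{m+v/2}$, a uniform-integrability statement beyond weak convergence. Worse, the identity holds exactly only under the \emph{true} Gibbs measure $\nu_\dim$, whereas your distributional analysis of $L_\dim$ is carried out under the sampler's approximating law $\hat\nu_\dim$; \Cref{thm:sampling} gives only total-variation closeness, which does not transfer to the exponential moment of the unbounded quantity $e^{L_\dim}$. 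The paper avoids both issues by invoking a contiguity criterion of Mukherjee (\Cref{fact:Mukherjee}), which extracts both mutual contiguity and the convergence of the deterministic constant $c_\dim\to -v/2$ from the weaker inputs that $L_\dim=O_\P(1)$ under \emph{both} $\nu_\dim$ and $\tilde\nu_\dim$ and that $L_\dim \dc \gauss{0}{v}$ under $\nu_\dim$. Verifying $L_\dim=O_\P(1)$ under $\tilde\nu_\dim$ in turn requires a perturbation estimate on the centering matrix $\calM_{\beta_\dim}(\cdot)$ (the paper's \Cref{lem:perturb}), which you would also need.

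The second gap is more serious and concerns the R\'enyi claims (2b and 3b). You write that the needed moment bound on $e^{(\alpha-1)L_\dim}$ ``follows from the sampler representation and Gaussian concentration.'' It does not, for the same reason as above: TV closeness to $\hat\nu_\dim$ does not let you compute $\E_{\nu_\dim}[e^{(\alpha-1)L_\dim}]$. If you instead try to compute it exactly, you find $(\alpha-1)\rdv{\alpha}{\tilde\nu_\dim}{\nu_\dim} = (\alpha-1)\ln Z(\mSigma)-\alpha\ln Z(\tilde\mSigma)+\ln Z(\mSigma+\alpha\mE/\sqrt\dim)$, a combination of three log-normalizing constants; \Cref{fact:guionnet} only gives each of these to $o(\dim)$ precision, so the difference is only known up to $o(\dim)$, not $o(1)$. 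The paper's way out is an algebraic identity: $(\diff\tilde\nu_\dim/\diff\nu_\dim)^\alpha$ is \emph{exactly} proportional to a third likelihood ratio $\diff\bar\nu_\dim/\diff\nu_\dim$ with $\bar\nu_\dim\bydef\nu(\cdot\mid\mSigma+\alpha\mE/\sqrt\dim,\beta_\dim,\rnk)$, so $(\alpha-1)\rdv{\alpha}{\tilde\nu_\dim}{\nu_\dim}=\ln\bar c_\dim$ for a deterministic constant, and the pointwise identity $\ln\bar c_\dim = \alpha\ln(\diff\tilde\nu_\dim/\diff\nu_\dim)(\mV)-\ln(\diff\bar\nu_\dim/\diff\nu_\dim)(\mV)$ determines $\ln\bar c_\dim$ as the deterministic difference of two random variables whose limit laws under $\nu_\dim$ are already available from the CLT you have proved. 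Slutsky then pins down $\ln\bar c_\dim\to\alpha(\alpha-1)v/2$. This entirely avoids exponential-moment calculations; you should replace your UI sketch with this step.
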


\begin{proof}[Proof sketch.] We give a proof sketch highlighting the main ideas \Cref{thm:contiguity} below, and the full proof is deferred to \Cref{app:contiguity}.
\paragraph{Showing Contiguity}
To show that the Gibbs distributions $\nu_{\dim}\bydef\nu(\cdot \mid \mSigma, \beta_\dim, \rnk),$ $\tilde{\nu}_{\dim}\bydef \nu(\cdot \mid \tilde{\mSigma}, \beta_\dim, \rnk)$ are contiguous, by Le Cam's first lemma (\Cref{fact:lecam-1}), we only need to check that the log-likelihood ratio between $\nu_{\dim}$ and $\tilde{\nu}_{\dim}$ satisfies:
\begin{align} \label{eq:lecam-criteria}
    \ln \frac{\diff\tilde{\nu}_{\dim}}{\diff\nu_{\dim}}(\rV) \dc \gauss{-\frac{v}{2}}{v}\quad \text{for some $v \geq 0$ when $\rV\sim\nu_{\dim}$}.
\end{align}
Recalling the formula for the densities of $\nu_{\dim}, \tilde{\nu}_{\dim}$ from \Cref{def:Gibbs}, we find that the log-likelihood ratio is given by:
\begin{align}\label{eq:pf_sketch_leCam}
    \ln \frac{\diff\tilde{\nu}_{\dim}}{\diff\nu_{\dim}}(\rV) & = \underbrace{\frac{\sqrt{\dim} \beta_\dim }{2} \Tr[\rV^\top \mE \rV]}_{(i)}  + \underbrace{\ln Z(\tilde{\mSigma}, \beta_\dim, \rnk)-\ln Z(\mSigma, \beta_\dim, \rnk)}_{(ii)}.
\end{align}
To verify Le Cam's criterion in \eqref{eq:lecam-criteria}, we will need to analyze the random term $(i)$ and the deterministic term $(ii)$. We discuss both of these terms below. 
\begin{itemize}
    \item \emph{Analysis of the Random Term $(i):$} We need to understand the limit distribution of random term $(i)$ when $\rV \sim \nu_\dim$ is sampled from the Gibbs distribution, a somewhat complicated high-dimensional probability measure. We can simplify our problem by appealing to \Cref{thm:sampling}, which shows that it is sufficient to derive the limit distribution of the random term $(i)$ under the output distribution of the sampling algorithm (\Cref{alg:sampler}). When $\rV$ is the output generated by the sampler, $\Tr[\rV^\top \mE \rV]$  reduces to a quadratic form involving Gaussian random variables and hence, we can use the second-order Poincaré Inequality \citep{chatterjee2009fluctuations} to analyze its limit distribution.
    \item \emph{Analysis of Deterministic Term $(ii):$} To analyze the deterministic term $(ii)$, it is tempting to appeal to \Cref{fact:guionnet}, which implies that:
    \begin{align}\label{eq:pf_sketch_log_normalizing}
        \ln Z(\mSigma, \beta_\dim, \rnk)=\frac{\dim}{2}\sum_{i=1}^\rnk f_\mu(\gamma_i)+o(\dim),\quad \ln Z(\tilde{\mSigma}, \beta_\dim, \rnk)=\frac{\dim}{2}\sum_{i=1}^\rnk f_\mu(\gamma_i)+o(\dim),
 \end{align}
 where the function $f_\mu$ is as defined in \Cref{fact:guionnet}. Hence, the difference of the log-normalizing constants becomes:
\begin{align*}
    \ln Z(\mSigma, \beta_\dim, \rnk)-\ln Z(\tilde{\mSigma}, \beta_\dim, \rnk)=o(\dim).
\end{align*}
The $o(p)$ estimate on the asymptotic error in the above display is too weak to verify Le Cam's criterion in \eqref{eq:lecam-criteria}, which requires a stronger result characterizing the asymptotic difference in the log-normalizing constants up to $o(1)$ error. Such results are unfortunately unavailable in the current literature. To bypass this difficulty, we use a contiguity criterion of \citet[Lemma 3.1]{mukherjee2013statistics}, which provides an analog of Le Cam's criterion for checking contiguity of distributions with complicated normalizing constants. This criterion helps us avoid a fine-grained asymptotic analysis of the log-normalizing constants of the Gibbs distribution. 
\end{itemize}
\paragraph{Asymptotics of the Trade-off Function} Once we have shown that the log-likelihood ratio satisfies Le Cam's criterion \eqref{eq:lecam-criteria} when $\rV \sim \nu_\dim$, Le Cam's third lemma (\Cref{eq:lecam-3}) automatically gives us the limit distribution of the log-likelihood ratio when $\rV \sim \tilde{\nu}_\dim$:
\begin{align*}
    \ln \frac{\diff\tilde{\nu}_{\dim}}{\diff\nu_{\dim}}(\rV) \dc \gauss{\frac{v}{2}}{v}\quad \text{ when $\rV\sim\tilde{\nu}_{\dim}$}.
\end{align*}
Since the optimal test between $\nu_\dim$ and $\tilde{\nu}_\dim$ is based on the log-likelihood ratio, the trade-off function between $\nu_\dim,\tilde{\nu}_\dim$ converges to the trade-off function between the limit distributions of the log-likelihood ratio under these two probability measures (this can be formalized using standard arguments used in asymptotic theory of hypothesis testing \citep{lehmann2005testing}):
\begin{align*}
    \lim_{\dim \rightarrow \infty} \tf{\nu_\dim}{\tilde{\nu}_\dim}(\alpha) & = \tf{\gausss{-\tfrac{v}{2}}{v}}{\gausss{\tfrac{v}{2}}{v}}(\alpha) = \tf{\gausss{0}{1}}{\gausss{\sqrt{v}}{1}}(\alpha)\quad \forall\;\alpha\;\in\;[0,1],
\end{align*}
as claimed. 
\paragraph{Asymptotics of Rényi Divergence} Recall from \Cref{def:Rényi}, that the Rényi divergence $\rdv{\alpha}{\tilde{\nu}_\dim}{\nu_\dim}$ is related to the $\alpha$ moment of the likelihood ratio. We already know that the log-likelihood ratio converges to a Gaussian from \eqref{eq:lecam-criteria}. We upgrade the weak convergence in \eqref{eq:lecam-criteria} to convergence of moments via a uniform integrability argument, which gives us the claimed asymptotic formula for the Rényi divergence. 
\end{proof}

\subsection{Privacy Analysis (Proof of \Cref{thm:privacy})}\label{sec:privacy-proof}
We finish the overview of our proof techniques by deriving the sharp asymptotic privacy guarantee for the exponential mechanism given in \Cref{thm:privacy} using our contiguity result (\Cref{thm:contiguity}). \Cref{thm:privacy} provides a privacy guarantee for the exponential mechanism in terms of trade-off functions, as well as Rényi divergence. We will present the proof of the trade-off function guarantee here, and the proof of the Rényi divergence guarantee is presented in 
\Cref{app:privacy-renyi}.
The proof relies on a few intermediate results, which introduce below. The proofs of these results are also deferred to 
\Cref{app:privacy-renyi}.
\paragraph{Simplification Through an Intermediate Gibbs Measure} Characterizing the privacy of the exponential mechanism on a given dataset $\mX$  requires us to analyze the trade-off function between the output distributions of the exponential mechanism on $\mX$ and its worst-case neighboring dataset:
\begin{align*}
    \lim_{\dim \rightarrow \infty} \inf_{\tilde{\mX} \in \calN(\mX)}\tf{\nu(\cdot \mid \mSigma(\mX), \beta_\dim, \rnk)}{\nu(\cdot \mid \mSigma(\tilde{\mX}), \beta_\dim,\rnk)}. 
\end{align*}
We will compute these limits using  \Cref{thm:contiguity}, which tells us that the asymptotic values of the trade-off function depends on the magnitude of the perturbation matrix $\mSigma(\tilde{\mX})-\mSigma(\mX)$. A simple computation reveals:
\begin{align}\label{eq:privacy-perturb-orig}
    \Sigma(\tilde{\mX})-\Sigma(\mX)= -\underbrace{\frac{\Sigma(\mX)}{\ssize+1}}_{(\star)}+\frac{\vx\vx^\top}{\ssize+1}  \text{ if }\tilde{\mX} = \mX \cup \{\vx\}
\end{align}
and 
\begin{align*}
    \Sigma(\tilde{\mX})-\Sigma(\mX)=
    \underbrace{\frac{\Sigma(\mX)}{\ssize-1}}_{(\star)}-\frac{\vx\vx^\top}{\ssize-1}  \text{ if } \tilde{\mX} = \mX \backslash \{\vx\},
\end{align*}
where $\ssize\bydef |\mX|$ and $\vx \in \R^\dim$ denotes the point added to or removed from $\mX$ to construct $\tilde{\mX}$. We will see that the terms marked $(\star)$ are asymptotically negligible and hence, to simplify our computations, it will be helpful to introduce the following simpler covariance matrix, which removes these negligible terms: 
\begin{align} \label{eq:simple-cov}
    \bar{\mSigma}(\tilde{\mX})\bydef\begin{dcases}
        \mSigma(\mX)+\frac{\vx\vx^\top}{\ssize}&\text{if }\tilde{\mX}\in\calN_+(\mX)\\
        \mSigma(\mX)-\frac{\vx\vx^\top}{\ssize}&\text{if }\tilde{\mX}\in\calN_-(\mX).
    \end{dcases}
\end{align}
The following lemma shows that removing the terms marked $(\star)$ has a negligible effect and the Gibbs distributions $\nu(\cdot \mid \mSigma(\tilde{\mX}), \beta_\dim, \rnk)$ and $\nu(\cdot \mid \bar{\mSigma}(\tilde{\mX}), \beta_\dim, \rnk)$ are very close. 

\begin{lemma}\label{lem:simple-gibbs-approx} Under the assumptions of \Cref{thm:privacy}, for any neighboring dataset $\tilde{\mX} \in \calN(\mX)$,
\begin{align*}
    \lm \tv(\nu(\cdot \mid \mSigma(\tilde{\mX}), \beta_\dim, \rnk), \nu(\cdot \mid \bar{\mSigma}(\tilde{\mX}), \beta_\dim, \rnk)) &=0, \\ \lm \rdv{\alpha}{\nu(\cdot \mid \mSigma(\tilde{\mX}), \beta_\dim, \rnk)}{\nu(\cdot \mid \bar{\mSigma}(\tilde{\mX}), \beta_\dim, \rnk)} &=0\quad\forall\:\alpha>1, \\
    \lm  \rdv{\alpha}{\nu(\cdot \mid \bar{\mSigma}(\tilde{\mX}), \beta_\dim, \rnk)}{\nu(\cdot \mid \mSigma(\tilde{\mX}), \beta_\dim, \rnk)}&=0\quad\forall\:\alpha>1.
\end{align*}
\end{lemma}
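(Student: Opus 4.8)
Lemma~\ref{lem:simple-gibbs-approx} is essentially a packaging of the contiguity result \Cref{thm:contiguity}. The key point is that $\mSigma(\tilde{\mX})$ and $\bar{\mSigma}(\tilde{\mX})$ differ by a symmetric perturbation of Frobenius norm $O(\dim^{-1})$ — a factor $\sqrt{\dim}$ smaller than the threshold $\dim^{-1/2}$ at which \Cref{thm:contiguity} operates — so after writing $\mSigma(\tilde{\mX}) = \bar{\mSigma}(\tilde{\mX}) + \tfrac{1}{\sqrt{\dim}} \mE$ we land squarely in regime~2 of \Cref{thm:contiguity} (the case $\|\mE\| \ll 1$), which is exactly the regime in which the two Gibbs distributions coincide asymptotically in total variation and Rényi divergence. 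This extra room is precisely what the sample-size scaling $\ssize \asymp \theta \dim^{3/2}$ buys.

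\textbf{The perturbation bound.} Combining \eqref{eq:privacy-perturb-orig} and \eqref{eq:simple-cov}, a short computation gives, for $\tilde{\mX} = \mX \cup \{\vx\} \in \calN_+(\mX)$,
\[
    \mSigma(\tilde{\mX}) - \bar{\mSigma}(\tilde{\mX}) = -\frac{\mSigma(\mX)}{\ssize+1} - \frac{\vx\vx^\top}{\ssize(\ssize+1)},
\]
with the analogous identity (with $\ssize+1$ replaced by $\ssize-1$) for $\tilde{\mX} \in \calN_-(\mX)$. I would bound this using $\|\vx\|^2 \le \dim$, $\ssize \asymp \theta\dim^{3/2}$, and the crude estimate
\[
    \|\mSigma(\mX)\|_{\fr}^2 \le \lambda_1(\mSigma(\mX)) \cdot \Tr[\mSigma(\mX)] \le \lambda_1(\mSigma(\mX)) \cdot \dim \lesssim \dim,
\]
which uses $\Tr[\mSigma(\mX)] = \tfrac{1}{\ssize}\sum_i \|\vx_i\|^2 \le \dim$ together with $\lambda_1(\mSigma(\mX)) \to \gamma_1 < \infty$. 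This yields $\|\mSigma(\tilde{\mX}) - \bar{\mSigma}(\tilde{\mX})\|_{\fr} \lesssim \dim^{-1}$ uniformly over $\vx$ with $\|\vx\| \le \sqrt{\dim}$, so the symmetric matrix $\mE \bydef \sqrt{\dim}\,(\mSigma(\tilde{\mX}) - \bar{\mSigma}(\tilde{\mX}))$ satisfies $\|\mE\| \le \|\mE\|_{\fr} \lesssim \dim^{-1/2} \to 0$.

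\textbf{Applying the contiguity theorem.} First I would verify that both $\bar{\mSigma}(\tilde{\mX})$ and $\mSigma(\tilde{\mX})$ satisfy \Cref{assump:mat}: each is a perturbation of $\mSigma(\mX)$ of operator norm $O(\dim/\ssize) = O(\dim^{-1/2})$, so Weyl's inequality keeps the top $\rnk+1$ eigenvalues converging to $\gamma_{1:\rnk+1}$ (preserving the limiting gap $\Delta > 0$), and a vanishing operator-norm perturbation does not alter the limiting empirical spectral distribution $\mu$ of the bottom $\dim-\rnk$ eigenvalues — here using that $\mSigma(\mX)$ satisfies \Cref{assump:data}, which implies the requirements of \Cref{assump:mat}. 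In particular the hypothesis $\beta \in (H_\mu(\gamma_\rnk),\infty)$ of \Cref{thm:contiguity} holds for either choice of base matrix. Now I apply \Cref{thm:contiguity}(2) twice. Taking $\bar{\mSigma}(\tilde{\mX})$ as the base matrix and $\mSigma(\tilde{\mX}) = \bar{\mSigma}(\tilde{\mX}) + \tfrac{1}{\sqrt{\dim}}\mE$ as the perturbation gives, by part~2(a), $\tv(\nu(\cdot \mid \bar{\mSigma}(\tilde{\mX}), \beta_\dim, \rnk),\, \nu(\cdot \mid \mSigma(\tilde{\mX}), \beta_\dim, \rnk)) \to 0$ — which, by symmetry of total variation, is the first claim — and, by part~2(b), $\rdv{\alpha}{\nu(\cdot \mid \mSigma(\tilde{\mX}), \beta_\dim, \rnk)}{\nu(\cdot \mid \bar{\mSigma}(\tilde{\mX}), \beta_\dim, \rnk)} \to 0$ for each fixed $\alpha > 1$, the second claim. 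Since the Rényi divergence is not symmetric, I would run \Cref{thm:contiguity}(2) once more with the roles interchanged — base matrix $\mSigma(\tilde{\mX})$, perturbation $\bar{\mSigma}(\tilde{\mX}) = \mSigma(\tilde{\mX}) - \tfrac{1}{\sqrt{\dim}}\mE$, still of vanishing operator norm — to get $\rdv{\alpha}{\nu(\cdot \mid \bar{\mSigma}(\tilde{\mX}), \beta_\dim, \rnk)}{\nu(\cdot \mid \mSigma(\tilde{\mX}), \beta_\dim, \rnk)} \to 0$, the third claim.

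\textbf{Main obstacle.} There is no substantial difficulty: the whole content lies in \Cref{thm:contiguity}. The only points needing care are (i) extracting the extra $\sqrt{\dim}$ of slack from $\ssize \asymp \dim^{3/2}$ via the bound $\|\mSigma(\mX)\|_{\fr} \lesssim \sqrt{\dim}$, and (ii) invoking the contiguity theorem a second time with the base and perturbed matrices swapped to cover both directions of the (asymmetric) Rényi divergence; the mildly tedious bookkeeping is checking \Cref{assump:mat} for the perturbed covariance matrices, which is a routine consequence of eigenvalue-perturbation bounds for vanishing operator-norm perturbations.
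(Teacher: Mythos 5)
Your proof is correct and follows essentially the same route as the paper: bound $\|\mSigma(\tilde{\mX}) - \bar{\mSigma}(\tilde{\mX})\|_{\fr} \lesssim \dim^{-1}$ (via $\ssize \asymp \theta\dim^{3/2}$ and $\|\mSigma(\mX)\|_{\fr} \lesssim \sqrt{\dim}$), verify both covariance matrices satisfy \Cref{assump:mat}, and invoke \Cref{thm:contiguity}(2). If anything you are slightly more careful than the paper in noting explicitly that the asymmetric R\'enyi divergence requires two applications of \Cref{thm:contiguity} with the base and perturbed matrices swapped---a point the paper's written proof leaves implicit.
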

Now, the proof boils down to understanding the asymptotics of the trade-off function between $\nu(\cdot \mid \mSigma({\mX}), \beta_\dim, \rnk)$ and the simplified Gibbs distribution $\nu(\cdot \mid \bar{\mSigma}(\tilde{\mX}), \beta_\dim, \rnk)$, and transferring the result to  $\nu(\cdot \mid \mSigma(\tilde{\mX}), \beta_\dim, \rnk)$ using the following perturbation bounds. 
\begin{lemma}[{\citet[Proposition 11]{mironov2017renyi}}] \label{lem:triangle-ineq} Let $\nu,$ $\tilde{\nu},$ and $\bar{\nu}$ be probability measures on a common space. Then:
\begin{enumerate}
    \item For any $\alpha\in[0,1],$ 
    \begin{align*}
        \left\vert\tf{\nu}{\tilde{\nu}}(\alpha)-\tf{\nu}{\bar{\nu}}(\alpha)\right\vert\leq \tv(\tilde{\nu},\bar{\nu}).
    \end{align*}
    \item For any $\alpha>1$ and $\epsilon\in(0,\alpha-1),$ 
    \begin{align*}
        \rdv{\alpha-\epsilon}{\bar{\nu}}{\nu}-\frac{\alpha}{\alpha-1}\rdv{\frac{(\alpha-\epsilon)(\alpha-1)}{\epsilon}}{\bar{\nu}}{\tilde{\nu}}\leq \rdv{\alpha}{\tilde{\nu}}{\nu}&\leq \rdv{\alpha+\epsilon}{\bar{\nu}}{\nu}\\
        &+\frac{\alpha+\epsilon}{\alpha+\epsilon-1}\rdv{\frac{\alpha(\alpha+\epsilon-1)}{\epsilon}}{\tilde{\nu}}{\bar{\nu}}.
    \end{align*}
\end{enumerate}
\end{lemma}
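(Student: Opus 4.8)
I would prove the two parts of \Cref{lem:triangle-ineq} separately; both are standard and the statement is a restatement of \citep[Proposition~11]{mironov2017renyi}. For Part~1 (Lipschitz continuity of the trade-off function in total variation) the plan is to transfer an optimal test between the two nearby testing problems. Fix $\alpha\in(0,1)$ and let $\phi^\star:\mathcal{V}\to[0,1]$ be an optimal level-$\alpha$ test for $H_0:\nu$ versus $H_1:\bar{\nu}$ (existence by Neyman--Pearson for randomized tests), so that $\E_{\rV\sim\nu}[\phi^\star(\rV)]\le\alpha$ and $1-\E_{\rV\sim\bar{\nu}}[\phi^\star(\rV)]=\tf{\nu}{\bar{\nu}}(\alpha)$. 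Since $\phi^\star$ still has level $\le\alpha$ under $\nu$, it is feasible for $H_0:\nu$ versus $H_1:\tilde{\nu}$, so the definition of the trade-off function gives $\tf{\nu}{\tilde{\nu}}(\alpha)\le 1-\E_{\rV\sim\tilde{\nu}}[\phi^\star(\rV)]$. Combining this with the elementary bound $\bigl|\E_{\tilde{\nu}}[\phi^\star]-\E_{\bar{\nu}}[\phi^\star]\bigr|\le\tv(\tilde{\nu},\bar{\nu})$ (valid for any $[0,1]$-valued test, by the Hahn decomposition of the signed measure $\tilde{\nu}-\bar{\nu}$) gives $\tf{\nu}{\tilde{\nu}}(\alpha)\le\tf{\nu}{\bar{\nu}}(\alpha)+\tv(\tilde{\nu},\bar{\nu})$; interchanging $\tilde{\nu}$ and $\bar{\nu}$ yields the reverse inequality, and the endpoints $\alpha\in\{0,1\}$ are handled directly.

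For Part~2 (a weak triangle inequality for Rényi divergence) the plan is a Hölder-inequality argument on a change-of-measure factorization, proving the upper bound first and then deducing the lower bound from it. For the upper bound I may assume $\bar{\nu}\ll\nu$ and $\tilde{\nu}\ll\bar{\nu}$ (otherwise the right-hand side is $+\infty$), so $\tilde{\nu}\ll\nu$ and $\frac{\diff\tilde{\nu}}{\diff\nu}=L_1L_2$ holds $\nu$-a.e., where $L_1\bydef\frac{\diff\tilde{\nu}}{\diff\bar{\nu}}$ and $L_2\bydef\frac{\diff\bar{\nu}}{\diff\nu}$. I would then write
\begin{align*}
\exp\bigl((\alpha-1)\rdv{\alpha}{\tilde{\nu}}{\nu}\bigr)=\int L_1^{\alpha}L_2^{\alpha}\,\diff\nu=\int\bigl(L_1^{\alpha}L_2^{1/r}\bigr)\cdot L_2^{\gamma}\,\diff\nu,
\end{align*}
with $r\bydef\tfrac{\alpha+\epsilon-1}{\epsilon}$, $s\bydef\tfrac{r}{r-1}=\tfrac{\alpha+\epsilon-1}{\alpha-1}$, and $\gamma\bydef\alpha-\tfrac{1}{r}$, and apply Hölder's inequality with the conjugate pair $(r,s)$ (both strictly larger than $1$). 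The exponents are engineered so that $(\alpha-\gamma)r=1$ and $\gamma s=\alpha+\epsilon$, which turns the first Hölder factor into $\bigl(\int L_1^{\alpha r}\diff\bar{\nu}\bigr)^{1/r}$ and the second into $\bigl(\int L_2^{\alpha+\epsilon}\diff\nu\bigr)^{1/s}$, that is, into $\exp$ of constants times $\rdv{\alpha r}{\tilde{\nu}}{\bar{\nu}}$ and $\rdv{\alpha+\epsilon}{\bar{\nu}}{\nu}$ respectively. Taking logarithms, dividing by $\alpha-1$, and simplifying the coefficients (using $\gamma s-1=s(\alpha-1)$ and $\alpha r-1=(\alpha-1)(\alpha+\epsilon)/\epsilon$) gives the stated upper bound, with perturbation order $\alpha r=\tfrac{\alpha(\alpha+\epsilon-1)}{\epsilon}$ and coefficient $\tfrac{\alpha+\epsilon}{\alpha+\epsilon-1}$. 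For the lower bound I would invoke this upper bound with $\alpha$ replaced by $\alpha-\epsilon$ (legitimate since $\alpha-\epsilon>1$ by hypothesis), $\tilde{\nu}$ replaced by $\bar{\nu}$, and $\bar{\nu}$ replaced by $\tilde{\nu}$ (so that ``$\alpha+\epsilon$'' becomes $\alpha$), and then rearrange; the perturbation order $\tfrac{(\alpha-\epsilon)(\alpha-1)}{\epsilon}$ and the coefficient $\tfrac{\alpha}{\alpha-1}$ come out automatically.

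I do not anticipate a genuine conceptual obstacle. The main work is the bookkeeping in Part~2: choosing the Hölder exponents so that the two resulting integrals land precisely on Rényi-divergence integrals of the orders appearing in the statement, and then simplifying the constants to the clean forms written there. A secondary point of care is the degenerate cases --- non--absolute-continuity in Part~2 (where the right-hand side is $+\infty$) and $\alpha\in\{0,1\}$ in Part~1 --- for which all the inequalities hold trivially.
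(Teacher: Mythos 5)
Your proof is correct. For Part 1 your argument is essentially the paper's: both transfer an optimal Neyman--Pearson test between the two nearby alternatives and use the elementary bound $|\E_{\rV\sim\tilde{\nu}}[\phi(\rV)]-\E_{\rV\sim\bar{\nu}}[\phi(\rV)]|\le\tv(\tilde{\nu},\bar{\nu})$ for $[0,1]$-valued tests; you plug the test optimal against $\bar{\nu}$ into the problem against $\tilde{\nu}$, the paper does the mirror image (evaluating the test optimal against $\tilde{\nu}$ under $\bar{\nu}$), and the two are interchangeable.

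For Part 2 the only real difference is one of self-containedness: the paper does not prove the upper bound at all --- it imports it verbatim as \citet[Proposition 11]{mironov2017renyi} (its \Cref{fact:weak-triangle}) and only performs the swap-and-reparametrize step ($\tilde{\nu}\leftrightarrow\bar{\nu}$, $\alpha+\epsilon\mapsto\alpha$) to deduce the lower bound, which you carry out identically. Your H\"older reconstruction of the upper bound is correct: with $r=\tfrac{\alpha+\epsilon-1}{\epsilon}$ one gets $s=\tfrac{\alpha+\epsilon-1}{\alpha-1}$ and $\gamma=\alpha-\tfrac{1}{r}=\tfrac{(\alpha-1)(\alpha+\epsilon)}{\alpha+\epsilon-1}$, so indeed $(\alpha-\gamma)r=1$ (turning the first factor into $\bigl(\int (\tfrac{\diff\tilde{\nu}}{\diff\bar{\nu}})^{\alpha r}\diff\bar{\nu}\bigr)^{1/r}$) and $\gamma s=\alpha+\epsilon$; the resulting coefficients simplify to $\tfrac{\alpha+\epsilon-1}{s(\alpha-1)}=1$ for $\rdv{\alpha+\epsilon}{\bar{\nu}}{\nu}$ and $\tfrac{\alpha r-1}{r(\alpha-1)}=\tfrac{\alpha+\epsilon}{\alpha+\epsilon-1}$ for $\rdv{\frac{\alpha(\alpha+\epsilon-1)}{\epsilon}}{\tilde{\nu}}{\bar{\nu}}$, exactly as stated. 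So your write-up proves what the paper merely cites, at the cost of the exponent bookkeeping you describe.
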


\paragraph{Characterization of the Worst-Case Neighboring Dataset} 
To summarize our discussion so far, for a neighboring dataset $\tilde{\mX} \in \calN(\mX)$ constructed by adding or removing a data point $\vx \in \R^\dim$, we expect that:
\begin{align*}
    \tf{\nu(\cdot \mid \mSigma(\mX), \beta_\dim, \rnk)}{\nu(\cdot \mid \mSigma(\tilde{\mX}), \beta_\dim,\rnk)} \approx \tf{\nu(\cdot \mid \mSigma(\mX), \beta_\dim, \rnk)}{\nu(\cdot \mid \bar{\mSigma}(\tilde{\mX}), \beta_\dim,\rnk)}.
\end{align*}
Now, we can approximate the RHS  in the high-dimensional limit $\dim \rightarrow \infty$ using \Cref{thm:contiguity}:
\begin{align}
     &\tf{\nu(\cdot \mid \mSigma(\mX), \beta_\dim, \rnk)}{\nu(\cdot \mid \mSigma(\tilde{\mX}), \beta_\dim,\rnk)}\\
     &\quad\approx \tf{\gauss{0}{1}}{\gausss{\sigma_{\mSigma(\mX)}({\sqrt{\dim} (\bar{\mSigma}(\tilde{\mX})-\mSigma(\mX))},\beta_\dim)}{1}} \nonumber \\
     &\quad \explain{\eqref{eq:simple-cov}}{=} \tf*{\gauss{0}{1}}{\gauss{\sigma_{\mSigma(\mX)}\bigg(\pm \tfrac{ \sqrt{\dim} \vx \vx^\top}{\ssize},\beta_\dim\bigg)}{1}} \nonumber \\
     &\quad = \tf*{\gauss{0}{1}}{\gauss{\sigma_{\mSigma(\mX)}\bigg(\tfrac{ \sqrt{\dim} \vx \vx^\top}{\ssize},\beta_\dim\bigg)}{1}}, \label{eq:heursitic-plugin}
\end{align}
 where  $\sigma^2_{\mSigma(\mX)}(\cdot,\beta_{\dim})$ denotes the variance function introduced in  \Cref{thm:contiguity} (recall \eqref{eq:var-func}) and the last step follows because $\sigma^2_{\mSigma(\mX)}(\cdot,\beta_{\dim})$ is even. For notational convenience, we define the function $\sigma^2_{\mX}$:
\begin{align}\label{eq:var-func-x}
    \sigma^2_{\mX}(\vx,\beta)\bydef \sigma^2_{\mSigma(\mX)}\left(\frac{\sqrt{\dim}\vx\vx^\top}{\ssize},\beta\right) \quad \forall \; \beta > H_\mu(\gamma_\rnk), \; \vx \in \R^\dim.
\end{align}
Recall that our goal is to characterize the trade-off function for the \emph{worst-case} neighboring dataset. In light of \eqref{eq:heursitic-plugin}, we expect that:
\begin{align*}
    \inf_{\tilde{\mX} \in \calN(\mX)} \tf{\nu(\cdot \mid \mSigma(\mX), \beta_\dim, \rnk)}{\nu(\cdot \mid \mSigma(\tilde{\mX}), \beta_\dim,\rnk)} &\approx  \inf_{\substack{\vx \in \R^\dim \\ \|\vx\|^2 \leq \dim }}\tf*{\gauss{0}{1}}{\gauss{\sigma_{\mX}(\vx,\beta)}{1}}.
\end{align*}
Since $\gauss{0}{1}$ and $\gauss{\sigma}{1}$ are easier to distinguish for large values of $\sigma$,  the final ingredient to finish our privacy analysis is a characterization of the maximum value of the function $\sigma^2_{\mX}(\vx, \beta)$ over all valid possibilities of the added/removed data point $\vx$. The following proposition provides this characterization. 

\begin{proposition}\label{prop:var-lim-v2} Consider a sequence of datasets $\mX$ which satisfies \Cref{assump:data} and a sequence of noise parameters $\beta_\dim \rightarrow \beta \in (H_\mu(\gamma_\rnk), \infty)$. Let $\sigma^2_{\mX}:\R^\dim\times (H_{\mu}(\gamma_{\rnk}),\infty) \rightarrow \R$ denote the variance function defined in \eqref{eq:var-func-x}. Then,
\begin{align} \label{eq:var-func-max-v2}
    {\lim_{\dim \rightarrow \infty}} \sup_{\substack{\vx \in \R^p \\ \|x\|^2 \leq \dim}}\sigma_{\mX}^2(\vx,\beta_\dim)= \sigma_\beta^2 \bydef \begin{cases} \frac{1}{2 \Delta \theta^2} \cdot \frac{\left(\beta-H_{\mu}(\gamma_\rnk)\right)^2}{2({\beta-H_{\mu}(\gamma_\rnk)})+ \Delta H'_{\mu}(\gamma_\rnk)}&\text{if } \beta \geq -\Delta H'_{\mu}(\gamma_\rnk)+H_{\mu}(\gamma_{\rnk})\\
        -\frac{1}{2\theta^2}H'_{\mu}(\gamma_\rnk)&\text{if } \beta<-\Delta H'_{\mu}(\gamma_\rnk)+H_{\mu}(\gamma_{\rnk}).
    \end{cases}
\end{align}
Moreover, if $\vu_1, \vu_2, \dotsc, \vu_\dim$ denote the eigenvectors of $\mSigma(\mX)$, the data point:
\begin{align*}
    \vx_\star & = \sqrt{\dim} \left(  \sqrt{t_\star}  \vu_{\rnk} + \sqrt{1-t_\star}  \vu_{\rnk+1} \right) \quad \text{with}\quad t_\star \explain{def}{=} \min\left(\frac{{\beta-H_{\mu}(\gamma_\rnk)}}{2(\beta-H_{\mu}(\gamma_\rnk)) + \Delta H'_{\mu}(\gamma_\rnk)}, 1\right)
\end{align*}
asymptotically achieves the maximum in \eqref{eq:var-func-max-v2}: 
\begin{align*}
    \lim_{\dim \rightarrow \infty} \sigma_{\mX}^2(\vx_\star,\beta_\dim) & = \lim_{\dim \rightarrow \infty} \sup_{\substack{\vx \in \R^{\dim} \\ \|x\|^2 \leq \dim}}\sigma_{\mX}^2(\vx,\beta).
\end{align*}
\end{proposition}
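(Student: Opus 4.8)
The plan is to reduce the constrained maximization $\sup_{\|\vx\|^2 \le \dim} \sigma^2_{\mX}(\vx, \beta_\dim)$ to an elementary one‑variable optimization and then pass to the limit using consistency of the empirical Hilbert transform (\Cref{lem:misc_HK}). Let $\vu_1, \dotsc, \vu_\dim$ and $\lambda_1 \ge \dotsb \ge \lambda_\dim$ be the eigenvectors and eigenvalues of $\mSigma(\mX)$. Writing $\vx = \sqrt{\dim}\, \vz$ with $\|\vz\| \le 1$ and setting $a_j \bydef (\vu_j^\top \vz)^2$ for $j \in [\rnk]$, $b_i \bydef (\vu_{\rnk+i}^\top \vz)^2$ for $i \in [\dim - \rnk]$, one substitutes $\vu_a^\top \vx\vx^\top \vu_b = \dim (\vu_a^\top \vz)(\vu_b^\top \vz)$ into \eqref{eq:var-func}, \eqref{eq:var-func-x} and uses $\dim^3/\ssize^2 \to \theta^{-2}$ to get
\[
  \sigma^2_{\mX}(\vx, \beta_\dim) = \frac{\dim^3}{\ssize^2}\left[ \tfrac12 \sum_{j, \ell = 1}^{\rnk} K_{\mSigma(\mX)}(\lambda_j, \lambda_\ell)\, a_j a_\ell + \sum_{j=1}^\rnk \sum_{i=1}^{\dim - \rnk} \frac{\beta_\dim - H_{\mSigma(\mX)}(\lambda_j)}{\lambda_j - \lambda_{\rnk+i}}\, a_j b_i \right],
\]
to be maximized over $a_j, b_i \ge 0$ with $\sum_j a_j + \sum_i b_i \le 1$. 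For $\dim$ large, $\lambda_j - \lambda_{\rnk+1} \to \gamma_j - \gamma_{\rnk+1} > 0$ and $\beta_\dim - H_{\mSigma(\mX)}(\lambda_j) \to \beta - H_\mu(\gamma_j) \ge \beta - H_\mu(\gamma_\rnk) > 0$ (using \Cref{lem:misc_HK}, $\lambda_j \to \gamma_j$, and that $H_\mu$ is decreasing on $(\sup \mathrm{supp}(\mu), \infty) \supseteq [\gamma_\rnk, \infty)$, since $\sup\mathrm{supp}(\mu) \le \gamma_{\rnk+1} < \gamma_\rnk$), so every coefficient above is positive.

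First I would optimize out the bulk weights $b_i$: the objective is linear in $b$, and since $\lambda_{\rnk+i}$ is nonincreasing in $i$ we have $\lambda_j - \lambda_{\rnk+i} \ge \lambda_j - \lambda_{\rnk+1} > 0$, so for fixed $a$ the maximum is attained by placing all remaining mass $1 - S$, $S \bydef \sum_j a_j$, on $i = 1$. This collapses the inner problem to maximizing $\tfrac12 a^\top \mK a + (1-S)\psi^\top a$ over the simplex $\{a \ge 0 : \sum_j a_j = S\}$, where $\mK \bydef (K_{\mSigma(\mX)}(\lambda_j, \lambda_\ell))_{j,\ell \le \rnk}$ and $\psi_j \bydef (\beta_\dim - H_{\mSigma(\mX)}(\lambda_j))/(\lambda_j - \lambda_{\rnk+1})$. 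Because $\mK = \tfrac1\dim \sum_i \vw_i \vw_i^\top$ with $\vw_i \bydef ((\lambda_j - \lambda_{\rnk+i})^{-1})_{j \le \rnk}$ is positive semidefinite, this map is convex, so its maximum over the simplex is at a vertex $a = S \ve_m$. Hence, for $\dim$ large,
\[
  \sup_{\|\vx\|^2 \le \dim} \sigma^2_{\mX}(\vx, \beta_\dim) = \frac{\dim^3}{\ssize^2} \max_{m \in [\rnk]}\ \max_{S \in [0,1]} g^\dim_m(S), \qquad g^\dim_m(S) \bydef \tfrac12 K_{\mSigma(\mX)}(\lambda_m, \lambda_m) S^2 + \psi_m S(1-S),
\]
the ``$\ge$'' direction being realized by $\vx = \sqrt{\dim}\,(\sqrt{S}\, \vu_m + \sqrt{1-S}\, \vu_{\rnk+1})$, which attains $\|\vx\|^2 = \dim$.

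Next I would pass to the limit and locate the optimal index. By \Cref{lem:misc_HK}, $g^\dim_m \to g_m$ uniformly on $[0,1]$ where $g_m(S) = \tfrac{-H'_\mu(\gamma_m)}{2} S^2 + \tfrac{\beta - H_\mu(\gamma_m)}{\gamma_m - \gamma_{\rnk+1}} S(1-S)$, so it suffices to show that $G(\gamma) \bydef \max_{S \in [0,1]}\big[ \tfrac{-H'_\mu(\gamma)}{2} S^2 + \phi(\gamma) S(1-S)\big]$, with $\phi(\gamma) \bydef (\beta - H_\mu(\gamma))/(\gamma - \gamma_{\rnk+1})$, satisfies $G(\gamma) \le G(\gamma_\rnk)$ for all $\gamma \ge \gamma_\rnk$. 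A short computation gives $G(\gamma) = \phi(\gamma)^2 / \big(2(2\phi(\gamma) + H'_\mu(\gamma))\big)$, with interior maximizer $S = t_\star(\gamma)$, when $\phi(\gamma) \ge -H'_\mu(\gamma)$, equivalently $\beta \ge h(\gamma) \bydef H_\mu(\gamma) - (\gamma - \gamma_{\rnk+1}) H'_\mu(\gamma)$, and $G(\gamma) = -H'_\mu(\gamma)/2$ (maximizer $S = 1$) otherwise; taking $\gamma = \gamma_\rnk$ here reproduces \eqref{eq:var-func-max-v2} verbatim, the threshold there being exactly $\beta = h(\gamma_\rnk)$ and the maximizer exactly $t_\star$. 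The bound $G(\gamma) \le G(\gamma_\rnk)$ then rests on three facts about $H_\mu$ on $(\sup \mathrm{supp}(\mu), \infty) \supseteq [\gamma_\rnk, \infty)$: (i) $-H'_\mu$ is positive and decreasing, so the branch $-H'_\mu/2$ is decreasing; (ii) $h' = -(\gamma - \gamma_{\rnk+1}) H''_\mu < 0$, so $\{\gamma \ge \gamma_\rnk : \beta \ge h(\gamma)\}$ is an up‑interval and $[\gamma_\rnk, \infty)$ either lies entirely in the first branch or is the second branch followed by the first; (iii) on any interval where $\beta \ge h(\gamma)$, $\phi$ is decreasing, since $\phi' = N(\gamma)/(\gamma - \gamma_{\rnk+1})^2$ with $N(\gamma) = -H'_\mu(\gamma)(\gamma - \gamma_{\rnk+1}) - (\beta - H_\mu(\gamma))$ decreasing ($N' = -(\gamma - \gamma_{\rnk+1})H''_\mu < 0$) and $N \le 0$ at the left endpoint of any such interval (by definition of $h$). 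Since the closed form $\phi^2/(2(2\phi + H'_\mu))$ is increasing in both $\phi$ and in $-H'_\mu$ on $\{\phi \ge -H'_\mu\}$, facts (i)--(iii) together with continuity of $G$ at the branch junction give $G$ nonincreasing on $[\gamma_\rnk, \infty)$, so (using $\gamma_m \ge \gamma_\rnk$) $\max_m G(\gamma_m) = G(\gamma_\rnk)$. Combining this with uniform convergence gives $\sup_\vx \sigma^2_{\mX}(\vx,\beta_\dim) = \tfrac{\dim^3}{\ssize^2}\max_m\max_S g^\dim_m(S) \to \theta^{-2} G(\gamma_\rnk) = \sigma^2_\beta$; and since $\vx_\star$ realizes the configuration $(a_\rnk, b_1) = (t_\star, 1-t_\star)$, we get $\sigma^2_{\mX}(\vx_\star,\beta_\dim) = \tfrac{\dim^3}{\ssize^2} g^\dim_\rnk(t_\star) \to \theta^{-2} g_\rnk(t_\star) = \theta^{-2}G(\gamma_\rnk) = \sigma^2_\beta$, which is the stated optimality of $\vx_\star$.

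The main obstacle I expect is step three — proving $\vu_\rnk$ is the optimal top eigenvector, i.e.\ $G(\gamma) \le G(\gamma_\rnk)$ for $\gamma \ge \gamma_\rnk$. This is not pointwise obvious: raising $\gamma$ shrinks the ``self‑term'' $-H'_\mu(\gamma)$ (helpful) but shifts the cross‑term coefficient $\phi(\gamma)$ in a direction whose sign flips across the regime boundary $\beta = h(\gamma)$, so the argument must follow that moving boundary between the two closed forms for $G$ and verify continuity and monotonicity across it. The remaining ingredients — the bulk reduction, the convex‑maximum‑at‑a‑vertex step, uniform convergence of the coefficients via \Cref{lem:misc_HK}, and the final two‑sided limit — should be routine.
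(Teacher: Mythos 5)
Your proposal is correct, and it shares the paper's overall skeleton: concentrate all bulk mass on $\vu_{\rnk+1}$ (the coefficient $\tfrac{\beta_\dim-H_{\mSigma(\mX)}(\lambda_j)}{\lambda_j-\lambda_{\rnk+i}}$ is maximized at $i=1$), reduce to a one-dimensional quadratic in the mass placed on the top block, pass to the limit via \Cref{lem:misc_HK}, and verify achievability by $\vx_\star$. Two sub-steps are executed genuinely differently. First, where you collapse the top-block optimization to a single index $m$ by noting that a convex function (PSD quadratic $\tfrac12 a^\top\mK a$ plus, for fixed $S$, a linear term) attains its maximum over the scaled simplex at a vertex, the paper instead dominates every coefficient by its value at $\gamma_\rnk$ — $K_\mu(\gamma_j,\gamma_\ell)\le K_\mu(\gamma_\rnk,\gamma_\rnk)$ by coordinate-wise monotonicity of $K_\mu$, and $\phi(\gamma_j)\le\phi(\gamma_\rnk)$ via its \Cref{claim:mono}, whose proof is essentially your fact (iii) — which lands directly on the one-dimensional problem without ever comparing the indices $m$. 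Second, the paper treats the regime $\beta<H_\mu(\gamma_\rnk)-\Delta H_\mu'(\gamma_\rnk)$ as a separate case, using that $\sigma^2_{\mX}(\vx,\cdot)$ is nondecreasing in $\beta$ to reduce to the critical value $\beta_c$; you instead derive the closed form $G(\gamma)$ on both branches and prove $G$ is nonincreasing on $[\gamma_\rnk,\infty)$ by tracking the moving branch boundary $\beta=h(\gamma)$ together with the monotonicity of $\phi^2/(2(2\phi+H'))$ in $(\phi,-H')$. Your route is somewhat longer at this step but handles both regimes uniformly, avoids the paper's relaxation of $t$ from $[0,1]$ to $\R$, and makes explicit why $\vu_\rnk$ beats $\vu_m$ for $m<\rnk$ even below the threshold; the paper's coefficient-domination argument is shorter. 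Both are sound, and your closed forms for $G(\gamma_\rnk)$, the threshold, and the maximizer $t_\star$ match \eqref{eq:var-func-max-v2} exactly.
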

We now have all the ingredients needed to prove \Cref{thm:privacy}. We defer the proofs of the intermediate results to 
\Cref{app:privacy-renyi} 
and present the proof of \Cref{thm:privacy}, taking these results for granted. 
\begin{proof}[Proof of \Cref{thm:privacy}] Consider any sequence of datasets $\mX$ that satisfies \Cref{assump:data} and a sequence of noise parameters $\beta_{\dim} \rightarrow \beta\in(H_\mu(\gamma_k),\infty)$. Throughout the proof, for notational convenience, we will use $\nu_\dim$ to denote the Gibbs distribution:
\begin{align*}
    \nu_\dim \bydef \nu(\cdot \mid \mSigma(\mX), \beta_\dim, \rnk), 
\end{align*}
which is the output distribution of the exponential mechanism on dataset $\mX$. 
To show that the exponential mechanism (\Cref{alg:ExpM}) satisfies a $\sigma_\beta$-AGDP guarantee, we show:
\begin{align} \label{eq:priv-thm-tf}
   \lm \inf_{\tilde{\mX}\in\calN(\mX)}\tf{\nu_{\dim}}{\nu(\cdot \mid \mSigma(\tilde{\mX}), \beta_{\dim}, \rnk)}(\alpha)=\tf{\gauss{0}{1}}{\gauss{\sigma_\beta}{1}}(\alpha) \quad \forall \; \alpha \; \in \; [0,1],
\end{align}
where $\sigma_\beta$ is as defined in \eqref{eq:var-func-max-v2}. We will prove this in two steps. First, we show the lower bound: 
\begin{align}
   \li\inf_{\tilde{\mX}\in\calN(\mX)}\tf{\nu_{\dim}}{\nu(\cdot \mid \mSigma(\tilde{\mX}), \beta_{\dim}, \rnk)}(\alpha)\geq\tf{\gauss{0}{1}}{\gauss{\sigma_\beta}{1}}(\alpha).\label{eq:privacy-toshow1}
\end{align}
Then, we will show that the lower bound above is asymptotically tight by proving that the worst-case data point $\vx_\star$ constructed in \Cref{prop:var-lim-v2} achieves it:
\begin{align}  \label{eq:achievability-priv-thm}  
\lim_{\dim \rightarrow \infty} \tf{\nu_{\dim}}{\nu(\cdot \mid \mSigma({\mX} \cup \{\vx_\star\}), \beta_{\dim}, \rnk)}(\alpha)&=\tf{\gauss{0}{1}}{\gauss{\sigma_\beta}{1}}(\alpha).
\end{align}
Notice that \eqref{eq:privacy-toshow1} and \eqref{eq:achievability-priv-thm} together imply \eqref{eq:priv-thm-tf}. To show \eqref{eq:privacy-toshow1}, we start by letting $\tilde{\mX}_{\sharp}\in\calN(\mX)$ be an approximate minimizer of: $$ \inf_{\tilde{\mX}\in\calN(\mX)}\tf{\nu_{\dim}}{\nu(\cdot \mid \mSigma(\tilde{\mX}), \beta_{\dim}, \rnk)}(\alpha)$$ in the sense that:
\begin{align} \label{eq:priv-thm-interm}
    \tf{\nu_{\dim}}{\nu(\cdot \mid \mSigma(\tilde{\mX}_{\sharp}), \beta_{\dim}, \rnk)}(\alpha)\leq \inf_{\tilde{\mX}\in\calN(\mX)}\tf{\nu_{\dim}}{\nu(\cdot \mid \mSigma(\tilde{\mX}), \beta_{\dim}, \rnk)}(\alpha)+\frac{1}{\dim},
\end{align}
which guarantees that:
\begin{align*}
     \li \inf_{\tilde{\mX}\in\calN(\mX)}\tf{\nu_{\dim}}{\nu(\cdot \mid \mSigma(\tilde{\mX}), \beta_{\dim}, \rnk)}(\alpha)  &=\li \tf{\nu_{\dim}}{\nu(\cdot \mid \mSigma(\tilde{\mX}_{\sharp}), \beta_{\dim}, \rnk)}(\alpha)\notag\\& \explain{(a)}{=}\li \tf{\nu_{\dim}}{\nu(\cdot \mid \bar{\mSigma}(\tilde{\mX}_{\sharp}), \beta_{\dim}, \rnk)}(\alpha),
\end{align*}
where (a) follows from \Cref{lem:simple-gibbs-approx} (item (1)) and \Cref{lem:triangle-ineq} (item (1)). We will compute the limit on the RHS using \Cref{thm:contiguity}. To ensure that the requirements of \Cref{thm:contiguity} are met, we pass to a subsequence along which the limit inferior is attained and then to a further subsequence\footnote{Bolzano–Weierstrass guarantees the existence of such a subsequence since $\ls \sigma^2_{\mSigma(\mX)}(\sqrt{\dim}(\bar{\mSigma}(\tilde{\mX}_{\sharp})-\mSigma(\tilde{\mX})),\beta_{\dim})< \infty$, thanks to 
\Cref{prop:var-lim-v2}.}

along which $$\sigma^2_{\mSigma(\mX)}(\sqrt{\dim}(\bar{\mSigma}(\tilde{\mX}_{\sharp})-\mSigma(\tilde{\mX})),\beta_{\dim})\rightarrow v$$ for some $v\in[0,\infty)$. Now, applying \Cref{thm:contiguity}, we conclude that: 
\begin{align*}
     \li \inf_{\tilde{\mX}\in\calN(\mX)}\tf{\nu_{\dim}}{\nu(\cdot \mid \mSigma(\tilde{\mX}), \beta_{\dim}, \rnk)}(\alpha)  & = \tf{\gauss{0}{1}}{\gauss{\sqrt{v}}{1}}(\alpha)\\
     &\explain{(c)}{\geq}\tf*{\gauss{0}{1}}{\gauss{\sigma_\beta}{1}}(\alpha),
\end{align*}
where (c) holds by observing that if $\vx_\sharp$ denotes the data point added/removed from $\mX$ to construct $\mX_\sharp$,
\begin{align*}
    v &\bydef \lm \sigma^2_{\mSigma(\mX)}(\sqrt{\dim}(\bar{\mSigma}(\tilde{\mX}_{\sharp})-\mSigma(\tilde{\mX})),\beta_{\dim}) \\ &\explain{\eqref{eq:simple-cov}}{=} \lm \sigma^2_{\mSigma(\mX)}\left(\frac{\sqrt{\dim}\vx_\sharp\vx_\sharp^\top}{\ssize},\beta_{\dim}\right) \\&\bydef \lm\sigma^2_{\mX}(\vx_{\sharp},\beta)\\&\leq \lim_{\dim \rightarrow \infty} \sup_{\substack{\vx \in \R^\dim \\ \|\vx\|^2 \leq \dim}} \sigma^2_{\mX}(\vx,\beta_{\dim})\explain{Prop. \ref{prop:var-lim-v2}}{=} \sigma_\beta^2.
\end{align*}
This proves the lower bound \eqref{eq:privacy-toshow1}.  To complete the proof of the first claim of \Cref{thm:privacy}, we also need to prove \eqref{eq:achievability-priv-thm}. That is, the worst-case data point $\vx_{\star}$ constructed in \Cref{prop:var-lim-v2} achieves the lower bound proved above. By essentially following the same approach used for showing the lower bound:
\begin{align*}
    \lim_{\dim \rightarrow \infty} \tf{\nu_{\dim}}{\nu(\cdot \mid \mSigma({\mX} \cup \{\vx_\star\}), \beta_{\dim}, \rnk)}(\alpha)
    &\explain{(a)}{=}\lim_{\dim \rightarrow \infty} \tf{\nu_{\dim}}{\nu(\cdot \mid \bar{\mSigma}({\mX} \cup \{\vx_\star\}), \beta_{\dim}, \rnk)}(\alpha)\\&\explain{(b)}{=}\tf{\gauss{0}{1}}{\gauss{\sigma_\beta}{1}}(\alpha),
\end{align*}
where step (a) follows from \Cref{lem:simple-gibbs-approx} (item (1)) and \Cref{lem:triangle-ineq} (item (1)). In step (b) we applied \Cref{thm:contiguity} to compute the limiting trade-off function after observing that:
\begin{align*}
    \lm\sigma^2_{\mSigma(\mX)}(\sqrt{\dim}(\bar{\mSigma}({\mX} \cup \{\vx_\star\})-\mSigma(\mX)),\beta_{\dim})&=\lm \sigma^2_{\mSigma(\mX)}\left(\frac{\sqrt{\dim}\vx_\star\vx_\star^\top}{\ssize},\beta_{\dim}\right)\\
    &\bydef \lm\sigma^2_{\mX}(\vx_{\star},\beta)\\
    &\explain{Prop. \ref{prop:var-lim-v2}}{=} \sigma_\beta^2.
\end{align*}
This completes the proof of the trade-off function privacy guarantee \eqref{eq:priv-thm-tf}.
\end{proof}
\section*{Acknowledgements}
The authors would like to thank Songbin Liu and Junjie Ma for helpful notes regarding the 1000 Genomes dataset and Gavin Brown for helpful discussions regarding differential privacy.
\bibliographystyle{abbrvnat}
\bibliography{refs}

@article{chaudhuri2013near,
  title={A near-optimal algorithm for differentially-private principal components},
  author={Chaudhuri, Kamalika and Sarwate, Anand D and Sinha, Kaushik},
  journal={The Journal of Machine Learning Research},
  volume={14},
  number={1},
  pages={2905--2943},
  year={2013},
  publisher={JMLR. org}
}

@article{guionnet2021asymptotics,
  author  = {Guionnet, Alice and Husson, Jonathan},
  title   = {Asymptotics of $k$ dimensional spherical integrals and applications},
  journal = {ALEA: Latin American Journal of Probability and Mathematical Statistics},
  volume  = {19},
  pages   = {769--797},
  year    = {2022},
  doi     = {10.30757/ALEA.v19-30}
}

@article{liao2025testing,
  title={Testing for latent structure via the {W}ilcoxon--{W}igner random matrix of normalized rank statistics},
  author={Liao, Jonquil Z and Cape, Joshua},
  journal={arXiv preprint arXiv:2512.18924},
  year={2025}
}

@article{chatterjee2009fluctuations,
  title={Fluctuations of eigenvalues and second order {P}oincar{\'e} inequalities},
  author={Chatterjee, Sourav},
  journal={Probability Theory and Related Fields},
  volume={143},
  number={1},
  pages={1--40},
  year={2009},
  publisher={Springer}
}

@article{mukherjee2013statistics,
  author  = {Mukherjee, Sumit and Xu, Yuanzhe},
  title   = {Statistics of the two star {ERGM}},
  journal = {Bernoulli},
  volume  = {29},
  number  = {1},
  pages   = {24--51},
  month   = feb,
  year    = {2023},
  doi     = {10.3150/21-BEJ1448}
}

@article{redberg2021privately,
  title={Privately publishable per-instance privacy},
  author={Redberg, Rachel and Wang, Yu-Xiang},
  journal={Advances in Neural Information Processing Systems},
  volume={34},
  pages={17335--17346},
  year={2021}
}

@book{vershynin2018high,
  title={High-dimensional probability: An introduction with applications in data science},
  author={Vershynin, Roman},
  volume={47},
  year={2018},
  publisher={Cambridge university press}
}

@book{meckes2019random,
  title={The random matrix theory of the classical compact groups},
  author={Meckes, Elizabeth S},
  volume={218},
  year={2019},
  publisher={Cambridge University Press}
}

@article{dawid1977spherical,
  title={Spherical matrix distributions and a multivariate model},
  author={Dawid, AP},
  journal={Journal of the Royal Statistical Society Series B: Statistical Methodology},
  volume={39},
  number={2},
  pages={254--261},
  year={1977},
  publisher={Oxford University Press}
}

@book{searle1997linear,
  title={Linear models},
  author={Searle, Shayle R},
  year={1997},
  publisher={John Wiley \& Sons}
}

@book{van2000asymptotic,
  title={Asymptotic statistics},
  author={Van der Vaart, Aad W},
  volume={3},
  year={2000},
  publisher={Cambridge university press}
}

@article{chen2021spectral,
  title={Spectral methods for data science: A statistical perspective},
  author={Chen, Yuxin and Chi, Yuejie and Fan, Jianqing and Ma, Cong and others},
  journal={Foundations and Trends{\textregistered} in Machine Learning},
  volume={14},
  number={5},
  pages={566--806},
  year={2021},
  publisher={Now Publishers, Inc.}
}

@article{wedin1973perturbation,
  title={Perturbation theory for pseudo-inverses},
  author={Wedin, Per-{\AA}ke},
  journal={BIT Numerical Mathematics},
  volume={13},
  pages={217--232},
  year={1973},
  publisher={Springer}
}

@article{lewis1996derivatives,
  title={Derivatives of spectral functions},
  author={Lewis, Adrian S},
  journal={Mathematics of Operations Research},
  volume={21},
  number={3},
  pages={576--588},
  year={1996},
  publisher={INFORMS}
}

@article{guionnet2005fourier,
title = {A {F}ourier view on the {R}-transform and related asymptotics of spherical integrals},
journal = {Journal of Functional Analysis},
volume = {222},
number = {2},
pages = {435-490},
year = {2005},
author = {A. Guionnet and M. Maida}
}

@inproceedings{mironov2017renyi,
  title={R{\'e}nyi differential privacy},
  author={Mironov, Ilya},
  booktitle={2017 IEEE 30th computer security foundations symposium (CSF)},
  pages={263--275},
  year={2017},
  organization={IEEE}
}

@article{dong2022gaussian,
  title={Gaussian differential privacy},
  author={Dong, Jinshuo and Roth, Aaron and Su, Weijie J},
  journal={Journal of the Royal Statistical Society: Series B (Statistical Methodology)},
  volume={84},
  number={1},
  pages={3--37},
  year={2022},
  publisher={Wiley Online Library}
}

@inproceedings{dwork2006calibrating,
  title={Calibrating noise to sensitivity in private data analysis},
  author={Dwork, Cynthia and McSherry, Frank and Nissim, Kobbi and Smith, Adam},
  booktitle={Theory of cryptography conference},
  pages={265--284},
  year={2006},
  organization={Springer}
}

@book{lehmann2005testing,
  title={Testing statistical hypotheses},
  author={Lehmann, Erich Leo and Romano, Joseph P},
  year={2005},
  publisher={Springer}
}

@inproceedings{kairouz2015composition,
  title={The composition theorem for differential privacy},
  author={Kairouz, Peter and Oh, Sewoong and Viswanath, Pramod},
  booktitle={International conference on machine learning},
  pages={1376--1385},
  year={2015},
  organization={PMLR}
}

@article{wasserman2010statistical,
  title={A statistical framework for differential privacy},
  author={Wasserman, Larry and Zhou, Shuheng},
  journal={Journal of the American Statistical Association},
  volume={105},
  number={489},
  pages={375--389},
  year={2010},
  publisher={Taylor \& Francis}
}

@book{james2013introduction,
  title={An introduction to statistical learning: with applications in R},
  author={James, Gareth and Witten, Daniela and Hastie, Trevor and Tibshirani, Robert},
  volume={103},
  year={2013},
  publisher={Springer}
}

@inproceedings{dwork2014analyze,
  title={Analyze {G}auss: optimal bounds for privacy-preserving principal component analysis},
  author={Dwork, Cynthia and Talwar, Kunal and Thakurta, Abhradeep and Zhang, Li},
  booktitle={Proceedings of the forty-sixth annual ACM symposium on Theory of computing},
  pages={11--20},
  year={2014}
}

@article{amin2019differentially,
  title={Differentially private covariance estimation},
  author={Amin, Kareem and Dick, Travis and Kulesza, Alex and Munoz, Andres and Vassilvitskii, Sergei},
  journal={Advances in Neural Information Processing Systems},
  volume={32},
  year={2019}
}

@inproceedings{kapralov2013differentially,
  title={On differentially private low rank approximation},
  author={Kapralov, Michael and Talwar, Kunal},
  booktitle={Proceedings of the twenty-fourth annual ACM-SIAM symposium on Discrete algorithms},
  pages={1395--1414},
  year={2013},
  organization={SIAM}
}

@inproceedings{wei2016analysis,
  title={Analysis of a privacy-preserving {PCA} algorithm using random matrix theory},
  author={Wei, Lu and Sarwate, Anand D and Corander, Jukka and Hero, Alfred and Tarokh, Vahid},
  booktitle={2016 IEEE Global Conference on Signal and Information Processing (GlobalSIP)},
  pages={1335--1339},
  year={2016},
  organization={IEEE}
}

@book{kato2013perturbation,
  title={Perturbation theory for linear operators},
  author={Kato, Tosio},
  volume={132},
  year={2013},
  publisher={Springer Science \& Business Media}
}

@article{xia2021normal,
  title={Normal approximation and confidence region of singular subspaces},
  author={Xia, Dong},
  journal={Electronic Journal of Statistics},
  volume={15},
  number={2},
  pages={3798--3851},
  year={2021},
  publisher={The Institute of Mathematical Statistics and the Bernoulli Society}
}

@inproceedings{koltchinskii2016asymptotics,
  title={Asymptotics and concentration bounds for bilinear forms of spectral projectors of sample covariance},
  author={Koltchinskii, Vladimir and Lounici, Karim},
  booktitle={Annales de l’Institut Henri Poincar{\'e}-Probabilit{\'e}s et Statistiques},
  volume={52},
  pages={1976--2013},
  year={2016}
}

@article{he2025differentially,
  title={Differentially private low-dimensional synthetic data from high-dimensional datasets},
  author={He, Yiyun and Strohmer, Thomas and Vershynin, Roman and Zhu, Yizhe},
  journal={Information and Inference: A Journal of the IMA},
  volume={14},
  number={1},
  pages={iaae034},
  year={2025},
  publisher={Oxford University Press}
}

@article{liu2022dp,
  title={{DP-PCA}: Statistically optimal and differentially private {PCA}},
  author={Liu, Xiyang and Kong, Weihao and Jain, Prateek and Oh, Sewoong},
  journal={Advances in neural information processing systems},
  volume={35},
  pages={29929--29943},
  year={2022}
}

@inproceedings{leake2021sampling,
  title={Sampling matrices from {H}arish-{C}handra--{I}tzykson--{Z}uber densities with applications to quantum inference and differential privacy},
  author={Leake, Jonathan and McSwiggen, Colin and Vishnoi, Nisheeth K},
  booktitle={Proceedings of the 53rd Annual ACM SIGACT Symposium on Theory of Computing},
  pages={1384--1397},
  year={2021}
}

@inproceedings{gonem2018smooth,
  title={Smooth sensitivity based approach for differentially private {PCA}},
  author={Gonem, Alon and Gilad-Bachrach, Ram},
  booktitle={Algorithmic Learning Theory},
  pages={438--450},
  year={2018},
  organization={PMLR}
}

@incollection{dwork2024differentially,
  author    = {Dwork, Cynthia and Tankala, Pranay and Zhang, Linjun},
  title     = {Differentially Private Learning Beyond the Classical Dimensionality Regime},
  booktitle = {Theory of Cryptography Conference (TCC 2025)},
  pages     = {321--355},
  year      = {2025},
  publisher = {Springer},
  doi       = {10.1007/978-3-032-12290-2_11}
}

@inproceedings{dwork2006our,
  title={Our data, ourselves: Privacy via distributed noise generation},
  author={Dwork, Cynthia and Kenthapadi, Krishnaram and McSherry, Frank and Mironov, Ilya and Naor, Moni},
  booktitle={Annual international conference on the theory and applications of cryptographic techniques},
  pages={486--503},
  year={2006},
  organization={Springer}
}

@inproceedings{mcsherry2007mechanism,
  title={Mechanism design via differential privacy},
  author={McSherry, Frank and Talwar, Kunal},
  booktitle={48th Annual IEEE Symposium on Foundations of Computer Science (FOCS'07)},
  pages={94--103},
  year={2007},
  organization={IEEE}
}

@article{mangoubi2022re,
  title={Re-analyze {G}auss: Bounds for private matrix approximation via {D}yson {B}rownian motion},
  author={Mangoubi, Oren and Vishnoi, Nisheeth},
  journal={Advances in Neural Information Processing Systems},
  volume={35},
  pages={38585--38599},
  year={2022}
}

@inproceedings{hardt2013beyond,
  title={Beyond worst-case analysis in private singular vector computation},
  author={Hardt, Moritz and Roth, Aaron},
  booktitle={Proceedings of the forty-fifth annual ACM symposium on Theory of computing},
  pages={331--340},
  year={2013}
}

@inproceedings{hardt2012beating,
  title={Beating randomized response on incoherent matrices},
  author={Hardt, Moritz and Roth, Aaron},
  booktitle={Proceedings of the forty-fourth annual ACM symposium on Theory of computing},
  pages={1255--1268},
  year={2012}
}

@article{hardt2014noisy,
  title={The noisy power method: A meta algorithm with applications},
  author={Hardt, Moritz and Price, Eric},
  journal={Advances in neural information processing systems},
  volume={27},
  year={2014}
}

@article{baik2016fluctuations,
  title={Fluctuations of the free energy of the spherical {S}herrington--{K}irkpatrick model},
  author={Baik, Jinho and Lee, Ji Oon},
  journal={Journal of Statistical Physics},
  volume={165},
  number={2},
  pages={185--224},
  year={2016},
  publisher={Springer}
}

@inproceedings{baik2017fluctuations,
  title={Fluctuations of the free energy of the spherical {S}herrington--{K}irkpatrick model with ferromagnetic interaction},
  author={Baik, Jinho and Lee, Ji Oon},
  booktitle={Annales Henri Poincar{\'e}},
  volume={18},
  pages={1867--1917},
  year={2017},
  organization={Springer}
}

@article{baik2021spherical,
  title={Spherical spin glass model with external field},
  author={Baik, Jinho and Collins-Woodfin, Elizabeth and Le Doussal, Pierre and Wu, Hao},
  journal={Journal of Statistical Physics},
  volume={183},
  number={2},
  pages={31},
  year={2021},
  publisher={Springer}
}

@article{kosterlitz1976spherical,
  title={Spherical model of a spin-glass},
  author={Kosterlitz, John M and Thouless, David J and Jones, Raymund C},
  journal={Physical Review Letters},
  volume={36},
  number={20},
  pages={1217},
  year={1976},
  publisher={APS}
}

@article{husson2025spherical,
  title={Spherical integrals of sublinear rank},
  author={Husson, Jonathan and Ko, Justin},
  journal={Probability Theory and Related Fields},
  pages={1--88},
  year={2025},
  publisher={Springer}
}

@article{landon2020fluctuations,
  title={Fluctuations of the 2-spin {SSK} model with magnetic field},
  author={Landon, Benjamin and Sosoe, Philippe},
  journal={arXiv preprint arXiv:2009.12514},
  year={2020}
}

@article{baik2018ferromagnetic,
  title={Ferromagnetic to paramagnetic transition in spherical spin glass},
  author={Baik, Jinho and Lee, Ji Oon and Wu, Hao},
  journal={Journal of Statistical Physics},
  volume={173},
  number={5},
  pages={1484--1522},
  year={2018},
  publisher={Springer}
}

@article{dungler2025iterative,
  title={An Iterative Algorithm for Differentially Private $ k $-{PCA} with Adaptive Noise},
  author={D{\"u}ngler, Johanna and Sanyal, Amartya},
  journal={arXiv preprint arXiv:2508.10879},
  year={2025}
}

@article{cai2024optimal,
  title={Optimal Differentially Private {PCA} and Estimation for Spiked Covariance Matrices},
  author={Cai, T Tony and Xia, Dong and Zha, Mengyue},
  journal={arXiv preprint arXiv:2401.03820},
  year={2024}
}

@article{singhal2021privately,
  title={Privately learning subspaces},
  author={Singhal, Vikrant and Steinke, Thomas},
  journal={Advances in Neural Information Processing Systems},
  volume={34},
  pages={1312--1324},
  year={2021}
}

@inproceedings{mcsherry2009differentially,
  title={Differentially private recommender systems: Building privacy into the {N}etflix prize contenders},
  author={McSherry, Frank and Mironov, Ilya},
  booktitle={Proceedings of the 15th ACM SIGKDD international conference on Knowledge discovery and data mining},
  pages={627--636},
  year={2009}
}

@inproceedings{hu2024provable,
  author    = {Hu, Yuchen and Sanyal, Amartya and Sch{\"o}lkopf, Bernhard},
  title     = {Provable Privacy with Non-Private Pre-Processing},
  booktitle = {Proceedings of the 41st International Conference on Machine Learning (ICML)},
  year      = {2024},
  series    = {Proceedings of Machine Learning Research},
  volume    = {235},
  pages     = {19402--19437},
  publisher = {JMLR.org},
  doi       = {10.5555/3692070.3692852}
}

@article{hoff2009simulation,
  title={Simulation of the matrix {B}ingham--von {M}ises--{F}isher distribution, with applications to multivariate and relational data},
  author={Hoff, Peter D},
  journal={Journal of Computational and Graphical Statistics},
  volume={18},
  number={2},
  pages={438--456},
  year={2009},
  publisher={Taylor \& Francis}
}

@article{kume2006sampling,
  title={Sampling from compositional and directional distributions},
  author={Kume, Alfred and Walker, Stephen G},
  journal={Statistics and Computing},
  volume={16},
  number={3},
  pages={261--265},
  year={2006},
  publisher={Springer}
}

@article{kent2004simulation,
  title={Simulation for the complex {B}ingham distribution},
  author={Kent, John T and Constable, Patrick DL and Er, Fikret},
  journal={Statistics and Computing},
  volume={14},
  number={1},
  pages={53--57},
  year={2004},
  publisher={Springer}
}

@article{kent2018new,
  title={A new unified approach for the simulation of a wide class of directional distributions},
  author={Kent, John T and Ganeiber, Asaad M and Mardia, Kanti V},
  journal={Journal of Computational and Graphical Statistics},
  volume={27},
  number={2},
  pages={291--301},
  year={2018},
  publisher={Taylor \& Francis}
}

@article{bombari2025better,
  title={Better Rates for Private Linear Regression in the Proportional Regime via Aggressive Clipping},
  author={Bombari, Simone and Seroussi, Inbar and Mondelli, Marco},
  journal={arXiv preprint arXiv:2505.16329},
  year={2025}
}

@inproceedings{kuleszamean,
  title={Mean Estimation in the Add-Remove Model of Differential Privacy},
  author={Kulesza, Alex and Suresh, Ananda Theertha and Wang, Yuyan},
  booktitle={Forty-first International Conference on Machine Learning},
year = {2024}
}

@article{dwork2014algorithmic,
  title={The algorithmic foundations of differential privacy},
  author={Dwork, Cynthia and Roth, Aaron},
  journal={Foundations and trends{\textregistered} in theoretical computer science},
  volume={9},
  number={3--4},
  pages={211--407},
  year={2014},
  publisher={Now Publishers, Inc.}
}

@article{wang2019per,
  title={Per-instance differential privacy},
  author={Wang, Yu-Xiang},
  journal={Journal of Privacy and Confidentiality},
  volume={9},
  number={1},
  year={2019}
}

@misc{biostars,
  author       = {Kevin Blighe},
  title        = {Tutorial: Produce {PCA} bi-plot for 1000 {G}enomes Phase III - Version 2},
  year         = {2018},
  howpublished = {\url{https://www.biostars.org/p/335605/}},
  note         = {Accessed: 2025-09-23}
}

@article{10002015global,
  title={A global reference for human genetic variation},
  author={{1000 Genomes Project Consortium}},
  journal={Nature},
  volume={526},
  number={7571},
  pages={68},
  year={2015}
}

@article{zhong2022empirical,
  title={Empirical Bayes {PCA} in high dimensions},
  author={Zhong, Xinyi and Su, Chang and Fan, Zhou},
  journal={Journal of the Royal Statistical Society Series B: Statistical Methodology},
  volume={84},
  number={3},
  pages={853--878},
  year={2022},
  publisher={Oxford University Press}
}

@article{homer2008resolving,
  title={Resolving individuals contributing trace amounts of DNA to highly complex mixtures using high-density SNP genotyping microarrays},
  author={Homer, Nils and Szelinger, Szabolcs and Redman, Margot and Duggan, David and Tembe, Waibhav and Muehling, Jill and Pearson, John V and Stephan, Dietrich A and Nelson, Stanley F and Craig, David W},
  journal={PLoS genetics},
  volume={4},
  number={8},
  pages={e1000167},
  year={2008},
  publisher={Public Library of Science San Francisco, USA}
}

@book{le2012asymptotic,
  title={Asymptotic methods in statistical decision theory},
  author={Le Cam, Lucien},
  year={2012},
  publisher={Springer Science \& Business Media}
}

@inproceedings{ge2021efficient,
  title={Efficient sampling from the Bingham distribution},
  author={Ge, Rong and Lee, Holden and Lu, Jianfeng and Risteski, Andrej},
  booktitle={Algorithmic Learning Theory},
  pages={673--685},
  year={2021},
  organization={PMLR}
}

@book{tao2012topics,
  title={Topics in random matrix theory},
  author={Tao, Terence},
  volume={132},
  year={2012},
  publisher={American Mathematical Soc.}
}

@inproceedings{montanari2018mean,
  title={Mean field asymptotics in high-dimensional statistics: From exact results to efficient algorithms},
  author={Montanari, Andrea},
  booktitle={Proceedings of the International Congress of Mathematicians: Rio de Janeiro 2018},
  pages={2973--2994},
  year={2018},
  organization={World Scientific}
}

@inproceedings{papernot2018scalable,
  title={Scalable Private Learning with {PATE}},
  author={Papernot, Nicolas and Song, Shuang and Mironov, Ilya and Raghunathan, Ananth and Talwar, Kunal and Erlingsson, Ulfar},
  booktitle={International Conference on Learning Representations},
  year={2018}
}

@inproceedings{tran2025spectral,
  author    = {Tran, Phuc and Vishnoi, Nisheeth K. and Vu, Van H.},
  title     = {Spectral Perturbation Bounds for Low-Rank Approximation with Applications to Privacy},
  booktitle = {Advances in Neural Information Processing Systems (NeurIPS)},
  year      = {2025},
  url       = {https://openreview.net/forum?id=F0JzotXYgC}
}

@misc{https://doi.org/10.21231/gnt1-hw21,
  doi = {10.21231/GNT1-HW21},
  url = {https://chtc.cs.wisc.edu/},
  author = {{Center for High Throughput Computing}},
  title = {Center for High Throughput Computing},
  publisher = {Center for High Throughput Computing},
  year = {2006}
}

@article{mitra2014multivariate,
  title={Multivariate analysis of nonparametric estimates of large correlation matrices},
  author={Mitra, Ritwik and Zhang, Cun-Hui},
  journal={arXiv preprint arXiv:1403.6195},
  year={2014}
}

@article{liu2012high,
  title={High-dimensional semiparametric {G}aussian copula graphical models},
  author={Liu, Han and Han, Fang and Yuan, Ming and Lafferty, John and Wasserman, Larry},
  journal={The Annals of Statistics},
  volume={40},
  number={4},
  pages={2293},
  year={2012},
  publisher={Institute of Mathematical Statistics}
}

@article{xue2012regularized,
  title={Regularized rank-based estimation of high-dimensional nonparanormal graphical models},
  author={Xue, Lingzhou and Zou, Hui},
  journal={Annals of Statistics},
  volume={40},
  number={5},
  pages={2541--2571},
  year={2012},
  publisher={Institute of Mathematical Statistics}
}

@article{cape2024robust,
  title={Robust spectral clustering with rank statistics},
  author={Cape, Joshua and Yu, Xianshi and Liao, Jonquil Z},
  journal={Journal of Machine Learning Research},
  volume={25},
  number={398},
  pages={1--81},
  year={2024}
}

@inproceedings{d2025tight,
  author    = {d'Orsi, Tommaso and Novikov, Gleb},
  title     = {Tight Differentially Private {PCA} via Matrix Coherence},
  booktitle = {Proceedings of the 2026 Annual {ACM}-{SIAM} Symposium on Discrete Algorithms (SODA)},
  pages     = {10--51},
  year      = {2026},
  doi       = {10.1137/1.9781611978971.2}
}

@inproceedings{papernot2016semi,
  author    = {Papernot, Nicolas and Abadi, Mart{\'\i}n and Erlingsson, {\'U}lfar and Goodfellow, Ian and Talwar, Kunal},
  title     = {Semi-supervised Knowledge Transfer for Deep Learning from Private Training Data},
  booktitle = {International Conference on Learning Representations (ICLR)},
  year      = {2017},
  url       = {https://openreview.net/forum?id=HkwoSDPgg}
}

@article{pandey2025infinitely,
  title={Infinitely divisible privacy and beyond I: resolution of the $ s^2= 2k $ conjecture},
  author={Pandey, Aaradhya and Maleki, Arian and Kulkarni, Sanjeev},
  journal={arXiv preprint arXiv:2512.00734},
  year={2025}
}
\newpage

\appendix
\addcontentsline{toc}{section}{Appendix} 
\part{Appendix} 
\parttoc 

\crefalias{section}{appendix}
\crefalias{subsection}{appendix}
\section{Proof of the Utility Theorem (Theorem \texorpdfstring{\mref{thm:overlap}}{thm:overlap})}\label{app:overlap}
This appendix is devoted to the proof of our utility theorem (Theorem \mref{thm:overlap}).
\begin{proof}[Proof of Theorem \mref{thm:overlap}]
Recall that for a sequence of matrices $\mSigma\in\R^{\dim\times\dim}$ that satisfies Assumption  \mref{assump:mat}, any fixed rank $\rnk \in \N,$ and any sequence of noise parameters $\beta_p \rightarrow \beta \in [0,\infty),$ our goal is to show:
\begin{align}\label{eq:utility-wts1}
    \mU_{\star}^\top \rV \rV^\top \mU_{\star}\pc \diag\left(1 - \frac{H_\mu(\gamma_1)}{\beta}, 1 - \frac{H_\mu(\gamma_2)}{\beta}, \dotsc, 1-\frac{H_\mu(\gamma_\rnk)}{\beta} \right)_+,
\end{align}
where $\mU_{\star} \in \R^{\dim \times \rnk}$ is the matrix of the top $\rnk$ eigenvectors of $\mSigma$ and $\rV \sim \nu(\cdot \mid \mSigma, \beta_{\dim}, \rnk)$.  For notational convenience, we define:
\begin{align*}
    \mD\bydef \diag\left(1 - \frac{H_\mu(\gamma_1)}{\beta}, 1 - \frac{H_\mu(\gamma_2)}{\beta}, \dotsc, 1-\frac{H_\mu(\gamma_\rnk)}{\beta} \right)_+.
\end{align*}
Observe that to show \eqref{eq:utility-wts1}, it suffices to show that for any non-zero symmetric matrix $\mB \in \R^{\rnk \times \rnk}$ and any $\epsilon > 0,$\footnote{Observe that setting $\mB=(\ve_i\ve_j^\top+\ve_j\ve_i^\top)/2$ recovers the $ij$th entry of $\mU_{\star}^\top \rV \rV^\top \mU_{\star}.$}
\begin{align} \label{eq:overlap-conc-B}
   \lm\P(\vert\langle\mU_{\star}^\top \rV \rV^\top \mU_{\star},\mB\rangle-\langle\mD,\mB\rangle\vert>\epsilon)  & = 0.
\end{align}
By the union bound,
\begin{multline*}
    \P(\vert\langle\mU_{\star}^\top \rV \rV^\top \mU_{\star},\mB\rangle-\langle\mD,\mB\rangle\vert>\epsilon)\leq \P(\langle\mU_{\star}^\top \rV \rV^\top \mU_{\star},\mB\rangle>\langle\mD,\mB\rangle+\epsilon)\\ + \P(\langle\mU_{\star}^\top \rV \rV^\top \mU_{\star},\mB\rangle<\langle\mD,\mB\rangle-\epsilon).
\end{multline*}
We will show the upper tail bound:
\begin{align}\label{eq:utility-wts1-general}
    \lm\P(\langle\mU_{\star}^\top \rV \rV^\top \mU_{\star},\mB\rangle>\langle\mD,\mB\rangle+\epsilon)=0,
\end{align}
and the lower tail bound $\lm\P(\langle\mU_{\star}^\top \rV \rV^\top \mU_{\star},\mB\rangle<\langle\mD,\mB\rangle-\epsilon)=0$ follows by the same argument. We prove \eqref{eq:utility-wts1-general} in several steps.

\paragraph{Step 1: Chernoff Argument} To prove the upper tail bound \eqref{eq:utility-wts1-general}, we will use the standard Chernoff argument. Let $t_\star > 0$ be a tuning parameter which we will set appropriately. Then, we have the upper tail bound:
\begin{align}
    &\P(\langle\mU_{\star}^\top \rV \rV^\top \mU_{\star},\mB\rangle>\langle\mD,\mB\rangle+\epsilon)\\
    &\qquad = \P\left(e^{\frac{t_\star \dim\beta_{\dim}}{2}\langle\mU_{\star}^\top \rV \rV^\top \mU_{\star},\mB\rangle}>e^{\frac{t_\star \dim\beta_{\dim}}{2}(\langle\mD,\mB\rangle+\epsilon)}\right) \nonumber \\
    &\qquad \leq \E \left[ e^{\frac{t_\star \dim\beta_{\dim}}{2}\langle\mU_{\star}^\top \rV \rV^\top \mU_{\star},\mB\rangle} \right] \cdot e^{- \frac{t_\star \dim\beta_{\dim}}{2}(\langle\mD,\mB\rangle+\epsilon)} & \text{[Markov's Inequality]} \nonumber\\
    &\qquad \leq e^{ - \dim  \big(  \tfrac{t_\star \beta_{\dim}}{2}(\langle\mD,\mB\rangle+\epsilon) - M_\dim(t_\star) \big)}, \label{eq:chernoff}
\end{align}
where, in the last step, we used $M_\dim$ to denote the log-MGF of $\langle\mU_{\star}^\top \rV \rV^\top \mU_{\star},\mB\rangle$ (up to a reparametrization and normalization):
\begin{align} \label{eq:logmgf-def}
    M_\dim(t) \bydef \frac{1}{p} \ln \E \left[ e^{\frac{t \dim\beta_{\dim}}{2}\langle\mU_{\star}^\top \rV \rV^\top \mU_{\star},\mB\rangle}  \right] \quad \forall \; t \; \in \; \R.
\end{align}

\paragraph{Step 2:  Asymptotics of the log-MGF} To further simplify the upper tail bound \eqref{eq:chernoff}, we need to understand the asymptotic behavior of the log-MGF $M_\dim(t)$. Recall that the density of $\rV \sim \nu( \cdot \mid \mSigma, \beta_\dim, \rnk)$ with respect to $\xi_{\dim, \rnk} \bydef \unif{\O(\dim,\rnk)}$ was given by:
\begin{align} \label{eq:V-density-recall}
    \frac{\diff \nu( \mV | \mSigma, \beta_\dim, \rnk)}{\diff \xi_{\dim, \rnk}} & = \frac{e^{\frac{\dim \beta_\dim}{2}\Tr[\mV^\top \mSigma \mV ]}}{Z(\mSigma, \beta_\dim, \rnk)}\quad\forall\;\mV\;\in\;\O(\dim,\rnk),
\end{align}
where $Z(\mSigma, \beta_\dim, \rnk)$ denotes the normalizing constant:
\begin{align} \label{eq:log-Z}
    Z(\mSigma, \beta_\dim, \rnk) \bydef \E_{\rU \sim \xi_{\dim, \rnk}} \left[ e^{\frac{\dim \beta_\dim}{2}\Tr[\rU^\top \mSigma \rU ]} \right].
\end{align}
The key idea behind understanding the asymptotic behavior of the log-MGF $M_\dim(t)$ is to notice that it can be expressed in terms of the log-normalizing constant $\ln Z(\cdot)$. Indeed, we have:
\begin{align}
M_\dim(t) & \explain{\eqref{eq:logmgf-def}}{=} \frac{1}{p} \ln \E \left[ e^{\frac{t \dim\beta_{\dim}}{2}\langle\mU_{\star}^\top \rV \rV^\top \mU_{\star},\mB\rangle}  \right] \notag \\
& \explain{\eqref{eq:V-density-recall}}{=} \frac{1}{\dim}\ln\E_{\rU \sim \xi_{\dim,\rnk}}\left[e^{\frac{\dim\beta_{\dim}}{2}\Tr[\rU^\top \mSigma\rU]} \cdot e^{\frac{t \dim\beta_{\dim}}{2}\langle\mU_{\star}^\top \rU \rU^\top \mU_{\star},\mB\rangle}\right] - \frac{1}{\dim}\ln\E_{\rU \sim \xi_{\dim,\rnk}}\left[e^{\frac{\dim\beta_{\dim}}{2}\Tr[\rU^\top \mSigma\rU]}\right]\\
    &\explain{}{=}{\frac{1}{\dim}\ln\E_{\rU \sim \xi_{\dim,\rnk}}\left[e^{\frac{\dim\beta_{\dim}}{2}\Tr[\rU^\top (\mSigma+t\mU_\star\mB\mU_\star^\top)\rU]}\right]} - {\frac{1}{\dim}\ln\E_{\rU \sim \xi_{\dim,\rnk}}\left[e^{\frac{\dim\beta_{\dim}}{2}\Tr[\rU^\top \mSigma\rU]}\right]} \notag \\
    & \explain{\eqref{eq:log-Z}}{=} \frac{\ln Z(\mSigma +t\mU_\star\mB\mU_\star^\top, \beta_\dim, \rnk) }{\dim} - \frac{\ln Z(\mSigma, \beta_\dim, \rnk) }{\dim}. \label{eq:mgf-to-logZ}
\end{align}
Combining the formula \eqref{eq:mgf-to-logZ} for the log-MGF $M_\dim(t)$ with Fact \mref{fact:guionnet} on the asymptotics of $\ln Z(\cdot, \beta_\dim, \rnk)$ immediately gives the following asymptotic formula for the log-MGF.
\begin{claim} \label{claim:log-MGF} For any $t \in (-\tfrac{\Delta}{2\|\mB\|}, \tfrac{\Delta}{2\|\mB\|})$, $M_\dim(t) \rightarrow M(t)$ as $\dim \rightarrow \infty$, where:
\begin{align*}
    M(t)  & \bydef \frac{1}{2}\sum_{i=1}^\rnk f_\mu(\lambda_i\!\left(\mGamma+t\mB\right))-\frac{1}{2}\sum_{i=1}^\rnk f_\mu(\gamma_i) \quad \forall \; |t|<\tfrac{\Delta}{2\|\mB\|},\quad \mGamma \bydef \diag(\gamma_1,\dotsb,\gamma_\rnk).
\end{align*}
\end{claim}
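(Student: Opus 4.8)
The plan is to exploit identity~\eqref{eq:mgf-to-logZ}, which writes $M_\dim(t)$ as the difference of two normalized log-normalizing constants, and then apply \Cref{fact:guionnet} term by term. Write $\mSigma_t \bydef \mSigma + t\mU_\star \mB \mU_\star^\top$ (we may assume $\mB \neq 0$, the case $\mB = 0$ being trivial). The only substantive point is to verify that, for $|t| < \Delta/(2\|\mB\|)$, the perturbed sequence $\mSigma_t$ still satisfies \Cref{assump:mat}, so that \Cref{fact:guionnet} is applicable to $\dim^{-1}\ln Z(\mSigma_t,\beta_\dim,\rnk)$. Granting this, \Cref{fact:guionnet} yields $\dim^{-1}\ln Z(\mSigma_t, \beta_\dim, \rnk) \to \tfrac{1}{2}\sum_{i=1}^\rnk f_\mu(\lambda_i(\mGamma + t\mB))$ and $\dim^{-1}\ln Z(\mSigma, \beta_\dim, \rnk) \to \tfrac{1}{2}\sum_{i=1}^\rnk f_\mu(\gamma_i)$, and subtracting via \eqref{eq:mgf-to-logZ} gives $M_\dim(t) \to M(t)$, as claimed.

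To check \Cref{assump:mat} for $\mSigma_t$, I would first record that the perturbation $t\mU_\star\mB\mU_\star^\top$ is supported on the span of the top $\rnk$ eigenvectors of $\mSigma$; hence $\mSigma_t$ leaves the orthogonal complement of this span invariant and agrees with $\mSigma$ there, while $\mU_\star^\top \mSigma_t \mU_\star = \diag(\lambda_{1:\rnk}(\mSigma)) + t\mB$. Consequently the spectrum of $\mSigma_t$ consists of the $\rnk$ eigenvalues of $\diag(\lambda_{1:\rnk}(\mSigma)) + t\mB$ together with $\lambda_{\rnk+1}(\mSigma) \geq \dots \geq \lambda_\dim(\mSigma)$. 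By Weyl's inequality each eigenvalue of $\diag(\lambda_{1:\rnk}(\mSigma)) + t\mB$ is at least $\lambda_\rnk(\mSigma) - |t|\|\mB\|$, which tends to $\gamma_\rnk - |t|\|\mB\| > \gamma_\rnk - \Delta/2 = \tfrac{1}{2}(\gamma_\rnk + \gamma_{\rnk+1}) > \gamma_{\rnk+1}$; since $\lambda_{\rnk+1}(\mSigma) \to \gamma_{\rnk+1}$, for all large $\dim$ these $\rnk$ eigenvalues lie strictly above $\lambda_{\rnk+1}(\mSigma)$. Thus, for large $\dim$, the top $\rnk$ eigenvalues of $\mSigma_t$ are exactly those of $\diag(\lambda_{1:\rnk}(\mSigma)) + t\mB$, which by continuity of eigenvalues and $\lambda_i(\mSigma)\to\gamma_i$ converge to $\lambda_i(\mGamma + t\mB)$ for $i \in [\rnk]$; the $(\rnk+1)$-th largest eigenvalue of $\mSigma_t$ equals $\lambda_{\rnk+1}(\mSigma) \to \gamma_{\rnk+1}$; and the asymptotic spectral gap is $\lambda_\rnk(\mGamma + t\mB) - \gamma_{\rnk+1} \geq \Delta - |t|\|\mB\| > \Delta/2 > 0$. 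Finally, the bottom $\dim - \rnk$ eigenvalues of $\mSigma_t$ coincide with those of $\mSigma$, so $\mu_{\mSigma_t} = \mu_{\mSigma}$ converges weakly to the same bounded-support limit $\mu$. This establishes \Cref{assump:mat} for $\mSigma_t$, and shows each $\lambda_i(\mGamma + t\mB)$ lies in $(\gamma_{\rnk+1},\infty)$, the domain of $f_\mu$, so the formula for $M(t)$ is well-posed.

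I do not expect a genuine obstacle here: the argument is a short eigenvalue-perturbation computation followed by a direct invocation of \Cref{fact:guionnet} (whose statement already permits a varying sequence $\beta_\dim \to \beta$). The one place requiring care is the separation estimate above, which simultaneously identifies which eigenvalues of $\mSigma_t$ play the role of the ``outliers'' in \Cref{assump:mat} and guarantees a strictly positive limiting spectral gap — and this is precisely why the range of $t$ must be restricted to $|t| < \Delta/(2\|\mB\|)$.
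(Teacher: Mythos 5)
Your argument is correct and follows essentially the same route as the paper's proof: both rewrite $M_\dim(t)$ via \eqref{eq:mgf-to-logZ}, observe that the perturbation $t\mU_\star\mB\mU_\star^\top$ acts only on the top-$\rnk$ eigenspace so that the spectrum of $\mSigma_t$ is the union of $\mathrm{spec}(\diag(\lambda_{1:\rnk}(\mSigma))+t\mB)$ with $\lambda_{\rnk+1}(\mSigma),\dotsc,\lambda_\dim(\mSigma)$, use Weyl's inequality and the hypothesis $|t|<\Delta/(2\|\mB\|)$ to verify the ordering and a positive limiting spectral gap, note that the limiting spectral measure is unchanged, and then apply \Cref{fact:guionnet} to both normalizing constants. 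Your extra remark that each $\lambda_i(\mGamma+t\mB)$ lies in $(\gamma_{\rnk+1},\infty)$, so $f_\mu$ is evaluated on its domain, is a valid and useful sanity check that the paper leaves implicit.
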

We will also find it helpful to record the value of the limiting log-MGF and its derivative at $t=0$.
\begin{claim} \label{claim:log-MGF-at0} The limiting log-MGF $M$ is continuously differentiable on $(-\tfrac{\Delta}{2\|\mB\|}, \tfrac{\Delta}{2\|\mB\|})$ with $M(0) = 0$ and $M^\prime(0) = \beta \ip{D}{B}/2$.
\end{claim}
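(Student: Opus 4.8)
The identity $M(0)=0$ is immediate: at $t=0$ the matrix $\mGamma+t\mB$ equals $\diag(\gamma_1,\dots,\gamma_\rnk)$, so $\lambda_i(\mGamma)=\gamma_i$ and the two sums defining $M$ in \Cref{claim:log-MGF} cancel. For the remaining assertions, the plan is to reduce the regularity of $M$ to two ingredients: (i) the scalar function $f_\mu$ from \Cref{fact:guionnet} is continuously differentiable on its whole domain $(\gamma_{\rnk+1},\infty)$, with $f_\mu'=(\beta-H_\mu)_+$; and (ii) the induced trace functional $A\mapsto\Tr[f_\mu(A)]=\sum_{i=1}^\rnk f_\mu(\lambda_i(A))$ is $C^1$ in the symmetric matrix $A$ as long as $\mathrm{spec}(A)$ sits in a compact subinterval of $(\gamma_{\rnk+1},\infty)$. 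Granting these, $M(t)=\tfrac12\big(\Tr[f_\mu(\mGamma+t\mB)]-\Tr[f_\mu(\mGamma)]\big)$ is $C^1$ with $M'(t)=\tfrac12\Tr[f_\mu'(\mGamma+t\mB)\,\mB]$, and evaluating at $t=0$ (where $\mGamma$ is diagonal) will give the claimed formula after a short computation.

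For ingredient (i), I would first note that $H_\mu$ is strictly decreasing on $(\gamma_{\rnk+1},\infty)$ (its derivative $-\int(\lambda-t)^{-2}\diff\mu(t)$ is negative), so the condition $H_\mu(\gamma)\le\beta$ cuts the domain into at most two intervals with breakpoint $\gamma^\star\bydef H_\mu^{-1}(\beta)$. On the branch $H_\mu(\gamma)\le\beta$, differentiating under the integral sign (valid since $\gamma>\sup\mathrm{supp}(\mu)$) gives $f_\mu'(\gamma)=\beta-H_\mu(\gamma)$; on the branch $H_\mu(\gamma)>\beta$ the defining expression for $f_\mu$ does not involve $\gamma$, so $f_\mu'\equiv0$ there. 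One then checks that the two branch formulas for $f_\mu$ agree at $\gamma^\star$ (substitute $H_\mu^{-1}(\beta)=\gamma^\star$) and that both one-sided derivatives at $\gamma^\star$ equal $\beta-H_\mu(\gamma^\star)=0$; a standard consequence of the mean value theorem then gives differentiability at $\gamma^\star$ itself. Hence $f_\mu\in C^1((\gamma_{\rnk+1},\infty))$ with $f_\mu'(\gamma)=(\beta-H_\mu(\gamma))_+$, which is continuous (note $f_\mu$ is generally not $C^2$ at $\gamma^\star$). Throughout I would assume $\beta>0$; the degenerate case $\beta=0$ is trivial, since then $M_\dim\equiv0$, so $M\equiv0$, and $\mD=0$.

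Ingredient (ii) is the step I expect to be the main obstacle: $f_\mu$ is only $C^1$, and the eigenvalues of $\mGamma$ may be repeated, so individual eigenvalue maps need not be differentiable — hence naive eigenvalue perturbation theory is not directly available. I would instead argue through the trace functional. For $|t|<\Delta/(2\|\mB\|)$, Weyl's inequality pins $\mathrm{spec}(\mGamma+t\mB)$ inside the fixed compact interval $J\bydef[\gamma_{\rnk+1}+\Delta/2,\,\gamma_1+\Delta/2]\subset(\gamma_{\rnk+1},\infty)$. Pick polynomials $p_n$ with $p_n\to f_\mu$ and $p_n'\to f_\mu'$ uniformly on $J$ (for instance, integrate polynomial approximants of $f_\mu'$). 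For each $n$, $t\mapsto\Tr[p_n(\mGamma+t\mB)]$ is a genuine polynomial in $t$, and the cyclic property of the trace identifies its derivative as $\Tr[p_n'(\mGamma+t\mB)\,\mB]$. Using $\|g(A)-h(A)\|\le\sup_J|g-h|$ for symmetric $A$ with $\mathrm{spec}(A)\subset J$, one gets $\Tr[p_n(\mGamma+t\mB)]\to\Tr[f_\mu(\mGamma+t\mB)]$ pointwise in $t$ and $\Tr[p_n'(\mGamma+t\mB)\,\mB]\to\Tr[f_\mu'(\mGamma+t\mB)\,\mB]$ uniformly in $t$; the theorem on uniform limits of derivatives then gives that $M$ is $C^1$ on $(-\Delta/(2\|\mB\|),\Delta/(2\|\mB\|))$ with $M'(t)=\tfrac12\Tr[f_\mu'(\mGamma+t\mB)\,\mB]$. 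Finally, at $t=0$: since $\mGamma=\diag(\gamma_1,\dots,\gamma_\rnk)$, functional calculus gives $f_\mu'(\mGamma)=\diag\big((\beta-H_\mu(\gamma_i))_+\big)_{i=1}^\rnk$, so $M'(0)=\tfrac12\sum_{i=1}^\rnk(\beta-H_\mu(\gamma_i))_+\,\mB_{ii}$; and since $(\beta-H_\mu(\gamma_i))_+=\beta\,(1-H_\mu(\gamma_i)/\beta)_+=\beta\,\mD_{ii}$ for $\beta>0$, this equals $\tfrac{\beta}{2}\Tr[\mD\mB]=\tfrac{\beta}{2}\ip{\mD}{\mB}$, as required.
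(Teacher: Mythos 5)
Your proposal is correct, but it takes a genuinely different route from the paper's proof. Both start from $M(t)=\tfrac12\big(\Tr[f_\mu(\mGamma+t\mB)]-\Tr[f_\mu(\mGamma)]\big)$ and both compute $M'(t)=\tfrac12\Tr[f_\mu'(\mGamma+t\mB)\mB]$ before evaluating at $t=0$ in the same way. The difference lies in how the differentiability of the spectral sum $A\mapsto\sum_i f_\mu(\lambda_i(A))$ and the formula for its gradient are justified: the paper invokes Lewis's theorem on derivatives of spectral functions (\citep[Theorem 1.1]{lewis1996derivatives}) as a black box, which immediately yields $\nabla_A\sum_i f_\mu(\lambda_i(A))=\sum_i f_\mu'(\lambda_i(A))\,u_i(A)u_i(A)^\top$, whereas you give a self-contained argument via uniform polynomial approximation of $(f_\mu,f_\mu')$ on a compact spectral window, the cyclic property of trace (which trivializes the polynomial case), and the uniform-limit-of-derivatives theorem. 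Your route is more elementary, avoids a reference, and handles repeated eigenvalues without comment because it never differentiates individual eigenpairs — a point you correctly flag. It is slightly longer and requires establishing $f_\mu\in C^1$ explicitly, which the paper sidesteps by simply reading $f_\mu'=(\beta-H_\mu)_+$ off the formula; your branch-matching argument for this is correct and worth including since the piecewise definition of $f_\mu$ makes $C^1$ regularity at $H_\mu^{-1}(\beta)$ non-obvious.
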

We defer the detailed calculations involved in the derivation of \Cref{claim:log-MGF,claim:log-MGF-at0} to the end of this section, and continue with the proof of Theorem \mref{thm:overlap} assuming these claims.

\paragraph{Step 3: Choosing $t_\star$} To summarize our results from Steps 1-2, we have the following tail bound:
\begin{align*}
    &\P(\langle\mU_{\star}^\top \rV \rV^\top \mU_{\star},\mB\rangle>\langle\mD,\mB\rangle+\epsilon)\\
    & \quad \explain{\eqref{eq:chernoff}}{\leq} \quad  e^{ - \dim  \big(  \tfrac{t_\star \beta_{\dim}}{2}(\langle\mD,\mB\rangle+\epsilon) - M_\dim(t_\star) \big)} & \text{[assuming $t_\star > 0$]} \\
    & \quad \explain{\Cref{claim:log-MGF}}{=} \quad e^{ - \dim  \big(  \tfrac{t_\star \beta}{2}(\langle\mD,\mB\rangle+\epsilon) - M(t_\star)  - o(1) \big)} & \text{[assuming $|t_\star| <\tfrac{\Delta}{2\|\mB\|} $]}.
\end{align*}
Hence, to obtain the desired result $\P(\langle\mU_{\star}^\top \rV \rV^\top \mU_{\star},\mB\rangle>\langle\mD,\mB\rangle+\epsilon) \rightarrow 0$, we only need to find $t_\star \in (0, \tfrac{\Delta}{2\|\mB\|})$ such that the (limiting) exponent in our tail bound is positive:
\begin{align*}
    \frac{t_\star \beta(\langle\mD,\mB\rangle+\epsilon)}{2} - M(t_\star) > 0.
\end{align*}
The existence of such a $t_\star$ is guaranteed by \Cref{claim:log-MGF-at0} since the function $t \mapsto \beta t (\langle\mD,\mB\rangle+\epsilon)/2 - M(t)$ evaluates to $0$ at $t= 0$, and moreover, this function is strictly increasing in a neighborhood around $0$ since:
\begin{align*}
    \frac{\diff}{\diff t} \left( \frac{\beta t (\langle\mD,\mB\rangle+\epsilon)}{2} - M(t) \right) \bigg|_{t=0} = \frac{\beta (\ip{D}{B}+\epsilon)}{2} - M^\prime(0) \; \; \explain{\Cref{claim:log-MGF-at0}}{=} \; \;   \frac{\beta \epsilon}{2} > 0.
\end{align*}
This completes the proof of Theorem \mref{thm:overlap} taking \Cref{claim:log-MGF,claim:log-MGF-at0} for granted. The remainder of this section is devoted to the proofs of these claims.
\end{proof}
\subsection{Proof of \texorpdfstring{\Cref{claim:log-MGF}}{claim:log-MGF}}
\begin{proof} Recall from \eqref{eq:mgf-to-logZ} that:
\begin{align} \label{eq:mgf-recall}
    M_\dim(t) = \frac{\ln Z(\mSigma +t\mU_\star\mB\mU_\star^\top, \beta_\dim, \rnk) }{\dim} - \frac{\ln Z(\mSigma, \beta_\dim, \rnk) }{\dim}.
\end{align}
Our goal is to compute $\lim_{\dim \rightarrow \infty} M_\dim(t)$. For the second term, since $\mSigma$ satisfies Assumption \mref{assump:mat}, we know that $\mu_{\mSigma}\rightarrow\mu,$ $\lambda_i(\Sigma) \rightarrow \gamma_i$ for $i\in[\rnk+1],$ and $\Delta\bydef\gamma_\rnk-\gamma_{\rnk+1}>0$. Hence, Fact \mref{fact:guionnet} guarantees that 
\begin{align} \label{eq:mgf-part-1}
    \frac{\ln Z(\mSigma, \beta_\dim, \rnk) }{\dim} \rightarrow \frac{1}{2}\sum_{i=1}^\rnk f_\mu(\gamma_i).
\end{align}
To compute $$\lim_{\dim \rightarrow \infty} \frac{\ln Z(\mSigma +t\mU_\star\mB\mU_\star^\top, \beta_\dim, \rnk)}{\dim}$$  using Fact \mref{fact:guionnet}, we will need to understand the spectrum of the matrix $\mSigma+t\mU_\star\mB\mU_\star^\top.$ To do so, we first consider the eigendecomposition of $\mSigma$. Let $\lambda_1(\mSigma) \geq \lambda_2(\mSigma) \geq \dotsb \geq \lambda_\dim(\mSigma)$ denote the eigenvalues of $\mSigma$ in decreasing order. We can write the eigendecomposition of $\mSigma$ as:
\begin{align*}
    \mSigma & = \begin{bmatrix}\mU_\star&\mU_\perp\end{bmatrix}\begin{bmatrix}\Lambda_\star&0\\0&\Lambda_\perp\end{bmatrix}\begin{bmatrix}
        \mU_\star^\top\\\mU_\perp^\top
    \end{bmatrix},
\end{align*}
where $\Lambda_\star = \diag(\lambda_{1:\rnk}(\mSigma))$ is the diagonal matrix of the top $\rnk$ eigenvalues of $\mSigma$, $\Lambda_\perp = \diag(\lambda_{\rnk+1:\dim}(\mSigma))$ is the diagonal matrix of the remaining $\dim - \rnk$ eigenvalues of $\mSigma$ and $\mU_\star \in \R^{\dim \times \rnk}, \mU_\perp \in \R^{\dim \times (\dim - \rnk)}$ denote the matrices of the corresponding eigenvectors. We can now make the following observations regarding the spectrum of $\mSigma+t\mU_\star\mB\mU_\star^\top$.
\begin{itemize}
\item \emph{Eigenvalues:} Notice that:
\begin{align*}
    \Sigma+t\mU_\star\mB\mU_\star^\top & =  \begin{bmatrix}\mU_\star&\mU_\perp\end{bmatrix}\begin{bmatrix}\Lambda_\star  &0\\0&\Lambda_\perp\end{bmatrix}\begin{bmatrix}
        \mU_\star^\top\\\mU_\perp^\top \end{bmatrix}+t\begin{bmatrix}\mU_\star&\mU_\perp\end{bmatrix}\begin{bmatrix}\mB&0\\0&0\end{bmatrix}\begin{bmatrix}
        \mU_\star^\top\\\mU_\perp^\top
    \end{bmatrix} \\ &= \begin{bmatrix}\mU_\star&\mU_\perp\end{bmatrix}\begin{bmatrix}\Lambda_\star+t\mB&0\\0&\Lambda_\perp\end{bmatrix} \begin{bmatrix}
        \mU_\star^\top\\\mU_\perp^\top
    \end{bmatrix}.
\end{align*}
Since $\begin{bmatrix}\mU_\star&\mU_\perp\end{bmatrix}$ is an orthogonal matrix, the eigenvalues of $ \Sigma+t\mU_\star\mB\mU_\star^\top$ are exactly the eigenvalues of the block-diagonal matrix:
\begin{align*}
    \begin{bmatrix}\Lambda_\star+t\mB&0\\0&\Lambda_\perp\end{bmatrix},
\end{align*}
which are:
\begin{align*}
    \{\lambda_1(\Lambda_\star+t\mB), \lambda_2(\Lambda_\star+t\mB), \dotsc, \lambda_\rnk(\Lambda_\star+t\mB), \lambda_{\rnk+1}(\mSigma), \dotsc, \lambda_{\dim}(\mSigma)\}.
\end{align*}
\item \emph{Limiting Spectral Measure:} Note that only $\rnk = O(1)$ eigenvalues of the matrices $\mSigma+t\mU_\star\mB\mU_\star^\top$ and $\mSigma$ differ, so the spectral measure of $\mSigma+t\mU_\star\mB\mU_\star^\top$ also converges weakly to $\mu.$ 
\item \emph{Limits of the Largest $\rnk+1$ Eigenvalues:} We also need to compute the limits of the largest $\rnk+1$ eigenvalues of $\mSigma+t\mU_\star\mB\mU_\star^\top$. We already know that the set of eigenvalues of $\mSigma+t\mU_\star\mB\mU_\star^\top$ is:
\begin{align*}
      \{\lambda_1(\Lambda_\star+t\mB), \lambda_2(\Lambda_\star+t\mB), \dotsc, \lambda_\rnk(\Lambda_\star+t\mB)\} \cup  \{\lambda_{\rnk+1}(\mSigma), \dotsc, \lambda_{\dim}(\mSigma)\}.
\end{align*}
Notice that the elements of the two sets in the above union are already sorted in decreasing order, so to determine the largest $\rnk+1$ eigenvalues of $\mSigma+t\mU_\star\mB\mU_\star^\top$ we only need to determine the relative ordering between the elements of the two sets. For sufficiently small $t$, we expect that $\lambda_{\rnk}(\Lambda_\star+t\mB)$ will be close to $\lambda_{\rnk}(\Lambda_\star) = \lambda_\rnk(\mSigma)$, and hence greater than $\lambda_{\rnk+1}\left(\mSigma\right)$ for large enough $\dim$ (since  $\lambda_{\rnk}(\mSigma)-\lambda_{\rnk+1}(\mSigma)\rightarrow \Delta>0$). Indeed: 
\begin{align*} 
    &\lambda_{\rnk}(\Lambda_\star+t\mB) - \lambda_{\rnk+1}(\mSigma) \\
    &\quad= \lambda_{\rnk}(\Lambda_\star+t\mB)-\lambda_{\rnk}(\Lambda_*)+\lambda_{\rnk}(\mSigma) - \lambda_{\rnk+1}(\mSigma) \quad \text{[since $\lambda_{\rnk}(\Lambda_*) = \lambda_{\rnk}(\mSigma)$]}  \\
    &\quad\geq -|\lambda_{\rnk}(\Lambda_\star+t\mB)-\lambda_{\rnk}(\Lambda_\star)|+\lambda_{\rnk}(\mSigma) - \lambda_{\rnk+1}(\mSigma)\\
    &\quad \geq -|t|\|\mB\|+\lambda_{\rnk}(\mSigma) - \lambda_{\rnk+1}(\mSigma) \quad \text{[by Weyl's Inequality]} \\
    &\quad \geq -\frac{\Delta}{2} + \lambda_{\rnk}(\mSigma) - \lambda_{\rnk+1}(\mSigma) \quad \text{[since $|t| < \tfrac{\Delta}{2\|B\|}$ by assumption]} \\
    &\quad \rightarrow \frac{\Delta}{2} > 0  \;\;  \text{as $\dim \rightarrow \infty$} \quad \text{[since $\lambda_\rnk(\mSigma) - \lambda_{\rnk+1}(\mSigma) \rightarrow \Delta>0$]}.
\end{align*} 
Hence, for sufficiently large $\dim$, the largest $\rnk+1$ eigenvalues of $\mSigma+t\mU_\star\mB\mU_\star^\top$ are:
\begin{align*}
      \lambda_1(\Lambda_\star+t\mB) \geq  \lambda_2(\Lambda_\star+t\mB) \geq  \dotsb \geq  \lambda_\rnk(\Lambda_\star+t\mB) >  \lambda_{\rnk+1}(\mSigma),
\end{align*}
which converge to the limit values:
\begin{align*}
\lambda_1(\mGamma+t\mB) \geq \lambda_2(\mGamma+t\mB) \geq \dotsb \geq  \lambda_\rnk(\mGamma+t\mB) >  \gamma_{\rnk+1}
\end{align*}
since $\Lambda_\star \bydef \diag(\lambda_{1:\rnk}(\mSigma)) \rightarrow \Gamma \bydef  \diag(\gamma_{1:\rnk})$ and $\lambda_{\rnk+1}(\mSigma) \rightarrow \gamma_{\rnk+1}$ (recall Assumption \mref{assump:mat}), and eigenvalues of a matrix are continuous functions of the matrix. 
\end{itemize}
Now, we can apply Fact \mref{fact:guionnet} to conclude that: 
\begin{align*}
    \lim_{\dim \rightarrow \infty} \frac{\ln Z(\mSigma +t\mU_\star\mB\mU_\star^\top, \beta_\dim, \rnk)}{\dim} \rightarrow \frac{1}{2}\sum_{i=1}^\rnk f_\mu(\lambda_i\!\left(\mGamma+t\mB\right)).
\end{align*}
Combining the above display with \eqref{eq:mgf-recall} and \eqref{eq:mgf-part-1} gives us the claimed formula for the limiting log-MGF:
\begin{align*}
    \lim_{\dim \rightarrow \infty} M_\dim(t)=\frac{1}{2}\sum_{i=1}^\rnk f_\mu(\lambda_i\!\left(\mGamma+t\mB\right))-\frac{1}{2}\sum_{i=1}^\rnk f_\mu(\gamma_i)
\end{align*}
for any $t\in\R$ satisfying $|t|<\Delta/(2\|\mB\|).$
\end{proof}
\subsection{Proof of \texorpdfstring{\Cref{claim:log-MGF-at0}}{claim:log-MGF-at0}}
\begin{proof} Recall from \Cref{claim:log-MGF} that:
\begin{align*}
    M(t)  & \bydef \frac{1}{2}\sum_{i=1}^\rnk f_\mu(\lambda_i\!\left(\mGamma+t\mB\right))-\frac{1}{2}\sum_{i=1}^\rnk f_\mu(\gamma_i) \quad \forall \; |t|<\tfrac{\Delta}{2\|\mB\|},
\end{align*}
where:
\begin{subequations}\label{eq:f_mu-recall}
    
\begin{align} 
    \mGamma &\bydef \diag(\gamma_1, \dotsc, \gamma_\rnk),\\
    f_\mu(\gamma)&=\begin{cases}\beta\gamma-\ln \beta-\int_{\R} \ln(\gamma-\lambda)\diff\mu(\lambda)-1&\text{if }H_\mu(\gamma)\leq\beta\\
    \beta H_\mu^{-1}(\beta)-\ln \beta - \int_{\R}\ln (H_\mu^{-1}(\beta)-\lambda)\diff\mu(\lambda)-1&\text{if }H_\mu(\gamma)>\beta.
\end{cases}
\end{align}
\end{subequations}
Hence,
\begin{align*}
     M^\prime(t) = \frac{1}{2}\frac{\diff}{\diff t}\sum_{i=1}^\rnk f_\mu(\lambda_i\!\left(\Gamma+t\mB\right)).
\end{align*}
To compute the above derivative, we view the function $t\mapsto \sum_{i=1}^\rnk f_\mu(\lambda_i\!\left(\Gamma+t\mB\right))$ as a composition of the functions
\begin{align*}
    t\mapsto \Gamma+t\mB\quad \text{and}\quad \mA\mapsto \sum_{i=1}^\rnk f_\mu(\lambda_i(\mA)),
\end{align*}
and apply the chain rule. The derivative of the first function is simple: $\partial_t (\Gamma+t\mB) = \mB$. For the second function, we rely on a formula of \citet[Theorem 1.1]{lewis1996derivatives}:
\begin{align*}
     \nabla_A \sum_{i=1}^\rnk f_\mu(\lambda_i(\mA)) =\sum_{i=1}^\rnk f_\mu'(\lambda_i(\mA)) \cdot u_i(\mA)u_i(\mA)^\top,
\end{align*}
where for a matrix $\mA \in \R^{\rnk \times \rnk}$, $\lambda_{1:\rnk}(A)$ denote the eigenvalues of $A$ and $\vu_{1:\rnk}(A)$ denote the corresponding eigenvectors. By the chain rule, we have: 
\begin{align*}
    M^\prime(t) &= \frac{1}{2}\frac{\diff}{\diff t}\sum_{i=1}^\rnk f_\mu(\lambda_i\!\left(\Gamma+t\mB\right)) \\ &=\frac{1}{2}\Tr\left[\sum_{i=1}^\rnk f_\mu'(\lambda_i(\Gamma+t\mB))u_i(\Gamma+t\mB)u_i(\Gamma+t\mB)^\top\mB\right]\\
    &= \frac{1}{2}\sum_{i=1}^\rnk (\beta-H_\mu(\lambda_i(\mGamma + t\mB)))_+ \cdot u_i(\Gamma+t\mB)^\top\mB u_i(\Gamma+t\mB), 
\end{align*}
where the last equality holds since $f_\mu^\prime(\gamma) = (\beta - H_\mu(\gamma))_+$ (see \eqref{eq:f_mu-recall}).
We can now evaluate the derivative at $t = 0$. Since $\mGamma = \diag(\gamma_{1:\rnk})$,
\begin{align*}
    M^\prime(0) = \frac{1}{2}\sum_{i=1}^\rnk (\beta-H_\mu(\lambda_i(\mGamma)))_+ \cdot u_i(\Gamma)^\top\mB u_i(\Gamma) = \frac{1}{2}\sum_{i=1}^\rnk (\beta-H_\mu(\gamma_i))_+ \cdot B_{ii} = \frac{\beta \ip{D}{B}}{2},
\end{align*}
where the last equality follows by recalling:
\begin{align*}
     \mD\bydef \diag\left(1 - \frac{H_\mu(\gamma_1)}{\beta}, 1 - \frac{H_\mu(\gamma_2)}{\beta}, \dotsc, 1-\frac{H_\mu(\gamma_\rnk)}{\beta} \right)_+.
\end{align*}
\end{proof}
\section{Analysis of the Sampling Algorithm (Proof of Theorem \texorpdfstring{\mref{thm:sampling}}{thm:sampling})}\label{app:sampling}
This appendix is devoted to the analysis of the sampling algorithm (Algorithm \mref{alg:sampler}) and the proof of Theorem \mref{thm:sampling}. Recall that to prove Theorem \mref{thm:sampling}, we need to show that:
\begin{align*}
    \lim_{\dim \rightarrow \infty} \tv \left( \nu(\cdot \mid \mSigma, \beta_\dim, \rnk), \hat{\nu}(\cdot \mid \mSigma, \beta_\dim, \rnk) \right) & = 0 \quad \text{ provided } \beta_\dim \rightarrow \beta > H_\mu(\gamma_\rnk),
\end{align*}
where $\nu(\cdot \mid \mSigma, \beta_\dim, \rnk)$ denotes the Gibbs distribution 
\begin{align*}
    \frac{\diff \nu(\mV \mid \Sigma, \beta, \rnk)}{\diff \xi_{\dim,\rnk}} & \propto \exp\left( \frac{\beta p}{2} \Tr[ \mV^\top \mSigma \mV]\right),
\end{align*}
and $\hat{\nu}(\cdot \mid \mSigma, \beta_\dim, \rnk)$ the output distribution of the sampling algorithm in Algorithm \mref{alg:sampler} on inputs $(\mSigma, \beta_\dim, \rnk)$. In this section, we will often use the convenient shorthand notations:
\begin{align*}
    \nu_\dim \bydef \nu(\cdot \mid \mSigma, \beta_\dim, \rnk), \quad \hat{\nu}_\dim \bydef \hat{\nu}(\cdot \mid \mSigma, \beta_\dim, \rnk).
\end{align*}
We will introduce the main ideas involved in the proof of Theorem \mref{thm:sampling} in the form of some intermediate results, whose proofs are deferred to the end of this section. 

\paragraph{Two Convenient Statistics} To develop a sampling algorithm for the exponential mechanism, our strategy will be to express the output $\rV \sim \nu_\dim$ of the exponential mechanism as a function of two statistics, which have a convenient and easy-to-sample distribution. To define these statistics, we consider the eigendecomposition of the matrix $\mSigma$:
\begin{align*}
    \mSigma & = \mU \diag(\lambda_1, \dotsc, \lambda_{\dim}) \mU^\top.
\end{align*}
We will find it convenient to partition the eigenvector matrix into two blocks $U_\star$ and $U_\perp$ consisting of the first $\rnk$ eigenvectors and the remaining $\dim - \rnk$ eigenvectors:
\begin{align*}
    \mU & = \begin{bmatrix} \mU_\star & \mU_\perp \end{bmatrix}\quad \text{where}\quad \mU_\star \in \R^{\dim \times k},\quad \mU_\perp \in \R^{\dim \times (\dim - \rnk)}.
\end{align*}
For any $\mV \in \O(\dim,\rnk)$ such that $\mU_\star^\top \mV$ is invertible,  we define the following convenient statistics:
\begin{align} \label{eq:conv-stat-1}
    \calQ(\mV) \bydef (\bar{\mV}_{\star} \bar{\mV}_\star^\top)^{-1/2} \cdot \bar{\mV}_{\star}, \quad \calZ(\mV) = \bar{\mV}_\perp \cdot Q(\mV)^\top \quad \text{where}\quad \bar{\mV}_{\star} \bydef \mU_\star^\top \mV, \quad  \bar{\mV}_{\perp} \bydef \mU_\perp^\top \mV. 
\end{align}
The following lemma (proved in \Cref{app:decomp}) shows that $\rV$ can be expressed as a function of $\calQ(\rV), \calZ(\rV)$, reducing the problem of sampling $\rV$ to sampling $\calQ(\rV), \calZ(\rV)$.
\begin{lemma}\label{lem:decomp} For any $\mV \in \O(\dim,\rnk)$ such that $\mU_\star^\top \mV \in \R^{\rnk \times \rnk}$ is invertible, we have:
\begin{align*}
    \mV & = \mU \begin{bmatrix} (I_k - \calZ(\mV)^\top \calZ(\mV))^{1/2} \\ \calZ(\mV) \end{bmatrix}  \calQ(\mV).
\end{align*}
\end{lemma}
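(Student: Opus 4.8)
The identity is a finite-dimensional linear-algebra fact, so the plan is to reduce it to two blockwise identities and check each by hand. Write $\bar{\mV}_\star \bydef \mU_\star^\top \mV$, $\bar{\mV}_\perp \bydef \mU_\perp^\top \mV$, and abbreviate $\calQ \bydef \calQ(\mV)$, $\calZ \bydef \calZ(\mV)$. Since $\mU = [\,\mU_\star \ \mU_\perp\,]$ is orthogonal, $\mU^\top \mV = \bigl[\begin{smallmatrix}\bar{\mV}_\star\\ \bar{\mV}_\perp\end{smallmatrix}\bigr]$ and hence $\mV = \mU\bigl[\begin{smallmatrix}\bar{\mV}_\star\\ \bar{\mV}_\perp\end{smallmatrix}\bigr]$; therefore it suffices to prove the $\R^{\dim\times\rnk}$ identity $\bigl[\begin{smallmatrix}\bar{\mV}_\star\\ \bar{\mV}_\perp\end{smallmatrix}\bigr] = \bigl[\begin{smallmatrix}(I_\rnk - \calZ^\top\calZ)^{1/2}\\ \calZ\end{smallmatrix}\bigr]\calQ$. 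Because $\mU_\star^\top\mV$ is invertible, $\bar{\mV}_\star\bar{\mV}_\star^\top$ is symmetric positive definite, so $(\bar{\mV}_\star\bar{\mV}_\star^\top)^{\pm1/2}$ are well-defined symmetric positive-definite matrices, and $\calQ = (\bar{\mV}_\star\bar{\mV}_\star^\top)^{-1/2}\bar{\mV}_\star$ is a genuine element of $\O(\rnk)$: one computes $\calQ\calQ^\top = (\bar{\mV}_\star\bar{\mV}_\star^\top)^{-1/2}(\bar{\mV}_\star\bar{\mV}_\star^\top)(\bar{\mV}_\star\bar{\mV}_\star^\top)^{-1/2} = I_\rnk$, and since $\calQ$ is square, $\calQ^\top\calQ = I_\rnk$ as well. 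I would also record the rearrangement $(\bar{\mV}_\star\bar{\mV}_\star^\top)^{1/2}\calQ = \bar{\mV}_\star$, obtained by multiplying the definition of $\calQ$ on the left by $(\bar{\mV}_\star\bar{\mV}_\star^\top)^{1/2}$.

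The bottom block is immediate from $\calZ = \bar{\mV}_\perp\calQ^\top$: namely $\calZ\calQ = \bar{\mV}_\perp\calQ^\top\calQ = \bar{\mV}_\perp$. For the top block I must show $(I_\rnk - \calZ^\top\calZ)^{1/2}\calQ = \bar{\mV}_\star$, and by the rearrangement above this follows once $I_\rnk - \calZ^\top\calZ = \bar{\mV}_\star\bar{\mV}_\star^\top$. To get that, I use three ingredients: $\calZ^\top\calZ = \calQ\,\bar{\mV}_\perp^\top\bar{\mV}_\perp\,\calQ^\top$; the column-orthonormality $\mV^\top\mV = I_\rnk$, which gives $\bar{\mV}_\star^\top\bar{\mV}_\star + \bar{\mV}_\perp^\top\bar{\mV}_\perp = I_\rnk$, i.e. $\bar{\mV}_\perp^\top\bar{\mV}_\perp = I_\rnk - \bar{\mV}_\star^\top\bar{\mV}_\star$; and $\bar{\mV}_\star^\top\bar{\mV}_\star = \calQ^\top(\bar{\mV}_\star\bar{\mV}_\star^\top)\calQ$, obtained by transposing $\bar{\mV}_\star = (\bar{\mV}_\star\bar{\mV}_\star^\top)^{1/2}\calQ$ (using symmetry of the square root) and multiplying. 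Substituting and using $\calQ\calQ^\top = \calQ^\top\calQ = I_\rnk$ collapses the expression to $I_\rnk - \calZ^\top\calZ = \calQ\calQ^\top(\bar{\mV}_\star\bar{\mV}_\star^\top)\calQ\calQ^\top = \bar{\mV}_\star\bar{\mV}_\star^\top$. Taking symmetric positive-definite square roots of both sides (legitimate since both are SPD) yields $(I_\rnk - \calZ^\top\calZ)^{1/2} = (\bar{\mV}_\star\bar{\mV}_\star^\top)^{1/2}$, and multiplying on the right by $\calQ$ completes the top block, hence the lemma.

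There is no genuine analytic obstacle here; the only things requiring care are the bookkeeping distinguishing $\bar{\mV}_\star\bar{\mV}_\star^\top$ from $\bar{\mV}_\star^\top\bar{\mV}_\star$ (they differ by conjugation with the orthogonal matrix $\calQ$), and making sure every matrix square root and inverse square root used is the principal one, which is exactly what the invertibility hypothesis on $\mU_\star^\top\mV$ guarantees. This hypothesis is the same condition one needs downstream so that the statistics $\calQ(\rV),\calZ(\rV)$ are almost surely well-defined when $\rV \sim \nu_\dim$.
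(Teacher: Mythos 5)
Your proof is correct and follows essentially the same route as the paper's: both decompose $\mV$ through $\bar{\mV}_\star,\bar{\mV}_\perp$, verify $\calQ$ is orthogonal, and then combine $\mV^\top\mV = I_\rnk$ with the orthogonality of $\calQ$ to extract $\bar{\mV}_\star\bar{\mV}_\star^\top = I_\rnk - \calZ^\top\calZ$. The paper substitutes the intermediate expression for $\mV$ directly into the constraint $\mV^\top\mV = I_\rnk$ and solves, whereas you expand $\calZ^\top\calZ = \calQ\bar{\mV}_\perp^\top\bar{\mV}_\perp\calQ^\top$ and feed in $\bar{\mV}_\perp^\top\bar{\mV}_\perp = I_\rnk - \bar{\mV}_\star^\top\bar{\mV}_\star$ together with the conjugation identity $\bar{\mV}_\star^\top\bar{\mV}_\star = \calQ^\top\bar{\mV}_\star\bar{\mV}_\star^\top\calQ$; this is slightly more bookkeeping but the algebraic content is identical.
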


\paragraph{Some Notation} For a measure $\nu$ and a function $\phi$, we will find it convenient to use the notation $\phi \# \nu$ for the pushforward of $\nu$ by $\phi$: $\phi \# \nu$ denotes the law of $\phi(\rV)$ when $\rV \sim \nu$. Hence, $(\calQ, \calZ) \# \nu_\dim$ denotes
the joint law of $(\calQ(\rV), \calZ(\rV))$ when $\rV \sim \nu_\dim$ is the output of the exponential mechanism (Algorithm \mref{alg:ExpM}). 

\paragraph{Characterizing the Distribution of 
\texorpdfstring{$\calQ(\rV), \calZ(\rV)$}{calQcalZ}} The following proposition (proved in \Cref{app:stat-distr}) characterizes the distribution of the statistics $\calQ(\rV), \calZ(\rV)$ when $\rV$ denotes the output of the exponential mechanism. 

\begin{proposition} \label{prop:stat-distr} Let $\rV \sim \nu_\dim$ denote the output of the exponential mechanism. Then, $\calQ(\rV)$ and $\calZ(\rV)$ are independent. Moreover, $\calQ(\rV) \sim \xi_{\rnk,\rnk} \explain{def}{=} \unif{\O(\rnk)}$ and the unnormalized density of $\calZ(\rV)$ (with respect to the Lebesgue measure) is given by:
\begin{align} \label{eq:Z-dist}
    \frac{\diff (\calZ \#\nu_\dim) }{\diff \mZ} (\mZ) \propto \frac{1}{\sqrt{\det(\mI_k - \mZ^\top \mZ)}} \cdot e^{ - \frac{ \dim \beta_\dim}{2} \sum_{i=1}^{\dim - \rnk} \sum_{j=1}^{\rnk} (\lambda_{j} - \lambda_{i + \rnk}) Z_{ij}^2 } \cdot \ind\{ \mZ^\top \mZ \preceq \mI_\rnk \},
\end{align}
where $\ind\{ \mZ^\top \mZ \preceq \mI_\rnk \}$ denotes the indicator function which evaluates to $1$ if $\mZ^\top \mZ \preceq \mI_\rnk$ and is $0$ otherwise. 
\end{proposition}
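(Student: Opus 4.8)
\emph{Proof plan.} The plan is to express the law of $(\calQ(\rV),\calZ(\rV))$ in the coordinates furnished by \Cref{lem:decomp} and to separate variables. I will first reduce to $\mSigma$ diagonal: since $\xi_{\dim,\rnk}$ is invariant under $\mV\mapsto\mU^\top\mV$ for the $\mU\in\O(\dim)$ in the eigendecomposition $\mSigma=\mU\diag(\lambda_{1:\dim})\mU^\top$, since $\Tr[\mV^\top\mSigma\mV]=\Tr[(\mU^\top\mV)^\top\diag(\lambda_{1:\dim})(\mU^\top\mV)]$, and since the statistics in \eqref{eq:conv-stat-1} transform accordingly, one may assume $\mSigma=\diag(\lambda_{1:\dim})$, so that $\mU_\star^\top$ reads off the top $\rnk$ rows and $\mU_\perp^\top$ the bottom $\dim-\rnk$ rows. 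I then restrict to the full-measure set on which $\mU_\star^\top\mV$ is invertible --- its complement is a proper real-algebraic subset of $\O(\dim,\rnk)$, hence $\xi_{\dim,\rnk}$-null and (since $\nu_\dim\ll\xi_{\dim,\rnk}$) also $\nu_\dim$-null --- on which $\mV\mapsto(\calQ(\mV),\calZ(\mV))$ and $(\mQ,\mZ)\mapsto\begin{bmatrix}(\mI_\rnk-\mZ^\top\mZ)^{1/2}\\ \mZ\end{bmatrix}\mQ$ are mutually inverse bijections onto $\O(\rnk)\times\{\mZ:\mZ^\top\mZ\prec\mI_\rnk\}$.

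Two observations then do the work. \textbf{(a) Right $\O(\rnk)$-invariance.} Both $\xi_{\dim,\rnk}$ and $\nu_\dim$ are invariant under $\mV\mapsto\mV\mQ_0$ for fixed $\mQ_0\in\O(\rnk)$ (Haar invariance, resp.\ $\Tr[(\mV\mQ_0)^\top\mSigma\mV\mQ_0]=\Tr[\mV^\top\mSigma\mV]$), while \eqref{eq:conv-stat-1} gives $\calQ(\mV\mQ_0)=\calQ(\mV)\mQ_0$ and $\calZ(\mV\mQ_0)=\calZ(\mV)$. Hence the law of $(\calQ(\rV),\calZ(\rV))$ under $\nu_\dim$ is invariant under $(\mQ,\mZ)\mapsto(\mQ\mQ_0,\mZ)$, which by uniqueness of Haar measure on $\O(\rnk)$ forces $\calQ(\rV)$ and $\calZ(\rV)$ to be independent with $\calQ(\rV)\sim\unif{\O(\rnk)}$. \textbf{(b) The Radon--Nikodym factor depends on $\calZ$ alone.} Substituting $\mV=\begin{bmatrix}(\mI_\rnk-\mZ^\top\mZ)^{1/2}\\ \mZ\end{bmatrix}\mQ$ and writing $\Lambda_\star=\diag(\lambda_{1:\rnk})$, $\Lambda_\perp=\diag(\lambda_{\rnk+1:\dim})$, cyclicity of the trace gives
\[
\Tr[\mV^\top\mSigma\mV]=\Tr\!\big[\Lambda_\star(\mI_\rnk-\mZ^\top\mZ)\big]+\Tr[\mZ^\top\Lambda_\perp\mZ]=\Tr[\Lambda_\star]-\sum_{i=1}^{\dim-\rnk}\sum_{j=1}^{\rnk}(\lambda_j-\lambda_{\rnk+i})\,\mZ_{ij}^2 ,
\]
with no $\mQ$-dependence. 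Thus $\diff\nu_\dim/\diff\xi_{\dim,\rnk}(\mV)=h(\calZ(\mV))$ for $h(\mZ)\propto\exp\big(-\tfrac{\dim\beta_\dim}{2}\sum_{i,j}(\lambda_j-\lambda_{\rnk+i})\mZ_{ij}^2\big)$, so $\calZ\#\nu_\dim$ has density $h$ with respect to $\calZ\#\xi_{\dim,\rnk}$.

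It remains to compute the base marginal $\calZ\#\xi_{\dim,\rnk}$, and here I would realize $\rV\sim\xi_{\dim,\rnk}$ as $\rV=\rG(\rG^\top\rG)^{-1/2}$ with $\rG\in\R^{\dim\times\rnk}$ having i.i.d.\ $\gauss{0}{1}$ entries, split $\rG=\begin{bmatrix}\rG_\star\\ \rG_\perp\end{bmatrix}$, and set $\rY\bydef\rG_\perp\rG_\star^{-1}$. Using $\rG^\top\rG=\rG_\star^\top(\mI_\rnk+\rY^\top\rY)\rG_\star$, a short manipulation of \eqref{eq:conv-stat-1} yields the identity $\calZ(\rV)=\rY(\mI_\rnk+\rY^\top\rY)^{-1/2}$, equivalently $\mI_\rnk-\mZ^\top\mZ=(\mI_\rnk+\mY^\top\mY)^{-1}$ and $\mY=\mZ(\mI_\rnk-\mZ^\top\mZ)^{-1/2}$ on the relevant domain. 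Conditioning on $\rG_\star$ makes $\rY$ matrix-normal, and integrating out the Wishart matrix $\rG_\star^\top\rG_\star$ shows $\rY$ has the matrix-variate $t$ density $\propto\det(\mI_\rnk+\mY^\top\mY)^{-\dim/2}$; pushing this through the change of variables $\mY\mapsto\mZ$ --- whose Jacobian equals $\det(\mI_\rnk-\mZ^\top\mZ)^{-(\dim+1)/2}$, obtained by factoring it through the polar decomposition $\mZ=\mP(\mZ^\top\mZ)^{1/2}$ and combining the $\det(\mZ^\top\mZ)^{(\dim-2\rnk-1)/2}$ weight relating $\diff\mZ$ to $\diff(\mZ^\top\mZ)$ with the Jacobian $\det(\mI_\rnk-\mW)^{-(\rnk+1)}$ of $\mW\mapsto\mW(\mI_\rnk-\mW)^{-1}$ on symmetric matrices --- gives $\diff(\calZ\#\xi_{\dim,\rnk})/\diff\mZ\propto\det(\mI_\rnk-\mZ^\top\mZ)^{-1/2}\ind\{\mZ^\top\mZ\prec\mI_\rnk\}$. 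Multiplying by $h$ from (b) then produces exactly the unnormalized density \eqref{eq:Z-dist}.

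I expect this last step --- the explicit form of $\calZ\#\xi_{\dim,\rnk}$ --- to be the main obstacle, as it rests on classical but somewhat delicate matrix-variate Jacobian identities, valid once $\dim$ is large relative to $\rnk$ (automatic here, since $\rnk$ is fixed and $\dim\to\infty$). As a sanity check, for $\rnk=1$ everything collapses to the elementary fact that a uniform point on $\S^{\dim-1}$, written $\pm(\sqrt{1-\|z\|^2},z)$, has $z$-density $\propto(1-\|z\|^2)^{-1/2}$. The remaining $\xi_{\dim,\rnk}$-null-set bookkeeping ($\mU_\star^\top\mV$ singular, $\mZ^\top\mZ$ on the boundary) is harmless because $\nu_\dim\ll\xi_{\dim,\rnk}$.
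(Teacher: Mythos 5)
Your proof is correct, and it diverges from the paper's in two places worth noting. First, for the joint law of $(\calQ(\rV),\calZ(\rV))$: you observe right-$\O(\rnk)$-invariance of both $\xi_{\dim,\rnk}$ and $\nu_\dim$ together with the covariance $\calQ(\mV\mQ_0)=\calQ(\mV)\mQ_0$, $\calZ(\mV\mQ_0)=\calZ(\mV)$, and deduce in one stroke that $\calQ(\rV)$ is Haar on $\O(\rnk)$ and independent of $\calZ(\rV)$; the paper proves the Haar marginal via left-invariance and then independence separately via a tower-property argument. Your route is cleaner and gets both facts at once. Second, and more substantially, for the base-measure marginal $\calZ\#\xi_{\dim,\rnk}$: the paper shows $\calZ(\bar{\rV})\overset{d}{=}\bar{\rV}_\perp$ (matching polar parts and singular values, then using bi-invariance) and then cites the known density of a $(\dim-\rnk)\times\rnk$ sub-block of a Haar orthogonal matrix. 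You instead realize $\xi_{\dim,\rnk}$ via $\rV=\rG(\rG^\top\rG)^{-1/2}$ with Gaussian $\rG$, derive the matrix-$t$ law $\propto\det(\mI_\rnk+\mY^\top\mY)^{-\dim/2}$ for $\rY=\rG_\perp\rG_\star^{-1}$ by integrating out the Wishart $\rG_\star\rG_\star^\top$, identify $\calZ(\rV)=\rY(\mI_\rnk+\rY^\top\rY)^{-1/2}$, and push through the polar-decomposition change of variables; I checked the Jacobian exponent $-(\dim+1)/2$ and it is consistent (combining the $(\dim-2\rnk-1)/2$ radial weight with the $\det(\mI_\rnk-\mW)^{-(\rnk+1)}$ Jacobian of $\mW\mapsto\mW(\mI_\rnk-\mW)^{-1}$ on symmetric matrices). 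Your approach is self-contained where the paper defers to an external reference, but it does rest on the matrix-variate Jacobian identities you flag; both are valid, and for $\rnk=1$ they of course collapse to the same elementary spherical computation.
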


\paragraph{Approximating the Distribution of \texorpdfstring{$\calZ(\rV)$}{calZ}} In light of \Cref{lem:decomp} and \Cref{prop:stat-distr}, a natural approach to sampling the output of the exponential mechanism $\rV\sim \nu_\dim$ is:
\begin{enumerate}
    \item Generate $(\rQ, \rZ)  \sim (\calQ, \calZ) \# \nu_\dim$ by:
    \begin{enumerate}
        \item Sampling $\rQ \sim \unif{\O(\rnk)}$.
        \item Sampling $\rZ \sim \calZ \# \nu_{\dim}$ from the unnormalized density \eqref{eq:Z-dist}, independent of $\rQ$.
    \end{enumerate}
    \item Set $\rV$ as (cf. \Cref{lem:decomp})\footnote{To express $\rV \sim \nu_\dim$ as a function of $(\rQ, \rZ) \bydef (\calQ(\rV), \calZ(\rV))$, \Cref{lem:decomp} requires that $\bar{\rV}_\star \bydef \mU_\star^\top \rV$ is invertible with probability $1$. This holds because $\nu_\dim$ is absolutely continuous with respect to the Haar measure $\xi_{\dim,\rnk}$ on $\O(\dim,\rnk)$. When $\rV \sim \xi_{\dim,\rnk}$, for large enough $\dim$ (if $\dim\geq2\rnk$), the matrix $\bar{\rV}_\star$ has a known density with respect to Lebesgue measure on $\R^{\rnk \times \rnk}$ \citep[Theorem 2.11]{meckes2019random}, whose value at any non-invertible matrix is 0.}:
    \begin{align} \label{eq:map-motivation}
         \rV & = \mU \begin{bmatrix} (I_k - \rZ^\top \rZ)^{1/2} \\ \rZ \end{bmatrix}  \rQ \explain{(a)}{=} \mU \begin{bmatrix} (I_k - \rZ^\top \rZ)^{1/2}_+ \\ \rZ \end{bmatrix}  \rQ
    \end{align}
    where (a) follows because $I_k - \rZ^\top \rZ \succeq 0_{\rnk\times\rnk}$ with probability $1$ (recall the density of $\rZ$ from \Cref{prop:stat-distr}).
\end{enumerate}
This provides an \emph{exact sampler} for $\rV \sim \nu_{\dim}$. However, since the unnormalized density in \eqref{eq:Z-dist} is high-dimensional and non-log-concave, sampling $\rZ$ from it is not straightforward. To address this issue, our sampler (Algorithm \mref{alg:sampler}) replaces Step 1(b) above by sampling the entries of $\rZ$ independently as follows:
\begin{align} \label{eq:Z-sampler}
\rZ_{ij} \explain{}{\sim} \gauss{0}{\frac{1}{ \dim \beta_\dim( \lambda_j - \lambda_{\rnk+i})}} \quad i \in [\dim-\rnk], \; j \; \in \; [\rnk] \quad \iff \quad \rZ \sim \zeta_\dim.
\end{align}
Let $\zeta_\dim$  denote the distribution of $\rZ$ when generated according to \eqref{eq:Z-sampler}. 
The replacement of step (1b) by \eqref{eq:Z-sampler} is justified by the following proposition (proved in \Cref{app:Z-dist-approx}), which shows that the two distributions are close in total variation distance.

\begin{proposition} \label{prop:Z-dist-approx} If $\beta \bydef \lim_{\dim \rightarrow \infty} \beta_\dim  > H_\mu(\gamma_\rnk),$ then  $\lim_{\dim \rightarrow \infty} \tv{(\calZ \# \nu_{\dim}, \zeta_{\dim})} = 0$.
\end{proposition}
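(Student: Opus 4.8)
The plan is to leverage \Cref{prop:stat-distr} to reduce the total variation bound to a concentration statement about a single scalar statistic, and then upgrade convergence in probability to $L^1$ convergence. Write
\[
    g(\mZ) \bydef \ind\{\mZ^\top \mZ \preceq \mI_\rnk\}\cdot\det(\mI_\rnk - \mZ^\top \mZ)^{-1/2},
\]
so that, by \Cref{prop:stat-distr}, the density of $\calZ \# \nu_\dim$ with respect to $\zeta_\dim$ is $g/C_\dim$ with $C_\dim \bydef \E_{\rZ \sim \zeta_\dim}[g(\rZ)]$, and hence
\[
    \tv(\calZ \# \nu_\dim, \zeta_\dim) = \tfrac12\,\E_{\rZ \sim \zeta_\dim}\!\left|\tfrac{g(\rZ)}{C_\dim} - 1\right|.
\]
It therefore suffices to show that (i) $g(\rZ) \pc c$ under $\zeta_\dim$ for the constant $c \bydef \det(\mI_\rnk - \mathbf{R})^{-1/2} \in (1,\infty)$, where $\mathbf{R} \bydef \diag\!\big(H_\mu(\gamma_1)/\beta, \dots, H_\mu(\gamma_\rnk)/\beta\big)$, and (ii) the family $\{g(\rZ)\}_\dim$ is uniformly integrable under $\zeta_\dim$. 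Indeed, (i) and (ii) together give $C_\dim \to c$, after which $g(\rZ)/C_\dim$ is nonnegative, converges in probability to $1$, and has expectation $1$, so it converges to $1$ in $L^1$ and the displayed total variation distance vanishes.

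For (i), I would first establish that $\rZ^\top \rZ \pc \mathbf{R}$ under $\zeta_\dim$, with exponentially small fluctuations. Since the entries of $\rZ$ are independent with $\rZ_{ij} \sim \gauss{0}{(\dim \beta_\dim(\lambda_j - \lambda_{\rnk+i}))^{-1}}$ and $\lambda_j - \lambda_{\rnk+i} \geq \lambda_\rnk - \lambda_{\rnk+1} \to \Delta > 0$, each entry has variance $O(1/\dim)$; a direct computation gives $\E[(\rZ^\top \rZ)_{jj}] = H_{\mSigma}(\lambda_j)/\beta_\dim \to H_\mu(\gamma_j)/\beta$ (using consistency of the empirical Hilbert transform, \Cref{lem:misc_HK}), $\E[(\rZ^\top \rZ)_{j\ell}] = 0$ for $j \neq \ell$, and $\Var[(\rZ^\top \rZ)_{j\ell}] = O(1/\dim)$ for all $j,\ell$. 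Since each entry of $\rZ^\top\rZ$ is a sum of $\dim - \rnk$ independent sub-exponential random variables of scale $O(1/\dim)$, Bernstein's inequality yields $\P(\|\rZ^\top \rZ - \mathbf{R}_\dim\|_{\fr} > s) \leq C e^{-c\dim \min(s^2,s)}$ for $\mathbf{R}_\dim \bydef \diag(H_{\mSigma}(\lambda_j)/\beta_\dim) \to \mathbf{R}$. Because $\beta > H_\mu(\gamma_\rnk) \geq H_\mu(\gamma_j)$ for every $j \in [\rnk]$ (as $H_\mu$ is decreasing and $\gamma_j \geq \gamma_\rnk$), the limit $\mathbf{R}$ satisfies $\mathbf{R} \prec \mI_\rnk$ with $\lambda_{\max}(\mathbf{R}) < 1$; hence with probability $1 - e^{-c\dim}$ we have $\rZ^\top \rZ \preceq \mI_\rnk$ and $\det(\mI_\rnk - \rZ^\top \rZ)$ bounded away from $0$, so $g$ is continuous at these realizations and $g(\rZ) \pc c$.

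The main obstacle is (ii): $g$ is unbounded, blowing up like $\mathrm{dist}(\mZ, \partial)^{-1/2}$ as $\mZ$ approaches the boundary $\partial \bydef \{\det(\mI_\rnk - \mZ^\top\mZ) = 0\}$, so uniform integrability does not come for free (note in particular that $g \notin L^2(\zeta_\dim)$, so a $\chi^2$-divergence argument is unavailable). I would handle it by a truncation split. Fix $r_0 \in (\lambda_{\max}(\mathbf{R}), 1)$ and $r_1 \in (\lambda_2(\mathbf{R}), 1)$. On the event $\{\lambda_{\max}(\rZ^\top\rZ) \leq r_0\}$ — of probability $1 - e^{-c\dim}$ by the concentration bound above — one has $g(\rZ) \leq (1 - r_0)^{-\rnk/2}$, a bounded contribution. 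On the event $\{\lambda_2(\rZ^\top\rZ) \leq r_1\}$ — also of probability $1 - e^{-c\dim}$ — at most one eigenvalue of $\rZ^\top\rZ$ can be near $1$, so $\det(\mI_\rnk - \rZ^\top\rZ) \geq (1 - r_1)^{\rnk - 1}(1 - \|\rZ\|^2)$ and hence $g(\rZ) \leq (1 - r_1)^{-(\rnk-1)/2}(1 - \|\rZ\|^2)^{-1/2}$, a one-dimensional (codimension-one) singularity in the scalar $\|\rZ\|^2 = \lambda_{\max}(\rZ^\top\rZ)$ that is integrable near $1$. The remaining quantity $\E_{\zeta_\dim}[g(\rZ)\,\ind\{g(\rZ) > M\}]$ is then bounded by combining this estimate with a quantitative control on how little $\zeta_\dim$-mass lies in the shell $\{1 - M^{-2} \leq \lambda_{\max}(\rZ^\top\rZ) < 1\}$. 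Making the last step quantitative — e.g.\ showing $\E_{\zeta_\dim}[g(\rZ)^q]$ is bounded uniformly in $\dim$ for some fixed $q \in (1,2)$, via a Wishart-type density bound on $\lambda_{\max}(\rZ^\top\rZ)$ near $1$ or a direct Laplace estimate, so that the $e^{-c\dim}$ smallness of the bad event overcomes the $(1-\|\rZ\|^2)^{-1/2}$ weight — is the technical heart. With uniform integrability in hand, (i) and (ii) combine as described above to give $\tv(\calZ \# \nu_\dim, \zeta_\dim) \to 0$.
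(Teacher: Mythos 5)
Your argument takes a genuinely different route from the paper's, and the overall structure --- reduce to $\tfrac12\E_{\zeta_\dim}|g(\rZ)/C_\dim - 1|$, establish $g(\rZ)\pc c$ under $\zeta_\dim$, prove uniform integrability of $g$, and close with a Scheff\'e argument --- is sound. The paper instead invokes the contiguity criterion \Cref{fact:tv-conv}, which is designed precisely to sidestep any uniform-integrability argument: it only requires the random part of the log-likelihood ratio, $L_\dim(\rZ) = -\tfrac12\ln\det(\mI_\rnk - \rZ^\top\rZ) + \ln\ind\{0\prec\rZ^\top\rZ\prec\mI_\rnk\}$, to converge in probability to a constant under $\zeta_\dim$ and to be $O_\P(1)$ under $\calZ\#\nu_\dim$. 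Both follow from $\rZ^\top\rZ \pc \diag\bigl(H_\mu(\gamma_1)/\beta,\dots,H_\mu(\gamma_\rnk)/\beta\bigr)\prec\mI_\rnk$ under each measure together with the continuous mapping theorem; the convergence under $\calZ\#\nu_\dim$ comes from \Cref{lem:Z^T*Z-exp} (itself a consequence of the utility result \Cref{thm:overlap}), and under $\zeta_\dim$ from the direct Gaussian computation in \Cref{lem:Z^T*Z-sampler}. Your route has the appeal of never touching the Gibbs side --- you only analyze $\rZ^\top\rZ$ under the easy Gaussian measure $\zeta_\dim$ --- but it pays for this by having to control moments of the singular weight $g = \det(\mI_\rnk-\cdot)^{-1/2}$, which the paper's argument never confronts.

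That cost is exactly where your proof has a gap. You are right that $g\notin L^2(\zeta_\dim)$, so a $\chi^2$-divergence bound is unavailable, and your truncation via $\{\lambda_2(\rZ^\top\rZ)\leq r_1\}$ correctly reduces the singularity to a single integrable scalar direction on the good event. But you still have to handle the complementary event of probability $e^{-c\dim}$, on which $g$ is unbounded, and with only first moments of $g$ available no H\"older argument can absorb it; as you yourself note, what is really needed is a bound on $\E_{\zeta_\dim}[g(\rZ)^q]$, uniform in $\dim$ for some fixed $q\in(1,2)$, coming from a Wishart-type density estimate for the eigenvalues of $\rZ^\top\rZ$ near the boundary $\{\det(\mI_\rnk-\rZ^\top\rZ)=0\}$. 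You flag this as ``the technical heart'' and leave it as a sketch, so the proof is incomplete as written. The missing estimate is plausible, but if one is willing to establish concentration of $\rZ^\top\rZ$ under the Gibbs measure as well --- which the paper must do anyway for its utility theorem --- then \Cref{fact:tv-conv} replaces all of these hard tail estimates with soft convergence-in-probability ones, and that is the cleaner route.
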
 
We now have all the ingredients to present the proof of Theorem \mref{thm:sampling}. 

\begin{proof}[Proof of Theorem \mref{thm:sampling}] Let $\calM: \O(\rnk) \times \R^{(\dim - \rnk) \times \rnk}  \rightarrow \R^{\dim \times \rnk}$ denote the map:
\begin{align*}
    \calM(\mQ,\mZ) \bydef \mU \begin{bmatrix} (I_k - \mZ^\top \mZ)^{1/2}_+ \\ \mZ \end{bmatrix}  \mQ \quad \forall \; \mZ \; \in \; \R^{(\dim - \rnk) \times \rnk}, \; \mQ \; \in \; \O(\rnk).
\end{align*}
From the above discussion, we know that:
\begin{align*}
    \nu_\dim & = \calM \# (\unif{\O(\rnk)} \otimes \calZ \# \nu_\dim), \quad \hat{\nu}_\dim = \calM \# (\unif{\O(\rnk)} \otimes \zeta_\dim).
    \end{align*}
By the data processing inequality for total variation distance,
\begin{align*}
    \tv(\nu_{\dim}, \hat{\nu}_{\dim})  \explain{}{\leq} \tv(\unif{\O(\rnk)} \otimes \calZ \# \nu_{\dim} , \unif{\O(\rnk)} \otimes \zeta_{\dim})  = \tv(\calZ \# \nu_{\dim} , \zeta_{\dim}) \explain{Prop. \ref{prop:Z-dist-approx}}{\rightarrow} 0. 
\end{align*}
This concludes the proof of Theorem \mref{thm:sampling}.
\end{proof}

The remainder of this section is devoted to the proofs of \Cref{lem:decomp} and \Cref{prop:stat-distr,prop:Z-dist-approx}.

\subsection{Proof of \texorpdfstring{\Cref{lem:decomp}}{lem:decomp}} \label{app:decomp}

\begin{proof}[Proof of \Cref{lem:decomp}] Consider any $\mV \in \O(\dim,\rnk)$.  Recall that the statistics $\calQ(\mV)$ and $\calZ(\mV)$ were defined as:
\begin{align} \label{eq:stat-recall}
     \calQ(\mV) \bydef (\bar{\mV}_{\star} \bar{\mV}_\star^\top)^{-1/2} \cdot \bar{\mV}_{\star}, \quad \calZ(\mV) \bydef \bar{\mV}_\perp \cdot Q(\mV)^\top \quad \text{where}\quad \bar{\mV}_{\star} \bydef \mU_\star^\top \mV, \quad  \bar{\mV}_{\perp} \bydef \mU_\perp^\top \mV,
\end{align}
and $\mU = \begin{bmatrix} \mU_\star & \mU_\perp \end{bmatrix}$ is the matrix of eigenvectors of $\mSigma$.
Our goal is to invert \eqref{eq:stat-recall} and express $\mV$ in terms of $\calQ(\mV)$ and $\calZ(\mV)$. Notice that $\calQ(\mV)$ is a $\rnk \times \rnk$ orthogonal matrix:
\begin{align*}
\calQ(\mV) \calQ(\mV)^\top  =   (\bar{\mV}_\star \bar{\mV}_\star^\top)^{-\frac{1}{2}} \cdot \bar{\mV}_\star \bar{\mV}_\star^\top \cdot (\bar{\mV}_\star \bar{\mV}_\star^\top)^{-1/2} = \mI_{\rnk}.
\end{align*}
Hence, rearranging \eqref{eq:stat-recall} gives:
\begin{align*}
    \bar{\mV} &\bydef \mU^\top \mV = \begin{bmatrix} \bar{\mV}_\star \\ \bar{\mV}_\perp \end{bmatrix} =   \begin{bmatrix} (\bar{\mV}_\star \bar{\mV}_\star^\top)^{1/2} \\ \calZ(\mV)    \end{bmatrix} \cdot \calQ(\mV).
\end{align*}
Since $\bar{V} \explain{def}{=} \mU^\top \mV$ and $\mU$ is the orthogonal matrix of eigenvectors of $\mSigma$, we conclude that:
\begin{align} \label{eq:decomp-interm}
    \mV & = \mU \cdot    \begin{bmatrix} (\bar{\mV}_\star \bar{\mV}_\star^\top)^{1/2} \\ \calZ(\mV)    \end{bmatrix} \cdot \calQ(\mV).
\end{align}
Finally, we eliminate $\bar{\mV}_\star \bar{\mV}_\star^\top$ from the above equation. Since $\mV \in \O(\dim,\rnk)$, we have the constraint $\mV^\top \mV = \mI_\rnk$. Substituting the above formula for $\mV$ in the constraint gives:
\begin{align*}
    \calQ(\mV)^\top \cdot \left( \bar{\mV}_\star \bar{\mV}^\top_\star + \calZ(\mV)^\top \calZ(\mV) \right) \cdot \calQ(\mV) & = \mI_{\rnk}.
\end{align*}
Since $\calQ(\mV)$ is orthogonal, we can easily solve the above equation for $\bar{\mV}_\star \bar{\mV}^\top_\star$ and obtain: $\bar{\mV}_\star \bar{\mV}^\top_\star = \mI_\rnk - \calZ(\mV)^\top \calZ(\mV).$ Combining this with  \eqref{eq:decomp-interm} gives us the claim of the lemma:
\begin{align*}
\mV & = \mU \cdot    \begin{bmatrix} (\mI_\rnk - \calZ(\mV)^\top \calZ(\mV))^{1/2} \\ \calZ(\mV)    \end{bmatrix} \cdot \calQ(\mV).    
\end{align*}
\end{proof}
\subsection{Proof of \texorpdfstring{\Cref{prop:stat-distr}}{prop:stat-distr}} \label{app:stat-distr}
Recall that the unnormalized density of $\rV \sim \nu_\dim$ is given by:
\begin{align} \label{eq:exp-mech-density-recall}
    \frac{\diff \nu_\dim}{\diff \xi_{\dim,\rnk}}(\mV) \propto \exp\left( \frac{\dim \beta_\dim}{2} \cdot \Tr[ \mV^\top \mSigma \mV] \right),
\end{align}
where $\mSigma$ is a $\dim \times \dim$ matrix with eigendecomposition $\mSigma = \mU \diag(\lambda_{1:\dim}) \mU^\top$.  
Our goal is to characterize the distribution of the statistics:
\begin{align} \label{eq:conv-stat}
    \calQ(\mV) \bydef (\bar{\mV}_\star \bar{\mV}_\star^\top)^{-1/2} \cdot \bar{\mV}_\star, \quad \calZ(\mV) \bydef \bar{\mV}_\perp \cdot Q(\mV)^\top,
\end{align}
where:
\begin{align*}
    \bar{\mV} & = \begin{bmatrix} \bar{\mV}_\star \\ \bar{\mV}_\perp \end{bmatrix} \bydef  \mU^\top \mV \quad \text{ with } 
 \quad \bar{\mV}_\star \in \R^{\rnk \times \dim}, \quad  \bar{\mV}_\perp \in \R^{(\dim-\rnk) \times \dim}.
\end{align*}
We begin by characterizing the distribution of $(\calQ(\rV)), \calZ(\rV))$ when $\rV$ is sampled from the base measure $\xi_{\dim,\rnk} \bydef \unif{\O(\dim,\rnk)}$.

\begin{lemma} \label{lem:QZ-base-measure} When $\rV \sim \xi_{\dim,\rnk} \bydef \unif{\O(\dim,\rnk)}$, $\calQ(\rV)$ and $\calZ(\rV)$ are independent. Moreover, $\calQ(\rV) \sim \xi_{\rnk,\rnk} \explain{def}{=} \unif{\O(\rnk)}$ and the unnormalized density of $\calZ(\rV)$ (with respect to the Lebesgue measure) is given by:
\begin{align} 
    \frac{\diff (\calZ \#\xi_{\dim,\rnk}) }{\diff \mZ} (\mZ) \propto \frac{1}{\sqrt{\det(\mI_k - \mZ^\top \mZ)}} \cdot  \ind\{ 0_{\rnk\times\rnk}\prec \mZ^\top \mZ \prec \mI_\rnk \},
\end{align}
where $\ind\{ 0_{\rnk\times\rnk}\prec \mZ^\top \mZ \prec \mI_\rnk \}$ denotes the indicator function which evaluates to $1$ if $0_{\rnk\times\rnk}\prec \mZ^\top \mZ \prec \mI_\rnk$ and is $0$ otherwise. 
\end{lemma}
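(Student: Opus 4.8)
The plan is to first reduce to the case $\mU = \mI_\dim$: since $\mU \in \O(\dim)$ is fixed and $\xi_{\dim,\rnk}$ is invariant under left multiplication by $\O(\dim)$, the matrix $\bar{\rV} \bydef \mU^\top\rV$ is again distributed according to $\xi_{\dim,\rnk}$, and both $\calQ(\rV)$ and $\calZ(\rV)$ depend on $\rV$ only through $\bar{\rV}$ and its row-blocks $\bar{\rV}_\star \bydef \mU_\star^\top\rV \in \R^{\rnk\times\rnk}$ and $\bar{\rV}_\perp \bydef \mU_\perp^\top\rV \in \R^{(\dim-\rnk)\times\rnk}$. Throughout I would assume $\dim \geq 2\rnk$ so that, by \citep[Theorem 2.11]{meckes2019random}, $\bar{\rV}_\star$ has a density with respect to Lebesgue measure on $\R^{\rnk\times\rnk}$ (proportional to $\det(\mI_\rnk - \mW^\top\mW)^{(\dim-2\rnk-1)/2}$ on $\{\mW : \mW^\top\mW \prec \mI_\rnk\}$); in particular $\bar{\rV}_\star$ is invertible almost surely, so $\calQ(\rV)$ and $\calZ(\rV)$ are well defined off a null set.

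For the law of $\calQ$, I would record the equivariance relations $\calQ(\rV\mR) = \calQ(\rV)\mR$ and $\calZ(\rV\mR) = \calZ(\rV)$ valid for every fixed $\mR \in \O(\rnk)$ (right multiplication of $\rV$ by $\mR$ right-multiplies $\bar{\rV}_\star$ and $\bar{\rV}_\perp$ by $\mR$ while leaving $(\bar{\rV}_\star\bar{\rV}_\star^\top)^{-1/2}$ fixed). Since $\rV\mR \explain{d}{=}\rV$, this yields $(\calQ(\rV)\mR,\calZ(\rV)) \explain{d}{=}(\calQ(\rV),\calZ(\rV))$ for each fixed $\mR$; averaging over an independent $\mR \sim \unif{\O(\rnk)}$ and using that $\calQ(\rV)\mR$ is Haar on $\O(\rnk)$ conditionally on $\rV$ (hence conditionally on $\calZ(\rV)$) then gives $\calQ(\rV) \sim \unif{\O(\rnk)}$ together with $\calQ(\rV) \indep \calZ(\rV)$.

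It then remains to identify the marginal law of $\calZ(\rV)$, and the claim I would prove is that it equals the law of $\bar{\rV}_\perp$; granting this, \citep[Theorem 2.11]{meckes2019random} applied to $\bar{\rV}_\perp$ (i.e.\ to the top $(\dim-\rnk)\times\rnk$ block of $\rV$ after permuting rows) produces precisely the density $\propto\det(\mI_\rnk - \mZ^\top\mZ)^{-1/2}$ on $\{\mZ^\top\mZ \prec \mI_\rnk\}$, the exponent being $(\dim-(\dim-\rnk)-\rnk-1)/2 = -1/2$, and since $\{\mZ^\top\mZ \text{ singular}\}$ is Lebesgue-null this is exactly the claimed density with the indicator $\ind\{0 \prec \mZ^\top\mZ \prec \mI_\rnk\}$. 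To prove $\calZ(\rV) \explain{d}{=}\bar{\rV}_\perp$, I would use two facts. First, invariance of $\xi_{\dim,\rnk}$ under $\rV \mapsto \diag(\mI_\rnk,\mO)\rV$ for $\mO \in \O(\dim-\rnk)$ shows that $\calZ(\rV)$ (which is sent to $\mO\calZ(\rV)$, as $\bar{\rV}_\perp \mapsto \mO\bar{\rV}_\perp$ and $\calQ$ is untouched) and $\bar{\rV}_\perp$ both have left-$\O(\dim-\rnk)$-invariant laws; such a law on $\R^{(\dim-\rnk)\times\rnk}$ is determined by the law of the Gram matrix $\mZ^\top\mZ$ and the conditional law of $\mZ$ given $\mZ^\top\mZ$, and on the generic event $\mZ^\top\mZ \succ 0$ the latter must be the pushforward of $\unif{\O(\dim-\rnk,\rnk)}$ under $\mN \mapsto \mN(\mZ^\top\mZ)^{1/2}$ because $\O(\dim-\rnk)$ acts transitively on each fiber $\{\mZ':\mZ'^\top\mZ' = \mS\}$. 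Second, a short computation (using $\bar{\rV}^\top\bar{\rV} = \mI_\rnk$, $\calQ(\rV)\calQ(\rV)^\top = \mI_\rnk$, and \Cref{lem:decomp}) gives $\calZ(\rV)^\top\calZ(\rV) = \mI_\rnk - \bar{\rV}_\star\bar{\rV}_\star^\top$ and $\bar{\rV}_\perp^\top\bar{\rV}_\perp = \mI_\rnk - \bar{\rV}_\star^\top\bar{\rV}_\star$, so the claim reduces to $\bar{\rV}_\star\bar{\rV}_\star^\top \explain{d}{=}\bar{\rV}_\star^\top\bar{\rV}_\star$; this last identity holds because the density of $\bar{\rV}_\star$ is invariant under $\mW \mapsto \mO\mW\mR$ for all $\mO,\mR \in \O(\rnk)$, so both $\bar{\rV}_\star\bar{\rV}_\star^\top$ and $\bar{\rV}_\star^\top\bar{\rV}_\star$ have $\O(\rnk)$-conjugation-invariant laws, and these coincide since the two matrices share eigenvalues pathwise.

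The algebraic identities and the Haar-averaging are routine; I expect the main obstacle to be making the argument in the last step fully rigorous — that a left-$\O(\dim-\rnk)$-invariant matrix law is pinned down by its Gram matrix via the transitive action on fibers — while carefully disposing of the Lebesgue-null exceptional sets on which $\bar{\rV}_\star$ or $\calZ(\rV)$ degenerates (these are harmless once $\dim \geq 2\rnk$, by absolute continuity).
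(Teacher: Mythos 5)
Your proposal is correct, and the overall architecture — reduce to $\mU=\mI$ by left-invariance, get the law of $\calQ$ and the independence by Haar-averaging, then identify the law of $\calZ$ with that of $\bar{\rV}_\perp$ and invoke \citep[Theorem 2.11]{meckes2019random} — matches the paper. Two places differ in the details, and both of your variants are sound.

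First, for $\calQ$'s law and independence, you exploit only the right-equivariance $\calQ(\rV\mR)=\calQ(\rV)\mR$, $\calZ(\rV\mR)=\calZ(\rV)$ and a single averaging over $\rR\sim\unif{\O(\rnk)}$ to extract both facts at once. The paper proves $\calQ(\rV)\sim\unif{\O(\rnk)}$ first, by a separate argument using left-invariance under $\diag(\mA,\mI_{\dim-\rnk})$, and then runs a tower-property averaging (essentially identical to yours) for independence. Yours is slightly more economical; there is no gap.

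Second, for $\calZ(\rV)\overset{d}{=}\bar{\rV}_\perp$, the paper establishes invariance of both laws under $\mZ\mapsto\mB\mZ\mA$ for $\mB\in\O(\dim-\rnk),\mA\in\O(\rnk)$ (using the combined action $\bar{\rV}\mapsto\diag(\mA^\top,\mB)\bar{\rV}\mA$), notes the pathwise identity $\calZ\calZ^\top=\bar{\rV}_\perp\bar{\rV}_\perp^\top$ so the two share singular values, and then runs a two-sided Haar-averaging over $\rA\in\O(\rnk)$, $\rB\in\O(\dim-\rnk)$ after SVD to conclude equality of laws. You instead use only left-$\O(\dim-\rnk)$-invariance, argue that such a law is determined by the law of the Gram matrix $\mZ^\top\mZ$ via transitivity on fibers, compute $\calZ^\top\calZ=\mI_\rnk-\bar{\rV}_\star\bar{\rV}_\star^\top$ and $\bar{\rV}_\perp^\top\bar{\rV}_\perp=\mI_\rnk-\bar{\rV}_\star^\top\bar{\rV}_\star$, and close by showing $\bar{\rV}_\star\bar{\rV}_\star^\top\overset{d}{=}\bar{\rV}_\star^\top\bar{\rV}_\star$ from the bi-$\O(\rnk)$-invariance of $\bar{\rV}_\star$'s density (both are conjugation-invariant and share eigenvalues pathwise). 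This step is correct, though as you flag, the disintegration-over-fibers claim and the ``conjugation-invariant law is determined by its spectrum'' claim need a couple of lines to make airtight (both are routine once you condition on the a.s.\ event that the relevant Gram matrices are nonsingular with distinct eigenvalues, which holds for $\dim\ge 2\rnk$). Note that you do need the detour through $\bar{\rV}_\star\bar{\rV}_\star^\top$: the pathwise identity gives $\calZ\calZ^\top=\bar{\rV}_\perp\bar{\rV}_\perp^\top$, but your Gram-matrix reduction works with $\calZ^\top\calZ$, and equality of spectra alone does not give equality of laws of $\rnk\times\rnk$ Gram matrices without the conjugation-invariance you extract from $\bar{\rV}_\star$. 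The paper sidesteps this by also using right-$\O(\rnk)$-invariance of $\calZ$'s law directly; either route is fine.
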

We first prove \Cref{prop:stat-distr} taking this lemma for granted.

\begin{proof}[Proof of \Cref{prop:stat-distr}] Let $f: \R^{\rnk \times \rnk} \rightarrow \R$ and $g: \R^{(\dim-\rnk) \times \rnk} \rightarrow \R$ be arbitrary bounded measurable functions. It suffices to show that:
\begin{multline} \label{eq:QZ-distr-goal}
    \E_{\rV \sim \nu_\dim} \left[f(\calQ(\rV)) g(\calZ(\rV)) \right]  \propto\\  \E_{\rQ \sim \xi_{\rnk,\rnk}} \left[f(\rQ) \right] \cdot \int_{\R^{(\dim-\rnk) \times \rnk}}  g(Z) \cdot \frac{e^{ - \frac{\dim \beta_\dim}{2} \sum_{i=1}^{\dim - \rnk} \sum_{j=1}^{\rnk} (\lambda_{j} - \lambda_{i + \rnk}) Z_{ij}^2 }}{\sqrt{\det(\mI_k - \mZ^\top \mZ)}} \cdot  \ind\{0_{\rnk\times\rnk}\prec \mZ^\top \mZ \prec \mI_\rnk \} \diff Z,
\end{multline}
where $\propto$ hides the deterministic normalizing constant, which does not depend on $f,g$. Recalling the density of $\nu_\dim$ from \eqref{eq:exp-mech-density-recall}, we obtain the following formula for the expectation on the LHS:
\begin{align} \label{eq:distr-sub}
    \E_{\rV \sim \nu_\dim} \left[f(\calQ(\rV)) g(\calZ(\rV)) \right]  \propto \E_{\rV \sim \xi_{\dim,\rnk}} \left[f(\calQ(\rV)) g(\calZ(\rV)) \cdot e^{\frac{\dim\beta_\dim}{2}\Tr[\rV^\top \mSigma \rV ]}  \right].
\end{align}
The factor $e^{\frac{\dim \beta_\dim}{2}\Tr[\rV^\top \mSigma \rV ]}$ on the RHS can be written as a function of the random variables $\rQ \explain{def}{=} \calQ(\rV)$ and $\rZ \explain{def}{=} \calZ(\rV)$ using \Cref{lem:decomp} which shows that:
\begin{align*}
    \rV = \mU \begin{bmatrix} (I_k - \rZ^\top \rZ)^{1/2} \\ \rZ \end{bmatrix}  \rQ.
\end{align*}
Recalling that $\mSigma$ has the eigendecomposition  $\mSigma = \mU \diag(\lambda_{1:\dim}) \mU^\top$, we can compute:
\begin{align*}
    \Tr[\rV^\top \mSigma \rV ] & = \Tr[(I_k - \rZ^\top \rZ)^{1/2} \diag(\lambda_{1:\rnk}) (I_k - \rZ^\top \rZ)^{1/2}] +  \Tr[\rZ^\top \diag(\lambda_{\rnk+1:\dim}) \rZ]\\
    & = \Tr[ \diag(\lambda_{1:\rnk}) (I_k - \rZ^\top \rZ)] +  \Tr[\rZ^\top \diag(\lambda_{\rnk+1:\dim}) \rZ] \\
    & = \sum_{j=1}^\rnk \lambda_j - \sum_{i=1}^{\dim - \rnk} \sum_{j=1}^\rnk (\lambda_j - \lambda_{i + \rnk}) \cdot \rZ_{ij}^2.
\end{align*}
Substituting the above formula into \eqref{eq:distr-sub}, we obtain:
\begin{align*}
 &\E_{\rV \sim \nu_\dim} \left[f(\calQ(\rV)) g(\calZ(\rV)) \right]  \propto \E_{(\rQ,\rZ) \sim (\calQ,\calZ)\# \xi_{\dim,\rnk}} \left[f(\rQ) g(\rZ) \cdot e^{-\frac{\dim\beta_\dim}{2}\sum_{i=1}^{\dim - \rnk} \sum_{j=1}^\rnk (\lambda_j - \lambda_{i + \rnk}) \cdot \rZ_{ij}^2}  \right] \\
 & \hspace{1cm} \explain{Lem. \ref{lem:QZ-base-measure}}{=}  \E_{\rQ \sim \xi_{\rnk,\rnk}} \left[f(\rQ) \right] \cdot \int_{\R^{(\dim-\rnk) \times \rnk}}  g(Z) \cdot \frac{e^{ - \frac{\dim\beta_\dim}{2} \sum_{i=1}^{\dim - \rnk} \sum_{j=1}^{\rnk} (\lambda_{j} - \lambda_{i + \rnk}) Z_{ij}^2 }}{\sqrt{\det(\mI_k - \mZ^\top \mZ)}} \cdot  \ind\{ \mZ^\top \mZ \preceq \mI_\rnk \} \diff Z,
 \end{align*}
 which proves \eqref{eq:QZ-distr-goal} and the claim of \Cref{prop:stat-distr}.
\end{proof}
We complete the proof of \Cref{prop:stat-distr} by presenting the proof of \Cref{lem:QZ-base-measure}.

\begin{proof}[Proof of \Cref{lem:QZ-base-measure}] The proof of \Cref{lem:QZ-base-measure} relies on the following well-known characterization (see for e.g., \citep[Lemma 5 and Lemma 6]{dawid1977spherical}) of the uniform distribution on $\O(\dim,\rnk)$, stated below for convenience.
\begin{fact} \label{fact:haar} Consider any $\dim,\rnk \in \N$ with $\rnk \leq \dim$. \begin{enumerate}
\item A random matrix $\rO \sim \xi_{\dim, \rnk} \bydef \unif{\O(\dim,\rnk)}$ if and only if $\rO^\top \rO = \mI_{\rnk}$ and $\rO \explain{d}{=} \mA \rO$ for any deterministic $\mA \in \O(\dim)$.
\item Let $\rO \sim \unif{\O(\dim,\rnk)}$, then $\mA \rO \mB \explain{d}{=} \rO$ for any deterministic $\mA \in \O(\dim)$ and any $\mB \in \O(\rnk)$.
\end{enumerate}
\end{fact}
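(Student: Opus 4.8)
The plan is to derive both parts of \Cref{fact:haar} from the existence and uniqueness of an $\O(\dim)$-left-invariant probability measure on the Stiefel manifold $\O(\dim,\rnk)$. First I would recall the relevant structure: $\O(\dim,\rnk)$ is a compact metric space, the compact group $\O(\dim)$ acts on it continuously by $V \mapsto \mA V$, and this action is \emph{transitive} --- fixing the base point $V_\star \bydef \begin{bmatrix}\mI_\rnk \\ 0_{(\dim-\rnk)\times\rnk}\end{bmatrix}$, any $V \in \O(\dim,\rnk)$ can be completed to an orthogonal matrix $\mB = \begin{bmatrix} V & V_\perp\end{bmatrix} \in \O(\dim)$ with $\mB V_\star = V$. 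Existence of an invariant probability measure $\xi_{\dim,\rnk}$ then follows by pushing forward Haar measure on $\O(\dim)$ under the orbit map $\mA \mapsto \mA V_\star$ (equivalently, it is the normalized Riemannian volume, or the law of the first $\rnk$ columns of a Haar-random element of $\O(\dim)$). By this construction $\mA \rO \explain{d}{=} \rO$ for every deterministic $\mA \in \O(\dim)$, and $\rO^\top \rO = \mI_\rnk$ since $\rO$ is valued in $\O(\dim,\rnk)$; this is precisely the ``only if'' direction of part 1.

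For the ``if'' direction, the key step is \textbf{uniqueness} of the invariant measure. Suppose $\rO$ is any random matrix valued in $\O(\dim,\rnk)$ with $\mA \rO \explain{d}{=} \rO$ for all $\mA \in \O(\dim)$; let $\mu$ denote its law. Draw $\rA \sim \unif{\O(\dim)}$ independently of $\rO$. On one hand, conditioning on $\rO$ and using the assumed invariance, $\rA \rO \sim \mu$. On the other hand, for any fixed $V_0 \in \O(\dim,\rnk)$ write $V_0 = \mB V_\star$ with $\mB \in \O(\dim)$ as above; then $\rA V_0 = (\rA \mB) V_\star \explain{d}{=} \rA V_\star$ by right-invariance of Haar measure on $\O(\dim)$, so the conditional law of $\rA \rO$ given $\rO = V_0$ does not depend on $V_0$ and equals the law of $\rA V_\star$, which is $\xi_{\dim,\rnk}$. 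Integrating over $\rO$ gives $\rA\rO \sim \xi_{\dim,\rnk}$. Comparing the two computations yields $\mu = \xi_{\dim,\rnk}$, which is the claim. (All maps here are continuous and all spaces compact, so the conditioning and the exchange of integration are routine.)

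Part 2 is then a short consequence. Given $\rO \sim \xi_{\dim,\rnk}$ and deterministic $\mB \in \O(\rnk)$, first observe that $\rO \mB$ satisfies the characterization from the ``if'' direction of part 1: $(\rO\mB)^\top(\rO\mB) = \mB^\top \rO^\top \rO \mB = \mI_\rnk$, and for any $\mA' \in \O(\dim)$ we have $\mA'(\rO\mB) = (\mA'\rO)\mB \explain{d}{=} \rO\mB$ because $\mA'\rO \explain{d}{=} \rO$ (part 1) and $X \mapsto X\mB$ is a fixed measurable map. Hence $\rO\mB \sim \xi_{\dim,\rnk}$. Now for deterministic $\mA \in \O(\dim)$, $\mA\rO\mB = \mA(\rO\mB)$, and applying the ``only if'' direction of part 1 to $\rO\mB \sim \xi_{\dim,\rnk}$ gives $\mA(\rO\mB) \explain{d}{=} \rO\mB \sim \xi_{\dim,\rnk} \explain{d}{=} \rO$, as desired.

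I expect the only genuine subtlety to be the uniqueness argument --- in particular, ensuring the independent averaging variable $\rA$ is Haar on the \emph{full} orthogonal group $\O(\dim)$ (so that both $\rA\mB \explain{d}{=}\rA$ and the conditional invariance apply), and checking that the transitivity of the $\O(\dim)$ action on $\O(\dim,\rnk)$ --- the single structural fact that drives the whole proof --- indeed holds (it does, by Gram--Schmidt completion of an orthonormal $\rnk$-frame to an orthonormal basis). Everything else, namely existence of the invariant measure and the bookkeeping in part 2, is standard compact-group theory.
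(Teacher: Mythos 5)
Your proof is correct. Note first that the paper itself does not prove \Cref{fact:haar}: it is stated as a known result with a citation to Dawid (1977) (and part 1 is essentially taken as the working definition of $\unif{\O(\dim,\rnk)}$ in the notation section, citing Meckes). So your argument is a genuine self-contained substitute rather than a parallel to an in-paper proof. The route you take --- realizing $\O(\dim,\rnk)$ as a homogeneous space of $\O(\dim)$ via the transitive action $V\mapsto \mA V$, constructing $\xi_{\dim,\rnk}$ as the pushforward of Haar measure under the orbit map $\mA\mapsto \mA V_\star$, and proving uniqueness of the invariant law by averaging against an independent Haar element $\rA$ --- is the standard and correct one, and part 2 then follows cleanly by verifying that $\rO\mB$ satisfies the characterization of part 1. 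The only blemish is a wording slip in the uniqueness step: in the first of your two computations of the law of $\rA\rO$ you write ``conditioning on $\rO$,'' but to invoke the assumed invariance $\mA\rO\overset{d}{=}\rO$ you must condition on $\rA=\mA$ (using independence) and then integrate over $\rA$; it is the \emph{second} computation that conditions on $\rO=V_0$ and uses right-invariance of Haar measure via $\rA\mB\overset{d}{=}\rA$. With that phrase corrected, the argument is complete.
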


Let $\rV \sim \xi_{\dim,\rnk} \bydef \unif{\O(\dim,\rnk)}$. To characterize the distributions of the statistics $\calQ(\rV)$ and $\calZ(\rV)$ in \eqref{eq:conv-stat}, it will be helpful to view them as functions of $\bar{\rV} \bydef \mU^\top \rV$:
\begin{align} \label{j7}
    \calQ(\bar{\rV}) \bydef (\bar{\rV}_\star \bar{\rV}_\star^\top)^{-1/2} \cdot \bar{\rV}_\star, \quad \calZ(\bar{\rV}) = \bar{\rV}_\perp \cdot Q(\bar{\rV})^\top,
\end{align}
where $\bar{\rV}_\star \in \R^{\rnk \times \rnk}$ and $\bar{\rV}_\perp \in \R^{(\dim - \rnk) \times \rnk}$ are the submatrices of $\bar{\rV}$ formed by the first $\rnk$ rows and the last $(\dim - \rnk)$ rows respectively. Since the distribution of $\rV$ is invariant to left multiplication by an orthogonal matrix (\Cref{fact:haar} item (1)), $\bar{\rV} \sim \unif{\O(\dim,\rnk)}$.

\paragraph{Marginal Distribution of \texorpdfstring{$\calQ(\bar{\rV})$}{calQbar}} We begin by showing that $\calQ(\bar{\rV}) \sim \unif{\O(\rnk)}$. Thanks to \Cref{fact:haar} (item 1), it suffices to verify that $\calQ(\bar{\rV}) \explain{d}{=} \mA \calQ(\bar{\rV})$ for any deterministic orthogonal matrix $\mA \in \O(\rnk)$. Since $\bar{\rV} \sim \unif{\O(\dim,\rnk)}$, its distribution is invariant to left multiplication by an orthogonal matrix (\Cref{fact:haar}, item 2):
\begin{align*}
    \overline{\rV} \explain{d}{=} \begin{bmatrix} \mA & 0 \\ 0 & I_{\dim-\rnk} \end{bmatrix}\overline{\rV} = \begin{bmatrix} \mA \bar{\rV}_\star \\ \bar{\rV}_\perp \end{bmatrix}.
\end{align*}
Applying the function $\calQ(\cdot)$ to the above distributional equality we conclude that $ \calQ(\bar{\rV}) \explain{d}{=} \mA \calQ(\bar{\rV})$ for any deterministic orthogonal matrix $\mA$, and hence $\calQ(\bar{\rV}) \sim \unif{\O(\rnk)}$. 
\paragraph{Independence of \texorpdfstring{$\calQ(\bar{\rV})$}{calQrV} and \texorpdfstring{$\calZ(\bar{\rV})$}{calZrV}} Next, we show that $\calQ(\bar{\rV})$ and $\calZ(\bar{\rV})$ are independent. It suffices to show that for any two bounded measurable functions $f:\O(\rnk) \rightarrow \R$ and $g: \R^{(\dim-\rnk) \times \rnk} \rightarrow \R$, we have:
\begin{align} \label{eq:indp-check}
    \E[f(\calQ(\bar{\rV})) g(\calZ(\bar{\rV}))] = \E[f(\calQ(\bar{\rV}))] \E[g(\calZ(\bar{\rV}))].
\end{align}
We begin by observing that since the distribution of $\bar{\rV} \sim \unif{\O(\dim,\rnk)}$ is invariant to right multiplication by a deterministic orthogonal matrix $\mA \in \O(\rnk)$, $\bar{\rV} \explain{d}{=} \bar{\rV} \mA$. Applying the functions $\calQ(\cdot)$ and $\calZ(\cdot)$ to both sides of this distributional equality (recall \eqref{j7}), we conclude that:
\begin{align} \label{eq:key-dist-eq}
    (\calQ(\bar{\rV}), \calZ(\bar{\rV})) \explain{d}{=} (\calQ(\bar{\rV}) \mA, \calZ(\bar{\rV})).
\end{align}
Let $\rA \sim \unif{\O(\rnk)}$ be a uniformly random matrix sampled independently of $\bar{\rV}$. To prove \eqref{eq:indp-check}, we simplify the LHS using the Tower property to condition on $\rA$:
\begin{multline*}
    \E[f(\calQ(\bar{\rV})) g(\calZ(\bar{\rV}))]  = \E[\E[f(\calQ(\bar{\rV})) g(\calZ(\bar{\rV})) | \rA]] \explain{(a)}{=} \E[\E[f(\calQ(\bar{\rV})\rA) g(\calZ(\bar{\rV})) | \rA]]\\ = \E[f(\calQ(\bar{\rV})\rA) g(\calZ(\bar{\rV}))],
\end{multline*}
where step (a) follows by applying the distributional equality \eqref{eq:key-dist-eq} conditional on $\rA$ (which allows us to view $\rA$ as a deterministic matrix without changing the distribution of $\bar{\rV}$, which is independent of $\rA$). We further simplify the above expression by applying the Tower property again, this time to condition on $\bar{\rV}$:
\begin{multline*}
    \E[f(\calQ(\bar{\rV})) g(\calZ(\bar{\rV}))]  = \E[f(\calQ(\bar{\rV})\rA) g(\calZ(\bar{\rV}))]  = \E[g(\calZ(\bar{\rV}))\E[f(\calQ(\bar{\rV})\rA) | \overline{\rV}]]\\ \explain{(b)}{=} \E[g(\calZ(\bar{\rV}))] \E[f(\rA)].
\end{multline*}
where step (b) follows by recalling that $\rA \sim \unif{\O(\rnk)}$, and hence $\rA \explain{d}{=} \mQ \rA$ for any deterministic orthogonal matrix. In particular, applying this distributional equality conditional on $\bar{\rV}$ (which allows us to treat $\calQ(\bar{\rV})$ as a deterministic orthogonal matrix, without changing the distribution of $\rA$), we conclude that $\calQ(\bar{\rV})\rA | \bar{\rV} \explain{d}{=} \rA | \bar{\rV} \explain{d}{=} \rA$ which implies the equality marked (b). Lastly, we recall that we have already shown that $\calQ(\overline{\rV})\explain{d}{=} \rA$ (since both are $\unif{\O(\rnk)}$ distributed), and hence:
\begin{align*}
    \E[f(\calQ(\bar{\rV})) g(\calZ(\bar{\rV}))]  &= \E[f(\calQ(\bar{\rV}))] \E[g(\calZ(\bar{\rV}))],
\end{align*}
which proves that $\calQ(\bar{\rV}), \calZ(\bar{\rV})$ are independent. 
\paragraph{Marginal Density of \texorpdfstring{$\calZ(\overline{\rV})$}{calZrV}} We will show that:
\begin{align} \label{eq:Z-dist-eq}
    \calZ(\overline{\rV}) & \explain{d}{=} \bar{\rV}_\perp,
\end{align}
where $\bar{\rV}_\perp$ denotes the sub-matrix of $\bar{\rV}$ formed by the last $(\dim - \rnk)$ rows of $\bar{\rV}$. This is sufficient to identify the marginal PDF of $\calZ(\overline{\rV})$ because the marginal PDF of $\bar{\rV}_\perp$ (which is a sub-matrix of a uniformly random orthogonal matrix) is well-known \citep[Theorem 2.11]{meckes2019random} and is given by:
\begin{align*}
    h(\bar{\mV}_\perp) \propto \det(\mI_k - \bar{\mV}_\perp^\top \bar{\mV}_\perp)^{-1/2} \cdot  \ind\{ \bar{\mV}_\perp^\top \bar{\mV}_\perp \preceq \mI_\rnk \},
\end{align*}
which immediately gives the claimed density formula for $\calZ(\bar{\rV})$. To show \eqref{eq:Z-dist-eq}, we begin by observing that since the distribution of $\bar{\rV} \sim \unif{\O(\dim,\rnk)}$ is invariant to left and right multiplication by orthogonal matrices, for any deterministic $\mA \in \O(\rnk), \mB \in \O(\dim - \rnk)$,
\begin{align} \label{eq:rV-dist-inv}
    \bar{\rV} \explain{d}{=} \begin{bmatrix} \mA^\top & 0 \\ 0 & \mB \end{bmatrix} \bar{\rV} \mA = \begin{bmatrix} \mA^\top \bar{\rV}_\star \mA \\ \mB \bar{\rV}_\perp \mA.
    \end{bmatrix}
\end{align}
This implies the following two distributional equalities for any $\mA \in \O(\rnk), \mB \in \O(\dim - \rnk)$:
\begin{align} \label{eq:2-dist-eq}
    \bar{\rV}_\perp \explain{d}{=} \mB \bar{\rV}_\perp \mA, \quad \calZ(\bar{\rV}) \explain{d}{=} \mB \calZ(\bar{\rV}) \mA.
\end{align}
The first follows by comparing the corresponding submatrices in \eqref{eq:rV-dist-inv} and the second follows by applying the function $\calZ(\cdot)$ to both sides of \eqref{eq:rV-dist-inv}. Hence, we have shown that the distributions of $\bar{\rV}_\perp$ and $\calZ(\bar{\rV})$ are both invariant to left and right multiplication by deterministic orthogonal matrices. Moreover, since $\calZ(\bar{\rV}) \calZ(\bar{\rV})^\top = \bar{\rV}_\perp \bar{\rV}_\perp^\top$ (cf. \eqref{j7}), the two random matrices $\bar{\rV}_\perp$ and $\calZ(\bar{\rV})$ share the same singular values. This is enough to guarantee that the two random matrices have identical distributions. We demonstrate this by showing that for any bounded measurable function $g: \R^{(\dim - \rnk) \times \rnk} \rightarrow \R$ we have:
\begin{align} \label{eq:Z-dist-id}
 \E[g(\bar{\rV}_\perp)] = \E[g(\calZ(\bar{\rV}))].
\end{align}
We will show this by using an argument similar to the one used to prove the independence of $\calQ(\bar{\rV}), \calZ(\bar{\rV})$: we consider two independent random orthogonal matrices $\rA \sim \unif{\O(\rnk)}$ and $\rB \sim \unif{\O(\dim - \rnk)}$, sampled independently of $\bar{\rV}$. We simplify the LHS of \eqref{eq:Z-dist-id} using the tower property to condition on $\rA,\rB$:
\begin{align} \label{eq:arg-rep-1}
    \E[g(\bar{\rV}_\perp)] & = \E[\E[g(\bar{\rV}_\perp)|\rA,\rB]] \explain{(a)}{=} \E[\E[g(\rB\bar{\rV}_\perp \rA)|\rA,\rB]] = \E[g(\rB\bar{\rV}_\perp \rA)],
\end{align}
where (a) follows by applying \eqref{eq:2-dist-eq} (conditioned on $\rA,\rB$). We simplify the above expression by considering the singular value decomposition of $\bar{\rV}_\perp = \rL \rS \rR^\top$ (where $\rL \in \O(\dim-\rnk), \rS \in \R^{(\dim-\rnk) \times \rnk}, \rR \in \O(\rnk)$ denote the left singular vectors, the singular values, and right singular vectors of $\bar{\rV}_\perp$), and applying the tower property again, this time conditioning on $\rL, \rS, \rR$:
\begin{multline} \label{eq:arg-rep-2}
    \E[g(\bar{\rV}_\perp)]  = \E[g(\rB \rL \rS \rR^\top \rA)] = \E[\E[g(\rB \rL \rS \rR^\top \rA) | \rL, \rS, \rR]]\\ \explain{(b)}{=} \E[\E[g(\rB \rS \rA) | \rL, \rS, \rR]] = \E[g(\rB \rS \rA)],
\end{multline}
where (b) follows by recalling that$\rA, \rB$ are independent uniformly random orthogonal matrices and hence $(\rB \mL, \mR^\top \rA) \explain{d}{=} (\rB,\rA)$ for any deterministic orthogonal matrices $\mL,\mR$. In particular, applying this distributional equality conditional on $\rL,\rS,\rR$ gives the equality marked (b). 

We can similarly simplify $\E[g(\calZ(\bar{\rV}))]$ by considering the singular value decomposition of $\calZ(\bar{\rV}) = \tilde{\rL} \rS \tilde{\rR}^\top$, where $\tilde{\rL} \in \O(\dim-\rnk), \rS \in \R^{(\dim-\rnk) \times \rnk}, \tilde{\rR} \in \O(\rnk)$ denote the left singular vectors, the singular values, and right singular vectors of $\calZ(\bar{\rV})$ (recall $\bar{\rV}_\perp$ and $\calZ(\bar{\rV})$ have the same singular values). Repeating exactly the same arguments used in \eqref{eq:arg-rep-1} and \eqref{eq:arg-rep-2} gives:
\begin{align*}
    \E[g(\calZ(\bar{\rV}))] & = \E[g(\rB \rS \rA)].
\end{align*}
Comparing the above display with \eqref{eq:arg-rep-2} proves that $\bar{\rV}_\perp$ and $\calZ(\bar{\rV})$ have identical distributions and concludes the proof of \Cref{lem:QZ-base-measure}.
\end{proof}
\subsection{Proof of \texorpdfstring{\Cref{prop:Z-dist-approx}}{prop:Z-dist-approx}} \label{app:Z-dist-approx}
Recall that our goal is to show that $\tv{(\calZ \# \nu_{\dim}, \zeta_\dim)} \rightarrow 0.$ The PDF of $\rZ = \calZ(\rV)$ when $\rV$ is drawn from the Gibbs distribution $\nu_\dim \bydef \nu(\cdot \mid \mSigma,\beta_\dim,\rnk)$ was computed in \Cref{prop:stat-distr} (up to an unknown normalizing constant):
\begin{align} \label{eq:unnorm-1}
    \frac{\diff (\calZ \#\nu_\dim) }{\diff \mZ} (\mZ) \propto \frac{1}{\sqrt{\det(\mI_k - \mZ^\top \mZ)}} \cdot e^{ - \frac{\dim\beta_\dim}{2} \sum_{i=1}^{\dim - \rnk} \sum_{j=1}^{\rnk} (\lambda_{j} - \lambda_{i + \rnk}) Z_{ij}^2 } \cdot \ind\{ 0_{\rnk\times\rnk}\prec \mZ^\top \mZ \prec \mI_\rnk \}.
\end{align}
On the other hand, the sampling algorithm (Algorithm \mref{alg:sampler}) samples $\rZ \sim \zeta_\dim$ from a multivariate Gaussian distribution, with density:
\begin{align} \label{eq:unnorm-2}
    \frac{\diff \zeta_\dim }{\diff \mZ} (\mZ) & \propto  e^{ - \frac{\dim \beta_\dim}{2} \sum_{i=1}^{\dim - \rnk} \sum_{j=1}^{\rnk} (\lambda_{j} - \lambda_{i + \rnk}) Z_{ij}^2 }.
\end{align}
A natural approach to show that $\lim_{\dim \rightarrow \infty} \tv(\calZ\#\nu_\dim, \zeta_\dim)=0$ would be to use Scheffé's lemma, which would give us the desired conclusion if we can show ${\diff (\calZ \#\nu_\dim)} (\rZ)/{\diff \zeta_\dim}  \pc 1$ when $\rZ \sim \zeta_\dim$. The bottleneck in doing so is that we do not have a fine-grained understanding of the asymptotics of the unknown normalizing constant in \eqref{eq:unnorm-1}. To bypass this, we will rely on the following result due to \citet{mukherjee2013statistics}, which can be viewed as an analog of Scheffé's lemma for distributions with complicated normalizing constants.

\begin{fact}[{\citet[Lemma 3.1]{mukherjee2013statistics}}] \label{fact:tv-conv} For each $\dim \in \N$, let $\chi_{0,\dim}$ and $\chi_{1,\dim}$ be distributions on a common sample space $\mathscr{Z}_p$ such that {$\chi_{1,\dim} \ll \chi_{0,\dim}$}. Suppose that the log-likelihood ratio of $\chi_{1,\dim}$ with respect to $\chi_{0,\dim}$ has the form:
\begin{align*}
   \ln  \frac{\diff \chi_{1,\dim}}{\diff \chi_{0,\dim}}(\mZ) & = L_\dim(\mZ) + c_\dim \quad \forall \; \mZ \; \in \; \mathscr{Z}_\dim.
\end{align*}
for some function $L_\dim: \mathscr{Z}_\dim \rightarrow \R$ and a constant $c_\dim$. Suppose that:
\begin{enumerate} \item  When $\rZ \sim \chi_{0,\dim}$, $L_\dim(\rZ) \pc c$ for some non-random constant $c$.  \item When $\rZ \sim \chi_{1,\dim}$, $L_p(\rZ) = O_\P(1)$. 
\end{enumerate}
Then $\lim_{\dim \rightarrow \infty} \tv(\chi_{0,\dim}, \chi_{1,\dim}) = 0$.
\end{fact}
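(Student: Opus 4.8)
The plan is to work throughout with the density ratio $f_\dim \bydef \diff\chi_{1,\dim}/\diff\chi_{0,\dim}$, which by hypothesis equals $\exp(L_\dim + c_\dim)$ on $\mathscr{Z}_\dim$. Since $\chi_{1,\dim}\ll\chi_{0,\dim}$, one has $\tv(\chi_{0,\dim},\chi_{1,\dim}) = \tfrac12\,\E_{\chi_{0,\dim}}|f_\dim-1|$, and because $f_\dim\ge0$ and $\E_{\chi_{0,\dim}}[f_\dim]=1$ the signed parts balance, so this equals $\E_{\chi_{0,\dim}}[(1-f_\dim)^+]$. As $(1-f_\dim)^+\le1$, a bounded-convergence argument (which, across varying spaces, is just the elementary estimate $\E[X_\dim]\le\delta+\sup_\dim\chi_{0,\dim}(X_\dim>\delta)$ for $X_\dim:=(1-f_\dim)^+\in[0,1]$) shows $\tv(\chi_{0,\dim},\chi_{1,\dim})\to0$ as soon as $f_\dim\pc1$ under $\chi_{0,\dim}$. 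Given Assumption~1 that $L_\dim\pc c$ under $\chi_{0,\dim}$, it therefore suffices to prove $c_\dim\to-c$, since then $f_\dim=e^{L_\dim+c_\dim}\pc e^{0}=1$. Note also the mass-one identity $e^{-c_\dim}=\E_{\chi_{0,\dim}}[e^{L_\dim}]$, obtained from $\int f_\dim\,\diff\chi_{0,\dim}=1$.

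\textbf{Upper bound $\limsup_\dim c_\dim\le-c$.} First I would introduce the truncated integrals $g_\dim(M)\bydef\int_{\{L_\dim\le M\}}e^{L_\dim}\,\diff\chi_{0,\dim}$ for $M\in\R$. For any fixed $M>c$, the integrand $e^{L_\dim}\ind\{L_\dim\le M\}$ is bounded by $e^M$ and converges in $\chi_{0,\dim}$-probability to $e^c$ (by Assumption~1 and continuity of $x\mapsto e^x\ind\{x\le M\}$ at $x=c$), so bounded convergence gives $g_\dim(M)\to e^c$. Since the integrand is smaller on a smaller set, $e^{-c_\dim}=\E_{\chi_{0,\dim}}[e^{L_\dim}]\ge g_\dim(M)$, and letting $\dim\to\infty$ yields $\liminf_\dim e^{-c_\dim}\ge e^c$, i.e.\ $\limsup_\dim c_\dim\le-c$. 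This half uses only Assumption~1 and the mass-one identity.

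\textbf{Lower bound $\liminf_\dim c_\dim\ge-c$.} Here I would bring in Assumption~2. Observe $\chi_{1,\dim}(L_\dim\le M)=\int_{\{L_\dim\le M\}}f_\dim\,\diff\chi_{0,\dim}=e^{c_\dim}g_\dim(M)$. Fix $\epsilon>0$; tightness of $L_\dim$ under $\chi_{1,\dim}$ (the meaning of $L_\dim=O_\P(1)$) supplies $M_\epsilon>c$ with $\inf_\dim\chi_{1,\dim}(L_\dim\le M_\epsilon)\ge1-\epsilon$. Passing to a subsequence along which $c_\dim\to\ell\bydef\liminf_\dim c_\dim\in[-\infty,-c]$, and using $g_\dim(M_\epsilon)\to e^c$, the displayed identity gives $e^{\ell}e^{c}\ge1-\epsilon$ (reading the left side as $0$ if $\ell=-\infty$). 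Letting $\epsilon\downarrow0$ forces $e^{\ell+c}\ge1$, hence $\ell\ge-c$. Combined with the previous step, $c_\dim\to-c$, and the Scheff\'e reduction completes the proof.

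\textbf{Main obstacle.} The crux — and the only place where both hypotheses are genuinely needed — is controlling $c_\dim$, equivalently the normalizing constant $\E_{\chi_{0,\dim}}[e^{L_\dim}]$: convergence of $L_\dim$ in probability under $\chi_{0,\dim}$ does not by itself pin down $\E_{\chi_{0,\dim}}[e^{L_\dim}]$, because exponential mass can escape to $+\infty$ while $L_\dim$ still concentrates at $c$. Assumption~2 — tightness of $L_\dim$ under $\chi_{1,\dim}$, which is precisely $e^{L_\dim}$ reweighted by its own density — is exactly the input that rules this out and upgrades the crude inequality $e^{-c_\dim}\ge g_\dim(M)$ into a matching two-sided estimate. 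A minor point to handle with care is that $\chi_{0,\dim},\chi_{1,\dim}$ are defined on different spaces $\mathscr{Z}_\dim$; but every step above only concerns the laws of the real-valued random variable $L_\dim$ under these measures together with the deterministic sequence $c_\dim$, so this causes no real difficulty.
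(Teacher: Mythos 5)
Your proof is correct. Note that the paper does not actually prove this statement: it is imported verbatim as a Fact from \citet[Lemma 3.1]{mukherjee2013statistics}, so there is no in-paper argument to compare against. Your self-contained derivation is sound and follows the standard route for results of this type: reduce total variation convergence to $f_\dim \pc 1$ under $\chi_{0,\dim}$ via the identity $\tv(\chi_{0,\dim},\chi_{1,\dim}) = \E_{\chi_{0,\dim}}[(1-f_\dim)^+]$ together with boundedness of $(1-f_\dim)^+$, and then pin down the normalizing constant $c_\dim$. Your two-sided control of $e^{-c_\dim}=\E_{\chi_{0,\dim}}[e^{L_\dim}]$ is the genuine content: the truncation $g_\dim(M)$ plus the continuous mapping theorem (valid since $x\mapsto e^x\ind\{x\le M\}$ is continuous at $x=c<M$) gives $\limsup_\dim c_\dim\le -c$ from Assumption 1 alone, and the identity $\chi_{1,\dim}(L_\dim\le M)=e^{c_\dim}g_\dim(M)$ combined with tightness under $\chi_{1,\dim}$ (Assumption 2) gives the matching lower bound; your diagnosis that Assumption 2 is exactly what forbids exponential mass escaping to $+\infty$ is on point. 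All the measure-theoretic details (the bounded-convergence estimate across varying sample spaces, the exclusion of $\liminf c_\dim=-\infty$) are handled correctly, so the argument stands as a complete proof of the cited fact.
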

From \eqref{eq:unnorm-1} and \eqref{eq:unnorm-2}, the log-likelihood ratio between $\calZ \# \nu_{\dim}$ and $\zeta_\dim$ is given by:
\begin{align} \label{eq:log-LLR}
    \ln \frac{\diff (\calZ \#\nu_\dim)}{\diff \zeta_\dim} (\rZ) & = - \ln \det(\mI_k - \rZ^\top \rZ)/2 + \ln \ind\{ 0_{\rnk\times\rnk}\prec \rZ^\top \rZ \prec \mI_\rnk \} + c_\dim,
\end{align}
where $c_\dim$ denotes a deterministic constant which is determined by the normalizing constants in \eqref{eq:unnorm-1} and \eqref{eq:unnorm-2}. Following the strategy outlined in \Cref{fact:tv-conv} requires us to understand the asymptotics of $\rZ^\top \rZ$ when $\rZ \sim \calZ \# \nu_\dim$ and when $\rZ \sim \zeta_\dim$, which is done in the following two lemmas. 
\begin{lemma} \label{lem:Z^T*Z-exp}When $\rZ \sim \calZ \# \nu_\dim$ and $\beta = \lim_{\dim \rightarrow \infty} \beta_\dim > H_\mu(\gamma_\rnk)$, 
\begin{align*}
    \rZ^\top \rZ &\pc \diag\left( \frac{H_\mu(\gamma_1)}{\beta}, \dotsc,  \frac{H_\mu(\gamma_\rnk)}{\beta}\right) \prec \mI_\rnk.
\end{align*}
\end{lemma}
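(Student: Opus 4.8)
The plan is to reduce this statement to the utility theorem (\Cref{thm:overlap}) via the exact change of variables of \Cref{lem:decomp}. Since $\nu_\dim \ll \xi_{\dim,\rnk}$ and $\mU_\star^\top \rV$ is invertible with $\xi_{\dim,\rnk}$-probability one (the density of this submatrix vanishes at non-invertible matrices, as recalled in the proof of \Cref{prop:stat-distr}), the statistic $\rZ = \calZ(\rV)$ is well-defined almost surely when $\rV \sim \nu_\dim$, and \Cref{lem:decomp} gives
\begin{align*}
\rV = \mU \begin{bmatrix} (\mI_\rnk - \rZ^\top \rZ)^{1/2} \\ \rZ \end{bmatrix} \rQ, \qquad \rQ \bydef \calQ(\rV) \in \O(\rnk).
\end{align*}
Left-multiplying by $\mU_\star^\top$ and using $\mU_\star^\top \mU = \begin{bmatrix} \mI_\rnk & 0 \end{bmatrix}$ yields $\mU_\star^\top \rV = (\mI_\rnk - \rZ^\top \rZ)^{1/2} \rQ$, so that, since $\rQ \rQ^\top = \mI_\rnk$,
\begin{align*}
\mU_\star^\top \rV \rV^\top \mU_\star = (\mI_\rnk - \rZ^\top \rZ)^{1/2} \rQ \rQ^\top (\mI_\rnk - \rZ^\top \rZ)^{1/2} = \mI_\rnk - \rZ^\top \rZ,
\end{align*}
i.e.\ $\rZ^\top \rZ = \mI_\rnk - \mU_\star^\top \rV \rV^\top \mU_\star$ as an identity in $\rV$ on this almost-sure event.

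Next I would invoke \Cref{thm:overlap}, whose hypotheses coincide with those of this lemma, to get $\mU_\star^\top \rV \rV^\top \mU_\star \pc \diag\!\left(1 - H_\mu(\gamma_1)/\beta, \dotsc, 1 - H_\mu(\gamma_\rnk)/\beta\right)_+$. It then remains to observe that in the regime $\beta > H_\mu(\gamma_\rnk)$ the positive-part truncation is inactive. Indeed, the smallest $\dim - \rnk$ eigenvalues of $\mSigma$ are all at most $\lambda_{\rnk+1}(\mSigma) \to \gamma_{\rnk+1}$, so $\mathrm{supp}(\mu) \subseteq (-\infty, \gamma_{\rnk+1}]$; hence $H_\mu$ is finite, positive and strictly decreasing on $(\gamma_{\rnk+1}, \infty)$, which contains $\gamma_\rnk \leq \gamma_{\rnk-1} \leq \dotsb \leq \gamma_1$. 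Therefore $0 < H_\mu(\gamma_1) \leq \dotsb \leq H_\mu(\gamma_\rnk) < \beta$, so each entry $1 - H_\mu(\gamma_i)/\beta$ lies in $(0,1)$, the $(\cdot)_+$ may be dropped, and applying the continuous mapping theorem to $\rZ^\top \rZ = \mI_\rnk - \mU_\star^\top \rV \rV^\top \mU_\star$ gives $\rZ^\top \rZ \pc \diag\!\left(H_\mu(\gamma_1)/\beta, \dotsc, H_\mu(\gamma_\rnk)/\beta\right)$. The bound $\diag(H_\mu(\gamma_i)/\beta) \prec \mI_\rnk$ is immediate from $H_\mu(\gamma_i) \leq H_\mu(\gamma_\rnk) < \beta$.

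There is no genuinely hard step here: all the analytic content is already contained in \Cref{thm:overlap} (hence in \Cref{fact:guionnet}) and in the exact decomposition of \Cref{lem:decomp}. The only point requiring a little care — and the role of the hypothesis $\beta > H_\mu(\gamma_\rnk)$ — is the monotonicity argument that rules out the truncation and, symmetrically, delivers the strict upper bound $\prec \mI_\rnk$; I would record the needed properties of $H_\mu$ (finiteness, positivity, and strict monotonicity above $\sup \mathrm{supp}(\mu)$) as a small standalone observation, as these follow directly from the definition $H_\mu(\lambda) = \int (\lambda - t)^{-1}\,\diff\mu(t)$ together with $\mathrm{supp}(\mu)\subseteq(-\infty,\gamma_{\rnk+1}]$ and $\gamma_\rnk>\gamma_{\rnk+1}$.
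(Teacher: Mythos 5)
Your proposal is correct and follows essentially the same route as the paper: both reduce the claim to the identity $\rZ^\top\rZ = \mI_\rnk - \mU_\star^\top\rV\rV^\top\mU_\star$ (the paper computes this directly from the definition of $\calZ$ using $\bar{\rV}_\star^\top\bar{\rV}_\star + \bar{\rV}_\perp^\top\bar{\rV}_\perp = \mI_\rnk$, while you obtain it by inverting via \Cref{lem:decomp}, which is the same algebra), then invoke \Cref{thm:overlap} and use the monotonicity of $H_\mu$ together with $\beta > H_\mu(\gamma_\rnk)$ to drop the positive part. No gaps.
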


\begin{lemma}\label{lem:Z^T*Z-sampler} When $\rZ \sim \zeta_\dim$ and $\beta = \lim_{\dim \rightarrow \infty} \beta_\dim > H_\mu(\gamma_\rnk)$, 
\begin{align*}
    \E[\rZ^\top \rZ]\rightarrow \diag\left( \frac{H_\mu(\gamma_1)}{\beta}, \dotsc,  \frac{H_\mu(\gamma_\rnk)}{\beta}\right)\prec \mI_\rnk\quad\text{and}\quad\norm*{\rZ^\top \rZ - \E[\rZ^\top \rZ]}_{\fr}=O_\P\left(\frac{1}{\sqrt{\dim}}\right).
\end{align*}
\end{lemma}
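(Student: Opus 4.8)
\textbf{Proof proposal for Lemma~\ref{lem:Z^T*Z-sampler}.}
The plan is to exploit the explicit Gaussian structure of $\zeta_\dim$: under this law the entries $\rZ_{ij}$ are independent with $\rZ_{ij}\sim\gauss{0}{1/(\dim\beta_\dim(\lambda_j-\lambda_{\rnk+i}))}$, so $\rZ^\top\rZ$ is a fixed-size ($\rnk\times\rnk$, with $\rnk$ independent of $\dim$) matrix of quadratic forms in independent Gaussians, whose mean and variance can be computed in closed form. Both claims then reduce to elementary moment estimates together with the limiting spectral information supplied by \Cref{assump:mat}.

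First I would compute $\E[\rZ^\top\rZ]$ exactly. Since the columns $\rZ_{\cdot 1},\dots,\rZ_{\cdot\rnk}$ are independent and mean-zero, the off-diagonal entries of $\E[\rZ^\top\rZ]$ vanish identically, while the $j$-th diagonal entry is $\sum_{i=1}^{\dim-\rnk}\Var(\rZ_{ij})=\frac{1}{\beta_\dim}\cdot\frac{1}{\dim}\sum_{i=1}^{\dim-\rnk}\frac{1}{\lambda_j-\lambda_{\rnk+i}}=\frac{H_{\mSigma}(\lambda_j)}{\beta_\dim}$, recalling the definition \eqref{eq:empirical-HG} of the empirical Hilbert transform $H_{\mSigma}$. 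To pass to the limit, I would use that under \Cref{assump:mat} we have $\lambda_j(\mSigma)\to\gamma_j$ for $j\in[\rnk+1]$, the spectral gap $\Delta=\gamma_\rnk-\gamma_{\rnk+1}>0$ keeps every denominator $\lambda_j-\lambda_{\rnk+i}$ bounded below by (eventually) $\Delta/2$, and the empirical distribution of $\lambda_{\rnk+1:\dim}(\mSigma)$ converges weakly to the compactly supported measure $\mu$ with $\operatorname{supp}\mu\subseteq(-\infty,\gamma_{\rnk+1}]$; together with the continuity of $H_\mu$ on $(\gamma_{\rnk+1},\infty)$ and $\beta_\dim\to\beta$ this yields $H_{\mSigma}(\lambda_j)/\beta_\dim\to H_\mu(\gamma_j)/\beta$, so $\E[\rZ^\top\rZ]\to\diag(H_\mu(\gamma_1)/\beta,\dots,H_\mu(\gamma_\rnk)/\beta)$. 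That this limit is $\prec\mI_\rnk$ follows because $H_\mu$ is strictly decreasing on $(\gamma_{\rnk+1},\infty)$ (its derivative is $-\int(\lambda-t)^{-2}\diff\mu(t)<0$) and $\gamma_1\ge\cdots\ge\gamma_\rnk$, so $H_\mu(\gamma_j)\le H_\mu(\gamma_\rnk)<\beta$ by the hypothesis $\beta>H_\mu(\gamma_\rnk)$. (This convergence is exactly the deterministic-matrix specialization of the consistency statement referenced as \Cref{lem:misc_HK}.)

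For the fluctuation bound, since $\rnk$ is fixed it suffices to control each entry of $\rZ^\top\rZ-\E[\rZ^\top\rZ]$ in probability and then sum the $\rnk^2$ squared deviations. The $(j,\ell)$ entry $(\rZ^\top\rZ)_{j\ell}=\sum_{i=1}^{\dim-\rnk}\rZ_{ij}\rZ_{i\ell}$ is a sum of independent random variables, so, using $\Var(\rZ_{ij}^2)=2(\E[\rZ_{ij}^2])^2$ on the diagonal and independence of columns off the diagonal,
\begin{align*}
\Var\big((\rZ^\top\rZ)_{j\ell}\big)=\sum_{i=1}^{\dim-\rnk}\Var(\rZ_{ij}\rZ_{i\ell})=
\begin{cases}
\dfrac{2}{\beta_\dim^2\,\dim}\, K_{\mSigma}(\lambda_j,\lambda_j), & j=\ell,\\[2mm]
\dfrac{1}{\beta_\dim^2\,\dim}\, K_{\mSigma}(\lambda_j,\lambda_\ell), & j\ne\ell,
\end{cases}
\end{align*}
with $K_{\mSigma}$ as in \eqref{eq:empirical-HG}. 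Because the denominators $\lambda_j-\lambda_{\rnk+i}$ are eventually bounded below by $\Delta/2$ we have $K_{\mSigma}(\lambda_j,\lambda_\ell)\le 4/\Delta^2$ for all $j,\ell\in[\rnk]$, and $\beta_\dim\to\beta>0$, so each such variance is $O(1/\dim)$. Chebyshev's inequality then gives $(\rZ^\top\rZ)_{j\ell}-\E[(\rZ^\top\rZ)_{j\ell}]=O_\P(1/\sqrt{\dim})$ for each $(j,\ell)$, and summing over the $\rnk^2$ entries yields $\|\rZ^\top\rZ-\E[\rZ^\top\rZ]\|_\fr=O_\P(1/\sqrt{\dim})$, as claimed.

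I do not expect a serious obstacle: the argument is a direct first/second moment computation for Gaussian quadratic forms. The only step requiring mild care is the convergence $H_{\mSigma}(\lambda_j)\to H_\mu(\gamma_j)$ with the evaluation point $\lambda_j$ itself moving, but the strictly positive spectral gap $\Delta$ makes the integrand $(\lambda-t)^{-1}$ uniformly bounded and Lipschitz in $\lambda$ near $\gamma_j$ over $t\in\operatorname{supp}\mu$, so this follows from weak convergence of the bulk spectral measure by a standard argument.
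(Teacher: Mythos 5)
Your proposal is correct and follows essentially the same route as the paper: a direct computation of $\E[\rZ^\top\rZ]$ via the empirical Hilbert transform $H_{\mSigma}(\lambda_j)/\beta_\dim$ together with \Cref{lem:misc_HK}, and a second-moment/Chebyshev bound for the fluctuations. The only (cosmetic) difference is that you compute the variance of each entry of $\rZ^\top\rZ$ exactly in terms of $K_{\mSigma}$ and then sum over the $\rnk^2$ entries, whereas the paper bounds $\E\|\sum_i(\rz_i\rz_i^\top-\E[\rz_i\rz_i^\top])\|_{\fr}^2$ for the row sum via fourth moments of the Gaussian rows; both yield the same $O_\P(1/\sqrt{\dim})$ conclusion.
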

Notice that the claim of \Cref{prop:Z-dist-approx} is immediate from these two lemmas. Indeed, by the continuous mapping theorem, when $\rZ  \sim \calZ \# {\nu}_\dim$ or $\rZ \sim \zeta_\dim$, the random part of the log-likelihood ratio in \eqref{eq:log-LLR} satisfies:
\begin{align*}
    - \frac{\ln \det(\mI_k - \rZ^\top \rZ)}{2} + \ln \ind\{ 0_{\rnk\times\rnk}\prec \mZ^\top \mZ \prec \mI_\rnk \} \pc - \frac{1}{2}\sum_{i=1}^\rnk \ln\left(1 - \frac{H_\mu(\gamma_i)}{\beta} \right).
\end{align*}
This implies that $\tv{(\calZ \# \nu_{\dim}, \zeta_\dim)} \rightarrow 0$ by \Cref{fact:tv-conv}. Hence, the remainder of this section is devoted to the proof of \Cref{lem:Z^T*Z-exp,lem:Z^T*Z-sampler}. 

\begin{proof}[Proof of \Cref{lem:Z^T*Z-exp}] Recall that when $\rZ \sim \calZ \# \nu_\dim$, we can realize $\rZ$ as $\rZ = \calZ(\rV)$ where $\rV \sim \nu_\dim$. Recalling the formula for $\calZ: \O(\dim,\rnk) \rightarrow \R^{(\dim - \rnk) \times \rnk}$ from \eqref{eq:conv-stat-1}, we find that:
\begin{align*}
   \rZ {=} \bar{\rV}_\perp \bar{\rV}_{\star}^\top (\bar{\rV}_{\star} \bar{\rV}_\star^\top)^{-1/2} \quad \text{where}\quad \bar{\rV}_{\star} \bydef \mU_\star^\top \rV, \quad  \bar{\rV}_{\perp} \bydef \mU_\perp^\top \rV, 
\end{align*}
and $\mU = \begin{bmatrix} \mU_\star & \mU_\perp \end{bmatrix}$ denotes the matrix of eigenvectors of covariance matrix $\mSigma$. We can now compute:
\begin{align*}
    \rZ^\top \rZ & =(\bar{\rV}_{\star} \bar{\rV}_\star^\top)^{-1/2} \bar{\rV}_{\star} \bar{\rV}_{\perp}^\top \bar{\rV}_\perp \bar{\rV}_{\star}^\top (\bar{\rV}_{\star} \bar{\rV}_\star^\top)^{-1/2} \\
    & = (\bar{\rV}_{\star} \bar{\rV}_\star^\top)^{-1/2} \bar{\rV}_{\star} (\mI_\rnk - \bar{\rV}_\star^\top \bar{\rV}_\star) \bar{\rV}_{\star}^\top (\bar{\rV}_{\star} \bar{\rV}_\star^\top)^{-1/2} \quad \text{[since $\bar{\rV}_\star^\top \bar{\rV}_\star + \bar{\rV}^\top_\perp \bar{\rV}_\perp = \mI_\rnk$]} \\
    & = \mI_\rnk - \bar{\rV}_\star \bar{\rV}_\star^\top \\
    & \pc \mI_\rnk - \diag\left(1 - \frac{H_\mu(\gamma_1)}{\beta}, 1 - \frac{H_\mu(\gamma_2)}{\beta}, \dotsc, 1-\frac{H_\mu(\gamma_\rnk)}{\beta} \right)_+  \quad\text{[by Theorem \mref{thm:overlap}]}\\
    & = \diag\left( \frac{H_\mu(\gamma_1)}{\beta}, \dotsc,  \frac{H_\mu(\gamma_\rnk)}{\beta}\right) \prec \mI_\rnk \quad \text{[since $\beta > H_\mu(\gamma_\rnk)$, $H_\mu$ is decreasing],}
\end{align*}
which completes the proof of \Cref{lem:Z^T*Z-exp}.
\end{proof}
\begin{proof}[Proof of \Cref{lem:Z^T*Z-sampler}] Letting $\rz_{1}, \dotsc, \rz_{\dim-\rnk}$ denote the rows of $\rZ$, our goal is to show: 
\begin{align*}
    \underbrace{\sum_{i=1}^{\dim - \rnk} \E[\rz_i \rz_i^\top]}_{(i)}\rightarrow \diag\left( \frac{H_\mu(\gamma_1)}{\beta}, \dotsc,  \frac{H_\mu(\gamma_\rnk)}{\beta}\right)\prec \mI_\rnk
\end{align*}
and 
\begin{align*}
    \underbrace{\norm*{\sum_{i=1}^{\dim - \rnk} (\rz_i \rz_i^\top - \E[\rz_i \rz_i^\top])}_{\fr}}_{(ii)}=O_\P\left(\frac{1}{\sqrt{\dim}}\right).
\end{align*}
Recall the sampling algorithm (Algorithm \mref{alg:sampler}) generates $\rz_{1:(\dim-\rnk)}$ independently with:
\begin{align} \label{eq:z-dist-recall}
    \rz_i \sim \gauss{0}{\frac{\diag\left(\lambda_1 - \lambda_{\rnk+i}, \dotsc, \lambda_\rnk - \lambda_{\rnk+i}\right)^{-1}}{\dim \beta_\dim}}.
\end{align}
Hence, we find that the expectation term $(i)$ converges to:
\begin{align*}
    \sum_{i=1}^{\dim - \rnk} \E[\rz_i \rz_i^\top] & = \diag\left( \frac{1}{\dim\beta_\dim} \sum_{i = 1}^{\dim - \rnk} \frac{1}{\lambda_1 - \lambda_{k+i}}, \dotsc, \frac{1}{\dim\beta_{\dim}} \sum_{i = 1}^{\dim - \rnk} \frac{1}{\lambda_{\rnk} - \lambda_{k+i}}  \right) \\
    & \rightarrow \diag\left( \frac{H_\mu(\gamma_1)}{\beta}, \dotsc,  \frac{H_\mu(\gamma_\rnk)}{\beta}\right)\prec \mI_\rnk & \text{[since $\beta > H_\mu(\gamma_\rnk)$]},
\end{align*}
where the convergence in the final step follows by recalling that the empirical distribution of the eigenvalues $\lambda_{1:\dim}$ converges weakly to $\mu$ and $\lambda_{1:\rnk} \rightarrow \gamma_{1:\rnk}$ (see \Cref{lem:misc_HK}). Next, we show that the fluctuation term (ii) is $O_\P(1/\sqrt{\dim}).$ By Chebyshev's inequality, for any $\tau >0,$ we have:
\begin{align*}
    \P \left(\sqrt{\dim}\bigg\| \sum_{i=1}^{\dim - \rnk} (\rz_i \rz_i^\top - \E[\rz_i \rz_i^\top]) \bigg\|_{\fr}  > \tau\right) & \leq \frac{\dim}{\tau^2} \E \bigg\| \sum_{i=1}^{\dim - \rnk} (\rz_i \rz_i^\top - \E[\rz_i \rz_i^\top]) \bigg\|_{\fr}^2 \\
    & \explain{(a)}{=} \frac{\dim}{\tau^2} \sum_{i=1}^{\dim - \rnk} \E \| \rz_i \rz_i^\top - \E[\rz_i \rz_i^\top] \|_{\fr}^2 \\
    & = \frac{\dim}{\tau^2} \sum_{i=1}^{\dim - \rnk} (\E \| \rz_i \rz_i^\top\|_{\fr}^2 - \|\E [\rz_i \rz_i^\top]\|_{\fr}^2)\\
    & \leq \frac{\dim}{\tau^2} \sum_{i=1}^{\dim - \rnk} \E \| \rz_i\|^4,
\end{align*}
where (a) follows by expanding the squared norm and observing that the cross terms vanish because the individual terms in the sum are centered. We can further simplify the upper bound by recalling \eqref{eq:z-dist-recall}:
\begin{align}
     \P \left( \sqrt{\dim}\bigg\| \sum_{i=1}^{\dim - \rnk} (\rz_i \rz_i^\top - \E[\rz_i \rz_i^\top]) \bigg\|_{\fr}  > \tau\right) &\leq \frac{\dim}{\tau^2} \sum_{i=1}^{\dim - \rnk} \sum_{j,\ell = 1}^\rnk \E[\rz_{ij}^2 \rz_{i\ell}^2] \notag\\
     & \leq \frac{\dim}{\tau^2} \sum_{i=1}^{\dim - \rnk} \sum_{j,\ell = 1}^\rnk \sqrt{\E[\rz_{ij}^4]} \sqrt{\E[\rz_{i\ell}^4]}\notag\\
     & \explain{(a)}{=} \frac{3}{\tau^2 \dim \beta^2_\dim } \sum_{i=1}^{\dim - \rnk} \sum_{j,\ell = 1}^\rnk \frac{1}{ (\lambda_j - \lambda_{\rnk+i}) (\lambda_\ell - \lambda_{\rnk+i})}\notag\\
     & \explain{(b)}{\leq} \frac{3 \rnk^2}{\tau^2 \beta^2_\dim(\lambda_k - \lambda_{k+1})^2}\notag\\
     &\explain{(c)}{\leq} \frac{6 \rnk^2}{\tau^2 \beta^2\Delta^2}\quad\text{for large enough $\dim$}.\notag
\end{align}
In the above display (a) follows from \eqref{eq:z-dist-recall} and the fact that the 4th moment of a standard Gaussian random variable is $3$. In step (b) we noted that each term in the summation is bounded by $1/(\lambda_k - \lambda_{k+1})^2$ since the eigenvalues are sorted in decreasing order. Finally step (c) relied on our assumption that $\lambda_\rnk - \lambda_{\rnk+1} \rightarrow \gamma_\rnk - \gamma_{\rnk+1} \in (0,\infty)$ and $\beta_\dim \rightarrow \beta > H_\mu(\gamma_\rnk)$. Since the inequality holds for any $\tau>0,$ it can be chosen to make the upper bound in the last line be arbitrarily small. Hence, the fluctuation term $(ii)$ is $O_\P(1/\sqrt{\dim})$ and concludes the proof of \Cref{lem:Z^T*Z-sampler}.
\end{proof}
\section{Proof of Contiguity Result (Theorem \texorpdfstring{\mref{thm:contiguity})}{thm:contiguity}}\label{app:contiguity}
This appendix presents the proof of our contiguity result (Theorem \mref{thm:contiguity}). Recall that Theorem \mref{thm:contiguity} claims mutual contiguity for the two Gibbs distributions:
\begin{align*}
    \nu_\dim \bydef \nu( \cdot \mid \Sigma, \beta_\dim, \rnk), \quad \tilde{\nu}_\dim  \bydef \nu( \cdot  \mid  \tilde{\mSigma}, \beta_\dim, \rnk), \quad \text{where} \quad \tilde{\mSigma} \bydef \Sigma + \frac{\mE}{\sqrt{\dim}},
\end{align*}
for a sequence of matrices $\mSigma \in \R^{\dim \times \dim}$ which satisfies Assumption \mref{assump:mat}, sequence of noise parameters $\beta_\dim \rightarrow \beta > H_\mu(\gamma_\rnk),$ and a sequence of symmetric perturbation matrices $\mE\in\R^{\dim\times\dim}$ with $\|\mE\|_{\fr} \lesssim 1$. We will use $\nu_\dim, \tilde{\nu}_\dim$ as a convenient shorthand for these two Gibbs distributions throughout this section. 
We will begin by introducing the key ideas required to prove Theorem \mref{thm:contiguity} as a few intermediate results.

\paragraph{Preliminaries on Contiguity} Le Cam's first lemma (see \citet[Section 6.4]{van2000asymptotic}) provides a criterion for contiguity between two distributions $\nu_{\dim}$ and $\tilde{\nu}_{\dim}$ in terms of the limit distribution of the log-likelihood ratio between the two distributions. In our context, the log-likelihood ratio between $\tilde{\nu}_\dim, \nu_\dim$ is given by:
\begin{align} \label{eq:LLR-Z}
     \ln \frac{\diff\tilde{\nu}_{\dim}}{\diff\nu_{\dim}}(\mV) & = \frac{\sqrt{\dim} \beta_\dim }{2} \Tr[\mV^\top \mE \mV]  + \ln Z(\tilde{\mSigma}, \beta_\dim, \rnk)-\ln Z(\mSigma, \beta_\dim, \rnk) \quad \forall \; \mV \; \in \; \O(\dim,\rnk),
\end{align}
However, because the normalizing constants of $\nu_{\dim}$ and $\tilde{\nu}_{\dim}$ take a complicated form, it is challenging to directly study the asymptotics of the likelihood ratio between the two distributions. Instead, we will rely on the following result, due to \citet{mukherjee2013statistics}, which provides a convenient criterion for contiguity between distributions with complicated normalizing constants.  


\begin{fact}[{\citet[Lemma 3.1]{mukherjee2013statistics}\footnote{\citet[Lemma 3.1]{mukherjee2013statistics} consider the special case where $\mathcal{V}_{\dim}$ is a Cartesian product of intervals in $\R^{\dim}$ and the probability measures $\nu_{\dim}$ and $\tilde{\nu}_{\dim}$ are absolutely continuous with respect to the Lebesgue measure on $\R^{\dim}.$ The same proof holds for the more general setting in \Cref{fact:Mukherjee}. Additionally, the result for the constant shift $c_{\dim}$ is presented as an intermediate result in the proof of \citet[Lemma 3.1]{mukherjee2013statistics}.}}] \label{fact:Mukherjee} For each $\dim \in \N$, let $\nu_{\dim}$ and $\tilde{\nu}_{\dim}$ be two probability measures on a common sample space $\mathcal{\mV}_{\dim}$ such that $\tilde{\nu}_{\dim}$ is absolutely continuous with respect to $\nu_{\dim}.$ Suppose that the log-likelihood ratio of $\tilde{\nu}_{\dim}$ with respect to $\nu_{\dim}$ has the form:
\begin{align} \label{eq:LLR-random-nonrandom}
   \ln\frac{\diff \tilde{\nu}_{\dim}}{\diff \nu_{\dim}}(\mV) & = L_\dim(\mV) + c_\dim\quad \forall\: \mV\in \mathcal{\mV}_{\dim}
\end{align}
for some function $L_\dim:\mathcal{\mV}_{\dim}\rightarrow \R$ and a non-random constant $c_\dim\in\R.$ Suppose that $L_p(\rV)=O_\P(1)$ when $\rV\sim\tilde{\nu}_{\dim}$ and when $\rV\sim\nu_{\dim}.$ Then:
\begin{enumerate}
    \item $\nu_{\dim}$ and $\tilde{\nu}_{\dim}$ are mutually contiguous.
    \item If $L_{\dim}(\rV)\dc \gauss{0}{\sigma^2}$ when  $\rV\sim\nu_{\dim},$ then: 
    \begin{align*}
        c_{\dim}\rightarrow -\frac{\sigma^2}{2}, \quad  \ln\frac{\diff\tilde{\nu}_{\dim}}{\diff\nu_{\dim}}(\rV)\dc \gauss{-\frac{\sigma^2}{2}}{\sigma^2}  \quad\text{when}\quad\rV\sim\nu_{\dim}.
    \end{align*}
    \item If $L_\dim(\rV)\pc c$ for some non-random constant $c\in\R$ when  $\rV\sim\nu_{\dim}$, then: 
    \begin{align*}
        c_{\dim}\rightarrow -c,\quad   \ln\frac{\diff\tilde{\nu}_{\dim}}{\diff\nu_{\dim}}(\rV)\pc 0 \quad\text{when}\quad \rV\sim\nu_{\dim},  \quad  \tv(\nu_{\dim}, \tilde{\nu}_{\dim}) \rightarrow 0.
    \end{align*}
\end{enumerate}
\end{fact}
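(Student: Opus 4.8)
The plan is to squeeze the unknown constant $c_\dim$ between bounds read off from two normalization identities. Since the hypothesis gives a finite log-density $L_\dim+c_\dim$ everywhere, the Radon--Nikodym derivative $W_\dim\bydef\diff\tilde\nu_\dim/\diff\nu_\dim=e^{L_\dim+c_\dim}$ is strictly positive, so $\nu_\dim$ and $\tilde\nu_\dim$ are in fact mutually absolutely continuous with $\diff\nu_\dim/\diff\tilde\nu_\dim=e^{-L_\dim-c_\dim}$. Integrating $W_\dim$ against $\nu_\dim$ and $W_\dim^{-1}$ against $\tilde\nu_\dim$ gives
\begin{align*}
\E_{\rV\sim\nu_\dim}[\,e^{L_\dim(\rV)}\,]=e^{-c_\dim},\qquad \E_{\rV\sim\tilde\nu_\dim}[\,e^{-L_\dim(\rV)}\,]=e^{c_\dim}.
\end{align*}
The first step is to deduce from these that $|c_\dim|\le C$ for all large $\dim$: tightness of $L_\dim$ under $\nu_\dim$ gives $M$ with $\nu_\dim(|L_\dim|\le M)\ge\tfrac12$, so $e^{-c_\dim}\ge e^{-M}\nu_\dim(|L_\dim|\le M)\ge\tfrac12 e^{-M}$ (an upper bound on $c_\dim$); tightness under $\tilde\nu_\dim$ gives $M'$ with $\tilde\nu_\dim(|L_\dim|\le M')\ge\tfrac12$, and rewriting this as $e^{c_\dim}\E_{\nu_\dim}[\ind_{|L_\dim|\le M'}e^{L_\dim}]\le e^{c_\dim+M'}$ yields a lower bound on $c_\dim$. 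This is the one place where both $O_\P(1)$ hypotheses are genuinely used.

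For item 1 (mutual contiguity), take events $A_\dim$ with $\nu_\dim(A_\dim)\to 0$ and write $\tilde\nu_\dim(A_\dim)=e^{c_\dim}\E_{\nu_\dim}[\ind_{A_\dim}e^{L_\dim}]$. Splitting the expectation at $\{L_\dim\le M\}$, the truncated part is $\le e^{C}e^{M}\nu_\dim(A_\dim)\to 0$, and the rest equals $\tilde\nu_\dim(A_\dim\cap\{L_\dim>M\})\le\tilde\nu_\dim(L_\dim>M)$, which can be made uniformly (in large $\dim$) arbitrarily small by taking $M$ large, using tightness of $L_\dim$ under $\tilde\nu_\dim$. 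Hence $\tilde\nu_\dim(A_\dim)\to 0$; exchanging the roles of $\nu_\dim$ and $\tilde\nu_\dim$ (and $L_\dim$ for $-L_\dim$) gives the reverse direction, so the two sequences are mutually contiguous.

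For items 2 and 3, the heart of the matter is to promote the weak/in-probability convergence of $L_\dim$ under $\nu_\dim$ to convergence of $\E_{\nu_\dim}[e^{L_\dim}]=e^{-c_\dim}$. Let $Z$ denote the limit ($Z\sim\gauss{0}{\sigma^2}$ for item 2, $Z\equiv c$ for item 3). The bound $\liminf_{\dim\to\infty}\E_{\nu_\dim}[e^{L_\dim}]\ge\E[e^Z]$ is immediate from the portmanteau theorem applied to the nonnegative continuous map $x\mapsto e^x$. For the matching upper bound I would truncate: $\E_{\nu_\dim}[e^{L_\dim}]=\E_{\nu_\dim}[e^{L_\dim}\ind_{L_\dim\le K}]+e^{-c_\dim}\tilde\nu_\dim(L_\dim>K)$; the first term tends to $\E[e^Z\ind_{Z\le K}]$ by portmanteau (choosing $K$ so the limit has no atom at $K$), and the second is $\le e^{C}\sup_\dim\tilde\nu_\dim(L_\dim>K)$, which vanishes as $K\to\infty$ by the boundedness of $c_\dim$ together with tightness under $\tilde\nu_\dim$. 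Sending $\dim\to\infty$ and then $K\to\infty$ gives $\E_{\nu_\dim}[e^{L_\dim}]\to\E[e^Z]$, hence $c_\dim\to-\sigma^2/2$ (resp.\ $c_\dim\to-c$). Item 2 then follows by Slutsky's theorem applied to $\ln W_\dim=L_\dim+c_\dim$; for item 3, $L_\dim+c_\dim\pc 0$ means $W_\dim\pc 1$ under $\nu_\dim$, and since $\tv(\nu_\dim,\tilde\nu_\dim)=\E_{\nu_\dim}[(1-W_\dim)_+]$ with integrand in $[0,1]$, bounded convergence gives $\tv(\nu_\dim,\tilde\nu_\dim)\to 0$.

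I expect the main obstacle to be the upper bound in the previous paragraph: tightness of $L_\dim$ under $\nu_\dim$ by itself does \emph{not} make $\{e^{L_\dim}\}$ uniformly integrable under $\nu_\dim$, so one cannot simply pass $e^{L_\dim}\dc e^Z$ through the expectation. The fix is to control the discarded tail $e^{-c_\dim}\tilde\nu_\dim(L_\dim>K)$ using simultaneously the boundedness of $c_\dim$ and the tightness of $L_\dim$ under the \emph{other} measure $\tilde\nu_\dim$; this is exactly the interaction that the two $O_\P(1)$ hypotheses and the two normalization identities are there to supply.
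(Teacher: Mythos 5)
The paper itself contains no proof of this statement: it is imported as a \emph{Fact} from the cited reference, with a footnote asserting that the argument there carries over to general sample spaces. So your proposal has to stand on its own, and it does — it is a correct and complete proof. The two load-bearing observations are exactly right: (i) the normalization identities $\E_{\nu_\dim}[e^{L_\dim}]=e^{-c_\dim}$ and $\E_{\tilde\nu_\dim}[e^{-L_\dim}]=e^{c_\dim}$ (the latter legitimate because the hypothesis makes the log-density finite everywhere, hence the measures mutually absolutely continuous), combined with tightness of $L_\dim$ under \emph{each} measure, pin down $|c_\dim|\le C$; and (ii) the truncation $\E_{\nu_\dim}[e^{L_\dim}\ind\{L_\dim>K\}]=e^{-c_\dim}\tilde\nu_\dim(L_\dim>K)$ converts the discarded tail into a probability under the other measure, which is precisely how one upgrades $L_\dim\dc Z$ to $\E_{\nu_\dim}[e^{L_\dim}]\to\E[e^Z]$ without any uniform-integrability assumption. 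Your diagnosis of where a naive argument would fail (tightness does not give uniform integrability of $e^{L_\dim}$) and of why both $O_\P(1)$ hypotheses are needed is the essential content of the lemma, and matches the mechanism of the original proof in the cited reference. The remaining steps — contiguity by splitting $\tilde\nu_\dim(A_\dim)$ at $\{L_\dim\le M\}$, Slutsky for item 2, and $\tv(\nu_\dim,\tilde\nu_\dim)=\E_{\nu_\dim}[(1-W_\dim)_+]\to 0$ by bounded convergence for item 3 — are all sound. One cosmetic remark: for item 1 you could instead invoke Le Cam's first lemma once item 2's limit is identified, but your direct argument is cleaner here because contiguity is claimed under the weaker hypothesis that $L_\dim$ is merely tight, without any distributional limit.
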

The key point is that applying the above criterion for contiguity only requires us to analyze the random part $L_\dim(\rV)$ of the log-likelihood ratio in \eqref{eq:LLR-random-nonrandom}. This allows us to avoid a fine-grained analysis of the log-normalizing constants in \eqref{eq:LLR-Z}, as they can be absorbed into the non-random constant $c_\dim$ in \eqref{eq:LLR-random-nonrandom}. 
\paragraph{Trade-off Function and Limit Distribution of the Likelihood Ratio} While the first claim of \Cref{fact:Mukherjee} will help us prove the contiguity claim made in Theorem \mref{thm:contiguity}, the second and third claims (which characterize the limit distribution of the log-likelihood ratio) will be useful for deriving the asymptotic trade-off function between the Gibbs distributions $\nu_\dim, \tilde{\nu}_\dim$. This is due to a classical result in hypothesis testing \citep[Chapter 15]{lehmann2005testing}, summarized in the proposition below, which shows that the asymptotic trade-off function between two mutually contiguous distributions can be derived from the limit distribution of the log-likelihood ratio. 
\begin{proposition}\label{prop:le-cam} For each $\dim \in \N$, let $\nu_\dim$ and $\tilde{\nu}_\dim$ be two distributions on the same sample space $\mathcal{V}_{\dim}$ such that $\tilde{\nu}_{\dim}$ is absolutely continuous with respect to $\nu_{\dim}.$
\begin{enumerate}
    \item If $\ln (\diff\tilde{\nu}_{\dim}/\diff\nu_{\dim})(\rV)\dc \gauss{-\tfrac{v}{2}}{v}$ for some constant $v>0$ when $\rV\sim\nu_\dim$, then:
    \begin{align*}
        \lim_{\dim \rightarrow \infty} \tf{\nu_\dim}{\tilde{\nu}_\dim}(\alpha)=\tf{\gauss{0}{1}}{\gauss{\sqrt{v}}{1}}(\alpha)\quad\forall\:\alpha\in[0,1].
    \end{align*}
    \item If $\ln (\diff\tilde{\nu}_{\dim}/\diff\nu_{\dim})(\rV) \pc 0$ when $\rV\sim\nu_\dim$, then: 
    \begin{align*}
        \lim_{\dim \rightarrow \infty} \tf{\nu_\dim}{\tilde{\nu}_\dim}(\alpha)=1-\alpha\quad\forall\:\alpha\in[0,1].
    \end{align*}
\end{enumerate}
\end{proposition}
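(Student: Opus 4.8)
The plan is to reduce everything to the Neyman--Pearson lemma plus the convergence-in-law of the log-likelihood ratio that the two hypotheses hand us. Write $L_\dim \bydef \ln(\diff\tilde\nu_\dim/\diff\nu_\dim)$. The first step is the observation that, for each fixed $\dim$, the optimal level-$\alpha$ test for $H_0:\rV\sim\nu_\dim$ versus $H_1:\rV\sim\tilde\nu_\dim$ is a (possibly randomized) threshold test on $L_\dim$: fix a $(1-\alpha)$-quantile $t_{\dim,\alpha}$ of $L_\dim$ under $\nu_\dim$ and let $\phi_\dim \bydef \ind\{L_\dim > t_{\dim,\alpha}\} + \gamma_\dim\ind\{L_\dim = t_{\dim,\alpha}\}$ with $\gamma_\dim$ chosen so that $\E_{\nu_\dim}[\phi_\dim]=\alpha$; then $\tf{\nu_\dim}{\tilde\nu_\dim}(\alpha) = 1-\E_{\tilde\nu_\dim}[\phi_\dim]$. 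So the whole problem becomes tracking the behaviour of $L_\dim$ under both $\nu_\dim$ and $\tilde\nu_\dim$ as $\dim\to\infty$, together with the convergence of the threshold $t_{\dim,\alpha}$ and the vanishing of the randomization correction.

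For part (1), the hypothesis is $L_\dim\dc\gauss{-v/2}{v}$ under $\nu_\dim$ with $v>0$. The mean--variance relationship $-v/2 = -\tfrac12\cdot v$ is exactly what Le Cam's first lemma requires to conclude mutual contiguity of $\nu_\dim$ and $\tilde\nu_\dim$, and Le Cam's third lemma \citep[Section 6.4]{van2000asymptotic} then upgrades this to $L_\dim\dc\gauss{v/2}{v}$ under $\tilde\nu_\dim$. Because $v>0$, both limit laws are continuous with strictly increasing CDFs, so: the quantile $t_{\dim,\alpha}$ converges to the $(1-\alpha)$-quantile $t_\alpha$ of $\gauss{-v/2}{v}$; the boundary masses $\P_{\nu_\dim}(L_\dim = t_{\dim,\alpha})$ and $\P_{\tilde\nu_\dim}(L_\dim = t_{\dim,\alpha})$ vanish, killing the $\gamma_\dim$ term; and, by a P\'olya/Slutsky sandwich, $\E_{\tilde\nu_\dim}[\phi_\dim]\to\P(\gauss{v/2}{v}>t_\alpha)$. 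Hence $\tf{\nu_\dim}{\tilde\nu_\dim}(\alpha)\to\P(\gauss{v/2}{v}\le t_\alpha)$, which is precisely $\tf{\gauss{-v/2}{v}}{\gauss{v/2}{v}}(\alpha)$: this is the trade-off function of the experiment of observing one sample and deciding between these two Gaussians, whose log-likelihood ratio at the observed value $z$ equals $z$ itself, so thresholding the observation is the Neyman--Pearson test. Finally, trade-off functions are invariant under the affine bijection $z\mapsto(z+v/2)/\sqrt v$, which maps $(\gauss{-v/2}{v},\gauss{v/2}{v})$ to $(\gauss{0}{1},\gauss{\sqrt v}{1})$, giving the claimed identity.

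For part (2), I would avoid quantiles entirely. From $L_\dim\pc 0$ under $\nu_\dim$ we get $\diff\tilde\nu_\dim/\diff\nu_\dim = e^{L_\dim}\pc 1$ with $\E_{\nu_\dim}[e^{L_\dim}]=1$, so Scheff\'e's lemma gives $e^{L_\dim}\to 1$ in $L^1(\nu_\dim)$, i.e.\ $\tv(\nu_\dim,\tilde\nu_\dim) = \tfrac12\E_{\nu_\dim}|e^{L_\dim}-1|\to 0$ (this also follows directly from \Cref{fact:Mukherjee}(3)). Combining with \Cref{lem:triangle-ineq}(1) and the elementary identity $\tf{\nu_\dim}{\nu_\dim}(\alpha)=1-\alpha$ yields $|\tf{\nu_\dim}{\tilde\nu_\dim}(\alpha)-(1-\alpha)| = |\tf{\nu_\dim}{\tilde\nu_\dim}(\alpha)-\tf{\nu_\dim}{\nu_\dim}(\alpha)|\le\tv(\nu_\dim,\tilde\nu_\dim)\to 0$, as required. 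Morally part (2) is the $v=0$ degenerate case of part (1), but the continuity arguments there genuinely use $v>0$, so it is cleanest to dispatch it separately.

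The hard part, to the extent there is one, is the bookkeeping in part (1): the Neyman--Pearson reduction must be done carefully when $L_\dim$ may have atoms, so that randomized threshold tests are genuinely needed, and one must verify that the power of this randomized test converges --- which is exactly where the continuity of the limit law ($v>0$) enters. All of this is classical, being the content of the ``standard arguments'' of \citep[Chapter 15]{lehmann2005testing}, so the proof is mostly a matter of assembling these pieces and being explicit about the one genuine subtlety: one cannot read off the behaviour of $L_\dim$ under $\tilde\nu_\dim$ from its behaviour under $\nu_\dim$ without invoking Le Cam's third lemma, i.e.\ without first using contiguity and the mean--variance identity.
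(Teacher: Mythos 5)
Your proposal is correct and follows essentially the same route as the paper's proof: the Neyman--Pearson reduction to a randomized threshold test on the log-likelihood ratio, Le Cam's first and third lemmas to obtain contiguity and the $\gauss{v/2}{v}$ limit under $\tilde\nu_\dim$, convergence of the quantile/threshold and vanishing of the boundary mass (the paper cites \citep[Problem 11.42]{lehmann2005testing} for exactly these facts), and, for part (2), Scheff\'e plus the total-variation perturbation bound (which the paper derives inline rather than quoting \Cref{lem:triangle-ineq}). The only cosmetic difference is that you identify the limit as $\tf{\gauss{-v/2}{v}}{\gauss{v/2}{v}}$ and pass to $\tf{\gauss{0}{1}}{\gauss{\sqrt v}{1}}$ by affine invariance, whereas the paper computes $\Phi(\Phi^{-1}(1-\alpha)-\sqrt v)$ directly.
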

\paragraph{Limit Distribution of the Likelihood Ratio} To apply \Cref{fact:Mukherjee} and \Cref{prop:le-cam} to show that the Gibbs distributions $\nu_\dim, \tilde{\nu}_\dim$ are mutually contiguous and to derive their asymptotic trade-off function, we will need to characterize the limit distribution of the random part $\Tr[\rV^\top \mE \rV]$ of the log-likelihood ratio in \eqref{eq:LLR-Z}. The following proposition shows that the statistic $\Tr[\rV^\top \mE \rV]$ converges to a Gaussian limit,  after appropriate centering and rescaling. For the centering term, we will find it useful to introduce the function:
\begin{align}\label{eq:M-def}
    \calM_{\beta}(\mSigma)\bydef \sum_{j=1}^\rnk\left( 1-\frac{H_{\mSigma}(\lambda_j)}{\beta}\right)\vu_j\vu_j^\top+\frac{1}{\dim\beta}\sum_{j=1}^{\rnk}\sum_{i=1}^{\dim-\rnk}\frac{\vu_{\rnk+i}\vu_{\rnk+i}^\top}{\lambda_j-\lambda_{\rnk+i}} 
\end{align}
for any $\beta > 0$ and any symmetric matrix $\Sigma\in\R^{\dim\times\dim}$, where $\{(\lambda_i,\vu_i): i \in [\dim]\}$ denotes the eigenvalues and the corresponding eigenvectors of $\mSigma.$ We will also find it helpful to recall the variance function $\sigma_{\mSigma}^2: \R^{\dim \times \dim} \times [0,\infty) \to \R$ introduced in the statement of Theorem \mref{thm:contiguity}:
\begin{align*}
    \sigma_{\mSigma}^2(\mE,\beta) \explain{def}{=} \frac{1}{2} \sum_{j, \ell = 1}^\rnk K_\mSigma(\lambda_j, \lambda_\ell) \cdot (\vu_j^\top E \vu_\ell)^2  + \sum_{j=1}^\rnk \sum_{i= 1}^{\dim - \rnk} \frac{\beta - {H_{\mSigma}(\lambda_j)}}{\lambda_j - \lambda_{\rnk+i}}  \cdot (\vu_{\rnk+i}^\top E \vu_j)^2.
\end{align*}
In the above equations, the functions $H_\mSigma, K_\mSigma$ are defined as: 
\begin{align} \label{eq:H-K-def-recall}
    H_{\mSigma}(\lambda) &\explain{def}{=} \frac{1}{\dim} \sum_{i=1}^{\dim - \rnk} \frac{1}{\lambda - \lambda_{\rnk+i}}, \;   K_{\mSigma}(\lambda,\lambda^\prime) \explain{def}{=} \frac{1}{\dim} \sum_{i=1}^{\dim-\rnk} \frac{1}{(\lambda - \lambda_{\rnk+i})(\lambda^\prime - \lambda_{\rnk+i})} \;\forall \; \lambda, \lambda^\prime \; \in \; (\lambda_{\rnk+1}, \infty).
\end{align}
\begin{proposition}\label{prop:Lnull-dc} Consider a sequence of matrices $\mSigma\in\R^{\dim \times \dim}$ which satisfies Assumption \mref{assump:mat}, a sequence of noise parameters $\beta_{\dim} \rightarrow \beta \in (H_\mu(\gamma_\rnk),\infty)$ as $\dim \rightarrow \infty$, and a fixed rank $\rnk \in \N$ (independent of $\dim$). Let $\rV \sim \nu(\cdot \mid \Sigma, \beta_\dim, \rnk)$ denote a sample from the Gibbs distribution $\nu(\cdot \mid \Sigma, \beta_\dim, \rnk)$.  Suppose that the matrix $\mE \in \R^{\dim \times \dim}$ satisfies $\|\mE\|_{\fr} \lesssim 1.$ Then,
\begin{enumerate}
\item $\sigma^2_{\mSigma}(\mE, \beta_\dim) \lesssim 1$ and $\sqrt{\dim}\left(\Tr[\rV^\top\mE\rV]-\Tr[\mE\calM_{\beta_{\dim}}(\mSigma)]\right)=O_\P(1)$.
    \item If $\|\mE\|\ll1$ or $\sigma^2_{\mSigma}(\mE, \beta_\dim) \rightarrow 0,$ then:
    \begin{align*}
        \sqrt{\dim}(\Tr[\rV^\top\mE\rV]-\Tr[\mE\calM_{\beta_{\dim}}(\mSigma)])\pc 0. 
    \end{align*}
\item If $\sigma^2_{\mSigma}(\mE, \beta_\dim) \rightarrow v\in (0,\infty),$ then:
\begin{align*}
    \sqrt{\dim}(\Tr[\rV^\top\mE\rV]-\Tr[\mE\calM_{\beta_{\dim}}(\mSigma)])  
    \dc\gauss{0}{\frac{4v}{\beta^2}}.
\end{align*}
\end{enumerate}
\end{proposition}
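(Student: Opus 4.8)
The plan is to transfer the problem from the Gibbs measure to the explicit Gaussian sampler of \Cref{alg:sampler} and then carry out a CLT for the resulting (nearly polynomial) function of independent Gaussians. Since $\beta_\dim\to\beta>H_\mu(\gamma_\rnk)$, \Cref{thm:sampling} gives $\tv(\nu(\cdot\mid\mSigma,\beta_\dim,\rnk),\hat\nu(\cdot\mid\mSigma,\beta_\dim,\rnk))\to0$; because all three conclusions of \Cref{prop:Lnull-dc} (the $O_\P(1)$ bound, the $\pc0$ statement, and the $\dc$ statement) are statements about the law of $\Tr[\rV^\top\mE\rV]$, and total variation distance dominates convergence in distribution, it suffices to prove them for $\rV=\mU\left[\begin{smallmatrix}(\mI_\rnk-\rZ^\top\rZ)_+^{1/2}\\\rZ\end{smallmatrix}\right]\rQ$, the output of the sampler. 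The first simplification is that $\rV\rV^\top=\mU\left[\begin{smallmatrix}(\mI_\rnk-\rZ^\top\rZ)_+^{1/2}\\\rZ\end{smallmatrix}\right]\left[\begin{smallmatrix}(\mI_\rnk-\rZ^\top\rZ)_+^{1/2}&\rZ^\top\end{smallmatrix}\right]\mU^\top$ does not depend on $\rQ$ (the orthogonal factor cancels by the cyclic property of the trace). Writing $\mU^\top\mE\mU$ in the $\star/\perp$ block structure with blocks $\mA_{\star\star}=\mU_\star^\top\mE\mU_\star$, $\mA_{\star\perp}=\mU_\star^\top\mE\mU_\perp$, $\mA_{\perp\perp}=\mU_\perp^\top\mE\mU_\perp$, I get
\begin{align*}
\Tr[\rV^\top\mE\rV]=g(\rZ)\bydef\Tr[\mA_{\star\star}(\mI_\rnk-\rZ^\top\rZ)_+]+2\Tr[\mA_{\star\perp}\rZ(\mI_\rnk-\rZ^\top\rZ)_+^{1/2}]+\Tr[\mA_{\perp\perp}\rZ\rZ^\top],
\end{align*}
a constant-plus-linear-plus-quadratic function of the Gaussian matrix $\rZ$, plus the corrections coming from the matrix square root and the positive part.

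Next I would pin down the centering and the limiting variance. Because the law of $\rZ$ is invariant under flipping the sign of any row, $\E[\rZ_{i\ell}\,h(\rZ^\top\rZ)]=0$ for any $h$, so the middle (linear) term has mean exactly zero; and since $\rZ^\top\rZ\pc\diag(H_\mu(\gamma_j)/\beta)\prec\mI_\rnk$ with exponentially small deviation probability (\Cref{lem:Z^T*Z-sampler}), the $(\cdot)_+$ is irrelevant up to an exponentially small event on which the (bounded-in-$L^2$) integrand contributes a negligible amount. A direct Gaussian-moment computation using $\E[\rZ_{ij}^2]=(\beta_\dim\dim(\lambda_j-\lambda_{\rnk+i}))^{-1}$ then gives $\E[g(\rZ)]=\Tr[\mE\calM_{\beta_\dim}(\mSigma)]+o(\dim^{-1/2})$, matching the claimed centering. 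For the variance, the linear and quadratic parts are uncorrelated by the same row-flip parity argument, and using $\Var[x^\top Mx]=2\Tr[(M\Sigma)^2]$ for $x\sim\gauss{0}{\Sigma}$ one finds $\dim\cdot\Var[\Tr[\mA_{\star\star}\rZ^\top\rZ]]\to\frac{2}{\beta^2}\sum_{j,\ell}K_\mSigma(\lambda_j,\lambda_\ell)(\vu_j^\top\mE\vu_\ell)^2$ and $\dim\cdot\Var[2\Tr[\mA_{\star\perp}\rZ(\mI_\rnk-\rZ^\top\rZ)_+^{1/2}]]\to\frac{4}{\beta^2}\sum_{j}\sum_{i}\frac{\beta-H_\mu(\gamma_j)}{\lambda_j-\lambda_{\rnk+i}}(\vu_{\rnk+i}^\top\mE\vu_j)^2$, so $\dim\cdot\Var[g(\rZ)]\to\frac{4}{\beta^2}\,v$ with $v=\lim\sigma^2_\mSigma(\mE,\beta_\dim)$. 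The key point enabling this — and the step I expect to be the main obstacle — is showing that every cross term and remainder (the contribution of $\mA_{\perp\perp}$ to the variance, the covariances between the $\mA_{\perp\perp}$-block and the $\mA_{\star\star}$-block, and the cubic remainder from Taylor-expanding $(\mI_\rnk-\rZ^\top\rZ)_+^{1/2}$ around $\E[\rZ^\top\rZ]$) is $o(\dim^{-1})$ in variance, i.e. $o(\dim^{-1/2})$ after scaling. This does not follow from an operator-norm bound; it relies on the hypothesis $\|\mE\|_\fr\lesssim1$, which forces the off-block pieces $\mA_{\star\perp}=\mU_\star^\top\mE\mU_\perp$ and $\mA_{\perp\perp}$ to be ``spread out'' (their columns have squared norms summing to $\lesssim1$), so that bilinear and quadratic forms built from them with the $O(\dim^{-1})$ Gaussian variances concentrate at the correct lower-order rate; \Cref{lem:Z^T*Z-sampler}'s $O_\P(\dim^{-1/2})$ bound on $\|\rZ^\top\rZ-\E[\rZ^\top\rZ]\|_\fr$ is what controls the square-root remainder.

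Finally, for the distributional limit I would localize to the high-probability event $\{\rZ^\top\rZ\prec(1-\epsilon)\mI_\rnk\}$ (complement exponentially small), on which $g$ coincides with a globally $C^2$ (in fact real-analytic) function $\tilde g$ obtained by replacing $(\mI_\rnk-\rZ^\top\rZ)_+$ by a fixed smooth matrix cutoff of $\rZ^\top\rZ$. Applying the second-order Poincar\'e inequality of \citet{chatterjee2009fluctuations} to $\tilde g$ in whitened Gaussian coordinates — where $\Var[\tilde g]\asymp\dim^{-1}$, $\E\|\nabla\tilde g\|^4=O(\dim^{-2})$, and the Hessian is block-diagonal over the rows of $\rZ$ with operator norm $O(\dim^{-1})$, so Chatterjee's bound is $O(\dim^{-1/2})$ — yields $d_{\mathrm{TV}}\!\big((\tilde g-\E\tilde g)/\sqrt{\Var\tilde g},\ \gauss{0}{1}\big)\to0$; transferring back across the exponentially small event and combining with the mean and variance asymptotics proves $\sqrt\dim(\Tr[\rV^\top\mE\rV]-\Tr[\mE\calM_{\beta_\dim}(\mSigma)])\dc\gauss{0}{4v/\beta^2}$ in the regime $v\in(0,\infty)$. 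In the regimes $\|\mE\|\ll1$ or $\sigma^2_\mSigma(\mE,\beta_\dim)\to0$ one has $\dim\cdot\Var[g(\rZ)]\to0$, so Chebyshev gives $\sqrt\dim(\Tr[\rV^\top\mE\rV]-\Tr[\mE\calM_{\beta_\dim}(\mSigma)])\pc0$; and the deterministic bounds $\sigma^2_\mSigma(\mE,\beta_\dim)\lesssim1$ and $\sqrt\dim(\Tr[\rV^\top\mE\rV]-\Tr[\mE\calM_{\beta_\dim}(\mSigma)])=O_\P(1)$ follow from $\|\mE\|_\fr\lesssim1$, the spectral-gap lower bound $\Delta>0$, and Chebyshev. (An alternative to the second-order Poincar\'e step, should the Hessian bookkeeping prove cumbersome, is to replace $g$ by its degree-two Taylor polynomial with a remainder shown to be $o_\P(\dim^{-1/2})$ as above, and then apply the Lyapunov CLT directly to $\sqrt\dim$ times the resulting sum of independent-across-rows terms $\rZ_{i,\cdot}\mA_{\star\star}\rZ_{i,\cdot}^\top$ and linear forms in $\rZ_{i,\cdot}$.)
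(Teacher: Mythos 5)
The overall strategy matches the paper's: transfer via \Cref{thm:sampling} to the sampler representation, reduce to a function $g(\rZ)$ of independent Gaussian rows, compute the mean and variance, and invoke the second-order Poincar\'e inequality of \citet{chatterjee2009fluctuations} for the CLT. Your variance bookkeeping, the Chebyshev argument for parts (1)--(2), and the sign-flip symmetry argument for the centering are all sound. The gap is in the Chatterjee step of your main route. You apply second-order Poincar\'e to the smooth cutoff $\tilde g$ of the \emph{full} function $g$, which still contains the random matrix square root $(\mI_\rnk-\rZ^\top\rZ)_+^{1/2}$ and the quadratic form $\Tr[\mU_\perp^\top\mE\mU_\perp\,\rZ\rZ^\top]$, and you assert that $\nabla^2\tilde g$ is ``block-diagonal over the rows of $\rZ$ with operator norm $O(\dim^{-1})$.'' That assertion is false: $\rZ^\top\rZ=\sum_i\rz_i\rz_i^\top$ is a global function of all rows, so the second derivative of any nonlinear matrix function of $\rZ^\top\rZ$ --- in particular the square root --- has non-trivial off-diagonal row blocks (the $f''$ part of the chain rule produces cross terms $\partial_{\rz_i}(\rZ^\top\rZ)\cdot\partial_{\rz_{i'}}(\rZ^\top\rZ)$ for $i\ne i'$); and $\Tr[\mU_\perp^\top\mE\mU_\perp\,\rZ\rZ^\top]=\sum_{j}(\rZ^j)^\top\mU_\perp^\top\mE\mU_\perp\rZ^j$ is a quadratic form in \emph{columns}, which couples the rows as well. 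Getting an operator-norm bound on the Hessian of this globally-coupled function would be a genuinely different and harder calculation than the one you sketch, and it is not obviously $O(\dim^{-1})$.

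The fix is exactly what the paper does in its Lemma~\ref{lem:Lnull-dc-w} --- and what you already anticipate in your closing parenthetical. One first proves that the centered quantity equals $\rw+o_\P(1)$, where
\begin{align*}
\rw\;=\;\sqrt{\dim}\Bigl(\Tr\bigl[\mU_\star^\top\mE\mU_\star\bigl(\E[\rZ^\top\rZ]-\rZ^\top\rZ\bigr)\bigr]+2\Tr\bigl[\mU_\perp^\top\mE\mU_\star\bigl(\mI_\rnk-\E[\rZ^\top\rZ]\bigr)_+^{1/2}\rZ^\top\bigr]\Bigr),
\end{align*}
obtained by (i) freezing the random square root at its deterministic limit $(\mI_\rnk-\E[\rZ^\top\rZ])_+^{1/2}$ and (ii) dropping the $\mU_\perp^\top\mE\mU_\perp$ quadratic block; step (i) costs $o_\P(1)$ via $\|\rZ^\top\rZ-\E[\rZ^\top\rZ]\|_\fr=O_\P(\dim^{-1/2})$ (\Cref{lem:Z^T*Z-sampler}) together with continuous mapping and an $O_\P(1)$ bound on $\|\sqrt{\dim}\,\rZ^\top\mU_\perp^\top\mE\mU_\star\|$, while step (ii) costs $o_\P(1)$ via a second-moment bound that uses $\|\mE\|_\fr\lesssim1$ crucially. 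The resulting $\rw$ is an \emph{exact} degree-$\le2$ polynomial, a genuine sum of functions of the independent rows $\rz_1,\dotsc,\rz_{\dim-\rnk}$, whose Hessian is the fixed matrix $-2\sqrt{\dim}\,\mU_\star^\top\mE\mU_\star\otimes\mI_{\dim-\rnk}$ --- block-diagonal as you want --- and Chatterjee's bound then gives total-variation distance $O(\dim^{-1/2})$ without ambiguity. So the route to take is your parenthetical one (Taylor-truncate first, CLT second), not the smooth-cutoff one; your main sketch applies the Poincar\'e inequality one step too early.
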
 

\paragraph{A Perturbation Result} \Cref{prop:Lnull-dc} shows that the statistic $\Tr[\rV^\top\mE\rV]$ is $O_\P(1)$ when $\rV \sim \nu( \cdot \mid \mSigma, \beta_\dim, \rnk)$ only if it is properly centered, where the centering term $\Tr[\mE\calM_{\beta_{\dim}}(\mSigma)]$ depends on the spectral properties of the matrix $\mSigma$. To handle the slight discrepancy in centering when analyzing the asymptotic distribution of $\Tr[\rV^\top\mE\rV]$ under $\rV \sim \nu_{\dim} \bydef \nu( \cdot \mid \mSigma, \beta_\dim, \rnk)$ versus $\rV \sim \tilde{\nu}_{\dim} \bydef \nu( \cdot \mid \tilde{\mSigma}, \beta_\dim, \rnk)$, we will use the following perturbation bound.
\begin{lemma}\label{lem:perturb} Consider a sequence of matrices $\mSigma\in\R^{\dim \times \dim}$ which satisfies Assumption \mref{assump:mat} and a sequence of noise parameters $\beta_{\dim}$ that satisfies $\beta_\dim \lesssim 1$. Let $\tilde{\mSigma} = \mSigma + \tfrac{1}{\sqrt{\dim}} \mE $ for any symmetric matrix $\mE \in \R^{\dim \times \dim}$ satisfying $\|\mE\|_{\fr} \lesssim 1.$ Then:
\begin{align*}
    \|\calM_{\beta_{\dim}}(\mSigma)-\calM_{\beta_{\dim}}(\tilde{\mSigma})\|_{\fr} \lesssim \frac{1}{\sqrt{\dim}},
\end{align*}
where $\calM_{\beta_{\dim}}(\cdot)$ is as defined in \eqref{eq:M-def}.
\end{lemma}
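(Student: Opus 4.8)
The plan is to prove the estimate by writing $\calM_{\beta_\dim}(\mSigma)$ as a contour integral that isolates the top-$\rnk$ spectral block of $\mSigma$ plus a remainder whose Frobenius norm is already $O(1/\sqrt{\dim})$, and then controlling how each piece moves under the perturbation $\mSigma \mapsto \tilde{\mSigma} = \mSigma + \mE/\sqrt{\dim}$. First I would fix a $\dim$-independent, positively oriented circle $\Gamma$ in the complex plane, of radius $(\gamma_1 - \gamma_\rnk)/2 + \Delta/4$ centred at $(\gamma_1 + \gamma_\rnk)/2$, so that $\Gamma$ encloses $[\gamma_\rnk,\gamma_1]$ while staying at distance $\ge \Delta/4$ from $\gamma_{\rnk+1}$ and from $[\gamma_\rnk,\gamma_1]$. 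Since $\|\tilde{\mSigma} - \mSigma\| \le \|\mE\|_\fr/\sqrt{\dim} \to 0$, Weyl's inequality and \Cref{assump:mat} give, for all large $\dim$: the only eigenvalues of $\mSigma$ (resp. of $\tilde{\mSigma}$) inside $\Gamma$ are $\lambda_1,\dots,\lambda_\rnk$; every $z \in \Gamma$ has $\operatorname{Re} z - \lambda_{\rnk+1}(\cdot) \ge \Delta/4$, hence $|z - \lambda_i(\cdot)| \ge \Delta/4$ for $i > \rnk$ and $|z - \lambda_j(\cdot)| \ge \Delta/8$ for $j \le \rnk$; and $|\lambda_i(\mSigma) - \lambda_i(\tilde{\mSigma})| \le \|\mE\|_\fr/\sqrt{\dim}$ for all $i$. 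Consequently $\sup_{z\in\Gamma}\|(zI-\mSigma)^{-1}\|$, $\sup_{z\in\Gamma}\|(zI-\tilde{\mSigma})^{-1}\|$, $\sup_{z\in\Gamma}|H_\mSigma(z)|$ and $\sup_{z\in\Gamma}|H_{\tilde\mSigma}(z)|$ are all $\lesssim 1$. (All implied constants below depend only on $\rnk$, $\gamma_{1:\rnk+1}$, $\Delta$, $\sup_\dim\|\mE\|_\fr$ and a positive lower bound on $\beta_\dim$, which is available in every application where $\beta_\dim \to \beta \in (H_\mu(\gamma_\rnk),\infty)$.)

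Next I would record the decomposition $\calM_{\beta_\dim}(\mSigma) = \mathbf{C}(\mSigma) + \mathbf{R}(\mSigma)$, where $\mathbf{C}(\mSigma) \bydef \sum_{j=1}^\rnk \big(1 - H_\mSigma(\lambda_j)/\beta_\dim\big)\vu_j\vu_j^\top$ is the first sum in \eqref{eq:M-def} and $\mathbf{R}(\mSigma)$ is the ``bottom-block resolvent'' sum. Writing $\phi_\mSigma(z) \bydef 1 - H_\mSigma(z)/\beta_\dim$, which is holomorphic inside $\Gamma$ (its poles are the $\lambda_i(\mSigma)$, $i > \rnk$, all outside $\Gamma$), the residue theorem gives the key representation $\mathbf{C}(\mSigma) = \frac{1}{2\pi i}\oint_\Gamma \phi_\mSigma(z)(zI-\mSigma)^{-1}\diff z$; this holds even if $\gamma_1,\dots,\gamma_\rnk$ are degenerate, with eigenprojectors replacing $\vu_j\vu_j^\top$. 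The remainder is easy: each summand $\frac{1}{\dim\beta_\dim}\sum_{i=1}^{\dim-\rnk}\frac{\vu_{\rnk+i}\vu_{\rnk+i}^\top}{\lambda_j - \lambda_{\rnk+i}}$ has Frobenius norm $\frac{1}{\dim\beta_\dim}\big(\sum_{i}(\lambda_j - \lambda_{\rnk+i})^{-2}\big)^{1/2} \lesssim \sqrt{\dim}/\dim = 1/\sqrt{\dim}$ because of the $\gtrsim\Delta$ gap, so $\|\mathbf{R}(\mSigma)\|_\fr \lesssim 1/\sqrt{\dim}$ and likewise $\|\mathbf{R}(\tilde{\mSigma})\|_\fr \lesssim 1/\sqrt{\dim}$; by the triangle inequality $\|\mathbf{R}(\mSigma) - \mathbf{R}(\tilde{\mSigma})\|_\fr \lesssim 1/\sqrt{\dim}$.

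For the main term I would split
\begin{align*}
\mathbf{C}(\mSigma) - \mathbf{C}(\tilde{\mSigma}) = {}&\frac{1}{2\pi i}\oint_\Gamma \big(\phi_\mSigma(z) - \phi_{\tilde\mSigma}(z)\big)(zI-\mSigma)^{-1}\diff z \\
&{}+ \frac{1}{2\pi i}\oint_\Gamma \phi_{\tilde\mSigma}(z)\big[(zI-\mSigma)^{-1} - (zI-\tilde{\mSigma})^{-1}\big]\diff z.
\end{align*}
The first integrand is holomorphic inside $\Gamma$ apart from the resolvent poles $\lambda_j(\mSigma)$, $j \le \rnk$, so the residue theorem collapses the first term to $\sum_{j\le\rnk}\big(\phi_\mSigma(\lambda_j) - \phi_{\tilde\mSigma}(\lambda_j)\big)\vu_j\vu_j^\top$, a matrix of rank $\le \rnk$ with Frobenius norm $\lesssim \beta_\dim^{-1}\max_{j\le\rnk}|H_\mSigma(\lambda_j) - H_{\tilde\mSigma}(\lambda_j)|$; using $H_\mSigma(\lambda_j) - H_{\tilde\mSigma}(\lambda_j) = \frac{1}{\dim}\sum_{i>\rnk}\frac{\lambda_i(\mSigma) - \lambda_i(\tilde{\mSigma})}{(\lambda_j - \lambda_i(\mSigma))(\lambda_j - \lambda_i(\tilde{\mSigma}))}$ together with the Weyl bound and the $\gtrsim\Delta$ denominators gives $\lesssim 1/\sqrt{\dim}$. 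For the second term I would invoke the resolvent identity $(zI-\mSigma)^{-1} - (zI-\tilde{\mSigma})^{-1} = \frac{1}{\sqrt{\dim}}(zI-\mSigma)^{-1}\mE(zI-\tilde{\mSigma})^{-1}$ and then use $\|ABC\|_\fr \le \|A\|\,\|B\|_\fr\,\|C\|$ to bound its Frobenius norm by $\frac{\operatorname{length}(\Gamma)}{2\pi\sqrt{\dim}}\sup_{z\in\Gamma}\big(|\phi_{\tilde\mSigma}(z)|\,\|(zI-\mSigma)^{-1}\|\,\|\mE\|_\fr\,\|(zI-\tilde{\mSigma})^{-1}\|\big) \lesssim 1/\sqrt{\dim}$, since every factor is $O(1)$ by the first step. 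Adding the three contributions yields $\|\calM_{\beta_\dim}(\mSigma) - \calM_{\beta_\dim}(\tilde{\mSigma})\|_\fr \lesssim 1/\sqrt{\dim}$.

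The one genuinely delicate point — and the reason a naive estimate fails — is that one cannot bound the contour integrals by $\sup_{z\in\Gamma}$ of the Frobenius norm of the integrand, because $\|(zI-\mSigma)^{-1}\|_\fr$ is of order $\sqrt{\dim}$. The two devices that fix this are exactly the ones above: evaluating the holomorphic-weight integral by residues, so that the output is a rank-$O(1)$ matrix whose Frobenius norm is governed only by the size of the scalar weight; and, in the resolvent-difference term, keeping $\mE$ sandwiched between two resolvents of bounded operator norm, so that only $\|\mE\|_\fr = O(1)$ — rather than a dimension-dependent Frobenius norm — enters the estimate. A secondary benefit of working with the contour $\Gamma$ instead of a term-by-term eigenvector perturbation analysis is that it automatically accommodates possible degeneracies among $\gamma_1,\dots,\gamma_\rnk$.
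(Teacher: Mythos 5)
Your proof is correct. It takes a genuinely different (and arguably cleaner) route than the paper's, although both ultimately rely on contour-integral arguments and first-order perturbation bounds. The paper splits the first sum of $\calM_{\beta_\dim}(\cdot)$ into three explicit pieces: a projector-difference term handled by the Davis--Kahan $\sin\Theta$ theorem, an $H_\mSigma$-vs.-$H_{\tilde\mSigma}$ difference handled by Taylor's theorem plus Weyl's inequality, and a matrix-function perturbation term $\sum_i H_\mSigma(\lambda_i(\cdot))\vu_i(\cdot)\vu_i(\cdot)^\top$ handled by a contour integral combined with Wedin's bound $\|\mB^{-1}-\mA^{-1}\|_{\fr}\le\|\mB^{-1}\|\,\|\mA^{-1}\|\,\|\mB-\mA\|_{\fr}$. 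Your approach instead represents the \emph{entire} top-block part $\mathbf{C}(\mSigma)$ as a single contour integral $\frac{1}{2\pi i}\oint_\Gamma\phi_\mSigma(z)(zI-\mSigma)^{-1}\,\diff z$ from the outset, and then performs a two-way add-and-subtract: a ``weight-difference'' term that collapses by residues to a rank-$\rnk$ matrix controlled by the partial-fraction identity for $H_\mSigma-H_{\tilde\mSigma}$, and a ``resolvent-difference'' term controlled by the resolvent identity with $\mE$ sandwiched between two bounded-operator-norm resolvents, so that only $\|\mE\|_{\fr}$ (and never a $\sqrt{\dim}$-sized Frobenius norm of a resolvent) enters. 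This absorbs both the paper's terms $(i)$ and $(iii)$ into a single estimate, avoids invoking Davis--Kahan and Wedin, replaces Taylor's theorem with an exact algebraic computation, and, as you note, handles degenerate limit eigenvalues $\gamma_1,\dots,\gamma_\rnk$ without any modification. The one point you flag---that a positive lower bound on $\beta_\dim$ is needed, beyond the stated $\beta_\dim\lesssim1$---is equally required by the paper's own proof (see the remainder-term bound in the paper, which explicitly uses $\beta_\dim\to\beta\in(0,\infty)$), and is available in every application of the lemma, so this is a correct and useful observation rather than a defect in your argument.
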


We now have all of the ingredients to complete the proof of Theorem \mref{thm:contiguity}. We postpone the proofs of the intermediate results introduced above to the end of this section and present the proof of Theorem \mref{thm:contiguity}, taking these intermediate results for granted. 

\begin{proof}[Proof of Theorem \mref{thm:contiguity}] We begin by decomposing the log-likelihood ratio between $\nu_\dim$ and $\tilde{\nu}_\dim$ (computed in \eqref{eq:LLR-Z}) as follows:
\begin{align} \label{eq:log-LLR-decomp}
     \ln \frac{\diff\tilde{\nu}_{\dim}}{\diff\nu_{\dim}}(\mV) = L_p(\mV) +  c_p \quad\forall\: \mV\in  \O(\dim,\rnk),
\end{align}
where 
\begin{align*}
  L_\dim(\mV) \bydef \frac{\sqrt{\dim}\beta_{\dim}}{2}(\Tr[\mV^\top\mE\mV]-\Tr[\mE\calM_{\beta_{\dim}}(\mSigma)]),
\end{align*}
and $c_\dim$ is a non-random constant (the exact formula for $c_\dim$ will not be important). We prove each claim of Theorem \mref{thm:contiguity} one by one. 
\paragraph{Mutual Contiguity of $\nu_\dim$, $\tilde{\nu}_\dim$ (Claim (1) in Theorem \mref{thm:contiguity})} We first prove that the Gibbs distributions $\nu_\dim, \tilde{\nu}_\dim$ are mutually contiguous. Using the contiguity criterion from \Cref{fact:Mukherjee} (Item (1)), it suffices to show that: \begin{align} \label{eq:log-LLR-O(1)}
    L_{\dim}(\rV)=O_{\P}(1) \quad \text{when} \quad \rV\sim\nu_{\dim} \quad \text{and when} \quad  \rV\sim\tilde{\nu}_{\dim}.
\end{align}
Notice that \Cref{prop:Lnull-dc} guarantees that $L_{\dim}(\rV) = O_\P(1)$. On the other hand, when $\rV \sim \tilde{\nu}_{\dim}$, we can bound $L_\dim(\rV)$ as:
\begin{align*}
    |L_\dim(\rV)| &= \frac{\sqrt{\dim}\beta_{\dim}}{2}|\Tr[\rV^\top\mE\rV]-\Tr[\mE\calM_{\beta_{\dim}}(\mSigma)]|\\
    &\leq \underbrace{\frac{\sqrt{\dim}\beta_{\dim}}{2}|\Tr[\rV^\top\mE\rV]-\Tr[\mE\calM_{\beta_{\dim}}(\tilde{\mSigma})]|}_{(i)}+\underbrace{\frac{\sqrt{\dim}\beta_{\dim}}{2}|
    \Tr[\mE\calM_{\beta_{\dim}}(\tilde{\mSigma})]-\Tr[\mE\calM_{\beta_{\dim}}(\mSigma)]|}_{(ii)}.
\end{align*}
For the term $(i)$, we apply\footnote{Specifically, we apply \Cref{prop:Lnull-dc} with $\tilde{\mSigma}$ playing the role of the matrix $\mSigma$ in \Cref{prop:Lnull-dc}. Since $\|\tilde{\mSigma}-\mSigma\|\bydef\|\mE\|/\sqrt{\dim}\leq\|\mE\|_{\fr}/\sqrt{\dim}=o(1)$, $\tilde{\mSigma}$ also satisfies Assumption \mref{assump:mat} (see \Cref{lem:misc_conv}).} \Cref{prop:Lnull-dc} to conclude that this term is  $O_\P(1)$. Additionally, $(ii)\lesssim 1$ since
\begin{align*}
    |\Tr[\mE\calM_{\beta_{\dim}}(\tilde{\mSigma})]-\Tr[\mE\calM_{\beta_{\dim}}(\mSigma)]|\leq\|\mE\|_{\fr}\|\calM_{\beta_{\dim}}(\tilde{\mSigma})-\calM_{\beta_{\dim}}(\mSigma)\|_{\fr}\explain{\Cref{lem:perturb}}{\lesssim} \frac{1}{\sqrt{\dim}}.
\end{align*}
Hence, $L_{\dim}(\rV)  = O_\P(1)$ when $\rV \sim \tilde{\nu}_{\dim}$. This proves the first claim made in Theorem \mref{thm:contiguity}. To prove the remaining claims of Theorem \mref{thm:contiguity}, we split our analysis into two cases.   
\paragraph{Case 1: $\|\mE\|\ll1$ or $\sigma^2_{\mSigma}(\mE, \beta_\dim) \rightarrow 0$ (Claim (2) in Theorem \mref{thm:contiguity})} We first consider the case when $\|\mE\|\ll1$ or $\sigma^2_{\mSigma}(\mE, \beta_\dim) \rightarrow 0,$ and show: 
\begin{subequations}
    \begin{align}\label{eq:thm-contiguity-1a-toshow}
    \lim_{\dim \rightarrow \infty} \tv(\nu_{\dim}, \tilde{\nu}_{\dim}) &= 0, \\ \lim_{\dim \rightarrow \infty} \rdv{\alpha}{\tilde{\nu}_\dim}{\nu_\dim} &= 0 \quad \forall \; \alpha > 1. \label{eq:thm-contiguity-1b-toshow}
\end{align}
\end{subequations}
To prove \eqref{eq:thm-contiguity-1a-toshow}, we appeal to \Cref{fact:Mukherjee} (item (3)):
\begin{itemize}
    \item In \eqref{eq:log-LLR-decomp}, we decomposed the log-likelihood ratio between $\tilde{\nu}_\dim, \nu_\dim$ as $L_\dim(\mV) + c_\dim$ for any $\mV\in\O(\dim,\rnk).$
    \item We have already shown in \eqref{eq:log-LLR-O(1)} that $L_\dim(\rV) = O_\P(1)$ when $\rV \sim \nu_\dim$ and when $\rV \sim \tilde{\nu}_\dim$.
    \item Since $\|\mE\|\ll1$ or $\sigma^2_{\mSigma}(\mE, \beta_\dim) \rightarrow 0$, \Cref{prop:Lnull-dc} guarantees that $L_\dim(\rV) \bydef \sqrt{\dim}\beta_{\dim}(\Tr[\mV^\top\mE\mV]-\Tr[\mE\calM_{\beta_{\dim}}(\mSigma)])/2 \pc 0$ when $\rV \sim \nu_\dim$.
\end{itemize}
Since all the requirements of \Cref{fact:Mukherjee} (item (3)) are met, we conclude that:
\begin{align} \label{eq:mukherjee-fact-concl}
    \ln\frac{\diff\tilde{\nu}_{\dim}}{\diff\nu_{\dim}}(\rV)\pc 0 \quad\text{when}\quad \rV\sim\nu_{\dim},  \quad  \tv(\nu_{\dim}, \tilde{\nu}_{\dim}) \rightarrow 0.
\end{align}
This proves \eqref{eq:thm-contiguity-1a-toshow}. To show \eqref{eq:thm-contiguity-1b-toshow}, we recall the formula for $\rdv{\alpha}{\tilde{\nu}_\dim}{\nu_\dim}$: 
\begin{align} \label{eq:rdv-formula-recall}
   \rdv{\alpha}{\tilde{\nu}_{\dim}}{\nu_{\dim}}\bydef \frac{1}{\alpha - 1} \ln \E_{\rV \sim \nu_\dim}\left[ \left( \frac{\diff \tilde{\nu}_\dim}{\diff \nu_{\dim}}(\rV) \right)^\alpha \right].
\end{align}
The key idea to compute $\rdv{\alpha}{\tilde{\nu}_{\dim}}{\nu_{\dim}}$ will be to observe that the random variable $({\diff \tilde{\nu}_\dim}/{\diff \nu_{\dim}})(\rV)^\alpha$ is proportional to the likelihood ratio between $\nu_\dim$ and another Gibbs measure $\bar{\nu}_\dim$:
\begin{align*}
    \bar{\nu}_{\dim}\bydef\nu\left(\cdot\mid \mSigma+\frac{\alpha\mE}{\sqrt{\dim}},\beta_{\dim},\rnk\right).
\end{align*}
Indeed,
\begin{align} \label{eq:LR-moment}
    \left( \frac{\diff \tilde{\nu}_\dim}{\diff \nu_{\dim}}(\mV) \right)^\alpha & = \frac{Z(\tilde{\mSigma}, \beta_\dim,\rnk)^\alpha}{Z(\mSigma, \beta_\dim,\rnk)^\alpha} \cdot \exp\left( \frac{\alpha \beta_\dim \sqrt{\dim}}{2} \Tr[\mV^\top \mE \mV] \right) = \bar{c}_\dim \cdot \frac{\diff\bar{\nu}_{\dim}}{\diff\nu_{\dim}}(\mV) \quad \forall \; \mV \; \in \; \O(\dim,\rnk),
\end{align}
where $\bar{c}_\dim$ is a non-random constant which absorbs the normalizing constants of $\nu_\dim, \tilde{\nu}_\dim, \bar{\nu}_\dim$. Using the above display, we can compute the Rényi divergence in \eqref{eq:rdv-formula-recall}:
\begin{align} \label{eq:rdv-formula-cp}
    \rdv{\alpha}{\tilde{\nu}_{\dim}}{\nu_{\dim}} & = \frac{1}{\alpha-1} \ln \E_{\rV \sim \nu_\dim}\left[ \bar{c}_\dim \cdot \frac{\diff\bar{\nu}_{\dim}}{\diff\nu_{\dim}}(\rV) \right] = \frac{\ln \bar{c}_\dim}{\alpha -1 }
\end{align}
Now, we only need to compute $\lim_{\dim \rightarrow \infty} \ln \bar{c}_\dim$, which we can do by rearranging \eqref{eq:LR-moment}:
\begin{align} \label{eq:cp-formula}
    \ln \bar{c}_\dim & = \alpha \ln  \frac{\diff \tilde{\nu}_\dim}{\diff \nu_{\dim}}(\mV) -  \ln  \frac{\diff\bar{\nu}_{\dim}}{\diff\nu_{\dim}}(\mV) \quad \forall \; \mV \; \in \; \O(\dim,\rnk).
\end{align}
Notice that:
\begin{itemize}
    \item \eqref{eq:mukherjee-fact-concl} guarantees that
    \begin{align} \label{eq:cp-formula-t1}
        \alpha \ln  \frac{\diff \tilde{\nu}_\dim}{\diff \nu_{\dim}}(\rV) \pc 0  \quad \text{when}\quad \rV \sim \nu_\dim.
    \end{align}
    \item The Gibbs measure $\bar{\nu}_\dim = \nu(\cdot \mid \mSigma + \tfrac{\alpha \mE}{\sqrt{\dim}}, \beta_\dim, \rnk)$ has the same form as the Gibbs measure $\tilde{\nu}_\dim = \nu(\cdot | \mSigma + \tfrac{ \mE}{\sqrt{\dim}}, \beta_\dim, \rnk)$ (the only change being replacement of $\mE$ by $\alpha \mE$). Hence, by repeating the arguments used in \eqref{eq:mukherjee-fact-concl} (or by appealing to the result of \eqref{eq:mukherjee-fact-concl} after replacing $\tilde{\nu}_\dim$ by $\bar{\nu}_\dim$), we conclude that:
    \begin{align} \label{eq:cp-formula-t2}
        \ln  \frac{\diff \bar{\nu}_\dim}{\diff \nu_{\dim}}(\rV) \pc 0  \quad \text{when}\quad \rV \sim {\nu}_\dim.
    \end{align}
\end{itemize}
Combining \eqref{eq:cp-formula}, \eqref{eq:cp-formula-t1}, and \eqref{eq:cp-formula-t2} using Slutsky's theorem, we conclude that:
\begin{align*}
    \ln \bar{c}_\dim \rightarrow 0 \implies   \rdv{\alpha}{\tilde{\nu}_{\dim}}{\nu_{\dim}}  \explain{\eqref{eq:rdv-formula-cp}}{=}  \frac{\ln \bar{c}_\dim}{\alpha -1 } \rightarrow  0.
\end{align*}
This completes the proof of \eqref{eq:thm-contiguity-1b-toshow} and the proof of the second claim in Theorem \mref{thm:contiguity}. 
\paragraph{Case 2: $\sigma^2_{\mSigma}(\mE, \beta_\dim) \rightarrow v \in (0,\infty)$ (Claim (3) in Theorem \mref{thm:contiguity})} Next, we consider the case when $\sigma_{\mSigma}^2(\mE,\beta_{\dim})\rightarrow v$ for some $v\in(0,\infty),$ and  show that:
\begin{subequations}
\begin{align}
    \lim_{\dim \rightarrow \infty} \tf{\nu_{\dim}}{\tilde{\nu}_{\dim}}(\alpha) &=  \tf{\gauss{0}{1}}{\gauss{\sqrt{v}}{1}}(\alpha), \label{eq:thm-contiguity-tf} \\
    \lim_{\dim \rightarrow \infty} \rdv{\alpha}{\tilde{\nu}_{\dim}}{\nu_{\dim}} &=  \rdv{\alpha}{\gauss{\sqrt{v}}{1}}{\gauss{0}{1}} \quad \forall \; \alpha >1. \label{eq:thm-contiguity-rdv}
\end{align}
\end{subequations}
To prove \eqref{eq:thm-contiguity-tf}, we appeal to \Cref{prop:le-cam}. This requires us to first understand the limit distribution of the log-likelihood ratio between $\nu_\dim, \tilde{\nu}_\dim$ which we can derive using \Cref{fact:Mukherjee} (item 2):
\begin{itemize}
    \item In \eqref{eq:log-LLR-decomp}, we decomposed the log-likelihood ratio between $\tilde{\nu}_\dim, \nu_\dim$ as $L_\dim(\rV) + c_\dim$.
    \item We have already shown in \eqref{eq:log-LLR-O(1)} that $L_\dim(\rV) = O_\P(1)$ when $\rV \sim \nu_\dim$ and when $\rV \sim \tilde{\nu}_\dim$.
    \item Since $\sigma^2_{\mSigma}(\mE, \beta_\dim) \rightarrow v$, \Cref{prop:Lnull-dc} guarantees:
    \begin{align*}
        L_\dim(\rV) \bydef \sqrt{\dim}\beta_{\dim}(\mV^\top\mE\mV-\Tr[\mE\calM_{\beta_{\dim}}(\mSigma)])/2 \dc \gauss{0}{v}\quad\text{when}\quad\rV \sim \nu_\dim.
    \end{align*}
\end{itemize}
Since all the requirements of \Cref{fact:Mukherjee} (item (2)) are met, we conclude that:
\begin{align} \label{eq:mukherjee-fact-concl-2}
   \ln\frac{\diff\tilde{\nu}_{\dim}}{\diff\nu_{\dim}}(\rV)\dc \gauss{-\frac{v}{2}}{v} \quad\text{when}\quad  \rV\sim\nu_{\dim}.
\end{align}
Combining \eqref{eq:mukherjee-fact-concl-2} with \Cref{prop:le-cam} immediately gives the claim \eqref{eq:thm-contiguity-tf}. To show \eqref{eq:thm-contiguity-rdv}, we recall the formula for $\rdv{\alpha}{\tilde{\nu}_\dim}{\nu_\dim}$: 
\begin{align} \label{eq:rdv-formula-recall-2}
   \rdv{\alpha}{\tilde{\nu}_{\dim}}{\nu_{\dim}}\bydef \frac{1}{\alpha - 1} \ln \E_{\rV \sim \nu_\dim}\left[ \left( \frac{\diff \tilde{\nu}_\dim}{\diff \nu_{\dim}}(\rV) \right)^\alpha \right].
\end{align}
As before, the main idea to compute $\rdv{\alpha}{\tilde{\nu}_{\dim}}{\nu_{\dim}}$ will be to observe that the random variable $({\diff \tilde{\nu}_\dim}/{\diff \nu_{\dim}})^\alpha(\rV)$ is proportional to the likelihood ratio between $\nu_\dim$ and the Gibbs measure $\bar{\nu}_\dim \bydef \nu(\cdot\mid \mSigma+\tfrac{\alpha\mE}{\sqrt{\dim}},\beta_{\dim},\rnk)$:
\begin{align} \label{eq:LR-moment-2}
    \left( \frac{\diff \tilde{\nu}_\dim}{\diff \nu_{\dim}}(\mV) \right)^\alpha & =  \bar{c}_\dim \cdot \frac{\diff\bar{\nu}_{\dim}}{\diff\nu_{\dim}}(\mV) \quad \forall \; \mV \; \in \; \O(\dim,\rnk),
\end{align}
where $\bar{c}_\dim$ is a non-random constant which absorbs the normalizing constants of $\nu_\dim, \tilde{\nu}_\dim, \bar{\nu}_\dim$. Using the above display, we can compute the Rényi divergence in \eqref{eq:rdv-formula-recall-2}:
\begin{align} \label{eq:rdv-formula-cp-2}
    \rdv{\alpha}{\tilde{\nu}_{\dim}}{\nu_{\dim}} & = \frac{1}{\alpha-1} \ln \E_{\rV \sim \nu_\dim}\left[ \bar{c}_\dim \cdot \frac{\diff\bar{\nu}_{\dim}}{\diff\nu_{\dim}}(\rV) \right] = \frac{\ln \bar{c}_\dim}{\alpha -1 }.
\end{align}
Now, we only need to compute $\lim_{\dim \rightarrow \infty} \ln \bar{c}_\dim$, which we can do by rearranging \eqref{eq:LR-moment-2}:
\begin{align} \label{eq:cp-formula-2}
    \ln \bar{c}_\dim & = \alpha \ln  \frac{\diff \tilde{\nu}_\dim}{\diff \nu_{\dim}}(\mV) -  \ln  \frac{\diff\bar{\nu}_{\dim}}{\diff\nu_{\dim}}(\mV) \quad \forall \; \mV \; \in \; \O(\dim,\rnk).
\end{align}
Notice that:
\begin{itemize}
    \item \eqref{eq:mukherjee-fact-concl-2} guarantees that
    \begin{align} \label{eq:cp-formula-t1-2}
        \alpha \ln  \frac{\diff \tilde{\nu}_\dim}{\diff \nu_{\dim}}(\rV) \dc  \gauss{-\frac{\alpha v}{2}}{\alpha^2 v} \quad\text{when}\quad  \rV \sim \nu_\dim.
    \end{align}
    \item The Gibbs measure $\bar{\nu}_\dim = \nu(\cdot | \mSigma + \tfrac{\alpha \mE}{\sqrt{\dim}}, \beta_\dim, \rnk)$ has the same form as the Gibbs measure $\tilde{\nu}_\dim = \nu(\cdot | \mSigma + \tfrac{ \mE}{\sqrt{\dim}}, \beta_\dim, \rnk)$ (the only change being replacement of $\mE$ by $\alpha \mE$). Hence, by repeating the arguments used in \eqref{eq:mukherjee-fact-concl-2} (or by appealing to the result of \eqref{eq:mukherjee-fact-concl-2} after replacing $\tilde{\nu}_\dim$ by $\bar{\nu}_\dim$), we conclude that:
    \begin{align} \label{eq:cp-formula-t2-2}
        \ln  \frac{\diff \bar{\nu}_\dim}{\diff \nu_{\dim}}(\rV) \dc \gauss{-\frac{\alpha^2 v}{2}}{\alpha^2 v}\quad\text{when}\quad  \rV \sim {\nu}_\dim
    \end{align}
\end{itemize}
Combining \eqref{eq:cp-formula-2}, \eqref{eq:cp-formula-t1-2}, and \eqref{eq:cp-formula-t2-2} using Slutsky's theorem (see \Cref{lem:Slutsky}), we conclude that:
\begin{align*}
    \ln \bar{c}_\dim \rightarrow \frac{\alpha(\alpha - 1)v}{2} \implies   \rdv{\alpha}{\tilde{\nu}_{\dim}}{\nu_{\dim}}  \explain{\eqref{eq:rdv-formula-cp-2}}{=}  \frac{\ln \bar{c}_\dim}{\alpha -1 } \rightarrow  \frac{\alpha v}{2} = \rdv{\alpha}{\gauss{\sqrt{v}}{1}}{\gauss{0}{1}}.
\end{align*}
This completes the proof of \eqref{eq:thm-contiguity-rdv} and the proof of the third and last claim in Theorem \mref{thm:contiguity}. 
\end{proof}

\subsection{Proof of \texorpdfstring{\Cref{prop:Lnull-dc}}{prop:Lnull-dc}} \label{app:Lnull-dc} 
Recall that \Cref{prop:Lnull-dc} analyzes the asymptotic properties of the random variable:
\begin{align*}
    \sqrt{\dim}(\Tr[\rV^\top\mE\rV]-\Tr[\mE\calM_{\beta_{\dim}}(\mSigma)]), \quad \text{where} \quad  \rV \sim \nu(\cdot \mid \mSigma, \beta_\dim, \rnk). 
\end{align*}
Let $\hat{\nu}(\cdot\mid\mSigma,\beta_{\dim},\rnk)$ denote the output distribution of the sampler (Algorithm \mref{alg:sampler}) run on $(\mSigma,\beta_{\dim},\rnk)$.  Since Theorem \mref{thm:sampling} guarantees that the total variation distance between $\nu(\cdot\mid\mSigma,\beta_{\dim},\rnk)$ and $\hat{\nu}(\cdot\mid\mSigma,\beta_{\dim},\rnk)$ tends to 0 as $\dim\rightarrow\infty$, it is enough to prove the claims of \Cref{prop:Lnull-dc} for the random variable:
\begin{align} \label{eq:key-stat}
     \sqrt{\dim}(\Tr[\rV^\top\mE\rV]-\Tr[\mE\calM_{\beta_{\dim}}(\mSigma)]), \quad \text{where} \quad  \rV \sim \hat{\nu}(\cdot \mid \mSigma, \beta_\dim, \rnk). 
\end{align}

\paragraph{Some Intermediate Results} The proof of \Cref{prop:Lnull-dc} relies on two intermediate results. Recall the sampling algorithm (Algorithm \mref{alg:sampler}) constructs $\rV \sim  \hat{\nu}(\cdot \mid \mSigma, \beta_\dim, \rnk)$ as:
\begin{subequations}\label{eq:sampler-recall}
\begin{align}
    \rV\explain{}{=}\mU \begin{bmatrix} (I_k - \rZ^\top \rZ)_+^{1/2} \\ \rZ \end{bmatrix}  \rQ,\quad\text{where }\quad\rQ\sim\xi_{\rnk,\rnk}
\end{align}
and 
\begin{align}
    \rZ_{ij}\overset{\indep}{\sim}\gauss{0}{\frac{1}{\dim\beta_{\dim}(\lambda_j-\lambda_{\rnk+i})}},\; i\in[\dim-\rnk],\;j\in[\rnk],
\end{align}
\end{subequations}
where $\mU \in \O(\dim)$ denotes the matrix of eigenvectors of $\mSigma$ and $\lambda_{1:\dim}$ are the corresponding eigenvalues. Plugging \eqref{eq:sampler-recall} into \eqref{eq:key-stat}, together with some simple asymptotic estimates, gives us the following simplified asymptotic approximation for the statistic 
\begin{align*}
    \sqrt{\dim}(\Tr[\rV^\top\mE\rV]-\Tr[\mE\calM_{\beta_{\dim}}(\mSigma)]).
\end{align*}
\begin{lemma}\label{lem:Lnull-dc-w}
When $\rV\sim \hat{\nu}(\cdot\mid\mSigma,\beta_{\dim},\rnk)$, $\sqrt{\dim}\left(\Tr[\rV^\top\mE\rV]-\Tr[\mE\calM_{\beta_{\dim}}(\mSigma)]\right)=\rw + o_\P(1)$, where the random variable $\rw$ is defined as:
\begin{align*}
    \rw\bydef \sqrt{\dim}(\Tr[\mU_\star^\top\mE\mU_\star(\E[\rZ^\top\rZ]-\rZ^\top\rZ)]+2\Tr[\mU_\perp^\top\mE\mU_\star(\mI_\rnk-\E[\rZ^\top\rZ])_+^\frac{1}{2}\rZ^\top]).
\end{align*}
In the above display, $\mU_\star \in \R^{\dim \times \rnk}$ is the matrix of the leading $\rnk$ eigenvectors of $\mSigma$ and $\mU_\perp \in\R^{\dim \times (\dim - \rnk)}$ is the matrix of the remaining $\dim - \rnk$ eigenvectors. 
\end{lemma}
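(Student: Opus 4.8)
## Proof Plan for Lemma~\ref{lem:Lnull-dc-w}

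The plan is to substitute the explicit form of $\rV$ produced by the sampler (equation~\eqref{eq:sampler-recall}) into the statistic $\sqrt{\dim}(\Tr[\rV^\top\mE\rV]-\Tr[\mE\calM_{\beta_{\dim}}(\mSigma)])$, expand the quadratic form $\Tr[\rV^\top\mE\rV]$ into blocks according to the partition $\mU=\begin{bmatrix}\mU_\star & \mU_\perp\end{bmatrix}$, and show that the orthogonal rotation $\rQ$ and all terms other than the two appearing in $\rw$ are asymptotically negligible after multiplication by $\sqrt{\dim}$. First I would write $\rV = \mU\begin{bmatrix}(\mI_\rnk-\rZ^\top\rZ)_+^{1/2}\\ \rZ\end{bmatrix}\rQ =: \mU\mW\rQ$ with $\mW\in\R^{\dim\times\rnk}$, so that $\Tr[\rV^\top\mE\rV] = \Tr[\rQ^\top\mW^\top(\mU^\top\mE\mU)\mW\rQ] = \Tr[\mW^\top(\mU^\top\mE\mU)\mW]$, since $\rQ\in\O(\rnk)$ drops out of the trace. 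Writing $\mU^\top\mE\mU$ in $2\times2$ block form with blocks $\mE_{\star\star}=\mU_\star^\top\mE\mU_\star$, $\mE_{\star\perp}=\mU_\star^\top\mE\mU_\perp$, $\mE_{\perp\perp}=\mU_\perp^\top\mE\mU_\perp$, and denoting $\mS \bydef (\mI_\rnk-\rZ^\top\rZ)_+^{1/2}$, I get
\begin{align*}
    \Tr[\rV^\top\mE\rV] = \Tr[\mS\mE_{\star\star}\mS] + 2\Tr[\mS\mE_{\star\perp}\rZ] + \Tr[\rZ^\top\mE_{\perp\perp}\rZ].
\end{align*}

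Next I would handle each of the three terms. For the first term, $\Tr[\mS\mE_{\star\star}\mS] = \Tr[\mE_{\star\star}\mS^2] = \Tr[\mE_{\star\star}(\mI_\rnk-\rZ^\top\rZ)_+]$; since \Cref{lem:Z^T*Z-sampler} gives $\E[\rZ^\top\rZ]\to \diag(H_\mu(\gamma_{1:\rnk})/\beta)\prec\mI_\rnk$ with fluctuations $O_\P(\dim^{-1/2})$, the event $\{\rZ^\top\rZ\prec\mI_\rnk\}$ holds with probability $\to 1$, so the positive-part truncation is inactive asymptotically and $\Tr[\mE_{\star\star}(\mI_\rnk-\rZ^\top\rZ)_+] = \Tr[\mE_{\star\star}] - \Tr[\mE_{\star\star}\rZ^\top\rZ] + o_\P(\dim^{-1/2})$. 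Comparing with the definition of $\calM_{\beta_\dim}(\mSigma)$ in \eqref{eq:M-def}, its $\mU_\star$-block contributes $\sum_j(1-H_\mSigma(\lambda_j)/\beta)\vu_j\vu_j^\top$ so that $\Tr[\mE\,\mathcal{M}^{\star}_{\beta_\dim}(\mSigma)]$ matches $\Tr[\mE_{\star\star}\E[\mS^2]]$ up to the same order; the matching centering term then turns the first term into $\sqrt{\dim}\Tr[\mU_\star^\top\mE\mU_\star(\E[\rZ^\top\rZ]-\rZ^\top\rZ)]$, which is the first term of $\rw$. For the third term, $\Tr[\rZ^\top\mE_{\perp\perp}\rZ]$, I would note that each entry $\rZ_{ij}$ has variance of order $1/(\dim\beta_\dim(\lambda_j-\lambda_{\rnk+i}))$, so $\E\Tr[\rZ^\top\mE_{\perp\perp}\rZ]$ is a normalized sum matching exactly the $\mU_\perp$-block contribution $\tfrac{1}{\dim\beta}\sum_{j,i}\vu_{\rnk+i}\vu_{\rnk+i}^\top/(\lambda_j-\lambda_{\rnk+i})$ of $\calM_{\beta_\dim}(\mSigma)$; the centered fluctuation $\Tr[\rZ^\top\mE_{\perp\perp}\rZ]-\E[\cdot]$ is a centered quadratic form in $\dim-\rnk$ nearly-independent small Gaussians whose variance is $O(\dim^{-2}\|\mE\|_{\fr}^2)=O(\dim^{-2})$, hence it is $o_\P(\dim^{-1/2})$ after multiplication by $\sqrt{\dim}$. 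That leaves the middle term $2\Tr[\mS\mE_{\star\perp}\rZ]$: since $\E[\rZ]=0$, this term is already centered, and I would replace $\mS=(\mI_\rnk-\rZ^\top\rZ)_+^{1/2}$ by the deterministic $(\mI_\rnk-\E[\rZ^\top\rZ])_+^{1/2}$ at the cost of $o_\P(\dim^{-1/2})$, using that $\|\mS^2-\E[\mS^2]\|_{\fr}=O_\P(\dim^{-1/2})$ and $\|\rZ\|_{\fr}=O_\P(\dim^{-1/2})$, so the error in this product is $O_\P(\dim^{-1})$. Transposing inside the trace, $2\Tr[\mS\mE_{\star\perp}\rZ]=2\Tr[\mU_\perp^\top\mE\mU_\star\,(\mI_\rnk-\E[\rZ^\top\rZ])_+^{1/2}\rZ^\top]+o_\P(\dim^{-1/2})$, giving the second term of $\rw$.

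The main obstacle I anticipate is the careful bookkeeping in the two substitution steps — replacing $(\mI_\rnk-\rZ^\top\rZ)_+$ by $\mI_\rnk-\rZ^\top\rZ$ (activating the truncation) and replacing the random $\mS$ by its deterministic limit in the cross term — while verifying that every discarded remainder is genuinely $o_\P(\dim^{-1/2})$ and not merely $o_\P(1)$. This requires the quantitative $O_\P(\dim^{-1/2})$ control of $\rZ^\top\rZ-\E[\rZ^\top\rZ]$ from \Cref{lem:Z^T*Z-sampler} together with $\|\mE\|_{\fr}\lesssim 1$, and a Lipschitz bound for the map $\mA\mapsto(\mI_\rnk-\mA)_+^{1/2}$ near $\mA=\diag(H_\mu(\gamma_{1:\rnk})/\beta)$ (which is valid precisely because $\beta>H_\mu(\gamma_\rnk)$ keeps the limiting argument strictly inside the interior where the square root is smooth). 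Once these two replacements are justified, collecting the surviving terms yields exactly $\rw$, and the remaining terms — in particular the entire contribution of $\rQ$ and the centered quadratic form from the $\mU_\perp$ block — vanish at the required rate. I would also record, for use in the subsequent parts of \Cref{prop:Lnull-dc}, that $\rw$ as defined is itself a (centered) quadratic-plus-linear functional of the Gaussian vector $\rZ$, setting up the application of the second-order Poincaré inequality of \citet{chatterjee2009fluctuations} in the next lemma.
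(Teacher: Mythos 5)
Your proposal follows essentially the same route as the paper's proof: substitute the sampler's representation of $\rV$, observe that $\rQ$ cancels inside the trace, expand $\Tr[\rV^\top\mE\rV]$ over the $(\mU_\star,\mU_\perp)$ blocks, match the centering $\Tr[\mE\calM_{\beta_\dim}(\mSigma)]$ exactly against $\Tr[\mU_\star^\top\mE\mU_\star(\mI_\rnk-\E[\rZ^\top\rZ])]+\Tr[\mU_\perp^\top\mE\mU_\perp\E[\rZ\rZ^\top]]$, drop the positive parts on the event $\{\rZ^\top\rZ\prec\mI_\rnk\}$ (which holds w.h.p.), and then kill the two remainders: the cross term with $(\mI_\rnk-\rZ^\top\rZ)_+^{1/2}-(\mI_\rnk-\E[\rZ^\top\rZ])_+^{1/2}$, and the centered $\mU_\perp$-block quadratic form. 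The variance bound $O(\dim^{-2})$ you give for the latter matches the paper's computation.

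One intermediate claim is wrong, though the step it supports survives. You assert $\|\rZ\|_{\fr}=O_\P(\dim^{-1/2})$; in fact $\E\|\rZ\|_{\fr}^2=\Tr[\E[\rZ^\top\rZ]]\to\sum_{j=1}^{\rnk}H_\mu(\gamma_j)/\beta>0$, so $\|\rZ\|_{\fr}=\Theta_\P(1)$. What you actually need (and what makes your claimed $O_\P(\dim^{-1})$ for the cross-term error correct) is that the \emph{contracted} quantity $\|\mU_\star^\top\mE\mU_\perp\rZ\|_{\fr}$ is $O_\P(\dim^{-1/2})$, which follows from $\E\|\mA\rZ\|_{\fr}^2=\Tr[\mA\,\E[\rZ\rZ^\top]\mA^\top]\le\|\E[\rZ\rZ^\top]\|\,\|\mA\|_{\fr}^2$ together with $\|\E[\rZ\rZ^\top]\|\lesssim 1/\dim$ and $\|\mE\|_{\fr}\lesssim 1$. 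With that correction your argument closes; note also that the paper gets away with slightly less here, pairing the qualitative bound $\|(\mI_\rnk-\rZ^\top\rZ)_+^{1/2}-(\mI_\rnk-\E[\rZ^\top\rZ])_+^{1/2}\|=o_\P(1)$ (continuous mapping, no Lipschitz control of the matrix square root needed) with $\|\sqrt{\dim}\,\rZ^\top\mU_\perp^\top\mE\mU_\star\|=O_\P(1)$, since only an $o_\P(1)$ conclusion is required for the rescaled error.
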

We will also find it useful to compute the variance of the random variable $\rw$ introduced above.
\begin{lemma}\label{lem:Lnull-dc-var}
    For sufficiently large $\dim$, $\Var[\rw] = \tfrac{4 \sigma_{\mSigma}^2(\mE,\beta_{\dim})}{\beta_{\dim}^2}$, where the variance function  $\sigma_{\mSigma}^2(\mE,\beta_{\dim})$ is as defined in the statement of \Cref{prop:Lnull-dc}. 
\end{lemma}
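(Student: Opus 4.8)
\textbf{Proof proposal for \Cref{lem:Lnull-dc-var}.} The plan is a direct second-moment computation, exploiting that $\rZ$ has independent Gaussian entries so that $\rw$ decomposes into a ``quadratic'' piece and a ``linear'' piece in these Gaussians. First I would record the mean matrix $\mD \bydef \E[\rZ^\top \rZ]$: since the entries $\rZ_{ij}$ are independent with $\E[\rZ_{ij}^2] = \tfrac{1}{\dim \beta_\dim(\lambda_j - \lambda_{\rnk+i})}$, the off-diagonal entries of $\rZ^\top \rZ$ have zero mean and
\begin{align*}
    \mD = \diag\!\left( \frac{H_{\mSigma}(\lambda_1)}{\beta_\dim}, \dotsc, \frac{H_{\mSigma}(\lambda_\rnk)}{\beta_\dim}\right).
\end{align*}
Then I would split $\rw = \rA + \rB$ where $\rA \bydef \sqrt{\dim}\,\Tr[\mU_\star^\top \mE \mU_\star(\mD - \rZ^\top\rZ)]$ and $\rB \bydef 2\sqrt{\dim}\,\Tr[\mU_\perp^\top \mE \mU_\star(\mI_\rnk - \mD)_+^{1/2}\rZ^\top]$. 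Writing $\rz_1, \dotsc, \rz_{\dim-\rnk}$ for the rows of $\rZ$, the centered version of $\rA$ equals $-\sqrt{\dim}\sum_{i=1}^{\dim-\rnk}\big(\rz_i^\top (\mU_\star^\top\mE\mU_\star)\rz_i - \E[\rz_i^\top (\mU_\star^\top\mE\mU_\star)\rz_i]\big)$, a sum of independent (across $i$) centered Gaussian quadratic forms (here I use that $\mE$ is symmetric, so $\mU_\star^\top\mE\mU_\star$ is symmetric), while $\rB = 2\sqrt{\dim}\sum_{i,j}(\vu_{\rnk+i}^\top \mE \vu_j)\,\big((\mI_\rnk - \mD)_+^{1/2}\big)_{jj}\,\rZ_{ij}$ is a linear form in the independent Gaussians $\rZ_{ij}$.

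Next I would compute the two variances separately. For $\rB$, summing variances of independent Gaussians gives
\begin{align*}
    \Var[\rB] = 4\dim \sum_{j=1}^\rnk \sum_{i=1}^{\dim-\rnk} (\vu_{\rnk+i}^\top \mE \vu_j)^2 \cdot \Big(1 - \tfrac{H_\mSigma(\lambda_j)}{\beta_\dim}\Big)_+ \cdot \frac{1}{\dim\beta_\dim(\lambda_j - \lambda_{\rnk+i})};
\end{align*}
for sufficiently large $\dim$ the positive part is inactive, since $H_{\mSigma}(\lambda_j) \to H_\mu(\gamma_j) \le H_\mu(\gamma_\rnk) < \beta$ for every $j \le \rnk$ (using that $H_\mu$ is decreasing and $\gamma_j \ge \gamma_\rnk$, cf.\ \Cref{lem:misc_HK}), so $\Var[\rB] = \tfrac{4}{\beta_\dim^2}\sum_{j=1}^\rnk\sum_{i=1}^{\dim-\rnk}\tfrac{\beta_\dim - H_\mSigma(\lambda_j)}{\lambda_j - \lambda_{\rnk+i}}(\vu_{\rnk+i}^\top \mE \vu_j)^2$, which is $\tfrac{4}{\beta_\dim^2}$ times the second term of $\sigma_\mSigma^2(\mE,\beta_\dim)$. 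For $\rA$, I would apply the standard identity $\Var[\rz^\top \mat{M}\rz] = 2\Tr[(\mat{M}\mat{\Lambda})^2]$ for $\rz \sim \gauss{0}{\mat{\Lambda}}$ ($\mat{\Lambda}$ diagonal, $\mat{M}$ symmetric) to each summand, obtaining
\begin{align*}
    \Var[\rA] = 2\dim \sum_{j,\ell = 1}^\rnk (\vu_j^\top \mE \vu_\ell)^2 \sum_{i=1}^{\dim-\rnk} \frac{1}{\dim^2\beta_\dim^2(\lambda_j - \lambda_{\rnk+i})(\lambda_\ell - \lambda_{\rnk+i})} = \frac{2}{\beta_\dim^2}\sum_{j,\ell=1}^\rnk K_\mSigma(\lambda_j,\lambda_\ell)(\vu_j^\top\mE\vu_\ell)^2,
\end{align*}
which is $\tfrac{4}{\beta_\dim^2}$ times the first term of $\sigma_\mSigma^2(\mE,\beta_\dim)$.

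Finally I would show $\Cov[\rA,\rB] = 0$: the covariance is a linear combination of quantities of the form $\E[(\rZ_{ij}\rZ_{i\ell} - \E[\rZ_{ij}\rZ_{i\ell}])\,\rZ_{i'm}]$, which vanish whenever $i \ne i'$ by independence, and vanish when $i = i'$ as well because they reduce to a third-order moment of mean-zero jointly Gaussian variables (Wick/Isserlis). Adding $\Var[\rA] + \Var[\rB] + 2\Cov[\rA,\rB]$ then yields $\Var[\rw] = \tfrac{4\sigma_\mSigma^2(\mE,\beta_\dim)}{\beta_\dim^2}$ for $\dim$ large enough. I do not anticipate a genuine obstacle here — the computation is essentially bookkeeping; the only points requiring care are (i) the removal of the positive part $(\mI_\rnk - \mD)_+$, which is where the hypothesis $\beta > H_\mu(\gamma_\rnk)$ enters and forces the ``for sufficiently large $\dim$'' qualifier, and (ii) keeping the quadratic-form variance formula applicable by using symmetry of $\mE$.
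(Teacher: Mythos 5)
Your proposal is correct and follows essentially the same route as the paper's proof: decompose $\rw$ into the quadratic and linear Gaussian pieces, use independence across the rows $\rz_i$, apply the identity $\Var[\rz^\top M\rz]=2\Tr[(M\Lambda)^2]$ for the quadratic part, sum entrywise variances for the linear part, and drop the positive part for large $\dim$ using $\beta_\dim\to\beta>H_\mu(\gamma_\rnk)\ge H_\mu(\gamma_j)$. Your Isserlis argument for $\Cov[\rA,\rB]=0$ is just a computational restatement of the paper's observation that the cross term is the covariance of an even and an odd function of a mean-zero Gaussian vector.
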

We postpone the derivations of the two intermediate lemmas introduced above to the end of this section and present the proof of \Cref{prop:Lnull-dc}, taking these intermediate results for granted. 
\begin{proof}[Proof of \Cref{prop:Lnull-dc}] We prove each of the three claims made in \Cref{prop:Lnull-dc} one by one.
\paragraph{Proof of Claim (1) in \Cref{prop:Lnull-dc}}  We first prove that:
\begin{subequations}
\begin{align} 
    \sigma^2_{\mSigma}(\mE, \beta_\dim)&\lesssim 1, \label{eq:var-func-O1} \\ \quad\sqrt{\dim}\left(\Tr[\rV^\top\mE\rV]-\Tr[\mE\calM_{\beta_{\dim}}(\mSigma)]\right)&=O_\P(1). \label{eq:key-stat-O1}
\end{align}
\end{subequations}
To prove \eqref{eq:var-func-O1}, we recall that the variance function $ \sigma^2_{\mSigma}(\mE, \beta_\dim)$ was defined as:
\begin{align*}
    \sigma^2_{\mSigma}(\mE, \beta_\dim)&= \frac{1}{2} \sum_{j, \ell = 1}^\rnk K_\mSigma(\lambda_j, \lambda_\ell) \cdot (\vu_j^\top E \vu_\ell)^2  + \sum_{j=1}^\rnk \sum_{i= 1}^{\dim - \rnk} \frac{\beta_{\dim} - {H_{\mSigma}(\lambda_j)}}{\lambda_j - \lambda_{\rnk+i}}  \cdot (\vu_{\rnk+i}^\top E \vu_j)^2,
\end{align*}
where $\{(\lambda_i, \vu_i): i \in [\dim]\}$ denote the eigenvalues and the corresponding eigenvectors of $\mSigma,$ and the functions $H_\mSigma, K_\mSigma$ were defined as: 
\begin{align*}
    H_{\mSigma}(\lambda) &\explain{def}{=} \frac{1}{\dim} \sum_{i=1}^{\dim - \rnk} \frac{1}{\lambda - \lambda_{\rnk+i}}, \quad   K_{\mSigma}(\lambda,\lambda^\prime) \explain{def}{=} \frac{1}{\dim} \sum_{i=1}^{\dim-\rnk} \frac{1}{(\lambda - \lambda_{\rnk+i})(\lambda^\prime - \lambda_{\rnk+i})} \quad \forall \; \lambda, \lambda^\prime \; \in \; (\lambda_{\rnk+1}, \infty).
\end{align*}
Hence,
\begin{align}
    \sigma^2_{\mSigma}(\mE, \beta_\dim)&\leq \left(\frac{1}{(\lambda_{\rnk}-\lambda_{\rnk+1})^2} + \frac{\beta_\dim}{\lambda_{\rnk}-\lambda_{\rnk+1}} \right) \cdot \sum_{j=1}^\rnk \sum_{i=1}^{\dim} \ip{\vu_i}{\mE \vu_j}^2 \nonumber \\
    &\lesssim \sum_{j=1}^\rnk \sum_{i=1}^{\dim} \ip{\vu_i}{\mE \vu_j}^2\quad\text{[since $\mSigma$ satisfies Assumption \mref{assump:mat} and $\beta_\dim \rightarrow \beta$]}\nonumber\\
    &= \sum_{j=1}^\rnk \|\mE\vu_j\|^2 \quad\text{[since $\vu_{1}, \dotsc, \vu_\dim$ form an orthonormal basis]}\nonumber\\
    & \lesssim \|\mE\|^2  \label{eq:var-E} \\
    & \lesssim 1 \quad \text{[since \Cref{prop:Lnull-dc} assumes $\|\mE\|_{\fr} \lesssim 1$].} \nonumber
\end{align}
This verifies \eqref{eq:var-func-O1}. To prove \eqref{eq:key-stat-O1}, we first observe that \Cref{lem:Lnull-dc-w} guarantees that $\sqrt{\dim}(\rV^\top\mE\rV-\Tr[\mE\calM_{\beta_{\dim}}(\mSigma)]) = O_\P(1)$ if $\rw = O_\P(1)$. On the other hand, from \Cref{lem:Lnull-dc-var}, we know that $$\E[\rw^2] = \Var[\rw] = \frac{4 \sigma_{\mSigma}^2(\mE,\beta_{\dim})}{\beta_{\dim}^2} \explain{\eqref{eq:var-func-O1}}{\lesssim} 1.$$ Hence, by Chebyshev's inequality, $\rw = O_\P(1)$, which proves \eqref{eq:key-stat-O1} and completes the proof of the first claim of \Cref{prop:Lnull-dc}.  
\paragraph{Proof of Claim (2) in \Cref{prop:Lnull-dc}} Next, we consider the situation when $\|\mE\|\ll1$ or $\sigma^2_{\mSigma}(\mE, \beta_\dim) \rightarrow 0,$ and prove the second claim of \Cref{prop:Lnull-dc}:
    \begin{align}
        \sqrt{\dim}(\Tr[\rV^\top\mE\rV]-\Tr[\mE\calM_{\beta_{\dim}}(\mSigma)])  \pc 0. 
    \end{align}
Since $\sqrt{\dim}\left(\Tr[\rV^\top\mE\rV]-\Tr[\mE\calM_{\beta_{\dim}}(\mSigma)]\right) = \rw + o_\P(1)$ by \Cref{lem:Lnull-dc-w}, it is enough to show:
\begin{align} \label{eq:Lnull-dc-toshow-w-1}
    \rw & = o_\P(1).
\end{align}
To show \eqref{eq:Lnull-dc-toshow-w-1}, we observe that \eqref{eq:var-E} implies that if $\|\mE\|\ll1,$ then $\sigma_{\mSigma}^2(\mE,\beta_{\dim}) \rightarrow 0$ as $\dim \rightarrow \infty.$ It is therefore enough to show \eqref{eq:Lnull-dc-toshow-w-1} assuming that $\sigma_{\mSigma}^2(\mE,\beta_{\dim}) \rightarrow 0.$ In this situation \eqref{eq:Lnull-dc-toshow-w-1} follows by Chebychev's inequality. Indeed,
for any $\epsilon>0,$ 
\begin{align*}
    \lm \P\left(|\rw|>\epsilon\right)&\leq \lm \frac{4\cdot\sigma^2_{\mSigma}(\mE,\beta_{\dim})}{\beta^2_{\dim}\epsilon^2}&&\text{[by Chebyshev's inequality and \Cref{lem:Lnull-dc-var}]}\\
    &=0&&\text{[since $\sigma^2_{\mSigma}(\mE,\beta_{\dim})\rightarrow0$ and $\beta_{\dim}\rightarrow\beta$]}.
\end{align*}
\paragraph{Proof of Claim (3) in \Cref{prop:Lnull-dc}} Finally, we consider the situation when $\sigma^2_{\mSigma}(\mE, \beta_\dim) \rightarrow v \in (0,\infty)$,  and prove the last claim made in \Cref{prop:Lnull-dc}:
\begin{align}
    \sqrt{\dim}(\Tr[\rV^\top\mE\rV]-\Tr[\mE\calM_{\beta_{\dim}}(\mSigma)])  
    \dc\gauss{0}{\frac{4v}{\beta^2}}.
\end{align}
Again thanks to \Cref{lem:Lnull-dc-w}, it is sufficient to show:
\begin{align}\label{eq:Lnull-dc-toshow-w-2}
    \rw  
    \dc\gauss{0}{\frac{4v}{\beta^2}}.
\end{align}
At a high-level, our strategy would be to view $\rw$ as a non-linear function of the Gaussian vectors $\rz_1, \dotsc, \rz_{\dim - \rnk} \in \R^\rnk$, which form the rows of the matrix $\rZ$ from \eqref{eq:sampler-recall}, and apply Chatterjee's second-order Poincaré inequality \citep{chatterjee2009fluctuations} to obtain \eqref{eq:Lnull-dc-toshow-w-2}. In the following fact, we recall the statement of this result for the reader's convenience. 
\begin{fact}[{Second-Order Poincaré Inequality \citep[Theorem 2.2]{chatterjee2009fluctuations}}] \label{fact:chatterjee} 
    Let $\rz$ be a  Gaussian random vector with $\E[\rz] = 0$ and consider any twice continuously differentiable function $g$ such that $\E[g(\rz)^4]<\infty.$ Then, 
    \begin{align}\label{eq:fact:chatterjee-ub}
        \tv\left(g(\rz),\gauss{0}{\mathrm{Var}[g(\rz)]}\right)\leq  2\sqrt{5} \cdot \frac{\|\Cov[\rz]\|^{\frac{3}{2}}\left(\E\left[\|\nabla g(\rz)\|^4\right]\right)^{\frac{1}{4}}\left(\E\left[\|\nabla^2 g(\rz)\|^4\right]\right)^{\frac{1}{4}}}{\Var[g(\rz)]}.
    \end{align}
\end{fact}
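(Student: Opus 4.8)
This inequality is quoted from \citet{chatterjee2009fluctuations}, so I only outline the argument one would reproduce. The plan is to combine Stein's method of normal approximation with \emph{two} applications of the Gaussian Poincaré inequality. As a preliminary reduction I would diagonalize the covariance: writing $\rz = \Cov[\rz]^{1/2}\tilde{\rz}$ with $\tilde{\rz}$ standard Gaussian and $\tilde g(\tilde{\rz}) \bydef g(\rz)$, the chain rule gives $\nabla\tilde g = \Cov[\rz]^{1/2}\nabla g$ and $\nabla^2\tilde g = \Cov[\rz]^{1/2}(\nabla^2 g)\Cov[\rz]^{1/2}$, so $\norm{\nabla\tilde g}\le\norm{\Cov[\rz]}^{1/2}\norm{\nabla g}$ and $\norm{\nabla^2\tilde g}\le\norm{\Cov[\rz]}\norm{\nabla^2 g}$ in operator norm. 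Hence it suffices to prove the bound with constant $2\sqrt5$ in the standard-Gaussian case, and the factor $\norm{\Cov[\rz]}^{3/2}$ appears automatically when one substitutes $\tilde g = g\circ\Cov[\rz]^{1/2}$ back in (one power $\tfrac12$ from the gradient moment, one full power from the Hessian moment). I may also assume $\E[g(\rz)]=0$ without loss, since recentering $g$ and the target Gaussian simultaneously leaves the stated distance unchanged.

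\textbf{Stein kernel via an interpolation identity.} Set $W \bydef g(\tilde\rz)$ (mean zero) and $\sigma^2\bydef\Var[W]$. Let $\tilde\rz'$ be an independent copy of $\tilde\rz$ and, for $t\ge0$, let $\tilde\rz_t\bydef e^{-t}\tilde\rz + \sqrt{1-e^{-2t}}\,\tilde\rz'$ be the Ornstein--Uhlenbeck interpolation. Define the Stein kernel
\[
  T(\tilde\rz) \bydef \int_0^\infty e^{-t}\,\E'\!\big[\langle\nabla g(\tilde\rz),\,\nabla g(\tilde\rz_t)\rangle\big]\,dt,
\]
where $\E'$ integrates over $\tilde\rz'$ only. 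A routine Gaussian integration-by-parts computation (Chatterjee's ``first identity'', equivalently the Malliavin--Stein representation) shows $\E[Wf(W)] = \E[T(\tilde\rz)f'(W)]$ for every bounded $C^1$ function $f$, and a Hermite/Mehler expansion shows $\E[T(\tilde\rz)] = \Var[W] = \sigma^2$. Plugging $T$ into the Stein equation for $\gauss{0}{\sigma^2}$ and using that the relevant Stein solutions $f$ satisfy $\norm{f'}_\infty\le 2$ after rescaling to $\gauss{0}{1}$, I get
\[
  \tv\!\left(W,\gauss{0}{\sigma^2}\right) \;\le\; \frac{2}{\sigma^2}\,\E\big|\sigma^2 - T(\tilde\rz)\big| \;\le\; \frac{2}{\sigma^2}\sqrt{\Var[T(\tilde\rz)]}\,,
\]
the last step by Jensen together with $\E[T]=\sigma^2$.

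\textbf{The second Poincaré step.} Now bound $\Var[T(\tilde\rz)]$ by the ordinary Gaussian Poincaré inequality, $\Var[T(\tilde\rz)]\le\E\,\norm{\nabla_{\tilde\rz} T(\tilde\rz)}^2$. Differentiating the integral defining $T$ (legitimate because $g\in C^2$ and $\norm{\nabla g},\norm{\nabla^2 g}$ have finite fourth moments, which furnishes a $t$-uniform dominating function), $\nabla_{\tilde\rz}T$ is a weighted integral over $t$ of terms of the two shapes $\big(\nabla^2 g(\tilde\rz)\big)\nabla g(\tilde\rz_t)$ and $e^{-t}\big(\nabla^2 g(\tilde\rz_t)\big)\nabla g(\tilde\rz)$; each has Euclidean norm at most $\norm{\nabla^2 g(\cdot)}\,\norm{\nabla g(\cdot)}$ evaluated at $\tilde\rz$ or $\tilde\rz_t$. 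Since $\tilde\rz_t\overset{d}{=}\tilde\rz$ marginally, Cauchy--Schwarz in the $t$-variable followed by Cauchy--Schwarz on the probability space gives
\[
  \E\,\norm{\nabla_{\tilde\rz}T(\tilde\rz)}^2 \;\lesssim\; \E\!\big[\norm{\nabla^2 g(\tilde\rz)}^2\,\norm{\nabla g(\tilde\rz)}^2\big] \;\le\; \big(\E\norm{\nabla^2 g(\tilde\rz)}^4\big)^{1/2}\big(\E\norm{\nabla g(\tilde\rz)}^4\big)^{1/2},
\]
with the implicit constant arranged (this is where the $\sqrt5$ originates) so that combining with the Stein bound yields $\tv(W,\gauss{0}{\sigma^2})\le \tfrac{2\sqrt5}{\sigma^2}\big(\E\norm{\nabla g(\tilde\rz)}^4\big)^{1/4}\big(\E\norm{\nabla^2 g(\tilde\rz)}^4\big)^{1/4}$. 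Substituting $\tilde g = g\circ\Cov[\rz]^{1/2}$ and the operator-norm estimates of the first paragraph produces exactly the claimed inequality.

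\textbf{Main obstacle.} The bulk of the work is the second Poincaré step: carefully differentiating $T(\tilde\rz)$ under both the $dt$-integral and the conditional expectation $\E'$, checking the dominated-convergence hypotheses uniformly in $t$, and then performing the operator-norm bookkeeping so that the Hessian enters \emph{only} through $\E\norm{\nabla^2 g}^4$ (with $\norm{\cdot}$ the operator norm, not a larger Schatten norm) and the numerical constants telescope precisely to $2\sqrt5$. By contrast, the Stein reduction, the first identity $\E[Wf(W)]=\E[T(\tilde\rz)f'(W)]$, and the covariance diagonalization are standard Gaussian calculus.
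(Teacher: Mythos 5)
This statement is imported verbatim as a \emph{Fact} from \citet[Theorem 2.2]{chatterjee2009fluctuations}; the paper offers no proof of its own, so there is nothing internal to compare against. Your outline faithfully reconstructs the argument of the cited source — the Ornstein–Uhlenbeck/Mehler Stein kernel $T$, the total-variation bound $\tv \le \tfrac{2}{\sigma^2}\sqrt{\Var[T]}$ via Stein's equation, a second application of the Gaussian Poincaré inequality to $T$, and the covariance diagonalization producing the $\norm{\Cov[\rz]}^{3/2}$ factor — with the only gap being the unverified bookkeeping that yields the exact constant $2\sqrt{5}$, which is acceptable for a quoted external result.
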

To apply \Cref{fact:chatterjee} to our setting, we define the function $g$ as: 
\begin{align} \label{eq:g-def}
    g(\vz_1,...,\vz_{\dim-\rnk})=\sqrt{\dim}\sum_{i=1}^{\dim-\rnk}\left\{\Tr\left[\mU_\star^\top\mE\mU_\star\left(\E[\rz_i\rz_i^\top]-\vz_i\vz_i^\top\right)\right]+2\vu_{\rnk+i}^\top\mE\mU_\star\left(\mI_\rnk-\E[\rZ^\top\rZ]\right)_+^{\frac{1}{2}}\vz_i\right\}
\end{align}
so that $g(\rz_1,...,\rz_{\dim-\rnk})=\rw.$ Since \Cref{lem:Lnull-dc-var} guarantees that $\Var\left[\rw\right]\rightarrow 4v/\beta^2,$ \eqref{eq:Lnull-dc-toshow-w-2} follows if we show that 
\begin{align*}
    \lm\tv\left(\rw,\gauss{0}{\mathrm{Var}[\rw]}\right)=0.
\end{align*}
By the second-order Poincaré inequality,
\begin{align}
    &\tv\left(\rw,\gauss{0}{\mathrm{Var}[\rw]}\right)\\ &\quad\leq 2\sqrt{5} \cdot \frac{\|\Cov[\rz_{1:\dim-\rnk}]\|^{\frac{3}{2}}\left(\E\left[\|\nabla^2 g(\rz_{1:\dim-\rnk})\|^4\right]\right)^{\frac{1}{4}}\left(\E\left[\|\nabla g(\rz_{1:\dim-\rnk})\|^4\right]\right)^{\frac{1}{4}}}{\Var[g(\rz_{1:\dim-\rnk})]} \nonumber\\
    &\quad \explain{(a)}{\lesssim} \|\Cov[\rz_{1:\dim-\rnk}]\|^{\frac{3}{2}}\left(\E\left[\|\nabla^2 g(\rz_{1:\dim-\rnk})\|^4\right]\right)^{\frac{1}{4}}\left(\E\left[\|\nabla g(\rz_{1:\dim-\rnk})\|^4\right]\right)^{\frac{1}{4}} \label{eq:TV-plughere},
\end{align}
where (a) follows from \Cref{lem:Lnull-dc-w}, $\Var[g(\rz_{1:\dim-\rnk})] \rightarrow \tfrac{4v}{\beta^2}].$ We bound the covariance, Hessian, and gradient terms in the above expression one by one. 
\begin{itemize}
    \item First, we consider the covariance term. Recall from \eqref{eq:sampler-recall} that:
    \begin{align*}
       \|\Cov[\rz_{1:\dim-\rnk}] \|^{3/2}&=\left\| \frac{1}{\dim \beta_\dim} \diag\left( \frac{1}{\lambda_1 - \lambda_{\rnk+1}}, \dotsc, \frac{1}{\lambda_\rnk - \lambda_{\dim}} \right)\right\|^{3/2} \\ & \leq \left(\frac{1}{\dim \beta_\dim (\lambda_\rnk -\lambda_{\rnk+1})} \right)^{3/2} \\ &\explain{(a)}{\lesssim} \dim^{-3/2} \qquad \text{[since $\beta_\dim \rightarrow \beta$ and $\mSigma$ satisfies Assumption \mref{assump:mat}]}.
    \end{align*}
    \item Next, we bound the Hessian term by explicitly computing the Hessian of $g$ from \eqref{eq:g-def}:
    \begin{align*}
    \nabla^2g(\vz_1,...,\vz_{\dim-\rnk})=-2\sqrt{\dim}\mU_\star^\top\mE\mU_\star\otimes \mI_{\dim-\rnk},
\end{align*}
where $\otimes$ denotes the Kronecker product. Hence,
\begin{align*}
    \left(\E\left[\|\nabla^2 g(\rz_{1:\dim-\rnk})\|^4\right]\right)^{\frac{1}{4}}=2\sqrt{\dim}\|\mU_\star^\top\mE\mU_\star\|\| \mI_{\dim-\rnk}\|\leq 2\sqrt{\dim}\|\mE\|
    \lesssim \sqrt{\dim},
\end{align*}
where the last estimate follows by recalling the assumption that $\|\mE\|_{\fr} \lesssim 1$. 
\item Finally, we consider the gradient term. We first compute the gradient of $g$ from \eqref{eq:g-def}:
\begin{align*}
    \nabla_{z_i}g(\vz_1,...,\vz_{\dim-\rnk})=\sqrt{\dim}(-2\mU_\star^\top\mE\mU_\star\vz_i+2(\mI_\rnk-\E[\rZ^\top\rZ])_+^{\frac{1}{2}}\mU_\star^\top \mE\vu_{\rnk+i}).
\end{align*}
Hence,
\begin{align*}
   & \E\left[\|\nabla g(\rz_{1:\dim-\rnk})\|^4\right] \\
   &\quad=\E\left[\left(\sum_{i=1}^{\dim-\rnk}\|\nabla_{z_i}g(\rz_1,...,\rz_{\dim-\rnk})\|^2\right)^2\right]\\
    &\quad\lesssim \dim^2\E\left[\left(\sum_{i=1}^{\dim-\rnk}\left\{\|\mU_\star^\top\mE\mU_\star\rz_i\|^2+\|(\mI_\rnk-\E[\rZ^\top\rZ])_+^{\frac{1}{2}}\mU_\star^\top \mE\vu_{\rnk+i}\|^2\right\}\right)^2\right] \\ 
    &\quad\lesssim \dim^2\E\Bigg[\Bigg(\sum_{i=1}^{\dim-\rnk}\|\rz_i\|^2+\sum_{i=1}^{\dim-\rnk}\|\mE\vu_{\rnk+i}\|^2\Bigg)^2\Bigg]\qquad \text{[since $\|(I_\rnk-\E[\rZ^\top\rZ])_+^{\frac{1}{2}}\|, \|\mE\| \lesssim 1$]}\\
    &\quad\lesssim \dim^2{\E\Bigg[\left(\sum_{i=1}^{\dim-\rnk}\|\rz_i\|^2\right)^2\Bigg]}+\dim^2\Bigg({\sum_{i=1}^{\dim}\|\mE\vu_{i}\|^2}\Bigg)^2\qquad\text{[since $(a+b)^2\leq 2(a^2+b^2)$]} \\
     &\quad = \dim^2\E\Bigg[\left(\sum_{i=1}^{\dim-\rnk}\|\rz_i\|^2\right)^2\Bigg]+\dim^2 \|\mE\|^4_{\fr}\qquad\text{[since $\vu_{1:\dim}$ are orthonormal]}\\
     &\quad \lesssim \dim^2\E\Bigg[\left(\sum_{i=1}^{\dim-\rnk}\|\rz_i\|^2\right)^2\Bigg]+\dim^2.
\end{align*}
We can further simplify our estimate above:
\begin{align*}
    \E\left[\|\nabla g(\rz_{1:\dim-\rnk})\|^4\right] &\lesssim \dim^3\E\Bigg[\sum_{i=1}^{\dim-\rnk}\|\rz_i\|^4\Bigg] + \dim^2 \qquad \text{[since $\|\va\|_2^4 \leq \dim \|\va\|^4_4$ ]} \\
    & = \dim^3\sum_{i=1}^{\dim-\rnk}\left(\mathrm{Var}[\|\rz_i\|^2]+(\E[\|\rz_i\|^2])^2\right) +\dim^2 \\ &\explain{(a)}{=}\dim^3\sum_{i=1}^{\dim-\rnk}\left(2\Tr[\mathrm{Cov}[\rz_i]^2]+(\Tr[\mathrm{Cov}[\rz_i]])^2\right) + \dim^2  \\
    &\explain{\eqref{eq:sampler-recall}}{=}\frac{\dim}{\beta_{\dim}^2}\sum_{i=1}^{\dim-\rnk}\left(\sum_{j=1}^\rnk \frac{1}{(\lambda_j-\lambda_{\rnk+i})^2}+\left(\sum_{j=1}^\rnk \frac{1}{\lambda_j-\lambda_{\rnk+i}}\right)^2\right) + \dim^2 \\ &\lesssim \dim^2 \qquad \text{[since $\mSigma$ satisfies Assumption \mref{assump:mat}]},
\end{align*}
where (a) follows from the fact that if $\ry \sim \gauss{0}{\mS}$, then,  $\E[\|\ry\|^2]=\Tr[\mS]$ and $\Var[\|\ry\|^2]=2\Tr[\mS^2]$ (see \citet[Theorem 1 and Corollary 1.3]{searle1997linear}).
\end{itemize}
Plugging these estimates on the covariance, Hessian, and gradient terms into \eqref{eq:TV-plughere}, we conclude that:
\begin{align*}
     \tv\left(\rw,\gauss{0}{\mathrm{Var}[\rw]}\right)\lesssim \frac{1}{\sqrt{\dim}} &\explain{\Cref{lem:Lnull-dc-var}}{\implies} \rw  
    \dc\gauss{0}{\frac{4v}{\beta^2}} \\
    &\explain{\Cref{lem:Lnull-dc-w}}{\implies}\sqrt{\dim}\left(\Tr[\rV^\top\mE\rV]-\Tr[\mE\calM_{\beta_{\dim}}(\mSigma)]\right)  
    \dc\gauss{0}{\frac{4v}{\beta^2}}.
\end{align*}
This completes the proof of the last claim made in \Cref{prop:Lnull-dc}.
\end{proof}
We will conclude this section by presenting the proofs of \Cref{lem:Lnull-dc-w} and \Cref{lem:Lnull-dc-var}. 
\subsubsection{Proof of \texorpdfstring{\Cref{lem:Lnull-dc-w}}{lem:Lnull-dc-w}}
\begin{proof}[Proof of \Cref{lem:Lnull-dc-w}.]
Recall that our sampling algorithm (Algorithm \mref{alg:sampler}) generates $\rV$ as follows:\begin{align}\label{eq:V-dist}
    \rV\explain{d}{=}\mU \begin{bmatrix} (I_k - \rZ^\top \rZ)_+^{1/2} \\ \rZ \end{bmatrix}  \rQ,
\end{align}
where $\rQ \sim \xi_{\rnk,\rnk}$, and the rows $\rz_1, \dotsc, \rz_{\dim - \rnk}$ of the matrix $\rZ$ are sampled independently with:
\begin{align} \label{eq:z-dist-recalled}
    \rz_i \overset{\indep}{\sim}\gauss{0}{\frac{1}{\dim\beta_{\dim}} \cdot \diag\left( \frac{1}{\lambda_1-\lambda_{\rnk+i}}, \dotsc, \frac{1}{\lambda_\rnk-\lambda_{\rnk+i}} \right)}\quad \forall \;  i\in[\dim-\rnk].
\end{align}
Let us partition the matrix of eigenvectors $\mU$ of $\mSigma$ as: $$\mU = \begin{bmatrix} \mU_\star & \mU_\perp \end{bmatrix},$$ where $\mU_\star \in \R^{\dim \times \rnk}$ is the matrix of the top $\rnk$ eigenvectors and $\mU_\perp \in \R^{\dim \times (\dim-\rnk)}$ is the matrix of the remaining $\dim - \rnk$ eigenvectors. Now, we can decompose the statistic $\Tr[\rV^\top \mE \rV]$ as:
\begin{align} \label{eq:TrVTEV}
    \Tr[\rV^\top \mE \rV]  = \Tr[\mU_\star^\top\mE\mU_\star(\mI_\rnk-\rZ^\top\rZ)_+] &+ 2\Tr[\mU_\perp^\top \mE\mU_\star\left(\mI_\rnk-\rZ^\top\rZ\right)_+^{\frac{1}{2}}\rZ^\top]\\
    &\hspace{3cm}+\Tr[\mU_\perp^\top\mE\mU_\perp\rZ\rZ^\top].\notag
\end{align}
We also recall the formula for the centering term $\Tr[\mE\calM_{\beta_{\dim}}(\mSigma)]$ and express it in the following convenient form:
\begin{align} \label{eq:centering-term}
    \Tr[\mE\calM_{\beta_{\dim}}(\mSigma)]&\bydef \sum_{j=1}^\rnk\left( 1-\frac{H_{\mSigma}(\lambda_j)}{\beta_\dim}\right) \cdot  \vu_j^\top \mE \vu_j + \frac{1}{\dim\beta_\dim}\sum_{j=1}^{\rnk}\sum_{i=1}^{\dim-\rnk}\frac{\vu_{\rnk+i}^\top \mE \vu_{\rnk+i}}{\lambda_j-\lambda_{\rnk+i}}\nonumber \\
    & \explain{\eqref{eq:z-dist-recalled}}{=} \Tr[\mU_\star^\top\mE\mU_\star(\mI_\rnk-\E[\rZ^\top\rZ])] + \Tr[\mU_\perp^\top\mE\mU_\perp \E[\rZ\rZ^\top]],
\end{align}
where
\begin{align*}
    H_{\mSigma}(\lambda) \bydef \frac{1}{\dim} \sum_{i = 1}^{\dim - \rnk} \frac{1}{\lambda - \lambda_{\rnk+i}}.
\end{align*}
Combining \eqref{eq:TrVTEV} and \eqref{eq:centering-term} gives the following formula for the centered and rescaled statistic:
\begin{align} \label{eq:remove-pos}
     &\sqrt{\dim}(\Tr[\rV^\top\mE\rV]-\Tr[\mE\calM_{\beta_{\dim}}(\mSigma)])\\
     &\qquad =\sqrt{\dim}\{\Tr[\mU_\star^\top\mE\mU_\star(\mI_\rnk-\rZ^\top\rZ)_+] - \Tr[\mU_\star^\top\mE\mU_\star(\mI_\rnk-\E[\rZ^\top\rZ])] \notag\\
    &\qquad\qquad+2\Tr[\mU_\perp^\top \mE\mU_\star(\mI_\rnk-\rZ^\top\rZ)_+^{\frac{1}{2}}\rZ^\top]+\Tr[\mU_\perp^\top\mE\mU_\perp\rZ\rZ^\top]-\Tr[\mU_\perp^\top\mE\mU_\perp \E[\rZ\rZ^\top]]\}. \nonumber
\end{align}
From \Cref{lem:Z^T*Z-sampler}, we know that:
\begin{align*}
    \rZ^\top \rZ &\pc \diag\left( \frac{H_\mu(\gamma_1)}{\beta}, \dotsc,  \frac{H_\mu(\gamma_\rnk)}{\beta}\right) \prec \mI_\rnk \implies \P\left( (\mI_\rnk-\rZ^\top\rZ)_+ = \mI_\rnk-\rZ^\top\rZ \right)  \rightarrow 1.
\end{align*}
Hence, we can remove the positive parts $(\cdot)_+$ from \eqref{eq:remove-pos} at the cost of introducing $o_\P(1)$ error:
\begin{align}
     &\sqrt{\dim}(\Tr[\rV^\top\mE\rV]-\Tr[\mE\calM_{\beta_{\dim}}(\mSigma)])\notag\\
     &\quad= \sqrt{\dim}\{\Tr[\mU_\star^\top\mE\mU_\star(\E[\rZ^\top\rZ]-\rZ^\top\rZ)] +2\Tr[\mU_\perp^\top \mE\mU_\star\left(\mI_\rnk-\rZ^\top\rZ\right)_+^{\frac{1}{2}}\rZ^\top] \nonumber\notag \\ 
     & \hspace{7cm} +\Tr[\mU_\perp^\top\mE\mU_\perp(\rZ\rZ^\top-\E[\rZ\rZ^\top])]\} + o_{\P}(1) \nonumber \notag\\
     &\quad = \rw + 2\underbrace{\sqrt{\dim}\Tr[\mU_\perp^\top\mE\mU_\star\{(\mI_\rnk-\rZ^\top\rZ)_+^{\frac{1}{2}}-(\mI_\rnk-\E[\rZ^\top\rZ])_+^{\frac{1}{2}}\}\rZ^\top]}_{(i)}\notag\\ &\quad\qquad+\underbrace{\sqrt{\dim}\Tr[\mU_\perp^\top\mE\mU_\perp(\rZ\rZ^\top-\E[\rZ\rZ^\top])]}_{(ii)} + o_{\P}(1), \label{eq:w-err-decomp}
\end{align}
where the last step follows by recalling the definition of $\rw$ from the statement of \Cref{lem:Lnull-dc-w}:
\begin{align*}
    \rw\bydef \sqrt{\dim}\{\Tr[\mU_\star^\top\mE\mU_\star(\E[\rZ^\top\rZ]-\rZ^\top\rZ)]+2\Tr[\mU_\perp^\top\mE\mU_\star(\mI_\rnk-\E[\rZ^\top\rZ])_+^\frac{1}{2}\rZ^\top]\}.
\end{align*}
Since our goal is to show that $\sqrt{\dim}\left(\Tr[\rV^\top\mE\rV]-\Tr[\mE\calM_{\beta_{\dim}}(\mSigma)]\right) = \rw + o_\P(1)$, we need to prove that the two error terms $(1), (2)$ in \eqref{eq:w-err-decomp} are $o_\P(1)$. We analyze each term separately.
\begin{itemize}
    \item We can bound the first term as:
 \begin{align*}
   (i)&=\sqrt{\dim}\Tr[\{(\mI_\rnk-\rZ^\top\rZ)^{\frac{1}{2}}_+-(\mI_\rnk-\E[\rZ^\top\rZ])^{\frac{1}{2}}_+\}\rZ^\top\mU_\perp^\top\mE\mU_\star]\\
   & \explain{(a)}{\leq} \rnk \sqrt{\dim} \| \{(\mI_\rnk-\rZ^\top\rZ)^{\frac{1}{2}}_+-(\mI_\rnk-\E[\rZ^\top\rZ])^{\frac{1}{2}}_+\}\rZ^\top\mU_\perp^\top\mE\mU_\star]\|  \\
   &\explain{(b)}{\leq} \rnk\underbrace{\|(\mI_\rnk-\rZ^\top\rZ)^{\frac{1}{2}}_+-(\mI_\rnk-\E[\rZ^\top\rZ])_+^{\frac{1} {2}}\|}_{(i.a)}\underbrace{\|\sqrt{\dim}\rZ^\top\mU_\perp^\top\mE\mU_\star\|}_{(i.b)},
\end{align*}
where (a) holds since $|\Tr[\mA]| \leq \rnk \|\mA\|$ for $A \in \R^{\rnk \times \rnk},$ and (b) follows from the fact that $\|\mA \mB\| \leq \|\mA\| \|\mB\|.$ To control the term $(i.a)$, we recall from \Cref{lem:Z^T*Z-sampler} that:
\begin{align*}
    \rZ^\top \rZ &\pc \diag\left( \frac{H_\mu(\gamma_1)}{\beta}, \dotsc,  \frac{H_\mu(\gamma_\rnk)}{\beta}\right), \quad \E[\rZ^\top \rZ]\rightarrow \diag\left( \frac{H_\mu(\gamma_1)}{\beta}, \dotsc,  \frac{H_\mu(\gamma_\rnk)}{\beta}\right).
\end{align*}
Hence, by the continuous mapping theorem, $(i.a)  = o_\P(1)$. For the term $(i.b)$, we will show that $(i.b)  = O_\P(1)$ by bounding its second moment:
\begin{align*}
\E[\|\sqrt{\dim}\rZ^\top\mU_\perp^\top\mE\mU_\star\|^2] &\leq \E[\|\sqrt{\dim}\rZ^\top\mU_\perp^\top\mE\mU_\star\|_{\fr}^2] \\ &=\dim\Tr[\mU_\star^\top\mE\mU_\perp\E[\rZ\rZ^\top]\mU_\perp^\top\mE\mU_\star] \\ &\explain{(a)}{\leq} \rnk\dim \|\mU_\star^\top\mE\mU_\perp\E[\rZ\rZ^\top]\mU_\perp^\top\mE\mU_\star\|  \\
    &\lesssim \dim \|\E[\rZ\rZ^\top]\| \quad\text{[since $\|\mE\| \leq \|\mE\|_{\fr} \lesssim 1$ by assumption]} \\ &\explain{\eqref{eq:z-dist-recalled}}{=}\frac{1}{\beta_\dim} \left\| \diag\left(\sum_{j=1}^\rnk \frac{1}{\lambda_j-\lambda_{\rnk+1}}, \dotsc,\sum_{j=1}^\rnk \frac{1}{\lambda_j-\lambda_{\dim}} \right) \right\|
    \\&\leq \frac{1}{\beta_{\dim}}\sum_{j=1}^\rnk \frac{1}{\lambda_j-\lambda_{\rnk+1}}\lesssim 1,
\end{align*}
where (a) holds since $|\Tr[\mA]| \leq \rnk \|\mA\|$ for $A \in \R^{\rnk \times \rnk},$ and the last step follows thanks to our assumption that $\beta_{\dim}\rightarrow\beta$ and $\lambda_j-\lambda_{\rnk+1} \rightarrow \gamma_j - \gamma_{\rnk+1} > 0$ for all $j\in[\rnk].$ Hence, $(i.b)=O_\P(1)$. Combining our analysis of the terms $(i.a), (i.b)$  using Slutsky's theorem, we conclude that $(i)  = o_\P(1)$.
\item Next, we show that the error term $(ii)$ in \eqref{eq:w-err-decomp} is $o_\P(1)$ by computing its second moment and showing that it vanishes. Let $\rZ^j$ denote the $j$th column of $\rZ,$ and observe that the term $(ii)$ can be expressed as a sum of independent random variables:
\begin{align*}
    (ii) &\bydef \sqrt{\dim}(\Tr[\rZ^\top \mU_\perp^\top\mE\mU_\perp\rZ]-\E[\Tr[\rZ^\top \mU_\perp^\top\mE\mU_\perp\rZ]])\\
    &= \sqrt{\dim}\sum_{j=1}^{\rnk}(\rZ^{j^\top}\mU_\perp^\top\mE\mU_\perp\rZ^j-\E[\rZ^{j^\top}\mU_\perp^\top\mE\mU_\perp\rZ^j]).
\end{align*}
Hence, \begin{align*}
    &\E[(ii)^2]=\Var[(ii)]=\dim\sum_{j=1}^\rnk \mathrm{Var}[\rZ^{j^\top}\mU_\perp^\top\mE\mU_\perp\rZ^j]\\ &\explain{(a)}{=}2\dim\sum_{j=1}^\rnk \Tr[(\mU_\perp^\top\mE\mU_\perp\mathrm{Cov}[\rZ^j])^2]\\
    &=2\dim\sum_{j=1}^\rnk\|\mathrm{Cov}[\rZ^j]^{\frac{1}{2}}\mU_\perp^\top\mE\mU_\perp\mathrm{Cov}[\rZ^j]^{\frac{1}{2}}\|^2_{\fr}\\&\explain{}{\leq} 2\dim \sum_{j=1}^\rnk\|\mathrm{Cov}[\rZ^j]^{\frac{1}{2}}\|^4 \|\mE\|_{\fr}^2 \hspace{2cm} \text{[since $\|AB\|_{\fr} \leq \|A\| \|B\|_{\fr}$]}\\& \explain{\eqref{eq:z-dist-recalled}}{=}2\|\mE\|_{\fr}^2\sum_{j=1}^\rnk\frac{1}{\dim\beta_{\dim}^2(\lambda_j-\lambda_{\rnk+1})^2}\lesssim \frac{1}{\dim},
\end{align*}
where (a) holds since $\Var[\ry^\top A \ry] = 2 \Tr[(\mA \mS)^2]$ if $\ry \sim \gauss{0}{\mS}$ (see \citep[Cor. 1.3]{searle1997linear}), and 
the last step follows thanks to our assumptions that $\|\mE\|_{\fr} \lesssim 1$ and $\lambda_j-\lambda_{\rnk+1} \rightarrow \gamma_j - \gamma_{\rnk+1} > 0$ for all $j\in[\rnk].$ This verifies that $(ii) = o_\P(1)$.
\end{itemize}
Combining our analysis of the terms $(i),(ii)$ above with \eqref{eq:w-err-decomp}, we conclude that
\begin{align*}
    \sqrt{\dim}(\rV^\top\mE\rV-\Tr[\mE\calM_{\beta_{\dim}}(\mSigma)]) = \rw + o_\P(1),
\end{align*}
which completes the proof of \Cref{lem:Lnull-dc-w}.
\end{proof}
\subsubsection{Proof of \texorpdfstring{\Cref{lem:Lnull-dc-var}}{lem:Lnull-dc-var}}
\begin{proof}[Proof of \Cref{lem:Lnull-dc-var}.]
Recall that the random variable $\rw$ was defined as:
\begin{align*}
    \rw&\bydef \sqrt{\dim}\{\Tr[\mU_\star^\top\mE\mU_\star(\E[\rZ^\top\rZ]-\rZ^\top\rZ)]+2\Tr[\mU_\perp^\top\mE\mU_\star(\mI_\rnk-\E[\rZ^\top\rZ])_+^\frac{1}{2}\rZ^\top]\}\\
    &= \sqrt{\dim}\sum_{i=1}^{\dim-\rnk}\{\Tr[\mU_\star^\top\mE\mU_\star(\E[\rz_i\rz_i^\top]-\rz_i\rz_i^\top)]+2\vu_{\rnk+i}^\top\mE\mU_\star(\mI_\rnk-\E[\rZ^\top\rZ])_+^{\frac{1}{2}}\rz_i\},
\end{align*}
where $\rz_1, \dotsc, \rz_{\dim - \rnk}$ are the rows of the matrix $\rZ$, which were sampled independently with:
\begin{align} \label{eq:z-dist-recall2}
    \rz_i & \sim \gauss{0}{\frac{1}{\dim \beta_\dim} \diag\left( \frac{1}{\lambda_1 - \lambda_{\rnk+i}}, \dotsc, \frac{1}{\lambda_\rnk - \lambda_{\rnk+i}} \right)} \quad \forall \; i\; \in \; [\dim - \rnk].
\end{align}
Since $\rw$ is a sum of independent random variables, we can compute its variance as:
\begin{align}
    \mathrm{Var}[\rw]
    &= \dim\sum_{i=1}^{\dim-\rnk}\mathrm{Var}[-\rz_i^\top\mU_\star^\top\mE\mU_\star\rz_i+2\vu_{\rnk+i}^\top\mE\mU_\star(\mI_\rnk-\E[\rZ^\top\rZ])_+^{\frac{1}{2}}\rz_i] \nonumber\\
    &= \dim\sum_{i=1}^{\dim-\rnk}\mathrm{Var}[\rz_i^\top\mU_\star^\top\mE\mU_\star\rz_i]+4\dim\sum_{i=1}^{\dim-\rnk}\mathrm{Var}[\vu_{\rnk+i}^\top\mE\mU_\star(\mI_\rnk-\E[\rZ^\top\rZ])_+^{\frac{1}{2}}\rz_i] \nonumber \\
    &\qquad-4\dim\sum_{i=1}^{\dim-\rnk}\underbrace{\mathrm{Cov}[\rz_i^\top\mU_\star^\top\mE\mU_\star\rz_i,\vu_{\rnk+i}^\top\mE\mU_\star(\mI_\rnk-\E[\rZ^\top\rZ])_+^{\frac{1}{2}}\rz_i]}_{(\#)} \nonumber \\
    &\explain{(a)}{=} \underbrace{\dim\sum_{i=1}^{\dim-\rnk}\mathrm{Var}[\rz_i^\top\mU_\star^\top\mE\mU_\star\rz_i]}_{(i)}+\underbrace{4\dim\sum_{i=1}^{\dim-\rnk}\mathrm{Var}[\vu_{\rnk+i}^\top\mE\mU_\star(\mI_\rnk-\E[\rZ^\top\rZ])_+^{\frac{1}{2}}\rz_i]}_{(ii)}, \label{eq:var-w-plug-here}
\end{align}
where (a) holds since the term $(\#)$ is a covariance between an even and odd function of a Gaussian vector with mean zero, and hence vanishes. Next, we simplify the terms $(i)$ and $(ii)$ in the above display.  
\begin{itemize} \item Starting with term $(i)$, we find that:
\begin{align*}
    (i) &\bydef \dim\sum_{i=1}^{\dim-\rnk}\mathrm{Var}[\rz_i^\top\mU_\star^\top\mE\mU_\star\rz_i] \\ &\explain{(a)}{=}2\dim\sum_{i=1}^{\dim-\rnk}\Tr[(\mU_\star^\top\mE\mU_\star\mathrm{Cov}[\rz_i])^2]\\ &\explain{\eqref{eq:z-dist-recall2}}{=}\frac{2}{\beta_{\dim}}\sum_{i=1}^{\dim-\rnk}\sum_{j=1}^\rnk \frac{\vu_j^\top \mE\mU_\star\mathrm{Cov}[\rz_i]\mU_\star^\top\mE \vu_j}{\lambda_j-\lambda_{\rnk+i}}\\
    &=\frac{2}{\beta_{\dim}^2\dim}\sum_{i=1}^{\dim-\rnk}\sum_{j,\ell=1}^\rnk \frac{(\vu_\ell^\top\mE \vu_j)^2}{(\lambda_j-\lambda_{\rnk+i})(\lambda_\ell-\lambda_{\rnk+i})} \\&\bydef\frac{2}{\beta_{\dim}^2}\sum_{j,\ell=1}^\rnk K_{\Sigma}(\lambda_j,\lambda_\ell)\cdot(\vu_\ell^\top\mE \vu_j)^2,
\end{align*}
where (a) holds since  $\Var[\ry^\top A \ry] = 2 \Tr[(\mA \mS)^2]$ if $\ry \sim \gauss{0}{\mS}$ (see \citep[Cor. 1.3]{searle1997linear}), and the last step follows by recalling from \eqref{eq:H-K-def-recall} that the functions $H_{\mSigma}, K_\mSigma$ were defined, for all $\lambda, \lambda^\prime\in(\lambda_{\rnk+1}, \infty),$ as:
\begin{align} \label{eq:H-K-def-recall-recall}
     H_{\mSigma}(\lambda) &\explain{def}{=} \frac{1}{\dim} \sum_{i=1}^{\dim - \rnk} \frac{1}{\lambda - \lambda_{\rnk+i}}, \quad K_{\mSigma}(\lambda,\lambda^\prime) \explain{def}{=} \frac{1}{\dim} \sum_{i=1}^{\dim-\rnk} \frac{1}{(\lambda - \lambda_{\rnk+i})(\lambda^\prime - \lambda_{\rnk+i})}.
\end{align}
\item Next, we analyze term $(ii)$. We can first compute:
\begin{align} \label{eq:E-Z^TZ}
    \E[\rZ^\top\rZ] & = \sum_{i=1}^{\dim - \rnk} \E[\rz_i \rz_i^\top]\notag\\ &\explain{\eqref{eq:z-dist-recall2}}{=} \frac{1}{\beta_\dim} \diag\left( \frac{1}{\dim}\sum_{i=1}^{\dim - \rnk} \frac{1}{\lambda_1 - \lambda_{\rnk+i}}, \cdots,  \frac{1}{\dim}\sum_{i=1}^{\dim - \rnk} \frac{1}{\lambda_\rnk - \lambda_{\rnk+i}}\right)\notag\\
    &\explain{\eqref{eq:H-K-def-recall-recall}}{=} \frac{1}{\beta_\dim} \diag(H_{\mSigma}(\lambda_{1:\rnk})). 
\end{align}
Now we can simplify the term $(ii)$:
\begin{align*}
    (ii)&= 4\dim \sum_{i=1}^{\dim-\rnk}\vu_{\rnk+i}^\top\mE\mU_\star\left(\mI_\rnk-\E[\rZ^\top\rZ]\right)^\frac{1}{2}_+\mathrm{Cov}[\rz_i]\left(\mI_\rnk-\E[\rZ^\top\rZ]\right)^{\frac{1}{2}}_+\mU_\star^\top \mE\vu_{\rnk+i}\\
    & \explain{\eqref{eq:z-dist-recall2},\eqref{eq:E-Z^TZ}}{=} \frac{4}{\beta_{\dim}^2}\sum_{i=1}^{\dim-\rnk}\sum_{j=1}^\rnk \frac{(\beta_{\dim}-H_{\mSigma}(\lambda_j))_+}{\lambda_j-\lambda_{\rnk+i}}\cdot (\vu_j^\top\mE\vu_{\rnk+i})^2\\
    &= \frac{4}{\beta_{\dim}^2}\sum_{i=1}^{\dim-\rnk}\sum_{j=1}^\rnk \frac{\beta_{\dim}-H_{\mSigma}(\lambda_j)}{\lambda_j-\lambda_{\rnk+i}}\cdot (\vu_j^\top\mE\vu_{\rnk+i})^2\quad\text{for large enough $\dim$},
\end{align*}
where the last equality holds since $\beta_\dim\rightarrow\beta>H_{\mu}(\gamma_\rnk)$ (by assumption) and $H_{\mSigma}(\lambda_j)\rightarrow H_\mu(\gamma_j) \leq H_{\mu}(\gamma_\rnk)$ for each $j\in[\rnk]$ (see \Cref{lem:misc_HK}). 
\end{itemize}
Plugging the above formulas for the terms $(i), (ii)$ into \eqref{eq:var-w-plug-here}, we conclude that for sufficiently large $\dim$, \begin{align*}
    \mathrm{Var}[\rw]&= \frac{2}{\beta_{\dim}^2}\sum_{j,\ell=1}^\rnk K_{\Sigma}(\lambda_j,\lambda_\ell)\cdot(\vu_j^\top\mE \vu_\ell)^2+\frac{4}{\beta_{\dim}^2}\sum_{j=1}^\rnk\sum_{i=1}^{\dim-\rnk} \frac{\beta_{\dim}-H_{\mSigma}(\lambda_j)}{\lambda_j-\lambda_{\rnk+i}}\cdot (\vu_{\rnk+i}^\top\mE\vu_j)^2\\
    &=\frac{4\sigma_{\mSigma}^2(\mE,\beta_\dim)}{\beta_{\dim}^2},
\end{align*}
where the last step follows by recalling that the variance function $\sigma_{\mSigma}^2(\mE,\beta_\dim)$ was defined as:
\begin{align*}
     \sigma^2_{\mSigma}(\mE, \beta_\dim)&= \frac{1}{2} \sum_{j, \ell = 1}^\rnk K_\mSigma(\lambda_j, \lambda_\ell) \cdot (\vu_j^\top E \vu_\ell)^2  + \sum_{j=1}^\rnk \sum_{i= 1}^{\dim - \rnk} \frac{\beta_{\dim} - {H_{\mSigma}(\lambda_j)}}{\lambda_j - \lambda_{\rnk+i}}  \cdot (\vu_{\rnk+i}^\top E \vu_j)^2.
\end{align*}
This completes the proof of \Cref{lem:Lnull-dc-var}. 
\end{proof}

\subsection{Proof of \texorpdfstring{\Cref{lem:perturb}}{lem:perturb}}
\begin{proof}[Proof of \Cref{lem:perturb}]
Recall that for a sequence of matrices $\mSigma\in\R^{\dim\times\dim}$ which satisfies Assumption \mref{assump:mat}, the sequence of matrices $\tilde{\mSigma}\bydef \mSigma+\frac{1}{\sqrt{\dim}}\mE,$ where  $\mE \in \R^{\dim \times \dim}$ is any symmetric matrix satisfying $\|\mE\|_{\fr} \lesssim 1,$ and a sequence of noise parameters $\beta_{\dim} \rightarrow \beta \in (H_\mu(\gamma_\rnk),\infty),$ our goal is to show that:
\begin{align}\label{eq:perturb-goal}
    \|\calM_{\beta_{\dim}}(\mSigma)-\calM_{\beta_{\dim}}(\tilde{\mSigma})\|_{\fr} & = O\left( \frac{1}{\sqrt{\dim}} \right),
\end{align}
where, for any symmetric matrix $\mSigma\in\R^{\dim\times\dim},$ $\calM_{\beta_{\dim}}(\mSigma)$ was defined as:
\begin{align*}
   \calM_{\beta_{\dim}}(\mSigma)\bydef  \sum_{j=1}^\rnk \left(1-\frac{H_{\mSigma}(\lambda_j)}{\beta_{\dim}}\right) \cdot \vu_j\vu_j^\top+\frac{1}{\dim\beta_{\dim}}\sum_{j=1}^{\rnk}\sum_{i=1}^{\dim-\rnk}\frac{\vu_{\rnk+i}\vu_{\rnk+i}^\top}{\lambda_j-\lambda_{\rnk+i}}, 
\end{align*}
where $\{(\lambda_i,\vu_i): i \in [\dim]\}$ denotes the eigenvalues and the corresponding eigenvectors of $\mSigma.$ Let $\{(\lambda_i,\vu_i): i \in [\dim]\}$ and $\{(\tilde{\lambda}_i,\tilde{\vu}_i): i \in [\dim]\}$ denote the eigenvalues and the corresponding eigenvectors of $\mSigma$ and $\tilde{\mSigma},$ respectively, and observe that by the triangle inequality we have:
\begin{multline*}
    \|\calM_{\beta_{\dim}}(\mSigma)-\calM_{\beta_{\dim}}(\tilde{\mSigma})\|_{\fr}\leq {\norm*{\sum_{i=1}^\rnk \left(1-\frac{1}{\beta_{\dim}}H_{\mSigma}(\lambda_i)\right) \vu_i \vu_i^\top  -  \sum_{i=1}^\rnk\left(1-\frac{1}{\beta_{\dim}}H_{\tilde{\mSigma}}(\tilde{\lambda}_i)\right) \tilde{\vu}_i \tilde{\vu}_i^\top}_{\fr}}\\
    +\underbrace{\frac{1}{\dim\beta_{\dim}}\norm*{\sum_{j=1}^{\rnk}\sum_{i=1}^{\dim-\rnk}\frac{\vu_{\rnk+i}\vu_{\rnk+i}^\top}{\lambda_j-\lambda_{\rnk+i}}}_{\fr}}_{(a)}+\underbrace{\frac{1}{\dim\beta_{\dim}}\norm*{\sum_{j=1}^{\rnk}\sum_{i=1}^{\dim-\rnk}\frac{\tilde{\vu}_{\rnk+i}\tilde{\vu}_{\rnk+i}^\top}{\tilde{\lambda}_j-\tilde{\lambda}_{\rnk+i}}}_{\fr}}_{(b)},
\end{multline*}
where the terms marked $(a)$ and $(b)$ are $O(1/\sqrt{\dim})$ since:
\begin{align*}
    \frac{1}{\dim\beta_{\dim}}\norm*{\sum_{j=1}^{\rnk}\sum_{i=1}^{\dim-\rnk}\frac{\vu_{\rnk+i}\vu_{\rnk+i}^\top}{\lambda_j-\lambda_{\rnk+i}}}_{\fr}\leq\frac{\rnk}{\dim\beta_{\dim}(\lambda_\rnk-\lambda_{\rnk+1})}\norm*{\sum_{i=1}^{\dim-\rnk}\vu_{\rnk+i}\vu_{\rnk+i}^\top}_{\fr} = \frac{\rnk\sqrt{\dim-\rnk}}{\dim \beta_\dim (\lambda_\rnk-\lambda_{\rnk+1})} \lesssim \frac{1}{\sqrt{\dim}},
\end{align*}
where the last step follows from our assumption that $\beta_\dim \rightarrow \beta \in (0,\infty)$ and $\lambda_\rnk - \lambda_{\rnk+1} \rightarrow \gamma_\rnk - \gamma_{\rnk+1} > 0$. An analogous estimate holds for the term $(b)$. Hence,
\begin{subequations} \label{eq:mat-pert-plugin}
\begin{align}
      &\|\calM_{\beta_{\dim}}(\mSigma)-\calM_{\beta_{\dim}}(\tilde{\mSigma})\|_{\fr}\\
      &\qquad\lesssim \underbrace{\norm*{\sum_{i=1}^\rnk \left(1-\frac{1}{\beta_{\dim}}H_{\mSigma}(\lambda_i)\right) \vu_i \vu_i^\top  -  \sum_{i=1}^\rnk\left(1-\frac{1}{\beta_{\dim}}H_{\tilde{\mSigma}}(\tilde{\lambda}_i)\right) \tilde{\vu}_i \tilde{\vu}_i^\top}_{\fr}}_{(\star)} + \frac{1}{\sqrt{\dim}}. 
\end{align}

We will now show that $(\star)\lesssim 1.$ Observe that, by subtracting and adding $\frac{1}{\beta}\sum_{i=1}^\rnk H_{\mSigma}(\tilde{\lambda}_i)\tilde{\vu}_i \tilde{\vu}_i^\top$\footnote{Recall that $H_{\mSigma}$ was defined on the domain $(\lambda_{\rnk+1},\infty).$ Since $\lambda_{\rnk+1}\rightarrow\gamma_{\rnk+1}$ and $\tilde{\lambda}_i\rightarrow \gamma_i$ for any $i\in[\rnk],$ the assumption that $\Delta\bydef\gamma_\rnk-\gamma_{\rnk+1}>0$ implies that $H_{\mSigma}(\tilde{\lambda_i})$ for $i\in[\rnk]$ is well-defined for large enough $\dim.$} and applying the triangle inequality, we can upper bound $(\star)$ by:
\begin{align}
     \underbrace{\norm*{\sum_{i=1}^\rnk\vu_i\vu_i^\top-\sum_{i=1}^\rnk\tilde{\vu}_i\tilde{\vu}_i^\top}_{\fr}}_{(i)} &+ \frac{1}{\beta_{\dim}}\underbrace{\norm*{\sum_{i=1}^\rnk\left(H_{\tilde{\mSigma}}(\tilde{\lambda}_i)-H_{\mSigma}(\tilde{\lambda}_i)\right)\tilde{\vu}_i\tilde{\vu}_i^\top}_{\fr}}_{(ii)}\\
     &\hspace{1cm}+\frac{1}{\beta_{\dim}}\underbrace{\norm*{\sum_{i=1}^\rnk H_{\mSigma}(\tilde{\lambda}_i)\tilde{\vu}_i\tilde{\vu}_i^\top-\sum_{i=1}^\rnk H_{\mSigma}(\lambda_i)\vu_i\vu_i^\top}_{\fr}}_{(iii)}.
\end{align}
\end{subequations}

We will show that each of the terms $(i)$, $(ii)$, and $(iii)$ is $O(1/\sqrt{\dim}),$ which, together with the assumption that $\beta_{\dim}\rightarrow\beta,$ implies \eqref{eq:perturb-goal} and proves \Cref{lem:perturb}.
\paragraph{Step 1: Controlling Term $(i)$} We can bound $(i)$ using the Davis-Kahan sin$\Theta$ theorem (see e.g., \citep[Theorem 2.7]{chen2021spectral}):
\begin{align*}
    (i)&\bydef \norm*{\sum_{i=1}^\rnk\vu_i\vu_i^\top-\sum_{i=1}^\rnk\tilde{\vu}_i\tilde{\vu}_i^\top}_{\fr} 
\\
&\leq \frac{\sqrt{2{\rnk}}\|\tilde{\mSigma}-\mSigma\|}{\lambda_\rnk-\tilde{\lambda}_{\rnk+1}}\\
    &\lesssim \|\tilde{\mSigma}-\mSigma\|_{\fr}\quad\text{[since $\lambda_\rnk\rightarrow\gamma_\rnk,$  $\tilde{\lambda}_{\rnk+1}\rightarrow\gamma_{\rnk+1}$ (by \Cref{lem:misc_conv}), and $\gamma_{\rnk}-\gamma_{\rnk+1}>0$]}\\
    &\lesssim \frac{1}{\sqrt{\dim}}.
\end{align*}
Hence, $(i)\lesssim 1/\sqrt{\dim}.$

\paragraph{Step 2: Controlling Term $(ii)$} We can bound term $(ii)$ as:
\begin{align*}
    (ii) &\bydef \norm*{\sum_{i=1}^\rnk\left(H_{\tilde{\mSigma}}(\tilde{\lambda}_i)-H_{\mSigma}(\tilde{\lambda}_i)\right)\tilde{\vu}_i\tilde{\vu}_i^\top}_{\fr} \\
    & \leq \sum_{i=1}^\rnk |H_{\tilde{\mSigma}}(\tilde{\lambda}_i)-H_{\mSigma}(\tilde{\lambda}_i)| \quad \text{[by triangle inequality]} \\
    &\explain{(a)}{=} \frac{1}{\dim}\sum_{j=1}^\rnk\sum_{i=1}^{\dim-\rnk}\left\vert \frac{1}{\tilde{\lambda}_j-\tilde{\lambda}_{\rnk+i}}-\frac{1}{\tilde{\lambda}_j-\lambda_{\rnk+i}}\right\vert,
\end{align*}
where (a) follows from recalling that $H_{\Sigma}(\lambda) \bydef \tfrac{1}{\dim} \textstyle \sum_{i=1}^{\dim - \rnk} \tfrac{1}{\lambda - \lambda_{\rnk+i}}, \; H_{\tilde{\Sigma}}(\lambda) \bydef \tfrac{1}{\dim} \textstyle \sum_{i=1}^{\dim - \rnk} \tfrac{1}{\lambda - \tilde{\lambda}_{\rnk+i}}.$ We can simplify the above bound using Taylor's theorem:
\begin{align*}
    (ii) & \leq \frac{1}{\dim}\sum_{j=1}^\rnk\sum_{i=1}^{\dim-\rnk}\left\vert \frac{1}{\tilde{\lambda}_j-\tilde{\lambda}_{\rnk+i}}-\frac{1}{\tilde{\lambda}_j-\lambda_{\rnk+i}}\right\vert \\
    &  =  \frac{1}{\dim}\sum_{j=1}^\rnk\sum_{i=1}^{\dim-\rnk} \frac{| \lambda_{\rnk+i}-\tilde{\lambda}_{\rnk+i}|}{\gamma^2_{ij}}\quad\text{[for some $\gamma_{ij}$ between $\tilde{\lambda}_j-\tilde{\lambda}_{\rnk+i}$ and $\tilde{\lambda}_j-\lambda_{\rnk+i}$]} \\
    & \leq \frac{1}{\dim}\sum_{j=1}^\rnk\sum_{i=1}^{\dim-\rnk}  \frac{| \lambda_{\rnk+i}-\tilde{\lambda}_{\rnk+i}|}{(\min\{\tilde{\lambda}_\rnk-\tilde{\lambda}_{\rnk+1},\tilde{\lambda}_\rnk-\lambda_{\rnk+1}\})^2} \\
    &\explain{(a)}{\leq}\frac{1}{\dim}\sum_{j=1}^\rnk\sum_{i=1}^{\dim-\rnk}  \frac{\|\mSigma - \tilde{\mSigma}\|}{(\min\{\tilde{\lambda}_\rnk-\tilde{\lambda}_{\rnk+1},\tilde{\lambda}_\rnk-\lambda_{\rnk+1}\})^2}\\
    & \leq \frac{\rnk \|\mSigma - \tilde{\mSigma}\|}{(\min\{\tilde{\lambda}_\rnk-\tilde{\lambda}_{\rnk+1},\tilde{\lambda}_\rnk-\lambda_{\rnk+1}\})^2} \\
    & \lesssim \frac{1}{\sqrt{\dim}} \cdot \frac{1}{(\min\{\tilde{\lambda}_\rnk-\tilde{\lambda}_{\rnk+1},\tilde{\lambda}_\rnk-\lambda_{\rnk+1}\})^2} \quad \text{[by the assumption $\|\mSigma - \tilde{\mSigma}\|_{\fr} \lesssim \tfrac{1}{\sqrt{\dim}}$]}\\
    & \lesssim \frac{1}{\sqrt{\dim}},
\end{align*}
where (a) follows from Weyl's inequality (see e.g., \citep[Theorem 4.5.3]{vershynin2018high}), and 
the last steps holds since $\tilde{\lambda}_\rnk-\tilde{\lambda}_{\rnk+1},\tilde{\lambda}_{\rnk}-\lambda_{\rnk+1}\rightarrow\gamma_\rnk - \gamma_{\rnk+1}>0$ by \Cref{lem:misc_conv}. This proves that $(ii)\lesssim 1/\sqrt{\dim}.$
\paragraph{Step 3: Controlling Term $(iii)$} Recall that
\begin{align*}
    (iii) \bydef \norm*{\sum_{i=1}^\rnk H_{\mSigma}(\tilde{\lambda}_i)\tilde{\vu}_i\tilde{\vu}_i^\top-\sum_{i=1}^\rnk H_{\mSigma}(\lambda_i)\vu_i\vu_i^\top}_{\fr},
\end{align*}
and observe that the two terms in the expression above can be viewed as the function 
\begin{align} \label{eq:gen-mat-func}
    \mA\mapsto \sum_{i=1}^\rnk H_\mSigma(\lambda_i(A))u_i(\mA)u_i(\mA)^\top 
\end{align} 
applied to $\tilde{\mSigma}$ and $\mSigma,$ which we know are close. To estimate the error $(iii)$ in terms of $\|\mSigma - \tilde{\mSigma}\|$, we will use a technique of \citet{kato2013perturbation}  which uses Cauchy's residue theorem to derive perturbation bounds for complicated matrix functions like \eqref{eq:gen-mat-func} from classical perturbation bounds for the matrix inverse function. This technique has been recently used in other works in statistics (see e.g., \citep{koltchinskii2016asymptotics,xia2021normal}). 

The basic idea is to express the matrix function in \eqref{eq:gen-mat-func} as a contour integral by considering a contour $\Gamma$ on the complex plane $\mathbb{C}$ constructed so that the leading $\rnk$ eigenvalues $\lambda_{1:\rnk}(\mA)$ of $\mA$ (the argument of the function in \eqref{eq:gen-mat-func}) lie inside the contour, while the remaining eigenvalues $\lambda_{\rnk+1:\dim}$ lie outside the contour. Then, by Cauchy's residue theorem, for any symmetric matrix $\mA$ and any contour $\Gamma$ which satisfies this requirement,
\begin{align}
    &\sum_{i=1}^\rnk H_\mSigma(\lambda_i(A))u_i(\mA)u_i(\mA)^\top\notag\\
    &\qquad= \frac{1}{2\pi i}\sum_{i=1}^\rnk \oint_\Gamma \frac{H_\mSigma(z)\diff z}{z-\lambda_i(\mA)}u_i(\mA)u_i(\mA)^\top \quad\text{[since $\lambda_{1:\rnk}(\mA)$ are inside $\Gamma$]} \nonumber \\
    &\qquad=\frac{1}{2\pi i}\sum_{i=1}^{\dim} \oint_\Gamma \frac{H_\mSigma(z)\diff z}{z-\lambda_i(\mA)}u_i(\mA)u_i(\mA)^\top\quad\text{[since $\lambda_{\rnk+1:\dim}(\mA)$ are outside of $\Gamma$]} \nonumber\\
    &\qquad=\frac{1}{2\pi i} \oint_\Gamma \sum_{i=1}^{\dim}\frac{H_\mSigma(z)}{z-\lambda_i(\mA)}u_i(\mA)u_i(\mA)^\top \diff z \nonumber \\
    &\qquad=\frac{1}{2\pi i} \oint_\Gamma H_\mSigma(z)(zI_{\dim}-\mA)^{-1}\diff z. \label{eq:CRT}
\end{align}
\begin{figure}[ht]
    \centering
    \includegraphics[width=0.5\linewidth]{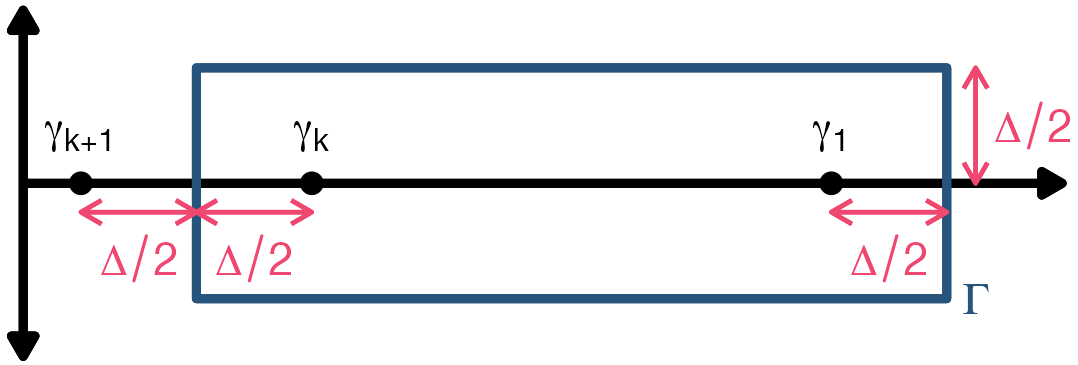}
    \caption{Visualization of the contour $\Gamma.$}
    \label{fig:contour}
\end{figure}
To apply the formula \eqref{eq:CRT} to $\mSigma$ and $\tilde{\mSigma},$ we will choose the contour $\Gamma$ as shown in \Cref{fig:contour} so that for large enough $\dim,$ $\lambda_i,\tilde{\lambda}_i$ for $i\in[\rnk]$ are strictly inside $\Gamma$ and $\lambda_i,\tilde{\lambda}_i$ for $i\in[\dim]\backslash[\rnk]$ are strictly outside $\Gamma,$ which is guaranteed since $\lambda_i,\tilde{\lambda}_i\rightarrow \gamma_i$ for $i\in\{1,\rnk,\rnk+1\}$ and $\Delta\bydef\gamma_{\rnk}-\gamma_{\rnk+1}>0.$ 
Moreover, this choice of $\Gamma$ ensures that
\begin{subequations}\label{eq:CRT-min-gap}
\begin{align} 
    \underset{\substack{z\in\Gamma\\i\in[\dim]}}{\min}\;|z-\lambda_i|&=\min\left\{\lambda_\rnk-\left(\gamma_{\rnk+1}+\frac{\Delta}{2}\right), \left(\gamma_{\rnk+1}+\frac{\Delta}{2}\right)-\lambda_{\rnk+1}, \left(\gamma_1+\frac{\Delta}{2}\right)-\lambda_1\right\} \rightarrow \frac{\Delta}{2},
\end{align}
and by the same argument:
\begin{align}
    \underset{\substack{z\in\Gamma\\i\in[\dim]}}{\min}\;|z-\tilde{\lambda}_i|& \rightarrow \frac{\Delta}{2}.
\end{align}
\end{subequations}
We can now estimate the term $(iii)$ as follows:
\begin{align*}
    (iii)&\bydef \norm*{\sum_{i=1}^\rnk H_{\mSigma}(\tilde{\lambda}_i)\tilde{\vu}_i\tilde{\vu}_i^\top-\sum_{i=1}^\rnk H_{\mSigma}(\lambda_i)\vu_i\vu_i^\top}_{\fr} \\
    & \explain{\eqref{eq:CRT}}{=}\norm*{\frac{1}{2\pi i} \oint_\Gamma H_{\mSigma}(z)(zI_{\dim}-\mSigma)^{-1}\diff z-\frac{1}{2\pi i} \oint_\Gamma H_{\mSigma}(z)(zI_{\dim}-\tilde{\mSigma})^{-1}\diff z}_{\fr}\\
    &\leq \frac{1}{2\pi} \oint_\Gamma |H_{\mSigma}(z)|\cdot\norm{(zI_{\dim}-\mSigma)^{-1}-(zI_{\dim}-\tilde{\mSigma})^{-1}}_{\fr}\diff z\quad\text{[by triangle inequality]}\\
    & \explain{(a)}{\leq} \frac{1}{2\pi \min_{z \in \Gamma, i \in [\dim]}|z-\lambda_i|} \oint_\Gamma \norm{(zI_{\dim}-\mSigma)^{-1}-(zI_{\dim}-\tilde{\mSigma})^{-1}}_{\fr}\diff z\\
    &\explain{\eqref{eq:CRT-min-gap}}{\lesssim}  \oint_\Gamma \norm{(zI_{\dim}-\mSigma)^{-1}-(zI_{\dim}-\tilde{\mSigma})^{-1}}_{\fr}\diff z,
\end{align*}
where (a) holds since $H_{\mSigma}(z) \bydef \tfrac{1}{\dim} \textstyle \sum_{i=1}^{\dim-\rnk} \tfrac{1}{z - \lambda_i}.$ To estimate $\norm{(zI_{\dim}-\mSigma)^{-1}-(zI_{\dim}-\tilde{\mSigma})^{-1}}_{\fr}$, we use a classical matrix perturbation bound for matrix inverse due to \citet{wedin1973perturbation}, which states that  for two invertible matrices $\mA$ and $\mB$:
\begin{align*}
    \|\mB^{-1}-\mA^{-1}\|_{\fr}\leq \|\mB^{-1}\|\|\mA^{-1}\|\|\mB-\mA\|_{\fr}.
\end{align*}
Hence,
\begin{align*}
    (iii)& \lesssim \oint_\Gamma \norm{(zI_{\dim}-\mSigma)^{-1}}\norm{(zI_{\dim}-\tilde{\mSigma})^{-1}}\norm{\tilde{\mSigma}-\mSigma}_{\fr}\diff z\\
    &= \oint_{\Gamma} \frac{1}{\min_{i\in[\dim]}|z-\lambda_i|}\frac{1}{\min_{i\in[\dim]}|z-\tilde{\lambda}_i|}\norm{\tilde{\mSigma}-\mSigma}_{\fr}\diff z\\
    &\lesssim \frac{1}{\sqrt{\dim}}\quad\text{[by \eqref{eq:CRT-min-gap} and the assumption $\|\mSigma - \tilde{\mSigma}\|_{\fr} \lesssim \tfrac{1}{\sqrt{\dim}}$]}.
\end{align*}
This gives our desired estimate $(iii)\lesssim 1/\sqrt{\dim}$.
\paragraph{Conclusion} Plugging in our estimates $(i), (ii), (iii) \lesssim 1/\sqrt{\dim}$ into \eqref{eq:mat-pert-plugin}, we obtain $\|\calM_{\beta_{\dim}}(\mSigma)-\calM_{\beta_{\dim}}(\tilde{\mSigma})\|_{\fr} \lesssim \tfrac{1}{\sqrt{\dim}}$, which completes the proof of \Cref{lem:perturb}.
\end{proof}

\subsection{Proof of \texorpdfstring{\Cref{prop:le-cam}}{prop:le-cam}} \label{app:le-cam}
\begin{proof}[Proof of \Cref{prop:le-cam}]
Fix any $\alpha\in[0,1],$ and recall that our goal is to characterize the asymptotics of the trade-off function $\tf{\nu_\dim}{\tilde{\nu}_\dim},$ which is defined as:
\begin{align*}
    \tf{\nu_{\dim}}{\tilde{\nu}_{\dim}}(\alpha)=\underset{\phi:\mathcal{V}_{\dim}\rightarrow[0,1]}{\min}\left\{1-\E_{\rV \sim \tilde{\nu}_\dim}[\phi(\rV)]\;:\;\E_{\rV \sim \nu_\dim}[\phi(\rV)]\leq\alpha\right\}\quad\forall\;\alpha\;\in\;[0,1].
\end{align*}
By the Neyman-Pearson theorem (see e.g., \citep[Theorem 3.2.1]{lehmann2005testing}), the optimal test which  minimizes  the RHS is given by:
\begin{align} \label{eq:NP-test}
    \phi_{\dim}^{\star}(\mV)=\ind\left\{\ln\frac{\diff\tilde{\nu}_{\dim}}{\diff\nu_{\dim}}(\mV)>\lambda_{\dim}\right\}+q_{\dim}\ind\left\{\ln\frac{\diff\tilde{\nu}_{\dim}}{\diff\nu_{\dim}}(\mV)=\lambda_{\dim}\right\},
\end{align}
where $\lambda_{\dim}\in\R$ and $q_{\dim}\in[0,1]$ are chosen\footnote{The Neyman-Pearson theorem ensures the existence of such $\lambda_{\dim}$ and $q_{\dim}.$} so that $\phi_{\dim}^{\star}$ satisfies the constraint:
\begin{align}\label{eq:tf-typeI}
    \E_{\rV \sim \nu_\dim}[\phi_{\dim}^{\star}(\rV)]=\alpha \iff \P_{\rV \sim \nu_\dim}\left\{\ln\frac{\diff\tilde{\nu}_{\dim}}{\diff\nu_{\dim}}(\rV)>\lambda_{\dim}\right\}+q_{\dim}\P_{\rV \sim \nu_\dim}\left\{\ln\frac{\diff\tilde{\nu}_{\dim}}{\diff\nu_{\dim}}(\rV)=\lambda_{\dim}\right\}=\alpha.
\end{align}
This allows us to express the trade-off function $    \tf{\nu_{\dim}}{\tilde{\nu}_{\dim}}(\alpha)$ as: 
\begin{align}\label{eq:tf-expr}
    \tf{\nu_{\dim}}{\tilde{\nu}_{\dim}}(\alpha)&\bydef 1-\E_{\rV \sim \tilde{\nu}_\dim}[\phi_{\dim}^{\star}(\rV)]\notag\\
    &=1-\P_{\rV \sim \tilde{\nu}_\dim}\left\{\ln\frac{\diff\tilde{\nu}_{\dim}}{\diff\nu_{\dim}}(\rV)>\lambda_{\dim}\right\}-q_{\dim}\P_{\rV \sim \tilde{\nu}_\dim}\left\{\ln\frac{\diff\tilde{\nu}_{\dim}}{\diff\nu_{\dim}}(\rV)=\lambda_{\dim}\right\}\notag\\
    &=\P_{\rV \sim \tilde{\nu}_\dim}\left\{\ln\frac{\diff\tilde{\nu}_{\dim}}{\diff\nu_{\dim}}(\rV)\leq \lambda_{\dim}\right\}-q_{\dim}\P_{\rV \sim \tilde{\nu}_\dim}\left\{\ln\frac{\diff\tilde{\nu}_{\dim}}{\diff\nu_{\dim}}(\rV)=\lambda_{\dim}\right\},
\end{align}
which makes explicit the connection between the asymptotics of $\tf{\nu_{\dim}}{\tilde{\nu}_{\dim}}(\alpha)$ and the log-likelihood ratio. We will characterize the asymptotic trade-off function by dividing into cases based on the asymptotic behavior of $\ln(\diff\tilde{\nu}_{\dim}/\diff\nu_{\dim})(\rV)$ when $\rV\sim\nu_{\dim}.$ 

\paragraph{Case 1: The Log-likelihood Ratio Has a Non-Degenerate Limit Distribution} We first consider the case \begin{align*}
    \ln \frac{\diff\tilde{\nu}_{\dim}}{\diff\nu_{\dim}}(\rV)\dc \gauss{-\frac{v}{2}}{v} \quad\text{when}\quad   \rV\sim\nu_{\dim}
\end{align*} for some constant $v>0$ and prove the first claim made in \Cref{prop:le-cam}:
\begin{align}\label{eq:lecam-non-degen}
    \lm\tf{\nu_{\dim}}{\tilde{\nu}_{\dim}}(\alpha)=\tf{\gauss{0}{1}}{\gauss{\sqrt{v}}{1}}(\alpha).
\end{align}
Since $\ln(\diff\tilde{\nu}_{\dim}/\diff\nu_{\dim})(\rV)\dc \gauss{-v/2}{v}$ when $\rV\sim\nu_{\dim},$ by Le Cam's first and third lemmas \citep[Section 6.4]{van2000asymptotic}, 
\begin{align} \label{eq:lecam-non-degen-alt}
    \ln \frac{\diff\tilde{\nu}_{\dim}}{\diff\nu_{\dim}}(\rV)\dc\gauss{\frac{v}{2}}{v} \quad\text{when}\quad  \rV\sim \tilde{\nu}_{\dim}.
\end{align}  
We can use \eqref{eq:lecam-non-degen} and \eqref{eq:lecam-non-degen-alt} to derive the asymptotic behavior of the cuttoff $\lambda_\dim$ in the optimal test $\phi_\dim^\star$ in \eqref{eq:NP-test}, which will help us compute the limiting trade-off function using the formula in \eqref{eq:tf-expr}. From \eqref{eq:lecam-non-degen} and \eqref{eq:lecam-non-degen-alt}, we know that the limit distribution of $\ln(\diff\tilde{\nu}_{\dim}/\diff\nu_{\dim})(\rV)$ when $\rV \sim \nu_\dim$ and when $\rV \sim \tilde{\nu}_\dim$ is continuous. Hence, we expect the probability that $\ln(\diff\tilde{\nu}_{\dim}/\diff\nu_{\dim})(\rV)$ realizes any specific value tends to $0$. This indeed holds \citep[Problem 11.42]{lehmann2005testing}:
\begin{align}\label{eq:L=lambda}
    \lm \P_{\rV \sim \nu_\dim}\left(\ln\frac{\diff\tilde{\nu}_{\dim}}{\diff\nu_{\dim}}(\rV)=\lambda_{\dim}\right)=0\quad\text{and}\quad \lm \P_{\rV \sim \tilde{\nu}_\dim}\left(\ln\frac{\diff\tilde{\nu}_{\dim}}{\diff\nu_{\dim}}(\rV)=\lambda_{\dim}\right)=0.
\end{align}
Furthermore, recall from \eqref{eq:tf-typeI} that 
\begin{align*}
    \alpha&=\lm\P_{\rV \sim \nu_\dim}\left(\ln\frac{\diff\tilde{\nu}_{\dim}}{\diff\nu_{\dim}}(\rV)>\lambda_{\dim}\right)+\lm q_{\dim}\P_{\rV \sim \nu_\dim}\left(\ln\frac{\diff\tilde{\nu}_{\dim}}{\diff\nu_{\dim}}(\rV)=\lambda_{\dim}\right)\\
    &\explain{\eqref{eq:L=lambda}}{=}\lm\P_{\rV \sim \nu_\dim}\left(\ln\frac{\diff\tilde{\nu}_{\dim}}{\diff\nu_{\dim}}(\rV)>\lambda_{\dim}\right).
\end{align*}
In light of the above equation and the fact that  $\ln(\diff\tilde{\nu}_{\dim}/\diff\nu_{\dim})(\rV)\dc\gauss{-v/2}{v}$ when $\rV\sim\nu_{\dim}$, we expect that $\lambda_{\dim}$ should converge to the $1-\alpha$ quantile of $\gauss{-v/2}{v},$ which is indeed true \citep[Problem 11.42]{lehmann2005testing}:
\begin{align}\label{eq:lambda-conv}
    \lambda_{\dim}\rightarrow\sqrt{v} \Phi^{-1}(1-\alpha)-\frac{v}{2},
\end{align}
where $\Phi$ denotes the CDF of $\gauss{0}{1}$. 
We can now compute the asymptotic trade-off function between $\nu_\dim, \tilde{\nu}_\dim$: 
\begin{align*}
    \lm\tf{\nu_{\dim}}{\tilde{\nu}_{\dim}}(\alpha) &\explain{\eqref{eq:tf-expr}}{=}\lm\P_{\rV \sim \tilde{\nu}_\dim}\left(\ln\frac{\diff\tilde{\nu}_{\dim}}{\diff\nu_{\dim}}(\rV)\leq \lambda_{\dim}\right)-\lm q_{\dim}\P_{\rV \sim \tilde{\nu}_\dim}\left(\ln\frac{\diff\tilde{\nu}_{\dim}}{\diff\nu_{\dim}}(\rV)=\lambda_{\dim}\right)\\
    &=\lm\P_{\rV \sim \tilde{\nu}_\dim}\left(\ln\frac{\diff\tilde{\nu}_{\dim}}{\diff\nu_{\dim}}(\rV)\leq \lambda_{\dim}\right)\quad \text{[by \eqref{eq:L=lambda} and $q_{\dim}\in[0,1]$]}\\
    &\explain{(a)}{=} \P\left(\gauss{\frac{v}{2}}{v}\leq \sqrt{v} \Phi^{-1}(1-\alpha)-\frac{v}{2}\right)\\
    &=\Phi\left( \Phi^{-1}(1-\alpha)-\sqrt{v}\right) \\
    &=\tf{\gauss{0}{1}}{\gauss{\sqrt{v}}{1}}(\alpha),
\end{align*}
where (a) follows from  \eqref{eq:lambda-conv} and the fact that when $\rV\sim\tilde{\nu}_{\dim}$, the CDF of $\ln(\diff\tilde{\nu}_{\dim}/\diff\nu_{\dim})(\rV)$  converges to the CDF of $\mathcal{N}(v/2,v)$ point-wise (recall \eqref{eq:lecam-non-degen-alt}). Since the limit CDF is continuous, this convergence is in fact uniform \citep[Theorem 11.2.9]{lehmann2005testing}. This proves the first claim of \Cref{prop:le-cam}.

\paragraph{Case 2: The Log-likelihood Ratio has a Degenerate Limit Distribution} Next, we consider the case \begin{align*}
    \ln \frac{\diff\tilde{\nu}_{\dim}}{\diff\nu_{\dim}}(\rV)\pc 0 \quad\text{when}\quad  \rV\sim\nu_{\dim},
\end{align*} and prove the second claim made in \Cref{prop:le-cam}:
\begin{align}\label{eq:lecam-degen}
    \lm\tf{\nu_{\dim}}{\tilde{\nu}_{\dim}}(\alpha)=1-\alpha.
\end{align}
Recall from \eqref{eq:tf-expr} that $\lm\tf{\nu_\dim}{\tilde{\nu}_\dim}(\alpha)=1-\lm\E_{\rV \sim \tilde{\nu}_\dim}[\phi_{\dim}^{\star}(\rV)],$ and note that 
\begin{align*}
    \lm\tf{\nu_\dim}{\tilde{\nu}_\dim}(\alpha)&=1-\lm\left(\E_{\rV \sim \tilde{\nu}_\dim}[\phi_{\dim}^{\star}(\rV)]-\E_{\rV \sim \nu_\dim}[\phi_{\dim}^{\star}(\rV)]\right)-\lm\E_{\rV \sim \nu_\dim}[\phi_{\dim}^{\star}(\rV)]\\
    &\explain{(a)}{=}1-\alpha -\lm\left(\E_{\rV \sim \tilde{\nu}_\dim}[\phi_{\dim}^{\star}(\rV)]-\E_{\rV \sim \nu_\dim}[\phi_{\dim}^{\star}(\rV)]\right)\\
    &\explain{(b)}{=}1-\alpha,
\end{align*}
where (a) holds since $\E_{\rV \sim \nu_\dim}[\phi_{\dim}^{\star}(\rV)]=\alpha$ for any $\dim\in\N$ (see \eqref{eq:tf-typeI}), and (b) holds since: 
\begin{align*}
    &\lm\left\vert\E_{\rV \sim \tilde{\nu}_\dim}[\phi_{\dim}^{\star}(\rV)]-\E_{\rV \sim \nu_\dim}[\phi_{\dim}^{\star}(\rV)]\right\vert\\
    &\qquad\explain{\eqref{eq:NP-test}}{\leq}  \lm\left\vert\P_{\rV \sim \tilde{\nu}_\dim}\left(\ln\frac{\diff\tilde{\nu}_{\dim}}{\diff\nu_{\dim}}(\rV)>\lambda_{\dim}\right)-\P_{\rV \sim \nu_\dim}\left(\ln\frac{\diff\tilde{\nu}_{\dim}}{\diff\nu_{\dim}}(\rV)>\lambda_{\dim}\right)\right\vert\\
    &\qquad\qquad+\lm q_{\dim}\cdot\left\vert \P_{\rV \sim \tilde{\nu}_\dim}\left(\ln\frac{\diff\tilde{\nu}_{\dim}}{\diff\nu_{\dim}}(\rV)=\lambda_{\dim}\right)-\P_{\rV \sim \nu_\dim}\left(\ln\frac{\diff\tilde{\nu}_{\dim}}{\diff\nu_{\dim}}(\rV)=\lambda_{\dim}\right)\right\vert\\
    &\qquad\explain{(a)}{\leq} 2\lm \tv\left(\nu_{\dim},\tilde{\nu}_{\dim}\right)\\
    &\qquad\explain{(b)}{=}0,
\end{align*}
where (a) follows from the fact that  $q_{\dim}\in[0,1]$ and the variational definition of the total variance distance. Step (b) follows by recalling the case assumption that $(\diff\tilde{\nu}_{\dim}/\diff\nu_{\dim})(\rV)\pc 1$, which combined with Scheffé's lemma guarantees that $\lm \tv\left(\nu_{\dim},\tilde{\nu}_{\dim}\right) = 0$. Thus, we have that $\lim_{\dim \rightarrow \infty} \tf{\nu_\dim}{\tilde{\nu}_\dim}(\alpha)=1-\alpha$, which proves the second claim of \Cref{prop:le-cam}. 
\end{proof}
\section{Proof of the Privacy Theorem (Theorem \mref{thm:privacy})}\label{app:privacy-renyi}
This appendix completes the proof of our privacy result for the exponential mechanism (Theorem \mref{thm:privacy}) by proving the claimed Rényi differential privacy guarantee and also provides the proofs of the various intermediate results introduced in Section \mref{sec:privacy}. We begin by completing the proof of Theorem \mref{thm:privacy}.

\begin{proof}[Proof of Theorem \mref{thm:privacy}] Since we have already proved the trade-off-function-based privacy guarantee in Section \mref{sec:privacy-proof}, we only need to show the Rényi differential privacy guarantee  required from a $\sigma_\beta$-AGDP algorithm (recall Definition \mref{def:AGDP}):
\begin{align} \label{eq:priv-thm-renyi}
    \ls\sup_{\tilde{\mX}\in\calN(\mX)} \rdv{\alpha}{\nu(\cdot \mid \mSigma(\tilde{\mX}), \beta_{\dim}, \rnk)}{\nu_{\dim}}=\rdv{\alpha}{\gauss{\sigma_\beta}{1}}{\gauss{0}{1}} \quad \forall \; \alpha > 1,
\end{align}
where we use the shorthand notation $\nu_{\dim}\bydef\nu(\cdot \mid \mSigma(\mX), \beta_{\dim}, \rnk).$ As before, we will prove this in two steps. First, we show the upper bound:
\begin{align}\label{eq:privacy-rdp-ub}
    \ls\sup_{\tilde{\mX}\in\calN(\mX)} \rdv{\alpha}{\nu(\cdot \mid \mSigma(\tilde{\mX}), \beta_{\dim}, \rnk)}{\nu_{\dim}}\leq\rdv{\alpha}{\gauss{\sigma_\beta}{1}}{\gauss{0}{1}}
\end{align}
and then show that the worst-case data point $\vx_\star$ constructed in Proposition \mref{prop:var-lim-v2} achieves this upper bound:
\begin{align}   \label{eq:achieve-renyi} 
\li\rdv{\alpha}{\nu(\cdot \mid \mSigma(\mX\cup \{\vx_\star\}), \beta_{\dim}, \rnk)}{\nu_{\dim}} \geq \rdv{\alpha}{\gauss{\sigma_\beta}{1}}{\gauss{0}{1}}.
\end{align}
Notice that \eqref{eq:privacy-rdp-ub} and \eqref{eq:achieve-renyi} immediately imply \eqref{eq:priv-thm-renyi}.  As before, we let $\tilde{\mX}_{\sharp}\in\calN(\mX)$ be an approximate worst-case neighboring dataset which satisfies:
\begin{align*}
    \rdv{\alpha}{\nu(\cdot \mid \mSigma(\tilde{\mX}_{\sharp}), \beta_{\dim}, \rnk)}{\nu_{\dim}}\geq\sup_{\tilde{\mX}\in\calN(\mX)} \rdv{\alpha}{\nu(\cdot \mid \mSigma(\tilde{\mX}), \beta_{\dim}, \rnk)}{\nu_{\dim}}-\frac{1}{\dim}
\end{align*}
so that:
\begin{align*}
    \ls\sup_{\tilde{\mX}\in\calN(\mX)} \rdv{\alpha}{\nu(\cdot \mid \mSigma(\tilde{\mX}), \beta_{\dim}, \rnk)}{\nu_{\dim}}&= \ls \rdv{\alpha}{\nu(\cdot \mid \mSigma(\tilde{\mX}_{\sharp}), \beta_{\dim}, \rnk)}{\nu_{\dim}}.
    \end{align*}
Next, note that Lemma \mref{lem:simple-gibbs-approx} (item (2)) and Lemma \mref{lem:triangle-ineq} (item (2)) imply that for any $\epsilon\in(0,\alpha-1),$ we have:
\begin{align}\label{eq:wti-application}
    \ls\rdv{\alpha}{\tilde{\nu}_{\dim}}{\nu_{\dim}}&\explain{Lem. \mref{lem:triangle-ineq}}{\leq} \ls \left( \rdv{\alpha+\epsilon}{\bar{\nu}_{\dim}}{\nu_{\dim}} +  \frac{\alpha+\epsilon}{\alpha+\epsilon-1}\rdv{\frac{\alpha(\alpha+\epsilon-1)}{\epsilon}}{\tilde{\nu}_\dim}{\bar{\nu}_\dim} \right)\notag\\
    &\hspace{7cm}\explain{Lem. \mref{lem:simple-gibbs-approx}}{=}\ls\rdv{\alpha+\epsilon}{\bar{\nu}_{\dim}}{\nu_{\dim}},
\end{align}
where $\tilde{\nu}_{\dim}\bydef\nu(\cdot \mid \mSigma(\tilde{\mX}_{\sharp}), \beta_{\dim}, \rnk)$ and $\bar{\nu}_{\dim}\bydef\nu(\cdot \mid \bar{\mSigma}(\tilde{\mX}_{\sharp}), \beta_{\dim}, \rnk)$. Next, we compute the RHS using Theorem \mref{thm:contiguity}. To ensure that the requirements of Theorem \mref{thm:contiguity} are met, we compute the RHS by passing to a subsequence achieving the limit superior and then to a further subsequence\footnote{Bolzano–Weierstrass guarantees the existence of such a subsequence since $\ls \sigma^2_{\mSigma(\mX)}(\sqrt{\dim}(\bar{\mSigma}(\tilde{\mX}_{\sharp})-\mSigma(\tilde{\mX}_{\sharp})),\beta_{\dim})< \infty$, thanks to Proposition \mref{prop:var-lim-v2}.} along which $$\sigma^2_{\mSigma(\mX)}(\sqrt{\dim}(\bar{\mSigma}(\tilde{\mX}_{\sharp})-\mSigma(\mX)),\beta_{\dim})\rightarrow v$$ for some $v\in[0,\infty).$ Applying Theorem \mref{thm:contiguity}, we conclude that:
\begin{align*}
     \ls\sup_{\tilde{\mX}\in\calN(\mX)} \rdv{\alpha}{\nu(\cdot \mid \mSigma(\tilde{\mX}), \beta_{\dim}, \rnk)}{\nu_{\dim}}& \explain{\eqref{eq:wti-application}}{\leq} \lm\rdv{\alpha+\epsilon}{\bar{\nu}_\dim}{\nu_\dim}\\
     &= 
        \rdv{\alpha+\epsilon}{\gauss{\sqrt{v}}{1}}{\gauss{0}{1}}. 
\end{align*}
Since the above estimate holds for any $\epsilon \in (0, \alpha-1)$, we can let $\epsilon \rightarrow 0$ and conclude that:  
\begin{align*}
   \ls\sup_{\tilde{\mX}\in\calN(\mX)} \rdv{\alpha}{\nu(\cdot \mid \mSigma(\tilde{\mX}), \beta_{\dim}, \rnk)}{\nu_{\dim}} &\leq  \rdv{\alpha}{\gauss{\sqrt{v}}{1}}{\gauss{0}{1}}\\
   &\explain{(a)}{=} \rdv{\alpha}{\gauss{\sigma_\beta}{1}}{\gauss{0}{1}}.
\end{align*} 
In the above display, step (a) follows by observing that if $\vx_\sharp$ denotes the data point added/removed from $\mX$ to obtain $\mX_\sharp$, we have:
\begin{align*}
    v  \bydef \lm \sigma^2_{\mSigma(\mX)}(\sqrt{\dim}(\bar{\mSigma}(\tilde{\mX}_{\sharp})-\mSigma(\tilde{\mX}_{\sharp})),\beta_{\dim})&\explain{\meqref{eq:simple-cov}}{=} \lm \sigma^2_{\mSigma(\mX)}\left(\frac{\sqrt{\dim}\vx_\sharp\vx_\sharp^\top}{\ssize},\beta_{\dim}\right)\\
    &\bydef \lm\sigma^2_{\mX}(\vx_{\sharp},\beta_{\dim})\\
    &\leq \lim_{\dim \rightarrow \infty} \sup_{\substack{\vx \in \R^\dim \\ \|\vx\|^2 \leq \dim}} \sigma^2_{\mX}(\vx,\beta_{\dim})\explain{Prop. \mref{prop:var-lim-v2}}{=} \sigma_\beta^2.
\end{align*}
This proves the upper bound \eqref{eq:privacy-rdp-ub}. Next, we prove the matching lower bound in \eqref{eq:achieve-renyi} (that is, the worst-case data point $\vx_{\star}$ constructed in Proposition Proposition \mref{prop:var-lim-v2} achieves the upper bound above). Using the shorthands $\tilde{\nu}_{\dim}\bydef\nu(\cdot \mid \mSigma({\mX} \cup \{\vx_\star\}), \beta_{\dim}, \rnk)$ and $\bar{\nu}_{\dim}\bydef\nu(\cdot \mid \bar{\mSigma}({\mX} \cup \{\vx_\star\}), \beta_{\dim}, \rnk),$ note that, for any $\epsilon\in(0,\alpha-1),$ we have:
\begin{align*}
    \li \rdv{\alpha}{\tilde{\nu}_{\dim}}{\nu_{\dim}}& \explain{Lem.  \mref{lem:triangle-ineq}}{\geq} \li \left(\rdv{\alpha-\epsilon}{\bar{\nu}_{\dim}}{\nu_{\dim}} - \frac{\alpha}{\alpha-1}\rdv{\frac{(\alpha-\epsilon)(\alpha-1)}{\epsilon}}{\bar{\nu}_\dim}{\tilde{\nu}_\dim}\right) \\ &  \explain{Lem. \mref{lem:simple-gibbs-approx}}{=} \li \rdv{\alpha-\epsilon}{\bar{\nu}_{\dim}}{\nu_{\dim}}
    \explain{(a)}{=}\rdv{\alpha-\epsilon}{\gauss{\sigma_\beta}{1}}{\gauss{0}{1}},
\end{align*}
where (a) follows by applying Theorem \mref{thm:contiguity} to compute the limiting Rényi divergence after observing that:
\begin{align*}
    \lm\sigma^2_{\mSigma(\mX)}(\sqrt{\dim}(\bar{\mSigma}({\mX} \cup \{\vx_\star\})-\mSigma(\mX)),\beta_{\dim}) &\explain{\meqref{eq:simple-cov}}{=}\lm \sigma^2_{\mSigma(\mX)}\left(\frac{\sqrt{\dim}\vx_\star\vx_\star^\top}{\ssize},\beta_{\dim}\right)\\
    &\hspace{2cm}\bydef \lm\sigma^2_{\mX}(\vx_{\star},\beta_{\dim})\explain{Prop. \mref{prop:var-lim-v2}}{=} \sigma_\beta^2.
\end{align*}
Finally, taking $\epsilon\rightarrow0,$ we conclude that:
\begin{align*}
    \li \rdv{\alpha}{\tilde{\nu}_{\dim}}{\nu_{\dim}} \geq \rdv{\alpha}{\gauss{\sigma_\beta}{1}}{\gauss{0}{1}},
\end{align*}
which proves the lower bound in \eqref{eq:achieve-renyi}. This completes the Rényi differential privacy analysis and the proof of Theorem \mref{thm:privacy}. 
\end{proof}

We now present the proofs of the intermediate results Lemma \mref{lem:simple-gibbs-approx}, Lemma \mref{lem:triangle-ineq}, and Proposition \mref{prop:var-lim-v2}, which were introduced in Section \mref{sec:privacy} and used in our privacy proof. 

\subsection{Proof of Lemma \mref{lem:simple-gibbs-approx}}
\begin{proof}[Proof of Lemma \mref{lem:simple-gibbs-approx}] Recall that our goal is to show that the total variation distance and Rényi divergence between the Gibbs distributions $\nu(\cdot \mid \mSigma(\tilde{\mX}), \beta_\dim, \rnk)$ and ${\nu(\cdot \mid \bar{\mSigma}(\tilde{\mX}), \beta_\dim, \rnk)}$ vanish in the limit as $\dim \rightarrow \infty$. To show this, we will appeal to Theorem \mref{thm:contiguity} (item (2)). Notice that all the assumptions of Theorem \mref{thm:contiguity} are met:
\begin{itemize}
    \item The perturbation matrix $\mE \bydef \sqrt{\dim} \cdot ( \bar{\mSigma}(\tilde{\mX}) -  \mSigma(\tilde{\mX}))$ vanishes in operator norm and is bounded in Frobenius norm. Indeed, recall that from \meqref{eq:privacy-perturb-orig} and \meqref{eq:simple-cov} that:
    \begin{subequations}
    \begin{align}  \label{eq:recall-all-cov}
         \Sigma(\tilde{\mX}) & = \begin{dcases} \Sigma(\mX)-{\frac{\Sigma(\mX)}{\ssize+1}}+\frac{\vx\vx^\top}{\ssize+1} & \text{ if }\tilde{\mX} = \mX \cup \{\vx\}, \\ 
    \mSigma(\mX) + {\frac{\Sigma(\mX)}{\ssize-1}} -\frac{\vx\vx^\top}{\ssize-1}  &\text{ if } \tilde{\mX} = \mX \backslash \{\vx\},
    \end{dcases}
    \end{align}
    and 
    \begin{align}
    \bar{\mSigma}(\tilde{\mX})\bydef\begin{dcases}
        \mSigma(\mX)+\frac{\vx\vx^\top}{\ssize}&\text{if }\tilde{\mX} = \mX \cup \{\vx\}\\
        \mSigma(\mX)-\frac{\vx\vx^\top}{\ssize}&\text{if }\tilde{\mX} = \mX \backslash \{\vx\}.
    \end{dcases}
    \end{align}
    \end{subequations}
    Hence, 
    \begin{align} \label{eq:E-bound}
    \|\mE \|_{\fr} \leq \sqrt{\dim} \cdot \left( \frac{\sqrt{\dim}\lambda_1(\mSigma(\mX))}{\ssize-1}+\frac{\|\vx\|^2}{\ssize(\ssize-1)} \right) \lesssim \frac{{\dim}}{\ssize} + \frac{\dim \sqrt{\dim}}{\ssize^2}\lesssim \frac{1}{\sqrt{\dim}} \ll 1. 
    \end{align}
    \item The matrices $\mSigma(\tilde{\mX})$ and $\bar{\mSigma}(\tilde{\mX})$ both satisfy Assumption \mref{assump:mat}. This is because the matrix $\mSigma(\mX)$ satisfies Assumption \mref{assump:mat} (since the dataset $\mX$ satisfies Assumption \mref{assump:data}) and both $\bar{\mSigma}(\tilde{\mX})$ and $\mSigma(\tilde{\mX})$ are close to $\mSigma(\mX)$ in operator norm. Indeed,
    \begin{align}
        \|\mSigma(\tilde{\mX})-\mSigma(\mX)\| & \explain{\eqref{eq:recall-all-cov}}{\leq} \frac{\lambda_1(\mSigma(\mX))}{\ssize-1}+\frac{\|\vx\|^2}{\ssize-1} \lesssim \frac{\dim}{\ssize} \ll 1, \label{eq:sigma-tilde-sigma}\\  \|\bar{\mSigma}(\tilde{\mX})-\mSigma(\mX)\| &\leq   \|\bar{\mSigma}(\tilde{\mX})-\mSigma(\tilde{\mX})\| +  \|\mSigma(\tilde{\mX})-\mSigma(\mX)\| \explain{\eqref{eq:E-bound}, \eqref{eq:sigma-tilde-sigma}}{\ll} 1. 
    \end{align}
Hence, \Cref{lem:misc_conv} guarantees that  the matrices $\mSigma(\tilde{\mX})$ and $\bar{\mSigma}(\tilde{\mX})$ also satisfy Assumption \mref{assump:mat}.
\end{itemize}
Since all the requirements of Theorem \mref{thm:contiguity} (item (2)) are met, we conclude that
\begin{align*}
     \lm \tv(\nu(\cdot \mid \mSigma(\tilde{\mX}), \beta_\dim, \rnk), \nu(\cdot \mid \bar{\mSigma}(\tilde{\mX}), \beta_\dim, \rnk)) &=0, \\ \lm \rdv{\alpha}{\nu(\cdot \mid \mSigma(\tilde{\mX}), \beta_\dim, \rnk)}{\nu(\cdot \mid \bar{\mSigma}(\tilde{\mX}), \beta_\dim, \rnk)} &=0\quad\forall\:\alpha>1, \\
    \lm  \rdv{\alpha}{\nu(\cdot \mid \bar{\mSigma}(\tilde{\mX}), \beta_\dim, \rnk)}{\nu(\cdot \mid \mSigma(\tilde{\mX}), \beta_\dim, \rnk)}&=0\quad\forall\:\alpha>1,
\end{align*}
as claimed in Lemma \mref{lem:simple-gibbs-approx}. 
\end{proof}

\subsection{Proof of Lemma \mref{lem:triangle-ineq}}\label{app:triangle-ineq}
\begin{proof}[Proof of Lemma \mref{lem:triangle-ineq}] Let $\nu,$ $\tilde{\nu},$ and $\bar{\nu}$ be three probability measures on a common sample space. Let $\chi$ be a measure on the same space such that all three measures $\nu,$ $\tilde{\nu},$ and $\bar{\nu}$ are absolutely continuous with respect to $\chi$ (for instance, take  $\chi = (\nu + \tilde{\nu} + \bar{\nu})/3$). We consider and prove each claim made in Lemma \mref{lem:triangle-ineq} individually. 

\paragraph{Proof of Claim (1) in Lemma \mref{lem:triangle-ineq}} We first prove the perturbation bound for the trade-off function: for any $\alpha\in[0,1],$ 
    \begin{align}\label{eq:wti-toshow1}
        \left\vert\tf{\nu}{\tilde{\nu}}(\alpha)-\tf{\nu}{\bar{\nu}}(\alpha)\right\vert\leq \tv(\tilde{\nu},\bar{\nu}).
    \end{align}
Fix any $\alpha\in[0,1],$ and let $\tilde{\phi}$ and $\bar{\phi}$ denote the (optimal) level-$\alpha$ Neyman-Pearson tests for the hypothesis testing problems 
\begin{align}\label{eq:ht1}
    H_0:\rV\sim \nu\quad\text{v.s.}\quad\tilde{H}_1:\rV\sim\tilde{\nu}
\end{align}
and
\begin{align}\label{eq:ht2}
    H_0:\rV\sim \nu\quad\text{v.s.}\quad\bar{H}_1:\rV\sim\bar{\nu},
\end{align}
respectively. Let $\E_{\tilde{\nu}}$ and $\E_{\bar{\nu}}$ denote that the expectation is taken with respect to $\tilde{\nu}$ and $\bar{\nu},$ respectively. Then, by the definition of the trade-off function, we have:
\begin{align*}
    \tf{\nu}{\tilde{\nu}}(\alpha)=1-\E_{\tilde{\nu}}[\tilde{\phi}]\quad\text{and}\quad \tf{\nu}{\bar{\nu}}(\alpha)=1-\E_{\bar{\nu}}[\bar{\phi}].
\end{align*}
Observe that
\begin{align*}
    \E_{\tilde{\nu}}[\tilde{\phi}]-\E_{\bar{\nu}}[\bar{\phi}]=\E_{\tilde{\nu}}[\tilde{\phi}]-\E_{\bar{\nu}}[\tilde{\phi}]+\underbrace{\E_{\bar{\nu}}[\tilde{\phi}]-\E_{\bar{\nu}}[\bar{\phi}]}_{(\#)}\explain{(a)}{\leq} \E_{\tilde{\nu}}[\tilde{\phi}]-\E_{\bar{\nu}}[\tilde{\phi}]\explain{(b)}{\leq} \tv(\tilde{\nu},\bar{\nu})
\end{align*}
and 
\begin{align*}
    \E_{\bar{\nu}}[\bar{\phi}]-\E_{\tilde{\nu}}[\tilde{\phi}]=\E_{\bar{\nu}}[\bar{\phi}]-\E_{\tilde{\nu}}[\bar{\phi}]+\underbrace{\E_{\tilde{\nu}}[\bar{\phi}]-\E_{\tilde{\nu}}[\tilde{\phi}]}_{(\#)}
    \explain{(a)}{\leq} \E_{\bar{\nu}}[\bar{\phi}]-\E_{\tilde{\nu}}[\bar{\phi}]\explain{(b)}{\leq} \tv(\tilde{\nu},\bar{\nu}),
\end{align*}
where the inequalities marked (a) hold since the terms marked $(\#)$ are non-positive by the definition of $\tilde{\phi}$ and $\bar{\phi}$ being Neyman-Pearson tests for \eqref{eq:ht1} and \eqref{eq:ht2}, respectively, and the inequalities marked (b) hold since for any map $\psi:\mathcal{V}\rightarrow[0,1],$ we have: 
\begin{align*}
    |\E_{\bar{\nu} }[\psi ]-\E_{\tilde{\nu} }[\psi ]|=\left\vert\int \psi \diff\bar{\nu}-\int\psi \diff \tilde{\nu} \right\vert\explain{def}{\leq}\tv(\bar{\nu},{\tilde{\nu}})&&\text{[since $\psi\in[0,1]$]}.
\end{align*}
Hence, we conclude that:
\begin{align*}
    \left\vert\tf{\nu}{\tilde{\nu}}(\alpha)-\tf{\nu}{\bar{\nu}}(\alpha)\right\vert=\vert\E_{\tilde{\nu}}[\tilde{\phi}]-\E_{\bar{\nu}}[\bar{\phi}]\vert\leq \tv(\bar{\nu},{\tilde{\nu}}),
\end{align*}
as desired. 
\paragraph{Proof of Claim (2) in Lemma \mref{lem:triangle-ineq}} Next, we prove the claimed perturbation bound for Rényi divergence: for any $\alpha>1$ and $\epsilon\in(0,\alpha-1),$ 
    \begin{align}\label{eq:wti-toshow2}
        \rdv{\alpha-\epsilon}{\bar{\nu}}{\nu}-\frac{\alpha}{\alpha-1}\rdv{\frac{(\alpha-\epsilon)(\alpha-1)}{\epsilon}}{\bar{\nu}}{\tilde{\nu}}\leq \rdv{\alpha}{\tilde{\nu}}{\nu}\leq \rdv{\alpha+\epsilon}{\bar{\nu}}{\nu}+\frac{\alpha+\epsilon}{\alpha+\epsilon-1}\rdv{\frac{\alpha(\alpha+\epsilon-1)}{\epsilon}}{\tilde{\nu}}{\bar{\nu}}.
    \end{align}
This bound is essentially due to \citet{mironov2017renyi}.
\begin{fact}[{\citet[Proposition 11]{mironov2017renyi}}]\label{fact:weak-triangle}
    Let $\nu,$ $\tilde{\nu},$ and $\bar{\nu}$ be probability measures on some common space $\mathcal{V}.$ Then, for any $\alpha>1,$ 
    \begin{align*}
    \rdv{\alpha}{\tilde{\nu}}{\nu}\leq\rdv{\alpha+\epsilon}{\bar{\nu}}{\nu}+\frac{\alpha+\epsilon}{\alpha+\epsilon-1}\rdv{\frac{\alpha(\alpha+\epsilon-1)}{\epsilon}}{\tilde{\nu}}{\bar{\nu}}\quad\forall\:\alpha>1\quad\forall\:\epsilon>0.
\end{align*}
\end{fact}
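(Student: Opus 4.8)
The plan is to prove the inequality of Fact~\ref{fact:weak-triangle} directly from Hölder's inequality, which is essentially the argument of \citet{mironov2017renyi}. First I would dispose of the degenerate cases: if $\bar\nu \not\ll \nu$ or $\tilde\nu\not\ll\bar\nu$, then by the second branch of Definition~\ref{def:Rényi} the right-hand side equals $+\infty$ and the bound is trivial; otherwise $\tilde\nu \ll \bar\nu \ll \nu$, so $\tilde\nu\ll\nu$ and all three Rényi divergences in the statement are given by the first (finite) branch. Taking $\nu$ itself as the dominating measure and writing $p \bydef \diff\tilde\nu/\diff\nu$ and $q \bydef \diff\bar\nu/\diff\nu$, the three divergences become $\rdv{\alpha}{\tilde\nu}{\nu} = \tfrac{1}{\alpha-1}\ln\int p^\alpha\diff\nu$, $\rdv{\alpha+\epsilon}{\bar\nu}{\nu} = \tfrac{1}{\alpha+\epsilon-1}\ln\int q^{\alpha+\epsilon}\diff\nu$, and $\rdv{\beta}{\tilde\nu}{\bar\nu} = \tfrac{1}{\beta-1}\ln\int p^{\beta}q^{1-\beta}\diff\nu$, where I abbreviate $\alpha' \bydef \alpha+\epsilon$ and $\beta \bydef \alpha(\alpha'-1)/\epsilon$ (the last integral is taken over $\{q>0\}$, with $p=0$ holding $\nu$-a.e. off this set since $\tilde\nu\ll\bar\nu$, so no indeterminate forms arise).

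The crux is the pointwise identity
\[
  p^{\alpha} \;=\; \bigl(q^{\alpha'}\bigr)^{\lambda}\,\bigl(p^{\beta} q^{1-\beta}\bigr)^{1-\lambda}, \qquad \lambda \bydef \frac{\alpha-1}{\alpha'-1},
\]
which I would check holds $\nu$-a.e. (on $\{p=0\}$ both sides vanish since $\alpha,\beta,1-\lambda>0$; on $\{p>0\}$ one has $q>0$ $\nu$-a.e., so all powers of $q$ are well-defined). Verifying it reduces to matching exponents: the exponent of $p$ must satisfy $\beta(1-\lambda)=\alpha$ and the exponent of $q$ must satisfy $\alpha'\lambda+(1-\beta)(1-\lambda)=0$, both of which follow from the elementary relations $1-\lambda = \epsilon/(\alpha'-1)$ and $\beta-1=(\alpha-1)\alpha'/\epsilon$ together with the definitions of $\lambda$ and $\beta$ (using $\alpha'=\alpha+\epsilon$). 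Since $1<\alpha<\alpha'$ gives $\lambda\in(0,1)$, applying Hölder's inequality with the conjugate exponents $1/\lambda$ and $1/(1-\lambda)$ to the factorization above yields
\[
  \int p^{\alpha}\diff\nu \;\le\; \Bigl(\int q^{\alpha'}\diff\nu\Bigr)^{\lambda}\Bigl(\int p^{\beta}q^{1-\beta}\diff\nu\Bigr)^{1-\lambda}.
\]

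Finally I would take logarithms, divide by $\alpha-1>0$, and simplify. This gives $\rdv{\alpha}{\tilde\nu}{\nu} \le \tfrac{\lambda(\alpha'-1)}{\alpha-1}\rdv{\alpha'}{\bar\nu}{\nu} + \tfrac{(1-\lambda)(\beta-1)}{\alpha-1}\rdv{\beta}{\tilde\nu}{\bar\nu}$; substituting $\lambda = (\alpha-1)/(\alpha'-1)$ collapses the first coefficient to $1$, and substituting $1-\lambda = \epsilon/(\alpha'-1)$ and $\beta-1 = (\alpha-1)\alpha'/\epsilon$ collapses the second to $\alpha'/(\alpha'-1) = (\alpha+\epsilon)/(\alpha+\epsilon-1)$, recovering exactly the claimed inequality with $\beta = \alpha(\alpha+\epsilon-1)/\epsilon$. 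The proof has no genuine obstacle beyond this exponent bookkeeping; the one point needing care is checking $\lambda\in(0,1)$ and $\beta>1$ (so that Hölder is used with a true conjugate pair and every division by $\alpha-1$, $\alpha'-1$, or $\beta-1$ is by a strictly positive quantity), both immediate from $1<\alpha<\alpha'$ and $\epsilon>0$, together with stating the measure-theoretic conventions on null sets precisely.
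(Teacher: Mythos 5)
The paper does not prove this statement at all: it is imported verbatim as a \emph{Fact} with a citation to \citet[Proposition 11]{mironov2017renyi}, so there is no in-paper argument to compare against. Your proof is correct and complete --- the reduction to the densities $p,q$ with respect to $\nu$, the pointwise factorization $p^{\alpha}=(q^{\alpha+\epsilon})^{\lambda}(p^{\beta}q^{1-\beta})^{1-\lambda}$ with $\lambda=(\alpha-1)/(\alpha+\epsilon-1)$, the application of H\"older with conjugate exponents $1/\lambda$ and $1/(1-\lambda)$, and the coefficient bookkeeping all check out, as do the handling of the degenerate non-absolute-continuity cases and the null-set conventions. This is essentially the same H\"older argument used in the cited reference, so it fills the (externally delegated) gap exactly as intended.
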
 
Observe that \Cref{fact:weak-triangle} gives the upper bound in \eqref{eq:wti-toshow2}. Applying \Cref{fact:weak-triangle} by swapping $\tilde{\nu}$ and $\bar{\nu}$ and  reparameterizing $\alpha+\epsilon$ as $\alpha,$ we have:  
\begin{align*}
    \rdv{\alpha}{\tilde{\nu}}{\nu}\geq \rdv{\alpha-\epsilon}{\bar{\nu}}{\nu}-\frac{\alpha}{\alpha-1}\rdv{\frac{(\alpha-\epsilon)(\alpha-1)}{\epsilon}}{\bar{\nu}}{\tilde{\nu}}\quad\forall\:\alpha>1\quad\forall\: \epsilon\in(0,1-\alpha),
\end{align*}
which then implies the lower bound in \eqref{eq:wti-toshow2}.
\end{proof}

\subsection{Proof of \texorpdfstring{Proposition \mref{prop:var-lim-v2}}{prop:var-lim}}
\begin{proof}[Proof of Proposition \mref{prop:var-lim-v2}] Recall that for a sequence of datasets $\mX$ which satisfies Assumption \mref{assump:data} and any sequence $\beta_\dim \rightarrow \beta \in (H_\mu(\gamma_\rnk), \infty)$, our goal is to show that:
\begin{align} \label{eq:var-func-max-v2-recall}
        \lim_{\dim \rightarrow \infty} \sup_{\substack{\vx \in \R^p \\ \|\vx\|^2 \leq \dim}}\sigma_{\mX}^2(\vx,\beta_\dim)=\begin{cases} \frac{1}{2 \Delta \theta^2} \cdot \frac{\left(\beta-H_{\mu}(\gamma_\rnk)\right)^2}{2({\beta-H_{\mu}(\gamma_\rnk)})+ \Delta H'_{\mu}(\gamma_\rnk)}&\text{if}\quad \beta \geq -\Delta H'_{\mu}(\gamma_\rnk)+H_{\mu}(\gamma_{\rnk})\\
        -\frac{1}{2\theta^2}H'_{\mu}(\gamma_\rnk)&\text{if}\quad \beta<-\Delta H'_{\mu}(\gamma_\rnk)+H_{\mu}(\gamma_{\rnk})
    \end{cases}.
\end{align}
where the variance function $\sigma_{X,\beta}^2: \R^{\dim}\times (H_{\mu}(\gamma_{\rnk}),\infty) \rightarrow \R$ is given by:
\begin{align}
    \sigma_{\mX}^2(\vx,\beta) &\explain{def}{=}\sigma^2_{\mSigma(\mX)}\left(\frac{\sqrt{\dim}\vx\vx^\top}{\ssize},\beta\right)\notag\\
    &=\frac{\dim}{2\ssize^2} \sum_{j, \ell = 1}^\rnk K_X(\lambda_j, \lambda_\ell)\ip{\vx}{\vu_j}^2 \ip{\vx}{\vu_\ell}^2\notag\\
    &\qquad+ \frac{\dim}{\ssize^2}\sum_{j=1}^\rnk \sum_{i= 1}^{\dim - \rnk} \frac{\beta - {H_{\mSigma(\mX)}(\lambda_j)}}{\lambda_j - \lambda_{\rnk+i}}  \ip{\vx}{\vu_j}^2 \ip{\vx}{\vu_{\rnk+i}}^2.\label{eq:var-func-recall} 
\end{align} Additionally,  if $\vu_1, \vu_2, \dotsc, \vu_\dim$ denote the eigenvectors of $\mSigma(\mX)$, we need to show that the data point:
\begin{align*}
    \vx_\star & = \sqrt{\dim} \left(  \sqrt{t_\star}  \vu_{\rnk} + \sqrt{1-t_\star}  \vu_{\rnk+1} \right) \; \text{ with } \; t_\star \explain{def}{=}\min\left(\frac{{\beta-H_{\mu}(\gamma_\rnk)}}{2(\beta-H_{\mu}(\gamma_\rnk)) + \Delta H'_{\mu}(\gamma_\rnk)},1\right)
\end{align*}
asymptotically achieves the maximum in \eqref{eq:var-func-max-v2-recall}: 
\begin{align*}
    \lim_{\dim \rightarrow \infty} \sigma_{\mX}^2(\vx_\star,\beta_\dim) & = \lim_{\dim \rightarrow \infty} \sup_{\substack{\vx \in \R^p \\ \|x\|^2 \leq \dim}}\sigma_{\mX}^2(\vx,\beta).
\end{align*}
We will prove these claims in three steps. First, we will show that the high-dimensional optimization problem in \eqref{eq:var-func-max-v2-recall} over $\vx \in \R^\dim$ can be reduced to a simpler low-dimensional optimization problem over $\R^\rnk$. Next, we will use this simplification to show the upper bound:
\begin{align} \label{eq:UB}
     \lim_{\dim \rightarrow \infty} \sup_{\substack{\vx \in \R^p \\ \|x\|^2 \leq \dim}}\sigma_{\mX}^2(\vx,\beta_\dim) \leq \begin{cases} \frac{1}{2 \Delta \theta^2} \cdot \frac{\left(\beta-H_{\mu}(\gamma_\rnk)\right)^2}{2({\beta-H_{\mu}(\gamma_\rnk)})+ \Delta H'_{\mu}(\gamma_\rnk)}&\text{if}\quad \beta \geq -\Delta H'_{\mu}(\gamma_\rnk)+H_{\mu}(\gamma_{\rnk})\\
        -\frac{1}{2\theta^2}H'_{\mu}(\gamma_\rnk)&\text{if}\quad \beta<-\Delta H'_{\mu}(\gamma_\rnk)+H_{\mu}(\gamma_{\rnk}).
    \end{cases}
\end{align}
Lastly, we will prove that the above upper bound is asymptotically tight by showing that the worst-case data point $\vx_{\star}$ attains it:
\begin{align} \label{eq:achievability}
    \lim_{\dim \rightarrow \infty} \sigma_{\mX}^2(\vx_\star,\beta_\dim) & = \begin{cases} \frac{1}{2 \Delta \theta^2} \cdot \frac{\left(\beta-H_{\mu}(\gamma_\rnk)\right)^2}{2({\beta-H_{\mu}(\gamma_\rnk)})+ \Delta H'_{\mu}(\gamma_\rnk)}&\text{if}\quad \beta \geq -\Delta H'_{\mu}(\gamma_\rnk)+H_{\mu}(\gamma_{\rnk})\\
        -\frac{1}{2\theta^2}H'_{\mu}(\gamma_\rnk)&\text{if}\quad \beta<-\Delta H'_{\mu}(\gamma_\rnk)+H_{\mu}(\gamma_{\rnk}).
    \end{cases}
\end{align}
Observe that the claims \eqref{eq:UB} and \eqref{eq:achievability} together imply Proposition \mref{prop:var-lim-v2}. 

\paragraph{Step 1: Simplification to a Low-Dimensional Optimization Problem} Recalling the formula for $\sigma^2_{\mX}(\vx,\beta)$ from \eqref{eq:var-func-recall}, we find that:
\begin{align} \label{eq:opt-problem}
  &\sup_{\substack{\vx \in \R^p \\ \|x\|^2 \leq \dim}}\sigma_{\mX}^2(\vx,\beta_\dim)  = \sup_{\substack{\vx \in \R^p \\ \|x\| \leq 1 }}  \frac{\dim^3}{2\ssize^2}\sum_{j, \ell = 1}^\rnk K_X(\lambda_j, \lambda_\ell)\vx_j^2 \vx_{\ell}^2  + \frac{\dim^3}{\ssize^2}\sum_{j=1}^\rnk \vx_j^2 \underbrace{\sum_{i= 1}^{\dim - \rnk} \frac{\beta_\dim - H_{\mSigma(\mX)}(\lambda_j)}{\lambda_j - \lambda_{\rnk+i}}   \vx_{\rnk+i}^2}_{(\#)},
\end{align}
where we reformulated the optimization problem using the one-to-one change of variables:
\begin{align*}
    \vx = (\vx_1, \vx_2, \dotsc, \vx_\dim)^\top \mapsto (\ip{\vx}{\vu_1}, \ip{\vx}{\vu_2}, \dotsc, \ip{\vx}{u_\dim})^\top/\sqrt{\dim}.
\end{align*}
Notice that once $\vx_1, \vx_2, \dotsc, \vx_\rnk$ are fixed the objective function in \eqref{eq:opt-problem} depends on $(\vx_{\rnk+1}, \vx_{\rnk+2}, \dotsb \vx_{\dim})$ only via the term $(\#)$. For a fixed choice of $\vx_1, \vx_2, \dotsc, \vx_\rnk$, the term $(\#)$ is maximized by setting:
\begin{align*}
    (x_{\rnk+1}, x_{\rnk+2}, \dotsc, x_{\dim}) = \left(1-\sum_{i=1}^k \vx_i^2 \right) \cdot  (1, 0, 0, \dotsc, 0),
\end{align*}
since for any $j \in [\rnk]:$\footnote{Notice that when $\beta > H_\mu(\gamma_k)$ (as assumed in Proposition \mref{prop:var-lim-v2}), for sufficiently large $\dim$, $\beta_{\dim}-H_{\mSigma(\mX)}(\lambda_j) \geq 0$ for any $j \in [k]$ since $\beta_{\dim}\rightarrow\beta$ and $\lim_{\dim \rightarrow \infty} H_{\mSigma(\mX)}(\lambda_j) \rightarrow H_{\mu}(\gamma_j) \leq H_{\mu}(\gamma_\rnk)$ (see \Cref{lem:misc_HK}).}
\begin{align*}
    \max_{i \in [\dim - \rnk]}  \frac{\beta_\dim - H_{\mSigma(\mX)}(\lambda_j)}{\lambda_j - \lambda_{\rnk+i}}  & =  \frac{\beta_\dim - H_{\mSigma(\mX)}(\lambda_j)}{\lambda_j - \lambda_{\rnk+1}}. 
\end{align*}
Hence,
\begin{align} \label{eq:reduced-opt}
    \lim_{\dim \rightarrow \infty}\sup_{\substack{\vx \in \R^{\dim} \\ \|x\|^2 \leq \dim}} \sigma_{\mX}^2(\vx,\beta) &=   \lim_{\dim \rightarrow \infty}\sup_{\substack{\vx \in \R^\rnk \\ \|x\| \leq 1 }} \frac{\dim^3}{2\ssize^2}\sum_{j, \ell = 1}^\rnk K_X(\lambda_j, \lambda_\ell)\vx_j^2 \vx_{\ell}^2 \notag\\
    &\hspace{4cm}+ \frac{\dim^3}{\ssize^2}(1-\|\vx\|^2)\cdot \sum_{j=1}^\rnk \vx_j^2\cdot{\frac{\beta_\dim - H_{\mSigma(\mX)}(\lambda_j)}{\lambda_j - \lambda_{\rnk+1}}}. 
\end{align}
We can further simplify the above optimization problem by letting $\dim \rightarrow \infty$. Recall that $\beta_\dim \rightarrow \beta$ and for $j,\ell \in [\rnk]$ (see \Cref{lem:misc_HK}),:
\begin{subequations}\label{eq:lim-H-K}
\begin{align} 
H_{\mSigma(\mX)}(\lambda_j) &\explain{def}{=} \frac{1}{\dim} \sum_{i = 1}^{\dim - \rnk} \frac{1}{\lambda_j - \lambda_{\rnk+i}} \rightarrow H_\mu(\gamma_j) \explain{def}{=} \int_{\R} \frac{\diff \mu(\lambda) }{\gamma_j - \lambda}, \\
    K_{\mX}(\lambda_j, \lambda_\ell) &\explain{def}{=} \frac{1}{\dim} \sum_{i = 1}^{\dim - \rnk} \frac{1}{(\lambda_j - \lambda_{\rnk+i})(\lambda_\ell - \lambda_{\rnk+i})} \rightarrow K_\mu(\gamma_j, \gamma_\ell) \explain{def}{=} \int_{\R} \frac{\diff \mu(\lambda) }{(\gamma_j - \lambda)(\gamma_\ell - \lambda)}.
\end{align}
\end{subequations}
Hence, the objective function from the optimization problem \eqref{eq:reduced-opt}:
\begin{align*}
    f_{X,\beta_\dim}(\vx) \explain{def}{=}  \frac{\dim^3}{2\ssize^2}\sum_{j, \ell = 1}^\rnk K_X(\lambda_j, \lambda_\ell)\vx_j^2 \vx_{\ell}^2 + \frac{\dim^3}{\ssize^2}(1-\|\vx\|^2) \cdot \sum_{j=1}^\rnk \vx_j^2 \cdot {\frac{\beta_\dim - H_{\mSigma(\mX)}(\lambda_j)}{\lambda_j - \lambda_{\rnk+1}}}, 
\end{align*}
converges point-wise to the limiting objective function:
\begin{align*}
    f_{\beta}(\vx) \explain{def}{=}  \frac{1}{2\theta^2}\sum_{j, \ell = 1}^\rnk K_\mu(\gamma_j, \gamma_\ell)\vx_j^2 \vx_{\ell}^2 + \frac{1}{\theta^2}(1-\|\vx\|^2) \cdot\sum_{j=1}^\rnk \vx_j^2 \cdot {\frac{\beta - H_{\mu}(\gamma_j)}{\gamma_j - \gamma_{\rnk+1}}}.
\end{align*}
In fact, this convergence is uniform over the domain of the optimization problem in \eqref{eq:reduced-opt} since by the triangle inequality:
\begin{align*}
    \sup_{\substack{\vx \in \R^p \\ \|x\| \leq 1 }}\;\left\vert f_{\mX,\beta_\dim}(\vx)-f_\beta(\vx)\right\vert &\leq \sup_{\substack{\vx \in \R^p \\ \|x\| \leq 1 }}\; \Bigg(  \frac{\dim^3}{2\ssize^2}\sum_{j, \ell = 1}^\rnk x_j^2 x_\ell^2 \left\vert K_X(\lambda_j, \lambda_\ell)-K_\mu(\gamma_j, \gamma_\ell)\right\vert  . \\ &\qquad+ \frac{\dim^3}{\ssize^2}(1-\|x\|^2)\cdot\sum_{j=1}^\rnk x_j^2 \left\vert\frac{\beta_\dim - H_{\mSigma(\mX)}(\lambda_j)}{\lambda_j - \lambda_{\rnk+1}}-\frac{\beta - H_{\mu}(\gamma_j)}{\gamma_j - \gamma_{\rnk+1}}\right\vert \Bigg) \\
    & \explain{(a)}{\leq} \frac{\dim^3}{2\ssize^2}\sum_{j, \ell = 1}^\rnk  \left\vert K_X(\lambda_j, \lambda_\ell)-K_\mu(\gamma_j, \gamma_\ell)\right\vert \\
    &\qquad+ \frac{\dim^3}{\ssize^2}\sum_{j=1}^\rnk  \left\vert\frac{\beta_\dim - H_{\mSigma(\mX)}(\lambda_j)}{\lambda_j - \lambda_{\rnk+1}}-\frac{\beta - H_{\mu}(\gamma_j)}{\gamma_j - \gamma_{\rnk+1}}\right\vert \\
    & \explain{\eqref{eq:lim-H-K}}{\rightarrow} 0 \quad \text{ as } \dim \rightarrow \infty,
\end{align*}
where (a) holds by observing that $|\vx_j| \leq \|\vx\| \leq 1$ for any $j\in[\rnk]$. Thanks to this uniform convergence, we can safely replace the objective function in \eqref{eq:reduced-opt} by its limit and obtain:
\begin{align} \label{eq:high-to-low}
     &\lim_{\dim \rightarrow \infty}\sup_{\substack{\vx \in \R^p \\ \|\vx\|^2 \leq \dim}} \sigma_{\mX}^2(\vx,\beta_\dim) = \sup_{\substack{\vx \in \R^\rnk \\ \|x\| \leq 1 }}  \frac{1}{2\theta^2}\sum_{j, \ell = 1}^\rnk K_\mu(\gamma_j, \gamma_\ell)\vx_j^2 \vx_{\ell}^2+\frac{1}{\theta^2}(1-\|\vx\|^2)\cdot \sum_{j=1}^\rnk {\frac{\beta - H_{\mu}(\gamma_j)}{\gamma_j - \gamma_{\rnk+1}}} \vx_j^2,
\end{align}
We have now reduced the high-dimensional optimization problem on the LHS of \eqref{eq:high-to-low} to a low-dimensional one over $\R^\rnk$ on the RHS. 



\paragraph{Step 2: Proof of the Upper Bound \eqref{eq:UB}} We will split our proof of the upper bound in \eqref{eq:UB} into two cases: when $\beta \geq H_{\mu}(\gamma_{\rnk})-\Delta H'_{\mu}(\gamma_\rnk)$ and when $\beta < H_{\mu}(\gamma_{\rnk})-\Delta H'_{\mu}(\gamma_\rnk)$.
\begin{description}
    \item[Case 1: $\beta \geq H_{\mu}(\gamma_{\rnk})-\Delta H'_{\mu}(\gamma_\rnk)$.]  Thanks to \eqref{eq:high-to-low}, we have:
\begin{align}
     &\lim_{\dim \rightarrow \infty}\sup_{\substack{\vx \in \R^p \\ \|x\|^2 \leq \dim}} \sigma_{\mX}^2(\vx,\beta) = \sup_{\substack{\vx \in \R^\rnk \\ \|x\| \leq 1 }}  \frac{1}{2\theta^2}\sum_{j, \ell = 1}^\rnk K_\mu(\gamma_j, \gamma_\ell)\vx_j^2 \vx_{\ell}^2  + \frac{1}{\theta^2}(1-\|\vx\|^2)\cdot \sum_{j=1}^\rnk {\frac{\beta - H_{\mu}(\gamma_j)}{\gamma_j - \gamma_{\rnk+1}}} \vx_j^2 \nonumber \\
     & = \sup_{t \in [0,1]} \sup_{\substack{\vx \in \R^\rnk \\ \|x\| = 1}} \frac{1}{2\theta^2}t^2 \underbrace{\sum_{j, \ell = 1}^\rnk K_\mu(\gamma_j, \gamma_\ell)\vx_j^2 \vx_{\ell}^2}_{(i)}  +  \frac{1}{\theta^2}t(1-t)\underbrace{\sum_{j=1}^\rnk {\frac{\beta - H_{\mu}(\gamma_j)}{\gamma_j - \gamma_{\rnk+1}}} \vx_j^2}_{(ii)}, \label{eq:cont-analysis}
\end{align}
where in the last step we split the maximization over $\vx \in \R^k$ with $\|\vx\|\leq 1$ into a maximization over the squared norm of $\vx$ (denoted by $t \in [0,1]$) and a maximization over the unit vector in the direction of $\vx$ (denoted again by $\vx$ for convenience).  We will bound each term of the two summations $(i),(ii)$ by the maximum term involved in the summation. For term $(ii)$, we will find the following intermediate claim useful.
\begin{claim}\label{claim:mono} When $\beta \geq H_{\mu}(\gamma_{\rnk})-\Delta H'_{\mu}(\gamma_\rnk)$, the function  $\gamma \mapsto (\beta - H_\mu(\gamma))/(\gamma - \gamma_{\rnk+1})$ is a non-increasing function on the domain $[\gamma_\rnk,\infty)$.
\end{claim}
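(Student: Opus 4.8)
\textbf{Proof proposal for \Cref{claim:mono}.}

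The plan is to show directly that the derivative of the function $g(\gamma) \bydef (\beta - H_\mu(\gamma))/(\gamma - \gamma_{\rnk+1})$ is nonpositive on $[\gamma_\rnk, \infty)$ whenever $\beta \geq H_\mu(\gamma_\rnk) - \Delta H'_\mu(\gamma_\rnk)$. First I would record the basic analytic facts about the Hilbert transform $H_\mu$ that are needed: since $\mu$ is a probability measure supported on a set bounded above by (something less than) $\gamma_{\rnk+1} \leq \gamma_\rnk$, the function $H_\mu(\gamma) = \int (\gamma - t)^{-1}\diff\mu(t)$ is well-defined, strictly positive, strictly decreasing, and strictly convex on $(\gamma_{\rnk+1}, \infty)$; in particular $H'_\mu < 0$, $H''_\mu > 0$ there. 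These follow by differentiating under the integral sign: $H'_\mu(\gamma) = -\int (\gamma-t)^{-2}\diff\mu(t)$ and $H''_\mu(\gamma) = 2\int (\gamma-t)^{-3}\diff\mu(t)$.

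Next, I would compute $g'(\gamma)$ via the quotient rule:
\begin{align*}
    g'(\gamma) = \frac{-H'_\mu(\gamma)(\gamma - \gamma_{\rnk+1}) - (\beta - H_\mu(\gamma))}{(\gamma - \gamma_{\rnk+1})^2}.
\end{align*}
So it suffices to show the numerator $N(\gamma) \bydef -H'_\mu(\gamma)(\gamma - \gamma_{\rnk+1}) - \beta + H_\mu(\gamma) \leq 0$ for all $\gamma \geq \gamma_\rnk$. The idea is to treat $N$ as a function of $\gamma$ and use monotonicity. Differentiating, $N'(\gamma) = -H''_\mu(\gamma)(\gamma - \gamma_{\rnk+1}) - H'_\mu(\gamma) + H'_\mu(\gamma) = -H''_\mu(\gamma)(\gamma - \gamma_{\rnk+1})$, which is strictly negative on $(\gamma_\rnk, \infty)$ since $H''_\mu > 0$ and $\gamma > \gamma_{\rnk+1}$. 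Hence $N$ is strictly decreasing on $[\gamma_\rnk, \infty)$, so it is enough to check $N(\gamma_\rnk) \leq 0$. Evaluating, $N(\gamma_\rnk) = -H'_\mu(\gamma_\rnk)(\gamma_\rnk - \gamma_{\rnk+1}) - \beta + H_\mu(\gamma_\rnk) = -\Delta H'_\mu(\gamma_\rnk) + H_\mu(\gamma_\rnk) - \beta$, which is $\leq 0$ precisely under the hypothesis $\beta \geq H_\mu(\gamma_\rnk) - \Delta H'_\mu(\gamma_\rnk)$. Therefore $N(\gamma) \leq 0$ throughout $[\gamma_\rnk, \infty)$, so $g'(\gamma) \leq 0$ there, and $g$ is non-increasing as claimed.

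I do not anticipate a serious obstacle here; the only mild care needed is in justifying differentiation under the integral sign for $H_\mu$, $H'_\mu$, $H''_\mu$ on the open interval $(\gamma_{\rnk+1}, \infty)$ (uniform integrability of the integrands on compact subsets away from $\operatorname{supp}(\mu)$, using the spectral gap $\Delta > 0$ and boundedness of $\operatorname{supp}(\mu)$), and in noting that the bound $\gamma_{\rnk+1} \leq \gamma_\rnk$ together with $\Delta > 0$ ensures $\gamma - \gamma_{\rnk+1} > 0$ on the whole domain of interest so that no division-by-zero issue arises. Everything else is a short calculus computation. If one prefers to avoid differentiability assumptions on $\mu$, the same argument goes through by writing $g$ as an average over $t$ of the elementary functions $\gamma \mapsto (\beta - (\gamma-t)^{-1})/(\gamma - \gamma_{\rnk+1})$ and checking the monotonicity termwise, but the derivative computation above is the cleanest route.
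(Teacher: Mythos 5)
Your proof is correct and follows essentially the same route as the paper: compute $g'$ via the quotient rule, observe that the numerator $N(\gamma)$ is non-increasing because $N'(\gamma) = -H''_\mu(\gamma)(\gamma-\gamma_{\rnk+1}) \leq 0$, and then reduce to the boundary check $N(\gamma_\rnk)\leq 0$, which is precisely the hypothesis $\beta \geq H_\mu(\gamma_\rnk) - \Delta H'_\mu(\gamma_\rnk)$. The extra remarks about differentiating under the integral sign are fine but not needed in the paper, which takes those properties of $H_\mu$ for granted.
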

The proof of the above claim follows by studying the first two derivatives of the function $\gamma \mapsto (\beta - H_\mu(\gamma))/(\gamma - \gamma_{\rnk+1})$. We defer its proof to the end, and continue our analysis of \eqref{eq:cont-analysis} assuming this claim. The maximum terms involved in the summations $(i), (ii)$ in \eqref{eq:cont-analysis} are given by:
\begin{align*}
        \max_{j,\ell \in [\rnk]} K_\mu(\gamma_j, \gamma_\ell) & \explain{(a)}{=} K_\mu(\gamma_\rnk, \gamma_\rnk), \quad \max_{j \in [\rnk]} \frac{\beta - H_{\mu}(\gamma_j)}{\gamma_j - \gamma_{\rnk+1}} \explain{(b)}{=} \frac{\beta - H_{\mu}(\gamma_\rnk)}{\gamma_\rnk - \gamma_{\rnk+1}},
     \end{align*}
     where (a) follows by observing that $K_\mu$ is a coordinate-wise decreasing function (recall \eqref{eq:lim-H-K}) and (b) follows from \Cref{claim:mono}. Hence,
     \begin{align}
         &\lim_{\dim \rightarrow \infty}\sup_{\substack{\vx \in \R^p \\ \|x\|^2 \leq \dim}} \sigma_{\mX}^2(\vx,\beta)  \nonumber \\ 
         &\quad  \leq \sup_{t \in [0,1]} \sup_{\substack{\vx \in \R^\rnk \\ \|x\| = 1}}\frac{1}{2\theta^2}t^2 K_\mu(\gamma_\rnk,\gamma_\rnk)\cdot \|\vx\|^4+ \frac{1}{\theta^2}t(1-t) \cdot \frac{\beta - H_{\mu}(\gamma_\rnk)}{\gamma_\rnk - \gamma_{\rnk+1}} \cdot \|x\|^2 \nonumber\\
         & \quad = \sup_{t \in [0,1]} \frac{1}{2\theta^2}t^2 K_\mu(\gamma_\rnk,\gamma_\rnk)  + \frac{1}{\theta^2}t(1-t) \cdot \frac{\beta - H_{\mu}(\gamma_\rnk)}{\gamma_\rnk - \gamma_{\rnk+1}} \nonumber\\
         &\quad \explain{(a)}{=} \sup_{t \in [0,1]}  -\frac{1}{\theta^2}t^2 \cdot \left( \frac{\beta - H_\mu(\gamma_k)}{\Delta} + \frac{H_\mu^\prime(\gamma_k)}{2} \right) + \frac{1}{\theta^2}t \cdot \frac{\beta - H_\mu(\gamma_k)}{\Delta},\label{eq:qp}
     \end{align}
     where (a) holds since $\Delta = \gamma_{\rnk} - \gamma_{\rnk+1}, H_\mu^\prime(\gamma) = - K_\mu(\gamma).$ Notice that the above upper bound is tight, as the inequality in the display is attained when $\vx = (0, \dotsc, 0, 1)^\top$. When $\beta \geq H_{\mu}(\gamma_{\rnk})-\Delta H'_{\mu}(\gamma_\rnk)$,
\begin{align*}
    \frac{\beta - H_\mu(\gamma_k)}{\Delta} + \frac{H_\mu^\prime(\gamma_k)}{2} \geq - \frac{H_\mu^\prime(\gamma_k)}{2} \bydef \frac{1}{2} \int_{\R} \frac{\diff \mu(\lambda)}{(\lambda - \gamma_k)^2} \geq 0.
\end{align*}
Hence, the objective function on the RHS of \eqref{eq:qp} is a concave quadratic function of $t$. We can further upper-bound it by maximizing this function over the entire real line instead of the interval $[0,1]$, which yields:
\begin{align*}
    \lim_{\dim \rightarrow \infty}\sup_{\substack{\vx \in \R^p \\ \|x\|^2 \leq \dim}} \sigma_{\mX}^2(\vx,\beta)  & \leq  \sup_{t \in [0,1]}  -\frac{1}{\theta^2}t^2 \cdot \left( \frac{\beta - H_\mu(\gamma_k)}{\Delta} + \frac{H_\mu^\prime(\gamma_k)}{2} \right) + \frac{1}{\theta^2}t \cdot \frac{\beta - H_\mu(\gamma_k)}{\Delta} \\
    & \leq  \sup_{t \in \R}  -\frac{1}{\theta^2}t^2 \cdot \left( \frac{\beta - H_\mu(\gamma_k)}{\Delta} + \frac{H_\mu^\prime(\gamma_k)}{2} \right) + \frac{1}{\theta^2}t \cdot \frac{\beta - H_\mu(\gamma_k)}{\Delta} \\
    & = \frac{1}{2 \Delta \theta^2} \cdot \frac{\left(\beta-H_{\mu}(\gamma_\rnk)\right)^2}{2({\beta-H_{\mu}(\gamma_\rnk)})+ \Delta H'_{\mu}(\gamma_\rnk)},
\end{align*}
 where the last step follows from the standard formula for the maximum of a concave quadratic function. This proves the upper bound \eqref{eq:UB} in the first case when $\beta \geq H_{\mu}(\gamma_{\rnk})-\Delta H'_{\mu}(\gamma_\rnk)$.      
\item[Case 2: $\beta < H_{\mu}(\gamma_{\rnk})-\Delta H'_{\mu}(\gamma_\rnk)$.] When $\beta$ is below the critical threshold $\beta_c \bydef H_{\mu}(\gamma_{\rnk})-\Delta H'_{\mu}(\gamma_\rnk)$, the proof of the upper bound \eqref{eq:UB} follows by simply observing that for any $\vx \in \R^\dim$, the function $\beta \mapsto \sigma_{\mX}^2(\vx,\beta)$ is a non-decreasing function of $\beta$ (see the formula for $\sigma_{\mX}^2(\vx,\beta)$ in \eqref{eq:var-func-recall}). Hence for any $\beta < \beta_c$,
\begin{align*}
   \lim_{\dim \rightarrow \infty}\sup_{\substack{\vx \in \R^p \\ \|x\|^2 \leq \dim}} \sigma_{\mX}^2(\vx,\beta) & \leq \lim_{\dim \rightarrow \infty}\sup_{\substack{\vx \in \R^p \\ \|x\|^2 \leq \dim}} \sigma_{\mX,\beta_c}^2(\vx)\\
   &\explain{Case 1}{=} \frac{1}{2 \Delta \theta^2} \cdot \frac{\left(\beta-H_{\mu}(\gamma_\rnk)\right)^2}{2({\beta-H_{\mu}(\gamma_\rnk)})+ \Delta H'_{\mu}(\gamma_\rnk)}\\
   &= -\frac{1}{2\theta^2}H'_{\mu}(\gamma_\rnk),
\end{align*}
which proves the upper bound \eqref{eq:UB} in the second case. 

\end{description}
\paragraph{Step 3: Achievability of the Upper Bound (Proof of \eqref{eq:achievability})} To complete the proof of Proposition \mref{prop:var-lim-v2}, we verify that the worst-case data point $\vx_\star = \sqrt{\dim} \cdot (\sqrt{t_\star} \vu_\rnk + \sqrt{1-t_\star} \vu_{\rnk+1})$ attains the upper bound in \eqref{eq:UB}:
\begin{align} \label{eq:achievability-recall}
    \lim_{\dim \rightarrow \infty} \sigma_{\mX}^2(\vx_\star,\beta_\dim) & = \begin{cases} \frac{1}{2 \Delta \theta^2} \cdot \frac{\left(\beta-H_{\mu}(\gamma_\rnk)\right)^2}{2({\beta-H_{\mu}(\gamma_\rnk)})+ \Delta H'_{\mu}(\gamma_\rnk)}&\text{if}\quad \beta \geq -\Delta H'_{\mu}(\gamma_\rnk)+H_{\mu}(\gamma_{\rnk})\\
        -\frac{1}{2\theta^2}H'_{\mu}(\gamma_\rnk)&\text{if}\quad \beta<-\Delta H'_{\mu}(\gamma_\rnk)+H_{\mu}(\gamma_{\rnk}).
    \end{cases}
\end{align}
Recalling the formula for the variance function $\sigma_{\mX,\beta}^2$ from \eqref{eq:var-func-recall}, we find that:
\begin{align}
\lim_{\dim\rightarrow\infty}\sigma_{\mX}^2(\vx_\star,\beta_\dim)&\explain{}{=}\lim_{\dim\rightarrow\infty}\frac{\dim^3}{2\ssize^2}t^2_\star K_X(\lambda_\rnk, \lambda_\rnk)+\frac{\dim^3}{\ssize^2}t_\star(1-t_\star) \cdot \frac{\beta_{\dim} - H_{\mSigma(\mX)}(\lambda_\rnk)}{\lambda_\rnk - \lambda_{\rnk+1}} \nonumber\\&\explain{(a)}{=}\frac{1}{2\theta^2}t^2_\star K_\mu(\gamma_\rnk, \gamma_\rnk)+\frac{1}{\theta^2}t_\star(1-t_\star) \cdot \frac{\beta - H_{\mu}(\gamma_\rnk)}{\Delta} \nonumber \\
& \explain{\eqref{eq:lim-H-K}}{=} -\frac{1}{2\theta^2}t^2_\star H_\mu^\prime(\gamma_\rnk)+\frac{1}{\theta^2}t_\star(1-t_\star)\cdot \frac{\beta - H_{\mu}(\gamma_\rnk)}{\Delta} \nonumber \\
& = - \frac{1}{2\theta^2}t_\star^2  \left(H_\mu^\prime(\gamma_\rnk) + \frac{2(\beta - H_\mu(\gamma_\rnk))}{\Delta} \right) + \frac{1}{\theta^2} t_\star \cdot  \frac{\beta - H_{\mu}(\gamma_\rnk)}{\Delta},  \label{eq:sub-here} 
\end{align}
where (a) follows from the fact that $\beta_{\dim}\rightarrow\beta$ and $\mX$ satisfies Assumption \mref{assump:data}, which guarantees $\lambda_\rnk - \lambda_{\rnk+1} \rightarrow \Delta$, $H_{\mSigma(\mX)}(\lambda_\rnk) \rightarrow H_\mu(\gamma_\rnk)$ and $K_{\mX}(\lambda_\rnk, \lambda_\rnk) \rightarrow K_\mu(\gamma_\rnk, \gamma_\rnk)$ (see \Cref{lem:misc_HK}).  Recall that: 
\begin{align*}
    t_\star &\explain{def}{=}\min\left(\frac{{\beta-H_{\mu}(\gamma_\rnk)}}{2(\beta-H_{\mu}(\gamma_\rnk)) + \Delta H'_{\mu}(\gamma_\rnk)},1\right)\\
    &= \begin{cases} \frac{{\beta-H_{\mu}(\gamma_\rnk)}}{2(\beta-H_{\mu}(\gamma_\rnk)) + \Delta H'_{\mu}(\gamma_\rnk)}&\text{if}\quad \beta\geq H_{\mu}(\gamma_{\rnk})-\Delta H'_{\mu}(\gamma_\rnk)\\
   1&\text{if}\quad \beta < H_{\mu}(\gamma_{\rnk})-\Delta H'_{\mu}(\gamma_\rnk).
   \end{cases}
\end{align*}
Plugging the above formula in \eqref{eq:sub-here} immediately yields the desired conclusion \eqref{eq:achievability-recall}, completing the proof of Proposition \mref{prop:var-lim-v2}. 
\paragraph{Proof of \Cref{claim:mono}} Lastly, we prove \Cref{claim:mono}. For convenience, we define the function  of interest as $g_{\beta}$: 
\begin{align*}
    g_{\beta}(\gamma) \bydef \frac{\beta - H_{\mu}(\gamma)}{\gamma - \gamma_{\rnk+1}} \quad \forall \; \gamma \; \in \; [\gamma_\rnk,\infty).
\end{align*} We will show that $g_{\beta}$ is non-increasing by showing that its derivative:
\begin{align*}
    g^\prime_{\beta}(\gamma)&=\frac{H_\mu(\gamma)-\beta -(\gamma - \gamma_{\rnk+1})H^\prime_\mu(\gamma)}{(\gamma-\gamma_{\rnk+1})^2}
\end{align*}
is non-positive  for any $\gamma \geq \gamma_{\rnk}$. Since the denominator in the above formula is non-negative, it suffices to show that the numerator satisfies:
\begin{align} \label{eq:deriv-toshow}
    H_\mu(\gamma)-\beta -(\gamma - \gamma_{\rnk+1})H^\prime_\mu(\gamma) \leq 0 \quad \forall \; \gamma \geq \gamma_{\rnk}.
\end{align}
Notice that the function on the LHS,  $\gamma \mapsto H_\mu(\gamma)-\beta -(\gamma - \gamma_{\rnk+1})H^\prime_\mu(\gamma)$ is non-increasing, which can be seen by inspecting its derivative:
\begin{align*}
    \frac{\diff}{\diff \gamma} \left( H_\mu(\gamma)-\beta -(\gamma - \gamma_{\rnk+1})H^\prime_\mu(\gamma) \right) &= - (\gamma - \gamma_{\rnk+1}) H_\mu^{\prime \prime}(\gamma)\\
    &= -  2(\gamma - \gamma_{\rnk+1}) \int_{\R} \frac{\diff \mu (\lambda)}{(\gamma - \lambda)^3} \\
    &\leq 0 
\end{align*}
for all $\gamma \geq \gamma_\rnk.$ Hence, \eqref{eq:deriv-toshow} holds for any $\gamma \geq \gamma_\rnk$ if it holds at $\gamma = \gamma_\rnk$, which is guaranteed by the assumption of the claim: $\beta \geq H_\mu(\gamma_\rnk) - \Delta H_\mu^\prime(\gamma_\rnk)$ (recall $\Delta \bydef \gamma_\rnk - \gamma_{\rnk+1}$). This proves \Cref{claim:mono}.
\end{proof}
\section{Data Normalization via Ranks}\label{app:rank}

In this appendix, we address data normalization, a practical aspect that arises while applying our results to real-world datasets. Our privacy result requires that the dataset is centered and normalized so that every data point has a bounded norm (Assumption \mref{assump:data}). To apply our privacy analysis to real-world datasets, one must preprocess them to satisfy these assumptions. In addition, one must account for the privacy loss introduced during preprocessing, as it is often data-dependent. In this section, we study a natural data normalization procedure based on rank transformation and provide an end-to-end privacy guarantee for the resulting algorithm (\Cref{thm:privacy-rank}). 

Before running PCA on a dataset, one usually normalizes the dataset so that \citep[Section 10.2.1]{james2013introduction}:
\begin{enumerate}
    \item Each feature is centered (has zero mean).
    \item All the features have the same scale (e.g., unit variance).
\end{enumerate}
For differentially private PCA, we also need to ensure that:
\begin{enumerate}
\setcounter{enumi}{2}
    \item Each datapoint $\vx$ satisfies the norm constraint $\|\vx\| \leq \sqrt{\dim}$.
\end{enumerate}
To ensure that the overall algorithm satisfies an end-to-end privacy guarantee, this normalization must also be differentially private. Some natural ways to accomplish the three desiderata above run into issues. For instance, one could ensure requirement (1-2) by subtracting the mean from each feature and rescaling it by its variance (perhaps using privatized estimators of the mean and variance). However, there is no guarantee that the resulting dataset would satisfy requirement (3). Moreover, any further preprocessing to satisfy (3) might disturb the requirements (1-2). While there has been some work on differentially private preprocessing \citep{mcsherry2009differentially,hu2024provable, dungler2025iterative,he2025differentially}, we were unable to find a clean approach that would work well in our setting. To address this issue, we study a natural normalization procedure based on rank transformations in this section, which achieves goals (1-3) while ensuring that the algorithm satisfies an end-to-end privacy guarantee. 

\paragraph{Normalization via Rank Transformation} For a given dataset $\mX = \{\vx_1, \dotsc, \vx_{\ssize} \}$, we define the raw rank-transformed dataset $\mR_{\mathrm{raw}}(\mX)$ as the dataset:
\begin{subequations} \label{eq:rank-cov}
    \begin{align}
    \mR_{\mathrm{raw}}(\mX) & = \{\vr^{\mathrm{raw}}_1, \vr^{\mathrm{raw}}_2, \dotsc, \vr^{\mathrm{raw}}_\ssize\},
\end{align}
where for each data point $i \in [\ssize]$ and each feature $j \in [\dim]$, $\vr^{\mathrm{raw}}_{ij} \in [\ssize]$ denotes the rank of $\vx_{ij}$ among $\{\vx_{1j}, \vx_{2j}, \dotsc, \vx_{\ssize j}\}$ (in the case of ties, we average the ranks). After rank transformation, it is very easy to normalize the rank-transformed dataset to satisfy the requirements (1-3) discussed above:
\begin{enumerate}
    \item By construction, the raw rank-transformed features have mean $\tfrac{\ssize+1}{2}$. Hence, they can be centered by simply subtracting this value. 
    \item After centering, the raw rank-transformed features lie in the range $[-\tfrac{\ssize-1}{2}, \tfrac{\ssize - 1}{2}]$, and hence have the same scale. Moreover, if the number of ties is not significant, they are roughly uniformly distributed in this range. 
    \item Finally, since the centered raw rank-transformed features are bounded in absolute value by $\frac{\ssize-1}{2}$, we can ensure that the transformed data points satisfy the $\ell_2$-norm constraint simply by rescaling the rank-transformed features by $\frac{2}{\ssize - 1}$.
\end{enumerate}
Hence, we define the normalized rank-transformed dataset $\mR(X)$ as:
\begin{align}
    \mR(X) = \{\vr_1, \vr_2, \dotsc, \vr_\ssize\}, \quad \text{ where } \quad \vr_i = \frac{2}{\ssize-1} \cdot \left( \vr_i^{\mathrm{raw}} - \frac{\ssize + 1}{2} \cdot 1_\dim  \right).
\end{align}
We use $ \mSigma_{\mathrm{rk}}(\mX)$ to denote the rank covariance matrix, which is simply the covariance matrix of the normalized rank-transformed dataset:
\begin{align}
    \mSigma_{\mathrm{rk}}(\mX) \bydef \mSigma(\mR(X)) = \frac{1}{\ssize} \sum_{i=1}^\ssize \vr_i \vr_i^\top.
\end{align}
\end{subequations}
As illustrated in \Cref{fig:projections-preprocessing}, the principal components of the rank covariance matrix often carry very similar information as the principal components of the usual sample covariance matrix obtained by the standard normalization steps of centering each feature by its mean and rescaling it by its variance.
\begin{figure}[H]
    \centering
    \includegraphics[width=0.7\linewidth]{plots/Figure7.pdf}
    \caption{Visualizations of projections of the 1000 Genomes dataset onto its first two PCs of the covariance matrix obtained after the standard normalization (left) and rank normalization (right).}
    \label{fig:projections-preprocessing}
\end{figure}


\paragraph{Exponential Mechanism with rank transformation} Combining the rank-transformation-based preprocessing with the exponential mechanism results in the following algorithm, which we call the rank-transformed exponential mechanism.
\begin{algorithm}[H]
\caption{\textsc{Rank-Transformed Exponential Mechanism}($X, \beta,  \rnk$)}
\label{alg:rank-ExpM}
\begin{algorithmic}
\STATE {\textit{Input:}} Dataset $\mX \subset \R^{p}$, noise parameter $\beta\geq0,$ rank (number of PCs) $\rnk\in\N.$
\STATE {\textit{Output:}} Privatized PCs $\rV \in \O(\dim,\rnk).$
\begin{itemize}
\item Compute the rank covariance matrix $\mSigma_{\mathrm{rk}}(\mX)$ of the dataset, as defined in \eqref{eq:rank-cov}.
\item Sample  $\rV$ from $\nu(\cdot \mid \mSigma_{\mathrm{rk}}(\mX), \beta, \rnk )$.
\end{itemize}

\STATE \textit{Return:} Privatized PCs $\rV$.
\end{algorithmic}
\end{algorithm} 

\paragraph{Privacy Analysis} The privacy guarantee for the rank-transformed exponential mechanism (\Cref{alg:rank-ExpM}) does not follow immediately from the privacy guarantee for the exponential mechanism in Theorem \mref{thm:privacy} since the rank-based normalization step is data-dependent: the rank of a particular data point depends on the other data points in the dataset, and will change if one adds or removes a data point from the dataset. Hence, we will need to properly account for the privacy loss due to the preprocessing step. Our privacy result for the rank-transformed exponential mechanism is based on the following assumption, which is a natural analog of Assumption \mref{assump:data} and requires that the relevant spectral properties of the rank covariance matrix $\mSigma_{\mathrm{rk}}(\mX)$ stabilize and converge to well-defined limits as $\dim \rightarrow \infty$.
\begin{assumption} \label{assump:data-rank} We observe a sequence of datasets $\mX \subset \R^{\ssize\times\dim}$ of increasing dimension $\dim$ and sample size $|\mX|$ which satisfies:
\begin{enumerate}
    \item $|\mX|/\dim^{3/2} \rightarrow \theta$ for some $\theta>0$ as $\dim \rightarrow \infty$.
    \item For some constant $\rnk \in \N$ (independent of $p$), the largest $\rnk+1$ eigenvalues of $\mSigma_{\mathrm{rk}}(\mX)$ converge to finite limits $\gamma_{1:\rnk+1}$ as $\dim \rightarrow \infty$:
    \begin{align*}
        \lambda_i(\mSigma_{\mathrm{rk}}(\mX)) & \rightarrow \gamma_{i} \in [0,\infty) \quad \forall \; i \; \in \; [\rnk+1].
    \end{align*}
    Moreover, the asymptotic spectral gap $\Delta \bydef \gamma_{\rnk} - \gamma_{\rnk+1}$ is strictly positive. 
    \item As $\dim \rightarrow \infty$, $\mu_{\mSigma_{\mathrm{rk}}(\mX)}$, the empirical distribution of the smallest $\dim - \rnk$ eigenvalues of $\mSigma_{\mathrm{rk}}(\mX)$ converges weakly to a limiting spectral measure $\mu$:
    \begin{align*}
        \mu_{\mSigma_{\mathrm{rk}}(\mX)} & \bydef \frac{1}{\dim-\rnk} \sum_{i=k+1}^{\dim} \delta_{\lambda_i(\mSigma_{\mathrm{rk}}(\mX))} \rightarrow \mu.
    \end{align*}
\end{enumerate}
\end{assumption}
Notice that crucially, \Cref{assump:data-rank} does not impose a norm constraint on the data points as the rank transformation based preprocessing automatically ensures this constraint. We have the following privacy guarantee for the rank-transformed exponential mechanism.

\begin{theorem}\label{thm:privacy-rank}
   Consider any sequence of datasets $\mX$ which satisfies \Cref{assump:data-rank}, a sequence of noise parameters $\beta_{\dim} \rightarrow \beta\in(H_\mu(\gamma_k),\infty)$ as $\dim \rightarrow \infty$, and a fixed rank $\rnk \in \N$ (independent of $\dim$). Then, the rank-transformed exponential mechanism (\Cref{alg:rank-ExpM}) satisfies a $\sigma_\beta$-AGDP guarantee on dataset $\mX$ with \begin{align*} 
        {\sigma^2_\beta}\bydef \begin{cases} \frac{1}{2 \Delta \theta^2} \cdot \frac{\left(\beta-H_{\mu}(\gamma_\rnk)\right)^2}{2({\beta-H_{\mu}(\gamma_\rnk)})+ \Delta H'_{\mu}(\gamma_\rnk)}&\text{if}\quad \beta \geq -\Delta H'_{\mu}(\gamma_\rnk)+H_{\mu}(\gamma_{\rnk})\\
        -\frac{1}{2\theta^2}H'_{\mu}(\gamma_\rnk)&\text{if}\quad \beta<-\Delta H'_{\mu}(\gamma_\rnk)+H_{\mu}(\gamma_{\rnk}).
    \end{cases}
    \end{align*}
\end{theorem}
We provide the proof of \Cref{thm:privacy-rank} below.  

\subsection{Proof of \texorpdfstring{\Cref{thm:privacy-rank}}{thm:privacy-rank}}
We consider any dataset $\mX$ that satisfies \Cref{assump:data-rank} and a sequence of noise parameters $\beta_{\dim} \rightarrow \beta\in(H_\mu(\gamma_k),\infty)$ and  bound the trade-off function and Rényi divergence between the output distributions of the rank-transformed exponential mechanism on the given dataset $\mX$ and its worst-case neighboring dataset:
\begin{align*}
    &{\li} \inf_{\tilde{\mX}\in\calN(\mX)}\tf{\nu(\cdot \mid {\mSigmaR(\mX)}, \beta_{\dim}, \rnk)}{\nu(\cdot \mid \mSigmaR(\tilde{\mX}), \beta_{\dim}, \rnk)}(\alpha)\quad\forall\:\alpha\in[0,1]\\
    &\ls\sup_{\tilde{\mX}\in\calN(\mX)} \rdv{\alpha}{\nu(\cdot \mid \mSigmaR(\tilde{\mX}), \beta_{\dim}, \rnk)}{\nu(\cdot \mid \mSigmaR(\mX), \beta_{\dim}, \rnk)}\quad\forall\;\alpha>1.
\end{align*}
As before, Theorem \mref{thm:contiguity} suggests that these privacy quantities are determined by the magnitude of the perturbation matrix $\mSigmaR(\tilde{\mX})-\mSigmaR(\mX)$ for any given neighboring dataset $\tilde{\mX}\in\calN(\mX)$. Recall that $\mSigmaR({\mX}) \bydef \mSigma(R(\mX))$ is simply the covariance matrix of the rank-transformed dataset $R(\mX)$ and analogously $\mSigmaR(\tilde{\mX}) \bydef \mSigma(R(\tilde{\mX}))$ is  the covariance matrix of the rank-transformed neighbhoring dataset. Since $\mX, \tilde{\mX}$ differ by a single data point, their rank-transformed versions $\mR(X), \mR(\tilde{X})$ will also differ by a single data point, which we call $\vr(\tilde{X})$:
\begin{align*}
    (\mR(X) \backslash  \mR(\tilde{\mX})) \cup (\mR(\tilde{X}) \backslash  \mR({\mX})) \bydef \{\vr(\tilde{X})\}.
\end{align*}
Notice that the error $\mSigmaR(\tilde{\mX})-\mSigmaR(\mX)$ can be attributed to two reasons:
\begin{enumerate}
    \item One of the two rank-transformed datasets $\mR(X), \mR(\tilde{X})$ contains a new data point $\vr(\tilde{X})$ not contained in the other, and so the difference between their covariance matrices $\mSigmaR(\tilde{\mX})-\mSigmaR({\mX})$ will have a rank-$1$ term proportional to $\vr(\tilde{X}) \vr(\tilde{X})^\top$. 
    \item The ranks of the shared data points in the two datasets $\tilde{\mX}, \mX$ are also slightly different due to the addition/removal of a data point. 
\end{enumerate}
Our strategy will be to argue that the error due to the second reason is asymptotically negligible.  To do so, we introduce an intermediate covariance matrix: 
\begin{align} \label{eq:interm-rank-cov}
    \mSigmaRb(\tilde{\mX})\bydef\begin{dcases}
        \mSigma(R(\mX)\cup \{\vr(\tilde{\mX})\})&\text{if }\tilde{\mX}\in\calN_+(\mX)\\
        \mSigma(R(\mX)\backslash \{\vr(\tilde{\mX})\})&\text{if }\tilde{\mX}\in\calN_-(\mX),
    \end{dcases}
\end{align}
Then, we can write: 
\begin{align*}
    \mSigmaR(\tilde{\mX})-\mSigmaR(\mX)=\underbrace{\mSigmaRb(\tilde{\mX})-\mSigmaR(\mX)}_{(i)} + \underbrace{\mSigmaR(\tilde{\mX})-\mSigmaRb(\tilde{\mX})}_{(ii)}.
\end{align*}
The term (i) captures the error attributed to reason (1) mentioned above, and the term (ii) captures the error attributed to reason (2). 
The following lemma shows that the second term $\mSigmaR(\tilde{\mX})-\mSigmaRb(\tilde{\mX})$ is asymptotically negligible, and hence the distributions $\nu(\cdot \mid \mSigmaR(\tilde{\mX}), \beta_{\dim}, \rnk)$ and $\nu(\cdot \mid \mSigmaRb(\tilde{\mX}), \beta_{\dim},\rnk)$ are very close. This will allow us to argue that:
\begin{align*}
\tf{\nu(\cdot \mid {\mSigmaR(\mX)}, \beta_{\dim}, \rnk)}{\nu(\cdot \mid \mSigmaR(\tilde{\mX}), \beta_{\dim}, \rnk)}(\alpha) &\approx \tf{\nu(\cdot \mid {\mSigmaR(\mX)}, \beta_{\dim}, \rnk)}{\nu(\cdot \mid \mSigmaRb(\tilde{\mX}), \beta_{\dim}, \rnk)}(\alpha), \\   \rdv{\alpha}{\nu(\cdot \mid \mSigmaR(\tilde{\mX}), \beta_{\dim}, \rnk)}{\nu(\cdot \mid \mSigmaR(\mX), \beta_{\dim})}  & \approx \rdv{\alpha}{\nu(\cdot \mid \mSigmaRb(\tilde{\mX}), \beta_{\dim}, \rnk)}{\nu(\cdot \mid \mSigmaR(\mX) \beta_{\dim})}.\end{align*}
We will see shortly that the quantities on the RHS can be easily computed by applying our privacy result (Theorem \mref{thm:privacy}) for the vanilla exponential mechanism (Algorithm \mref{alg:ExpM}). 
\begin{lemma}\label{lem:rank_tv}
    Under the assumptions of \Cref{thm:privacy-rank}, for any neighboring dataset $\tilde{\mX}\in\calN(\mX),$
    \begin{align*}
        \lm \tv(\nu(\cdot \mid \mSigmaR(\tilde{\mX}), \beta_{\dim}, \rnk),\nu(\cdot \mid \mSigmaRb(\tilde{\mX}), \beta_{\dim},\rnk))&=0,\\
        \lm \rdv{\alpha}{\nu(\cdot \mid \mSigmaR(\tilde{\mX}), \beta_{\dim}, \rnk)}{\nu(\cdot \mid \mSigmaRb(\tilde{\mX}), \beta_{\dim},\rnk)}&=0\quad\forall\:\alpha>1,\\
        \lm\rdv{\alpha}{\nu(\cdot \mid \mSigmaRb(\tilde{\mX}), \beta_{\dim},\rnk)}{\nu(\cdot \mid \mSigmaR(\tilde{\mX}), \beta_{\dim}, \rnk)}&=0\quad\forall\:\alpha>1.
    \end{align*}
\end{lemma}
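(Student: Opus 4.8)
The plan is to deduce \Cref{lem:rank_tv} from the contiguity result \Cref{thm:contiguity}, in exactly the way \Cref{lem:simple-gibbs-approx} was. I will apply \Cref{thm:contiguity} (item (2)) twice: once with $(\mSigma,\tilde{\mSigma}) = (\mSigmaRb(\tilde{\mX}),\mSigmaR(\tilde{\mX}))$, which gives $\tv(\nu(\cdot\mid\mSigmaR(\tilde{\mX}),\beta_\dim,\rnk),\nu(\cdot\mid\mSigmaRb(\tilde{\mX}),\beta_\dim,\rnk))\to0$ together with $\rdv{\alpha}{\nu(\cdot\mid\mSigmaR(\tilde{\mX}),\beta_\dim,\rnk)}{\nu(\cdot\mid\mSigmaRb(\tilde{\mX}),\beta_\dim,\rnk)}\to0$, and once with the two covariance matrices swapped, which gives the remaining Rényi bound $\rdv{\alpha}{\nu(\cdot\mid\mSigmaRb(\tilde{\mX}),\beta_\dim,\rnk)}{\nu(\cdot\mid\mSigmaR(\tilde{\mX}),\beta_\dim,\rnk)}\to0$. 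To run this argument it suffices to verify: (a) both $\mSigmaRb(\tilde{\mX})$ and $\mSigmaR(\tilde{\mX})$ satisfy \Cref{assump:mat} with the same limiting spectral data $(\mu,\gamma_{1:\rnk+1})$ as $\mSigmaR(\mX)$; and (b) the perturbation matrix $\mE\bydef\sqrt{\dim}\,(\mSigmaR(\tilde{\mX})-\mSigmaRb(\tilde{\mX}))$ satisfies $\|\mE\|_\fr\lesssim1$ and $\|\mE\|\ll1$.

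For (a), by \Cref{lem:misc_conv} it is enough to show that $\mSigmaRb(\tilde{\mX})$ and $\mSigmaR(\tilde{\mX})$ are within $o(1)$ of $\mSigmaR(\mX)$ in operator norm. A normalized rank vector $\vr$ has all coordinates in $[-1,1]$, hence $\|\vr\|\le\sqrt{\dim}$; combined with \Cref{assump:data-rank} (item (1)) (which gives $|\tilde{\mX}|\asymp\ssize\asymp\dim^{3/2}$), the rank-one reweighting update in \eqref{eq:interm-rank-cov} relating $\mSigmaRb(\tilde{\mX})$ to $\mSigmaR(\mX)$ has operator norm $\lesssim\dim/\ssize=o(1)$, and the crude per-summand bound below gives $\|\mSigmaR(\tilde{\mX})-\mSigmaRb(\tilde{\mX})\|\lesssim\dim/\ssize=o(1)$ too. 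The heart of the proof is the sharp estimate needed in (b). Let $\vr_1,\vr_2,\dotsc$ be the rank vectors of the data points shared by $\mX$ and $\tilde{\mX}$, computed within $\mX$, and let $\vr_1',\vr_2',\dotsc$ be those same points' rank vectors recomputed within $\tilde{\mX}$; handling the add and remove cases of \eqref{eq:interm-rank-cov} uniformly, a short calculation gives
\begin{align*}
    \mSigmaR(\tilde{\mX})-\mSigmaRb(\tilde{\mX}) = \frac{1}{|\tilde{\mX}|}\left(\mathbf{D}\mathbf{P}^\top + \mathbf{Q}\mathbf{D}^\top\right),
\end{align*}
where $\mathbf{P}$, $\mathbf{Q}$, $\mathbf{D}$ are the matrices whose columns are the vectors $\vr_i'$, $\vr_i$, and $\vr_i'-\vr_i$ respectively. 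The decisive observation is that adding or deleting one data point changes each raw rank by at most $1$ and perturbs the centering and scaling constants ($\tfrac{\ssize+1}{2}$ and $\tfrac{2}{\ssize-1}$) by $O(1/\ssize)$ and $O(1/\ssize^2)$, so $|(\vr_i'-\vr_i)_j|=O(1/\ssize)$ entrywise and therefore $\|\mathbf{D}\|_\fr=O(\sqrt{\dim/\ssize})$. Together with $\|\mathbf{P}\|\le\sqrt{|\tilde{\mX}|\cdot\|\mSigmaR(\tilde{\mX})\|}=O(\sqrt{\ssize})$ (using $\|\mSigmaR(\tilde{\mX})\|=O(1)$ from step (a)), this yields $\|\mSigmaR(\tilde{\mX})-\mSigmaRb(\tilde{\mX})\|_\fr\lesssim\sqrt{\dim}/\ssize\asymp\dim^{-1}$, hence $\|\mE\|_\fr\lesssim\dim^{-1/2}=o(1)$, which in particular gives $\|\mE\|_\fr\lesssim1$ and $\|\mE\|\le\|\mE\|_\fr\ll1$.

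With (a) and (b) established, \Cref{thm:contiguity} (item (2)) applies in both orientations — with $(\mSigma,\tilde{\mSigma})$ equal to $(\mSigmaRb(\tilde{\mX}),\mSigmaR(\tilde{\mX}))$ and then to $(\mSigmaR(\tilde{\mX}),\mSigmaRb(\tilde{\mX}))$ — and the three conclusions of \Cref{lem:rank_tv} follow; this last step is identical to the deduction used in the proof of \Cref{lem:simple-gibbs-approx}. I expect the main obstacle to be the sharp Frobenius-norm bound in (b): the naive estimate that controls each rank-one summand $\vr_i'(\vr_i')^\top-\vr_i\vr_i^\top$ in operator norm separately only gives $\|\mSigmaR(\tilde{\mX})-\mSigmaRb(\tilde{\mX})\|=O(\dim/\ssize)\asymp\dim^{-1/2}$, which after rescaling by $\sqrt{\dim}$ sits exactly at the borderline scale $O(1)$ rather than $o(1)$ and is therefore insufficient to invoke \Cref{thm:contiguity}; one genuinely needs the entrywise rank-stability bound $|(\vr_i'-\vr_i)_j|=O(1/\ssize)$ together with the matrix-product factorization above to recover the extra $\dim^{-1/2}$ of decay. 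A secondary, routine point will be checking carefully that removing or adding a single point perturbs every normalized rank by only $O(1/\ssize)$, including the contribution of ties (handled by rank averaging) and of the changing normalization constant.
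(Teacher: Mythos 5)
Your proposal is correct and takes essentially the same approach as the paper: reduce to \Cref{thm:contiguity} (item (2)), check that both rank covariance matrices satisfy \Cref{assump:mat} via \Cref{lem:misc_conv}, and establish $\|\mE\|_{\fr}\ll 1$ by factoring the difference of rank covariances through a ``difference-of-ranks'' matrix, bounding it entrywise by $O(1/\ssize)$ using rank stability under single-point add/remove, and pairing that with the $O(\sqrt{\ssize})$ operator-norm control on the matrix of rank vectors. The only difference is cosmetic: you factor the perturbation as $\mathbf{D}\mathbf{P}^\top+\mathbf{Q}\mathbf{D}^\top$ (absorbing the second-order term) whereas the paper writes $\mR\mM^\top+\mM\mR^\top+\mM\mM^\top$; both give the same $\|\mE\|_{\fr}\lesssim\dim/\ssize\asymp\dim^{-1/2}$, and your explicit double application of \Cref{thm:contiguity} in both orientations to recover all three limits is exactly what the paper does implicitly.
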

We defer the proof of \Cref{lem:rank_tv} to the end of this section and present the proof of \Cref{thm:privacy-rank}, taking this result for granted.
\begin{proof}[Proof of \Cref{thm:privacy-rank}] We prove the trade-off function privacy guarantee and the Rényi differential privacy guarantee individually. For notational convenience, we will let $\nu_\dim$ denote:
\begin{align*}
    \nu_{\dim}\bydef\nu(\cdot \mid {\mSigmaR(\mX)}, \beta_{\dim}, \rnk).
\end{align*}
\paragraph{Privacy Analysis in Terms of the Trade-off Function} Let us first prove the claimed trade-off function-based privacy guarantee: 
\begin{align*}
    {\li} \inf_{\tilde{\mX}\in\calN(\mX)}\tf{\nu(\cdot \mid {\mSigmaR(\mX)}, \beta_{\dim}, \rnk)}{\nu(\cdot \mid \mSigmaR(\tilde{\mX}), \beta_{\dim}, \rnk)}(\alpha)\geq\tf{\gauss{0}{1}}{\gauss{\sigma_\beta}{1}}(\alpha)
\end{align*}
for all $\alpha\in[0,1].$ Fix any $\alpha\in[0,1],$ and let $\tilde{\mX}_{\#}\in\calN(\mX)$ be an approximate worst-case dataset satisfying: 
\begin{align*}
    \tf{\nu_\dim}{\nu(\cdot \mid \mSigmaR(\tilde{\mX}_{\#}), \beta_{\dim}, \rnk)}(\alpha)\geq \inf_{\tilde{\mX}\in\calN(\mX)}\tf{\nu_{\dim}}{\nu(\cdot \mid \mSigmaR(\tilde{\mX}), \beta_{\dim}, \rnk)}(\alpha)+\frac{1}{\dim}
\end{align*}
so that:
\begin{align*}
    &\li \inf_{\tilde{\mX}\in\calN(\mX)}\tf{\nu_{\dim}}{\nu(\cdot \mid \mSigmaR(\tilde{\mX}), \beta_{\dim}, \rnk)}(\alpha)\\
    &\quad=\li\tf{\nu_{\dim}}{\nu(\cdot \mid \mSigmaR(\tilde{\mX}_{\#}), \beta_{\dim}, \rnk)}(\alpha)\\
    &\quad\explain{(a)}{=} \li\tf{\nu_{\dim}}{\nu(\cdot \mid \mSigmaRb(\tilde{\mX}_{\#}), \beta_{\dim}, \rnk)}(\alpha)\\
    &\quad\explain{(b)}{\geq} \lm \inf_{\tilde{\mR}\in\calN(R(\mX))}\tf{\nu_{\dim}}{\nu(\cdot \mid \mSigma(\tilde{\mR}), \beta_{\dim}, \rnk)}(\alpha)\\
    &\quad\explain{Thm. \mref{thm:privacy}}{=} \tf{\gauss{0}{1}}{\gauss{\sigma_\beta}{1}}(\alpha),
    \end{align*}
    where step (a) holds by \Cref{lem:rank_tv} (item (1)) and Lemma \mref{lem:triangle-ineq} (item (1)), and step (b) follows by recalling from \eqref{eq:interm-rank-cov} that $\mSigmaRb(\tilde{\mX}_{\#})$ was defined as:
    \begin{align*}
         \mSigmaRb(\tilde{\mX}_\#)\bydef \mSigma(\tilde{R}_\#), \quad \text{where}\quad \tilde{\mR}_\# \bydef \begin{dcases}
        R(\mX)\cup \{\vr(\tilde{\mX}_\#)\}&\text{if }\tilde{\mX}_\#\in\calN_+(\mX)\\
        R(\mX)\backslash \{\vr(\tilde{\mX}_\#)\}&\text{if }\tilde{\mX}_\#\in\calN_-(\mX),
    \end{dcases}
    \end{align*}
    and observing that $\tilde{\mR}_\#$ can be viewed as a neighboring dataset of the rank-transformed dataset $R(\mX)$. 
\paragraph{Rényi Differential Privacy Analysis} We next prove  the claimed asymptotic Rényi differential privacy guarantee:
\begin{align*}
    &\ls\sup_{\tilde{\mX}\in\calN(\mX)} \rdv{\alpha}{\nu(\cdot \mid \mSigmaR(\tilde{\mX}), \beta_{\dim}, \rnk)}{\nu(\cdot \mid \mSigmaR(\mX), \beta_{\dim}, \rnk)}\\
    &\hspace{8cm}\leq\rdv{\alpha}{\gauss{\sigma_\beta}{1}}{\gauss{0}{1}}\quad\forall\;\alpha>1.
\end{align*}
Fix any $\alpha>1,$ and let $\tilde{\mX}_{\#}\in\calN(\mX)$ be the approximate worst-case dataset satisfying:
\begin{align*}
    \rdv{\alpha}{\nu(\cdot \mid \mSigmaR(\tilde{\mX}_{\#}), \beta_{\dim}, \rnk)}{\nu_{\dim}}\geq \sup_{\tilde{\mX}\in\calN(\mX)} \rdv{\alpha}{\nu(\cdot \mid \mSigmaR(\tilde{\mX}), \beta_{\dim}, \rnk)}{\nu_{\dim}} -\frac{1}{\dim}
\end{align*}
so that: 
\begin{align}\label{eq:rank-approx-dataset}
    \ls\sup_{\tilde{\mX}\in\calN(\mX)} \rdv{\alpha}{\nu(\cdot \mid \mSigmaR(\tilde{\mX}), \beta_{\dim}, \rnk)}{\nu_{\dim}}&=\ls  \rdv{\alpha}{\nu(\cdot \mid \mSigmaR(\tilde{\mX}_{\#}), \beta_{\dim}, \rnk)}{\nu_{\dim}}.
\end{align}
Next, observe that \Cref{lem:rank_tv} and  Lemma \mref{lem:triangle-ineq} imply that for any $\epsilon\in(0,\alpha-1)$:
\begin{align*}
    \ls \rdv{\alpha}{\tilde{\nu}_{\dim}}{\nu_{\dim}}&\explain{Lem. \mref{lem:triangle-ineq}}{\leq} \ls \left(\rdv{\alpha+\epsilon}{\bar{\nu}_{\dim}}{\nu_{\dim}}+\frac{\alpha+\epsilon}{\alpha+\epsilon-1} \rdv{\frac{\alpha(\alpha+\epsilon-1)}{\epsilon}}{\tilde{\nu}_{\dim}}{\bar{\nu}_{\dim}}\right)\\
    &\explain{Lem. \ref{lem:rank_tv}}{=}\ls \rdv{\alpha+\epsilon}{\bar{\nu}_{\dim}}{\nu_{\dim}}\\
    &\leq \lm\sup_{\tilde{\mR}\in\calN(R(\mX))} \rdv{\alpha+\epsilon}{\nu(\cdot \mid \mSigma(\tilde{\mR}), \beta_{\dim}, \rnk)}{\nu_{\dim}}\\
    &\explain{Thm. \mref{thm:privacy}}{=}\rdv{\alpha+\epsilon}{\gauss{\sigma_\beta}{1}}{\gauss{0}{1}},
\end{align*}
where $\tilde{\nu}_{\dim}\bydef \nu(\cdot \mid \mSigmaR(\tilde{\mX}_{\#}), \beta_{\dim}, \rnk)$ and $\bar{\nu}_{\dim}\bydef \nu(\cdot \mid \mSigmaRb(\tilde{\mX}_{\#}), \beta_{\dim},\rnk).$ Noting that the above inequality holds for any $\epsilon\in(0,\alpha-1),$ we let $\epsilon\rightarrow0$ to obtain:
\begin{align*}
    \ls\sup_{\tilde{\mX}\in\calN(\mX)} \rdv{\alpha}{\nu(\cdot \mid \mSigmaR(\tilde{\mX}), \beta_{\dim}, \rnk)}{\nu_{\dim}}&\explain{\eqref{eq:rank-approx-dataset}}{=} \ls \rdv{\alpha}{\tilde{\nu}_{\dim}}{\nu_{\dim}}\\
    &\leq \rdv{\alpha}{\gauss{\sigma_\beta}{1}}{\gauss{0}{1}},
\end{align*}
as desired. This concludes the proof of \Cref{thm:privacy-rank}.
\end{proof}

The rest of this section is dedicated to the proof of \Cref{lem:rank_tv}. 

\subsubsection{Proof of \texorpdfstring{\Cref{lem:rank_tv}}{lem:ranktv}}\label{sec:proof_lem_rank_tv} 
\begin{proof} Fix any neighboring dataset $\tilde{\mX}\in\calN(\mX),$ and recall that our goal is to show that the Gibbs distributions $\nu(\cdot \mid \mSigmaR(\tilde{\mX}), \beta_{\dim}, \rnk)$ and $\nu(\cdot \mid \mSigmaRb(\tilde{\mX}), \beta_{\dim},\rnk)$ are asymptotically close in the total variation distance and Rényi divergence. As in the proof of Lemma \mref{lem:simple-gibbs-approx}, these results are a consequence of Theorem \mref{thm:contiguity} (item (2)). Before applying this result, let us check that the matrices $\mSigmaR(\tilde{\mX})$ and $\mSigmaRb(\tilde{\mX})$ satisfy the assumptions of Theorem \mref{thm:contiguity} (item (2)). For simplicity, we consider the situation where $\tilde{X}$ contains one additional data point compared to $\mX$:
\begin{align*}
    \mX = \{\vx_1, \dotsc, \vx_\ssize\}, \quad \tilde{\mX} = \{\vx_1, \dotsc, \vx_{\ssize}, \vx_{\ssize + 1}\},
\end{align*}
and the case when $\tilde{\mX}$ is constructed by removing a data point from $\mX$ is exactly analogous. 
\begin{itemize}
    \item We first verify that the perturbation matrix $\mE\bydef\sqrt{\dim}(\mSigmaR(\tilde{\mX})-\mSigmaRb(\tilde{\mX}))$ satisfies the requirements $\|\mE\|\ll1$ and $\|\mE\|_{\fr}\lesssim1$. We will in fact show that $\|\mE\|_{\fr} \ll 1$. Let $\mR(X)$ and $\mR(\tilde{\mX})$ denote the rank transformed versions of $\mX, \tilde{\mX}$:
    \begin{align*}
        \mR(\mX) = \{\vr_1, \dotsc, \vr_\ssize\}, \quad \mR(\tilde{\mX})  = \{\tilde{r}_1, \tilde{r}_2, \dotsc, \tilde{r}_\ssize, \tilde{r}_{\ssize+1}\}.
    \end{align*}
    Recalling the definitions of $\mSigmaR(\tilde{\mX})$ and $\mSigmaRb(\tilde{\mX})$ we find that:
    \begin{align*}
        \mSigmaR(\tilde{\mX}) & \bydef \mSigma(R(\tilde{\mX})) = \frac{1}{\ssize+1} \sum_{i=1}^{\ssize+1} \tilde{r}_i \tilde{r}_i^\top\\
        \mSigmaRb(\tilde{\mX})  &\bydef \mSigma(R({\mX}) \cup\{\tilde{r}_{\ssize + 1}\}) = \frac{1}{\ssize+1} \sum_{i=1}^\ssize {r}_i {r}_i^\top + \frac{\tilde{r}_{\ssize + 1} \tilde{r}_{\ssize + 1}^\top}{\ssize + 1}.
    \end{align*}
    Hence,
    \begin{align*}
        \mE & \bydef \sqrt{\dim}(\mSigmaR(\tilde{\mX})-\mSigmaRb(\tilde{\mX})) = \frac{\sqrt{\dim}}{\ssize+1} \left(  \sum_{i=1}^\ssize \tilde{\vr}_i \tilde{\vr}_i^\top -  \sum_{i=1}^\ssize \vr_i \vr_i^\top \right).
    \end{align*}
    For the subsequent analysis, we will find it helpful to define the matrices $\mR(X), \mM$ as follows:
    \begin{align*}
        \mR(\mX) \bydef \begin{bmatrix} \vr_1 & \vr_2 & \dotsb & \vr_\ssize \end{bmatrix}, \quad \mM \bydef \begin{bmatrix} \tilde{\vr}_1 - \vr_1 & \tilde{\vr}_2 - \vr_2 & \dotsb & \tilde{\vr}_{\ssize} - \vr_\ssize \end{bmatrix}.
    \end{align*}
    Notice the slight abuse in notation where we are using $\mR(\mX)$ to denote the set $\{r_1, \dotsc, \vr_\ssize\}$ as well as the matrix with columns $\{r_1, \dotsc, r_\ssize\}$. Notice that $\mE$ can be rewritten as:
    \begin{align*}
        \mE & = \frac{\sqrt{\dim}}{\ssize+1} (  (\mR(X) + \mM)(\mR(X) + \mM)^\top - \mR(X) \mR(X)^\top)\\
        &=  \frac{\sqrt{\dim}}{\ssize+1} (  \mR(X) \mM^\top + \mM \mR(X)^\top + \mM \mM^\top ). 
    \end{align*}
    Hence, we can bound $\|\mE\|_{\fr}$ as:
    \begin{align}\label{eq:rank-i}
        \|\mE\|_{\fr}\explain{(a)}{\leq} \sqrt{\dim}\cdot\frac{2\|\mR(X)\|\|\mM\|_{\fr}+\|\mM\|_{\fr}^2}{\ssize+1}, 
    \end{align}
     where (a) holds since $\|\mR(\mX)\mM\|_{\fr}\leq \|\mR(\mX)\|\|\mM\|_{\fr}$ (see e.g., \citep[Lemma 2.1]{chen2021spectral}).
    To control $\|\mM\|_{\fr}$, we will simply bound each entry of $\mM$. 
    Recall from the rank transformed exponential mechanism that $r_{ij}$ and $\tilde{r}_{ij}$ are the centered and normalized versions of the raw ranks of $\vx_{ij}$ and $\tilde{\vx}_{ij},$ respectively, for each $i\in[\ssize]$ and $j\in[\dim].$ We will find it helpful to define the raw ranks of $\vx_{ij}$ and $\tilde{\vx}_{ij}$ as $r^{\mathrm{raw}}_{ij}$ and $\tilde{r}^{\mathrm{raw}}_{ij},$ respectively, so that $r_{ij}$ and $\tilde{r}_{ij}$ can be expressed as:
    \begin{align*}
        r_{ij}=\frac{2}{\ssize-1}\left(r^{\mathrm{raw}}_{ij}-\frac{\ssize+1}{2}\right),\quad \tilde{r}_{ij}=\frac{2}{\ssize}\left(\tilde{r}^{\mathrm{raw}}_{ij}-\frac{\ssize+2}{2}\right)\quad\forall  i\;\in\;[\ssize],\;j\;\in\;[\dim].
    \end{align*}
    Having defined the raw ranks, we can bound 
    $\mM_{ji}$ as follows:
    \begin{align*}
        |\mM_{ji}| & = |\vr_{ij} - \tilde{\vr}_{ij}|\\
        &=  \left| \frac{2}{\ssize-1}\left(r^{\mathrm{raw}}_{ij}-\frac{\ssize+1}{2}\right)-\frac{2}{\ssize}\left(\tilde{r}^{\mathrm{raw}}_{ij}-\frac{\ssize+2}{2}\right)\right|\\
        &\leq \frac{2}{\ssize(\ssize-1)}\tilde{r}^{\mathrm{raw}}_{ij}+\frac{2}{\ssize-1}|\tilde{r}^{\mathrm{raw}}_{ij}-r^{\mathrm{raw}}_{ij}|+\frac{2}{\ssize(\ssize-1)}\\
        &\leq \frac{2}{\ssize-1}+\frac{2}{\ssize-1}+\frac{2}{\ssize(\ssize-1)}\quad\text{[since $\tilde{r}^{\mathrm{raw}}_{ij}\leq \ssize,$ $|\tilde{r}^{\mathrm{raw}}_{ij}-r^{\mathrm{raw}}_{ij}|\leq1$]}\\
        &\lesssim \frac{1}{\ssize},
    \end{align*}
    which implies:
    \begin{align}  \label{eq:rank-ii}
       \|\mM\|_{\fr} \lesssim \frac{\sqrt{\ssize\dim}}{\ssize} = \frac{\sqrt{\dim}}{\sqrt{\ssize}}.
    \end{align}
    To control $\|\mR(X)\|$, we exploit \Cref{assump:data-rank}:
    \begin{align} \label{eq:rank-iv}
        \|R(\mX)\|=\sqrt{\|R(\mX) R(\mX)^\top\|}=\sqrt{\ssize}\sqrt{\|\mSigmaR(\mX)\|} \; \; \explain{Assump. \ref{assump:data-rank}}{\lesssim} \; \; \sqrt{\ssize}.
    \end{align}
    Plugging the estimates \eqref{eq:rank-ii} and \eqref{eq:rank-iv} into \eqref{eq:rank-i}, we conclude that:
    \begin{align}\label{eq:rank-i-}
        \|\mE\|_{\fr} \lesssim \sqrt{\dim}\cdot\frac{\sqrt{\dim}+\tfrac{\dim}{\ssize}}{\ssize}\lesssim \frac{\dim}{\ssize}\lesssim\frac{1}{\sqrt{\dim}}\ll1.
    \end{align}
    This verifies that $\mE$ satisfies $\|\mE\|_{\fr} \lesssim 1$ and $\|\mE\| \ll 1$.
    \item The matrices $\mSigmaR(\tilde{\mX})$ and $\mSigmaRb(\tilde{\mX})$ satisfy Assumption \mref{assump:mat} since they are close (in operator norm) to the matrix $\mSigmaR(\mX),$ which satisfies Assumption \mref{assump:mat} because the data matrix $\mX$ satisfies \Cref{assump:data-rank} (see \Cref{lem:misc_conv}). Indeed, observe that 
    \begin{align*}
        \mSigmaRb(\tilde{\mX})= \frac{\ssize\mSigmaR(\mX)}{\ssize+1}+\frac{\vr\vr^\top}{\ssize+1},  
    \end{align*}
    which implies: 
    \begin{align}
        \|\mSigmaRb(\tilde{\mX})-\mSigmaR(\mX)\|_{\fr}&\leq \frac{\|\mSigmaR(\mX)\|_{\fr}}{\ssize+1}+\frac{\|\vr\vr^\top\|_{\fr}}{\ssize+1}\leq \frac{\dim}{\ssize}+o\left(\frac{1}{\sqrt{\dim}}\right)\lesssim \frac{1}{\sqrt{\dim}},\label{eq:rank-ii-}\\
        \|\mSigmaR(\tilde{\mX})-\mSigmaR(\mX)\|&\leq\|\mSigmaR(\tilde{\mX})-\mSigmaRb(\tilde{\mX})\|+\|\mSigmaRb(\tilde{\mX})- \mSigmaR(\mX)\|
        \explain{\eqref{eq:rank-i-}, \eqref{eq:rank-ii-}}{\ll}1.\notag
    \end{align}
\end{itemize}
Since all the requirements of Theorem \mref{thm:contiguity} (item (2)) are met, we conclude that:
\begin{align*}
    \lm \tv(\nu(\cdot \mid \mSigmaR(\tilde{\mX}), \beta_{\dim}, \rnk),\nu(\cdot \mid \mSigmaRb(\tilde{\mX}), \beta_{\dim},\rnk))&=0,\\
    \lm \rdv{\alpha}{\nu(\cdot \mid \mSigmaR(\tilde{\mX}), \beta_{\dim}, \rnk)}{\nu(\cdot \mid \mSigmaRb(\tilde{\mX}), \beta_{\dim},\rnk)}&=0\quad\forall\:\alpha>1,\\
    \lm\rdv{\alpha}{\nu(\cdot \mid \mSigmaRb(\tilde{\mX}), \beta_{\dim},\rnk)}{\nu(\cdot \mid \mSigmaR(\tilde{\mX}), \beta_{\dim}, \rnk)}&=0\quad\forall\:\alpha>1,
\end{align*}
as desired. 
\end{proof}
\section{Miscellaneous Lemmas}\label{app:misc}

\begin{lemma}\label{lem:misc_conv}
    Suppose $\mSigma_{\dim}\in\R^{\dim\times\dim}$ and $\tilde{\mSigma}_{\dim}\in\R^{\dim\times\dim}$ are sequences of symmetric matrices satisfying $\|\mSigma_{\dim}-\tilde{\mSigma}_{\dim}\|=o(1).$ Suppose that $\mSigma_{\dim}$ satisfies Assumption \mref{assump:mat}:
    \begin{enumerate}
        \item As $\dim\rightarrow\infty,$ $\lambda_i(\mSigma_{\dim})\rightarrow \gamma_i$ for all $i\in[\rnk+1]$ for some $\rnk\in\N$ (independent of $\dim$) and $\gamma_{1:\rnk+1}\in\R.$
        \item As $\dim\rightarrow\infty,$ $\mu_{\dim}\bydef\frac{1}{\dim-\rnk}\sum_{i=\rnk+1}^{\dim}\delta_{\lambda_i(\mSigma_{\dim})}\rightarrow\mu$ for some measure $\mu.$
    \end{enumerate} 
    Then:
    \begin{enumerate}
        \item As $\dim\rightarrow\infty,$ $\lambda_i(\tilde{\mSigma}_{\dim})\rightarrow \gamma_i$ for all $i\in[\rnk+1].$ 
        \item As $\dim\rightarrow\infty,$ $\frac{1}{\dim-\rnk}\sum_{i=\rnk+1}^{\dim}\delta_{\lambda_i(\tilde{\mSigma}_{\dim})}\rightarrow\mu.$ 
    \end{enumerate} 
\end{lemma}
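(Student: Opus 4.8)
The plan is to prove the two conclusions by separating the ``outlier'' eigenvalues ($i \le \rnk+1$) from the bulk, and in each case using a classical perturbation inequality together with the hypothesis that $\|\mSigma_\dim - \tilde{\mSigma}_\dim\| = o(1)$. For the first claim, the key tool is Weyl's inequality: for any $i \in [\rnk+1]$,
\begin{align*}
    |\lambda_i(\tilde{\mSigma}_\dim) - \lambda_i(\mSigma_\dim)| \le \|\tilde{\mSigma}_\dim - \mSigma_\dim\| = o(1),
\end{align*}
so $\lambda_i(\tilde{\mSigma}_\dim) = \lambda_i(\mSigma_\dim) + o(1) \to \gamma_i$ by the assumption on the spectrum of $\mSigma_\dim$. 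This is the easy part and is essentially immediate.

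For the second claim (weak convergence of the bulk empirical measure), I would use a metric for weak convergence such as the Lévy distance, or equivalently argue via the Kantorovich/bounded-Lipschitz characterization. First I would recall the Hoffman--Wielandt-type bound (or more simply, the rank inequality for empirical spectral distributions, e.g. the interlacing/rank perturbation bound): if $A, B$ are symmetric $\dim \times \dim$ matrices, the Kolmogorov distance between their empirical spectral distributions is bounded by $\mathrm{rank}(A-B)/\dim$, and moreover for any 1-Lipschitz test function $f$,
\begin{align*}
    \left| \frac{1}{\dim}\sum_{i=1}^\dim f(\lambda_i(A)) - \frac{1}{\dim}\sum_{i=1}^\dim f(\lambda_i(B)) \right| \le \frac{1}{\dim}\sum_{i=1}^\dim |\lambda_i(A) - \lambda_i(B)| \le \|A - B\|
\end{align*}
(the last step using that each eigenvalue moves by at most $\|A-B\|$ in the sorted order, by Weyl). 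Applying this with $A = \tilde{\mSigma}_\dim$, $B = \mSigma_\dim$ shows $\frac{1}{\dim}\sum_{i=1}^\dim \delta_{\lambda_i(\tilde{\mSigma}_\dim)}$ and $\frac{1}{\dim}\sum_{i=1}^\dim \delta_{\lambda_i(\mSigma_\dim)}$ have vanishing bounded-Lipschitz distance. Then I would handle the replacement of the full empirical measure by the bulk measure (dropping the top $\rnk$ eigenvalues and renormalizing from $\dim$ to $\dim - \rnk$): since $\rnk$ is fixed, removing $\rnk$ atoms changes any average of a bounded function by $O(\rnk/\dim) = o(1)$, and rescaling the normalization from $\tfrac{1}{\dim}$ to $\tfrac{1}{\dim-\rnk}$ is also an $o(1)$ perturbation. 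Combining these with the hypothesis that the bulk measure of $\mSigma_\dim$ converges to $\mu$ gives $\frac{1}{\dim-\rnk}\sum_{i=\rnk+1}^\dim \delta_{\lambda_i(\tilde{\mSigma}_\dim)} \to \mu$.

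The only mild subtlety — which I'd flag as the ``main obstacle,'' though it is minor — is being careful that the sorted eigenvalue matching used in the Weyl-type bound is exactly what lets us pass from an operator-norm bound on $\mSigma_\dim - \tilde{\mSigma}_\dim$ to a termwise bound $|\lambda_i(\tilde{\mSigma}_\dim) - \lambda_i(\mSigma_\dim)| \le \|\mSigma_\dim - \tilde{\mSigma}_\dim\|$ simultaneously for \emph{all} $i$, and that this termwise control is uniform enough to survive the averaging against a 1-Lipschitz function; this is standard but worth stating cleanly. Everything else is bookkeeping: choose a convergence-determining class of bounded Lipschitz functions, apply the triangle inequality through the three approximations (operator-norm perturbation, removal of $\rnk$ outliers, renormalization), and invoke the assumed convergence for $\mSigma_\dim$. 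I do not anticipate needing anything beyond Weyl's inequality, the elementary rank/interlacing bound, and the definition of weak convergence.
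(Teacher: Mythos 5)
Your proof is correct, but the route for the second claim is genuinely different from the paper's. You pass through the bounded-Lipschitz (Dudley) metric: Weyl gives a termwise bound $|\lambda_i(\tilde\mSigma_\dim)-\lambda_i(\mSigma_\dim)|\le\|\tilde\mSigma_\dim-\mSigma_\dim\|$, which you feed into a $1$-Lipschitz test function to get vanishing $d_{\mathrm{BL}}$ between the two full empirical spectral distributions, and then you peel off the top $\rnk$ eigenvalues and fix the $\tfrac{1}{\dim}$ vs.\ $\tfrac{1}{\dim-\rnk}$ normalization at an $O(\rnk/\dim)$ cost, closing with the triangle inequality and the fact that $d_{\mathrm{BL}}$ metrizes weak convergence. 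The paper instead never invokes a metric on $\mathcal{P}(\R)$ at all: it works directly with the two bulk CDFs $F_\dim,\tilde F_\dim$, uses Weyl to derive the sandwich $F_\dim(t-\|\tilde\mSigma_\dim-\mSigma_\dim\|)\le\tilde F_\dim(t)\le F_\dim(t+\|\tilde\mSigma_\dim-\mSigma_\dim\|)$, and then exploits density of the continuity points of $F$ to conclude pointwise convergence of $\tilde F_\dim$ at each continuity point of $F$ — the literal definition of weak convergence. Your version is slightly heavier machinery but makes the three approximation steps very transparent and avoids the small fuss over continuity points that the paper has to handle explicitly; the paper's version is more self-contained (no appeal to the metrization theorem) and stays entirely at the level of indicators. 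One minor note: the rank-perturbation bound on the Kolmogorov distance you mention at the start of the second part plays no role in the rest of your argument — $\tilde\mSigma_\dim - \mSigma_\dim$ can be full rank here — so you can drop it; the Weyl-plus-Lipschitz bound you actually use is what does all the work.
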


\begin{proof}
For convenience, let $\lambda_i$ and $\tilde{\lambda}_i$ denote $\lambda_i(\mSigma_{\dim})$ and $\lambda_i(\tilde{\mSigma}_{\dim}),$ respectively, for each $i\in[\dim].$  
Recall that our goal is to show 
\begin{align*}
    \tilde{\lambda}_i\rightarrow \gamma_i\quad\forall \;i\;\in[\rnk+1]\quad\text{and}\quad \frac{1}{\dim-\rnk}\sum_{i=\rnk+1}^{\dim}\delta_{\tilde{\lambda}_i}\rightarrow\mu.
\end{align*}
Since the difference between the matrices $\mSigma_{\dim}$ and $\tilde{\mSigma}_{\dim}$ is asymptotically small in operator norm, we expect the largest $\rnk+1$ eigenvalues of the two matrices to converge to the same limit values. Indeed, by Weyl's inequality \citep[Theorem 4.5.3]{vershynin2018high}, we have: 
\begin{align}\label{eq:misc_conv_Weyl}
    \underset{i\in[\dim]}{\max}\;|\tilde{\lambda}_i-\lambda_i|\leq \|\tilde{\mSigma}_{\dim}-\mSigma_{\dim}\|=o(1),
\end{align}
which immediately implies that $\lm \tilde{\lambda_i} =  \lm (\tilde{\lambda}_i - \lambda_i) + \lm \lambda_i=\gamma_i$ for each $i\in[\rnk+1].$ We now show $\frac{1}{\dim-\rnk}\sum_{i=\rnk+1}^{\dim}\delta_{\tilde{\lambda}_i}\rightarrow\mu.$ Let $F_{\dim}$ and $\tilde{F}_{\dim}$ denote the CDFs of $\mu_{\dim}$ and $\tilde{\mu}_{\dim},$ respectively, and let $F$ denote the CDF of $\mu.$ Fix any continuity point $t$ of $F.$ We will show that $\tilde{F}_{\dim}(t)$ converges to $F(t)$ by relating $\tilde{F}_{\dim}$ to $F_{\dim}.$ Observe that 
\begin{align*}
    \tilde{F}_p(t)\bydef \frac{1}{\dim-\rnk}\sum_{i=\rnk+1}^{\dim}\ind_{(-\infty,t]}(\tilde{\lambda}_i)&= \frac{1}{\dim-\rnk}\sum_{i=\rnk+1}^{\dim}\ind_{(-\infty,t]}(\tilde{\lambda}_i-\lambda_i+\lambda_i)\\
    &\hspace{3cm}=\frac{1}{\dim-\rnk}\sum_{i=\rnk+1}^{\dim}\ind_{(-\infty,t-\tilde{\lambda}_i+\lambda_i]}(\lambda_i).
\end{align*}
Then, since $\underset{i\in[\dim]}{\max}\;|\tilde{\lambda}_i-\lambda_i|\leq\|\tilde{\mSigma}_{\dim}-\mSigma_{\dim}\|$ by Weyl's inequality, we have:
\begin{align*}
   \frac{1}{\dim-\rnk}\sum_{i=\rnk+1}^{\dim}\ind_{(-\infty,t-\|\tilde{\mSigma}_{\dim}-\mSigma_{\dim}\|]}(\lambda_i)\leq  \tilde{F}_p(t)\leq \frac{1}{\dim-\rnk}\sum_{i=\rnk+1}^{\dim}\ind_{(-\infty,t+\|\tilde{\mSigma}_{\dim}-\mSigma_{\dim}\|]}(\lambda_i),
\end{align*}
or equivalently, 
\begin{align*}
    F_{\dim}(t-\|\tilde{\mSigma}_{\dim}-\mSigma_{\dim}\|)\leq \tilde{F}_p(t)\leq F_{\dim}(t+\|\tilde{\mSigma}_{\dim}-\mSigma_{\dim}\|).
\end{align*}
Since we do not know if $F$ is continuous, 
we cannot directly apply Polya's theorem \citep[Theorem 11.2.9]{lehmann2005testing} to get uniform convergence of $F_{\dim}$ to $F.$ Instead, we will leverage the observation that the set of continuity points of $F,$ which we denote as $C(F),$ is dense in $\R,$ which gives the existence of a sequence of continuity points $t_m\in C(F)\cap (t,t-\frac{1}{m})$ and $t_m'\in C(F)\cap (t,t+\frac{1}{m})$ for each $m\in\N.$ Fix any $m\in\N,$ and note that $t+\|\tilde{\mSigma}_{\dim}-\mSigma_{\dim}\|\in(t_m,t_m') $ for large enough $\dim.$ Then, for large enough $\dim,$ we have: 
\begin{align*}
    F_{\dim}(t_m)\leq F_{\dim}(t-\|\tilde{\mSigma}_{\dim}-\mSigma_{\dim}\|)\leq \tilde{F}_p(t)\leq F_{\dim}(t+\|\tilde{\mSigma}_{\dim}-\mSigma_{\dim}\|)\leq F_{\dim}(t_m'),
\end{align*}
which implies that 
\begin{align*}
    F(t_m)\explain{(\#)}{=}\lm F_{\dim}(t_m)\leq \lm \tilde{F}_p(t)\leq \lm F_{\dim}(t_m')\explain{(\#)}{=}F(t_m'),
\end{align*}
where the equalities marked (\#) hold since $t_m\in C(F).$ Finally, since $t\in C(F),$ we have $\lim_{m\rightarrow\infty}F(t_m)=\lim_{m\rightarrow\infty}F(t_m')=F(t),$ which proves the claim that $\lm \tilde{F}_p(t)=F(t).$
\end{proof}

\begin{lemma}\label{lem:misc_HK}
Suppose that $\mSigma_{\dim}\in\R^{\dim\times\dim}$ is a sequence of matrices satisfying Assumption \mref{assump:mat}:
    \begin{enumerate}
        \item As $\dim\rightarrow\infty,$ $\lambda_i(\mSigma_{\dim})\rightarrow \gamma_i$ for all $i\in[\rnk+1]$ for some $\rnk\in\N$ (independent of $\dim$) and $\gamma_{1:\rnk+1}\in\R,$ where $\gamma_\rnk>\gamma_{\rnk+1}.$
        \item As $\dim\rightarrow\infty,$ $\mu_{\dim}\bydef\frac{1}{\dim-\rnk}\sum_{i=\rnk+1}^{\dim}\delta_{\lambda_i(\mSigma_{\dim})}\rightarrow\mu$ for some measure $\mu.$
    \end{enumerate} 
Then, for any $j\in[\rnk]$ and any sequence $\xi_j$ converging to $\gamma_j$ and for any $m\in\N:$  
\begin{align*}
    \frac{1}{\dim-\rnk}\sum_{i=\rnk+1}^{\dim}\frac{1}{(\xi_j-\lambda_i(\mSigma_{\dim}))^m}\rightarrow\int_{\R}\frac{\diff\mu(\lambda)}{(\gamma_j-\lambda)^m}.
\end{align*}
Moreover, for any $j,\ell\in[\rnk]:$
\begin{align*}
    \frac{1}{\dim-\rnk}\sum_{i=\rnk+1}^{\dim}\frac{1}{(\lambda_j(\mSigma_{\dim})-\lambda_i(\mSigma_{\dim}))(\lambda_\ell(\mSigma_{\dim})-\lambda_i(\mSigma_{\dim}))}\rightarrow \int_{\R}\frac{\diff\mu(\lambda)}{(\gamma_j-\lambda)(\gamma_\ell-\lambda)}.
\end{align*}
\end{lemma}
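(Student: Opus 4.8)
The plan is to reduce both assertions to the weak convergence $\mu_\dim \to \mu$ (where $\mu_\dim \bydef \tfrac{1}{\dim-\rnk}\sum_{i=\rnk+1}^{\dim}\delta_{\lambda_i(\mSigma_\dim)}$; throughout I abbreviate $\lambda_i \bydef \lambda_i(\mSigma_\dim)$) together with an elementary ``substitution'' estimate controlling the difference between evaluating the integrand at $\xi_j$ versus at $\gamma_j$. First I would record that $\mathrm{supp}(\mu)$ lies strictly to the left of each $\gamma_j$, $j \in [\rnk]$: since the eigenvalues are sorted, every atom of $\mu_\dim$ is $\le \lambda_{\rnk+1} \to \gamma_{\rnk+1}$, so for any $t > \gamma_{\rnk+1}$ we have $\mu_\dim((t,\infty)) = 0$ for large $\dim$, and the portmanteau theorem gives $\mu((t,\infty)) = 0$; hence $\sup\mathrm{supp}(\mu) \le \gamma_{\rnk+1} < \gamma_\rnk \le \gamma_j$. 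Fix $\eta > 0$ with $\gamma_j - \gamma_{\rnk+1} > 2\eta$ for all the finitely many indices $j$ involved.

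For the first identity, introduce the truncated function $\psi(\lambda) \bydef \big(\gamma_j - \min\{\lambda, \gamma_{\rnk+1}+\eta\}\big)^{-m}$, which is bounded (by $\eta^{-m}$) and continuous on all of $\R$ and agrees with $\lambda \mapsto (\gamma_j-\lambda)^{-m}$ on $(-\infty,\gamma_{\rnk+1}+\eta]$; since $\mathrm{supp}(\mu) \subseteq (-\infty,\gamma_{\rnk+1}]$ we have $\int \psi\,\diff\mu = \int (\gamma_j-\lambda)^{-m}\,\diff\mu(\lambda)$. I then split
\[
\frac{1}{\dim-\rnk}\sum_{i=\rnk+1}^{\dim}\frac{1}{(\xi_j-\lambda_i)^m} = \underbrace{\frac{1}{\dim-\rnk}\sum_{i=\rnk+1}^{\dim}\left(\frac{1}{(\xi_j-\lambda_i)^m}-\psi(\lambda_i)\right)}_{(A)} \; + \; \underbrace{\int \psi\,\diff\mu_\dim}_{(B)}.
\]
Term $(B)$ tends to $\int\psi\,\diff\mu = \int(\gamma_j-\lambda)^{-m}\,\diff\mu(\lambda)$ because $\psi$ is bounded and continuous and $\mu_\dim \to \mu$ weakly. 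For term $(A)$, once $\dim$ is large enough that $\lambda_{\rnk+1} \le \gamma_{\rnk+1}+\eta$ and $|\xi_j-\gamma_j| \le \eta$, each $\psi(\lambda_i)$ equals $(\gamma_j-\lambda_i)^{-m}$ (as $\lambda_i \le \lambda_{\rnk+1} \le \gamma_{\rnk+1}+\eta$ for $i \ge \rnk+1$), and the mean value theorem applied to $x\mapsto (x-\lambda_i)^{-m}$ on the interval between $\xi_j$ and $\gamma_j$ yields
\[
\left|\frac{1}{(\xi_j-\lambda_i)^m}-\frac{1}{(\gamma_j-\lambda_i)^m}\right| \le \frac{m\,|\xi_j-\gamma_j|}{(\gamma_j-\gamma_{\rnk+1}-2\eta)^{m+1}},
\]
a bound uniform in $i$; averaging over $i$ and using $\xi_j\to\gamma_j$ gives $(A)\to 0$. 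This proves the first claim.

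For the second identity I run the same argument with $\xi_j = \lambda_j$ and $\xi_\ell = \lambda_\ell$ (both converge to $\gamma_j,\gamma_\ell$ by \Cref{assump:mat}) and the bounded continuous truncation $\psi(\lambda) = \big(\gamma_j-\min\{\lambda,\gamma_{\rnk+1}+\eta\}\big)^{-1}\big(\gamma_\ell-\min\{\lambda,\gamma_{\rnk+1}+\eta\}\big)^{-1}$. The only new ingredient is bounding the substitution error $\big|(\lambda_j-\lambda_i)^{-1}(\lambda_\ell-\lambda_i)^{-1} - (\gamma_j-\lambda_i)^{-1}(\gamma_\ell-\lambda_i)^{-1}\big|$, which I handle by the telescoping decomposition that first replaces $\lambda_j$ by $\gamma_j$ and then $\lambda_\ell$ by $\gamma_\ell$, applying the same mean value estimate to each of the two differences; for large $\dim$ all denominators stay $\ge \gamma_j-\gamma_{\rnk+1}-2\eta > 0$ and $\ge \gamma_\ell-\gamma_{\rnk+1}-2\eta > 0$, so this error is $O(|\lambda_j-\gamma_j| + |\lambda_\ell-\gamma_\ell|) \to 0$ uniformly in $i$, and the weak convergence of the $(B)$-type term finishes the proof as before.

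I do not expect a real obstacle here. The one point needing care is that \Cref{assump:mat} places no lower bound on $\lambda_\dim$, so a vanishing fraction of eigenvalues could in principle drift to $-\infty$; this is exactly why one works with the \emph{truncated, bounded} continuous function $\psi$ (for which the definition of weak convergence applies directly) rather than with the unbounded integrand $\lambda \mapsto (\gamma_j-\lambda)^{-m}$, and why the substitution estimate in $(A)$ must be made uniformly over the eigenvalue index rather than merely pointwise.
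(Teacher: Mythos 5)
Your proof is correct, and it rests on the same two pillars as the paper's argument: weak convergence of $\mu_{\dim}$ tested against a bounded continuous function, plus control of the error from replacing $\gamma_j$ (resp.\ $\gamma_j,\gamma_\ell$) by the finite-$\dim$ quantities $\xi_j$ (resp.\ $\lambda_j,\lambda_\ell$). The execution of both steps differs in ways worth noting. For the error control, the paper exploits monotonicity of $x\mapsto (x-\lambda)^{-m}$ in $x$ to sandwich the empirical average between the corresponding averages at $\gamma_j\pm\epsilon$, sends $\dim\to\infty$, and then removes $\epsilon$ by dominated convergence; your uniform-in-$i$ mean value bound of order $m|\xi_j-\gamma_j|/(\gamma_j-\gamma_{\rnk+1}-2\eta)^{m+1}$ reaches the same conclusion in a single limit and additionally yields a rate. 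For the weak-convergence step, the paper applies $\mu_{\dim}\rightarrow\mu$ directly to the integrand regarded as a continuous bounded function on the half-line $(-\infty,\gamma_{\rnk+1}+\Delta/3)$, which strictly speaking requires extending it to a bounded continuous function on all of $\R$; your truncation $\psi(\lambda)=\bigl(\gamma_j-\min\{\lambda,\gamma_{\rnk+1}+\eta\}\bigr)^{-m}$ supplies exactly such an extension and makes explicit why the truncation is harmless (both $\mu_{\dim}$, eventually, and $\mu$ charge only $(-\infty,\gamma_{\rnk+1}+\eta]$, the latter by your portmanteau observation that $\mathrm{supp}(\mu)\subseteq(-\infty,\gamma_{\rnk+1}]$, which the paper leaves implicit). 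Both routes are sound; yours is marginally more self-contained on the measure-theoretic side, while the paper's avoids the explicit derivative computation.
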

\begin{proof}
For convenience, we
let $\lambda_i\bydef\lambda_i(\mSigma_{\dim})$ for each $i\in[\dim].$ Fix any $j,\ell\in[\rnk],$ any sequence $\xi_j$ converging to $\gamma_j,$ and any $m\in\N,$ and recall that our goal is to show:
\begin{align*}
    \frac{1}{\dim-\rnk}\sum_{i=\rnk+1}^{\dim}\frac{1}{(\xi_j-\lambda_i)^m}\rightarrow\int_{\R}\frac{\diff\mu(\lambda)}{(\gamma_j-\lambda)^m}
\end{align*}
and 
\begin{align*}
    \frac{1}{\dim-\rnk}\sum_{i=\rnk+1}^{\dim}\frac{1}{(\lambda_j-\lambda_i)(\lambda_\ell-\lambda_i)}\rightarrow \int_{\R}\frac{\diff\mu(\lambda)}{(\gamma_j-\lambda)(\gamma_\ell-\lambda)},
\end{align*}
or equivalently: 
\begin{align*}
    \E_{\bf{\rLambda}\sim\mu_{\dim}}\left[\frac{1}{(\xi_j-\rLambda)^m}\right]\rightarrow \E_{\rLambda\sim\mu}\left[\frac{1}{(\gamma_j-\rLambda)^m}\right]
\end{align*}
and 
\begin{align*}
    \E_{\rLambda\sim\mu_{\dim}}\left[\frac{1}{(\lambda_j-\rLambda)(\lambda_\ell-\rLambda)}\right]\rightarrow \E_{\rLambda\sim\mu}\left[\frac{1}{(\gamma_j-\rLambda)(\gamma_\ell-\rLambda)}\right].
\end{align*}
Observe that for any $\gamma,\gamma'>\gamma_\rnk-\frac{\Delta}{3},$ where $\Delta\bydef\gamma_\rnk-\gamma_{\rnk+1},$ the functions
\begin{align*}
    \lambda\mapsto\frac{1}{(\gamma-\lambda)^m}\quad\text{and}\quad \lambda\mapsto \frac{1}{(\gamma-\lambda)(\gamma'-\lambda)}
\end{align*}
defined on $\left(-\infty,\gamma_{\rnk+1}+\Delta/3\right)$ are continuous and bounded. Then, since $\mathrm{supp}(\mu_{\mX})\subset(-\infty,\lambda_{\rnk+1}]\subset(-\infty,\gamma_{\rnk+1}+\Delta/3)$ for large enough $\dim,$ the assumption that $\mu_{\dim}\rightarrow\mu$ implies that
\begin{subequations}\label{eq:HK_obs}
\begin{align}
    \E_{\rLambda\sim\mu_{\dim}}\left[\frac{1}{(\gamma_j-\rLambda)^m}\right]\rightarrow \E_{\rLambda\sim\mu}\left[\frac{1}{(\gamma_j-\rLambda)^m}\right]
\end{align}
and
\begin{align}
    \E_{\rLambda\sim\mu_{\dim}}\left[\frac{1}{(\gamma_j-\rLambda)(\gamma_\ell-\rLambda)}\right]\rightarrow \E_{\rLambda\sim\mu}\left[\frac{1}{(\gamma_j-\rLambda)(\gamma'-\rLambda)}\right].
\end{align}
\end{subequations}
Fix any $j,\ell\in[\rnk].$ To derive the limits of 
\begin{align*}
    \E_{\rLambda\sim\mu_{\dim}}\left[\frac{1}{(\xi_j-\rLambda)^m}\right]\quad\text{and}\quad \E_{\rLambda\sim\mu_{\dim}}\left[\frac{1}{(\lambda_j-\rLambda)(\lambda_\ell-\rLambda)}\right],
\end{align*}
observe that for any $\epsilon\in(0,\Delta/3),$ $\lambda_j\in(\gamma_j-\epsilon,\gamma_j+\epsilon)$ and $\lambda_\ell\in(\gamma_\ell-\epsilon,\gamma_\ell+\epsilon)$ for large enough $\dim,$ which implies:
\begin{align*}
    \E_{\rLambda\sim\mu_{\dim}}\left[\frac{1}{(\gamma_j+\epsilon-\rLambda)^m}\right]\leq\E_{\rLambda\sim\mu_{\dim}}\left[\frac{1}{(\xi_j-\rLambda)^m}\right]\leq \E_{\rLambda\sim\mu_{\dim}}\left[\frac{1}{(\gamma_j-\epsilon-\rLambda)^m}\right]
\end{align*}
and 
\begin{align*}
    \E_{\rLambda\sim\mu_{\dim}}\left[\frac{1}{(\gamma_j+\epsilon-\rLambda)(\gamma_\ell+\epsilon-\rLambda)}\right]&\leq\E_{\rLambda\sim\mu_{\dim}}\left[\frac{1}{(\lambda_j-\rLambda)(\lambda_\ell-\rLambda)}\right]\\
    &\hspace{2cm}\leq \E_{\rLambda\sim\mu_{\dim}}\left[\frac{1}{(\gamma_j-\epsilon-\rLambda)(\gamma_\ell-\epsilon-\rLambda)}\right].
\end{align*}
Then, taking $\dim\rightarrow\infty,$ it follows from the observation in \eqref{eq:HK_obs} that 
\begin{align*}
    \E_{\rLambda\sim\mu}\left[\frac{1}{(\gamma_j+\epsilon-\rLambda)^m}\right]\leq\lm\E_{\rLambda\sim\mu_{\dim}}\left[\frac{1}{(\xi_j-\rLambda)^m}\right]\leq \E_{\rLambda\sim\mu}\left[\frac{1}{(\gamma_j-\epsilon-\rLambda)^m}\right]
\end{align*}
and 
\begin{align*}
    \E_{\rLambda\sim\mu}\left[\frac{1}{(\gamma_j+\epsilon-\rLambda)(\gamma_\ell+\epsilon-\rLambda)}\right]&\leq\lm \E_{\rLambda\sim\mu_{\dim}}\left[\frac{1}{(\lambda_j-\rLambda)(\lambda_\ell-\rLambda)}\right]\\
    &\hspace{2cm}\leq \E_{\rLambda\sim\mu}\left[\frac{1}{(\gamma_j-\epsilon-\rLambda)(\gamma_\ell-\epsilon-\rLambda)}\right].
\end{align*}
Finally, by the dominated convergence theorem, we have:
\begin{align*}
    \underset{\epsilon\rightarrow0}{\lim}\;\E_{\rLambda\sim\mu}\left[\frac{1}{(\gamma_j\pm\epsilon-\rLambda)^m}\right]=\E_{\rLambda\sim\mu}\left[\frac{1}{(\gamma_j-\rLambda)^m}\right]
\end{align*}
and
\begin{align*}
   \underset{\epsilon\rightarrow0}{\lim}\;\E_{\rLambda\sim\mu}\left[\frac{1}{(\gamma_j\pm\epsilon-\rLambda)(\gamma_\ell\pm\epsilon-\rLambda)}\right]=\E_{\rLambda\sim\mu}\left[\frac{1}{(\gamma_j-\rLambda)(\gamma_\ell-\rLambda)}\right],
\end{align*}
which immediately implies the claimed statement.
\end{proof}

\begin{lemma}\label{lem:misc-tv-var-ratio}
For any sequences $\mu_{\dim}\in\R$ and $v_{\dim},\tilde{v}_{\dim}>0$ satisfying $\tilde{v}_{\dim}/v_{\dim}\rightarrow1,$ 
\begin{align*}
    \tv(\gauss{\mu_{\dim}}{v_{\dim}},\gauss{\mu_{\dim}}{\tilde{v}_{\dim}})\rightarrow0.
\end{align*}
\end{lemma}
\begin{proof}
    Fix any sequences $\mu_{\dim}\in\R$ and $v_{\dim},\tilde{v}_{\dim}>0$ satisfying $\tilde{v}_{\dim}/v_{\dim}\rightarrow1.$
    We will show:
    \begin{align*}
        \frac{\diff\gauss{\mu_{\dim}}{\tilde{v}_{\dim}}}{\diff\gauss{\mu_{\dim}}{v_{\dim}}}(\rz)\pc 1\quad\text{when}\quad\rz\sim\gauss{\mu_{\dim}}{v_{\dim}},
    \end{align*}
    which, along with Scheffé's lemma, implies the claim. Let $\rz\sim\gauss{\mu_{\dim}}{v_{\dim}},$ and observe that:
    \begin{align*}
        \frac{\diff\gauss{\mu_{\dim}}{\tilde{v}_{\dim}}}{\diff\gauss{\mu_{\dim}}{v_{\dim}}}(\rz)&=\sqrt{\frac{v_{\dim}}{\tilde{v}_{\dim}}}\exp\left((\rz-\mu_{\dim})^2\left(\frac{1}{2v_{\dim}}-\frac{1}{2\tilde{v}_{\dim}}\right)\right)\\
        &=\sqrt{\frac{v_{\dim}}{\tilde{v}_{\dim}}}\exp\left(\left(\frac{\rz-\mu_{\dim}}{\sqrt{v_{\dim}}}\right)^2\left(\frac{1}{2}-\frac{v_{\dim}}{2\tilde{v}_{\dim}}\right)\right)\pc 1
    \end{align*}
    by Slutsky's theorem since 
    \begin{align*}
        \left(\frac{\rz-\mu_{\dim}}{\sqrt{v_{\dim}}}\right)^2\dc \chi_1^2\quad\text{and}\quad \frac{1}{2}-\frac{v_{\dim}}{2\tilde{v}_{\dim}}\rightarrow 0.
    \end{align*}
\end{proof}

\begin{lemma}\label{lem:Slutsky} 
Let $\mX_{\dim},\mX_{\dim}'$ be two sequences of random variables satisfying
\begin{align*}
    \mX_{\dim}\dc \gauss{\mu}{\sigma^2},\quad \mX_{\dim}'\dc \gauss{\mu'}{\sigma^2}\quad\text{for some }\mu,\mu'\;\in\;\R,\;\sigma^2>0.
\end{align*}
Suppose there exists a sequence of deterministic values $c_{\dim}$ such that 
\begin{align}\label{lem:Slutsky-assump}
    \mX_{\dim}-\mX_{\dim}'=c_{\dim}.   
\end{align}
Then, $c_{\dim}\rightarrow \mu-\mu'.$
\end{lemma}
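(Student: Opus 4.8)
The plan is to argue via subsequences, combining the uniqueness of distributional limits with the classical Slutsky's theorem (convergence in distribution plus convergence to a deterministic constant). The only mild subtlety is passing from tightness of the random sequences to boundedness of the deterministic difference $c_{\dim}$; everything else is routine.

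\emph{Step 1: boundedness of $(c_{\dim})$.} Since $\mX_{\dim} \dc \gauss{\mu}{\sigma^2}$, the sequence $(\mX_{\dim})$ is tight, so there is a compact $K \subset \R$ with $\P(\mX_{\dim} \in K) \geq 3/4$ for all large $\dim$; likewise, since $\mX_{\dim}' \dc \gauss{\mu'}{\sigma^2}$, there is a compact $K'$ with $\P(\mX_{\dim}' \in K') \geq 3/4$ for all large $\dim$. On the intersection of these two events, which has probability at least $1/2$, we have $c_{\dim} = \mX_{\dim} - \mX_{\dim}' \in K - K' := \{a - b : a \in K, b \in K'\}$, a fixed bounded set. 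As $c_{\dim}$ is deterministic, it must therefore lie in $K - K'$ for all large $\dim$, so $(c_{\dim})$ is bounded.

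\emph{Step 2: identifying every subsequential limit.} Because $(c_{\dim})$ is a bounded real sequence, it converges to $\mu - \mu'$ as soon as every convergent subsequence of it has limit $\mu - \mu'$. So fix a subsequence along which $c_{\dim} \to c$ for some $c \in \R$. Along this subsequence, $\mX_{\dim}' = \mX_{\dim} - c_{\dim}$, with $\mX_{\dim} \dc \gauss{\mu}{\sigma^2}$ and $c_{\dim} \to c$ a deterministic convergent sequence; by Slutsky's theorem this forces $\mX_{\dim}' \dc \gauss{\mu - c}{\sigma^2}$ along the subsequence. But by hypothesis $\mX_{\dim}' \dc \gauss{\mu'}{\sigma^2}$, and the limit in distribution is unique, so $\gauss{\mu - c}{\sigma^2} = \gauss{\mu'}{\sigma^2}$, whence $\mu - c = \mu'$, i.e.\ $c = \mu - \mu'$. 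This completes the argument.

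\emph{On the main obstacle.} There is essentially no obstacle here: the statement is a soft consequence of uniqueness of distributional limits. The single point demanding a little care is Step 1 (deriving boundedness of the \emph{deterministic} $c_{\dim}$ from tightness of the random $\mX_{\dim}, \mX_{\dim}'$), and even that could be bypassed by a characteristic-function computation, $\E[e^{\mathrm{i}t\mX_{\dim}'}] = e^{-\mathrm{i}tc_{\dim}}\,\E[e^{\mathrm{i}t\mX_{\dim}}]$, which at each fixed $t$ pins down the phase $e^{-\mathrm{i}tc_{\dim}}$ in the limit; I would nonetheless present the Slutsky-based version above as the most transparent.
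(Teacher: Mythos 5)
Your proof is correct and follows essentially the same route as the paper's: pass to a convergent subsequence of $c_{\dim}$, apply Slutsky's theorem to identify the distributional limit of $\mX_{\dim}' = \mX_{\dim} - c_{\dim}$, and invoke uniqueness of weak limits. Your Step 1 (deducing boundedness of $c_{\dim}$ from tightness) is a small but worthwhile addition that the paper's proof leaves implicit when it passes to a subsequence achieving the limit superior.
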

\begin{proof}
We show that $$\ls c_{\dim}=\li c_{\dim}=\mu-\mu'.$$ Let us pass to a subsequence that achieves the limit superior, and observe that Slutsky's theorem implies:
\begin{align*}
    \mX_{\dim}'\explain{\eqref{lem:Slutsky-assump}}{=}\mX_{\dim}-c_{\dim}\dc \gauss{\mu-\ls c_{\dim}}{\sigma^2}.
\end{align*}
Since $\mX_{\dim}^\prime \dc \gauss{\mu^\prime}{\sigma^2},$ we can deduce:
\begin{align*}
    \mu'=\mu-\ls c_{\dim}\implies \ls c_{\dim}=\mu-\mu'.
\end{align*}
An analogous argument implies that $\li c_{\dim}=\mu-\mu',$ which completes the proof.  
\end{proof}
\section{Notes on Figures}\label{sec:figures-info} In this section, we elaborate on various aspects of the figures, including the 1000 Genomes dataset and the implementation of the codes. All of the experiments for the figures were run in \verb|R|, and the figures themselves were generated using \verb|MATLAB|. The computations for Figures \mref{fig:main-utility} and \mref{fig:privacy-tf} were performed on a cluster at the Center for High Throughput Computing \citep{https://doi.org/10.21231/gnt1-hw21}.  

\paragraph{Description of the 1000 Genomes Project Dataset} The dataset we use as a running example is a subset of the cleaned version of the 1000  Genomes Project Dataset that we obtained by following the tutorial of \citet{biostars}. The tutorial produces a dataset consisting of 466,487 SNPs of 2504 individuals, from which we randomly sample 200 SNPs to create a subset of $\ssize=2504$ individuals each with $\dim=200$ SNPs, which we refer to throughout this paper as the 1000 Genomes dataset. The dataset comes with information on the sub-populations that the individuals belong to (e.g., Finnish, Peruvian), which we group into a coarser category of five super-populations (African, European, East Asian, Hispanic, and South Asian)  as is done in \citep[Supplementary Information Table 1]{10002015global} and \citep{zhong2022empirical}. In all of our figures involving the 1000 Genomes dataset, except for the left panel of \Cref{fig:projections-preprocessing}, we work with the standardized, rank-transformed dataset (see \eqref{eq:rank-cov}) to ensure that the norm constraint (Assumption \mref{assump:data}(3)) is satisfied. Hence, unless specified otherwise, the 1000 Genomes dataset in this paper refers to its standardized, rank-transformed version (the conversion to raw ranks is done through the \verb|R| function \verb|rank| with the parameter \verb|ties.method| set to \verb|"average"|).  The left panel of \Cref{fig:projections-preprocessing} does not use the rank transformed dataset, but instead preprocesses the 1000 Genomes dataset using the standard normalization, where each feature is centered by its mean and rescaled to have unit variance. 

\paragraph{Sampling from the Exponential Mechanism} To sample from the exponential mechanism, we use the Gibbs sampler of \citet{hoff2009simulation}, whose stationary distribution is the Gibbs distribution introduced in Definition \mref{def:Gibbs}. While the number of iterations required for convergence is unknown, we observed that the sample means of various one-dimensional statistics tend to stabilize after around 50 iterations. Hence, to draw one sample, we run the sampler for 50 iterations using the function \verb|rbing.matrix.gibbs| from the \verb|R| package \verb|rstiefel| written by \citet{hoff2009simulation}. 

\paragraph{Visualizing Projections (Figures \mref{fig:proj} and \mref{fig:privacy-projections})} When visualizing the projections of the dataset onto  privatized PCs $\rV,$ we rotate $\rV$ so that the orientation of the projections are comparable to that associated with the true PCs $\mU_\star$, using the function \verb|PROCRUSTES| from the \verb|R| package \verb|EFA.dimensions|. This process does not change the subspace spanned by the privatized principal components, but only finds a basis of this subspace which is most similar to the true PCs. 

\paragraph{Estimating Estimation Error (Figure \mref{fig:main-utility})} To estimate the finite-$\dim$ estimation errors in Figure \mref{fig:main-utility}, we approximate the expectation with the sample mean computed using 30,000 samples from the Gibbs distribution, as follows: 
\begin{align*} 
   \E \|\mU_\star \mU_\star^\top - \rV \rV^\top\|^2 &\approx \frac{1}{30,000} \sum_{i=1}^{30,000} \|\mU_\star \mU_\star^\top - \rV_{(i)} \rV_{(i)}^\top\|^2\\
   \E \|\mU_\star \mU_\star^\top - \rV \rV^\top\|_{\fr}^2  &\approx \frac{1}{30,000} \sum_{i=1}^{30,000} \|\mU_\star \mU_\star^\top - \rV_{(i)}  \rV_{(i)} ^\top\|_{\fr}^2,
\end{align*}
where $\rV_{(i)}$ denotes the $i$th sample drawn from the Gibbs distribution.

\paragraph{Estimating Trade-off Functions (Figure \mref{fig:privacy-tf})} Figure \mref{fig:privacy-tf} presents an estimation of the finite-$\dim$ trade-off function 
\begin{align}\label{eq:tf-finite-p}
    \tf{\nu(\cdot\mid \mSigma(\mX),\beta,\rnk)}{\nu(\cdot \mid \mSigma(\mX\cup\{\vx_\star\}), \beta, \rnk)},
\end{align}
where $\vx_\star$ is the worst-case data point constructed in Proposition \mref{prop:var-lim-v2} (see Section \mref{sec:privacy}). 
We estimate the trade-off function \eqref{eq:tf-finite-p} using the following procedure:
\begin{enumerate}
    \item Generate 30,000 samples of the test statistic $\|\rV^\top \vx_\star\|^2$ 
    from the null distribution $\nu(\cdot\mid \mSigma(\mX),\beta(\priv^2),\rnk):$
    \begin{align*}
        \|\rV_{(i)}^\top \vx_\star\|^2,\quad \rV_{(i)}\explain{i.i.d.}{\sim}\nu(\cdot\mid \mSigma(\mX),\beta,\rnk),\;i\in[30,000] .
    \end{align*}
    \item Compute the $1-\alpha$-th sample quantile $t_\alpha$ of the 30,000 samples $\|\rV_{(i)}^\top \vx_\star\|^2.$ 
    \item Generate 30,000 samples of the test statistic $\|\rV^\top \vx_\star\|^2$ 
    from the alternative distribution $\nu(\cdot \mid \mSigma({\mX} \cup \{\vx_\star\}), \beta, \rnk):$
    \begin{align*}
        \|\tilde{\rV}_{(i)}^\top \vx_\star\|^2,\quad \tilde{\rV}_{(i)}\explain{i.i.d.}{\sim}\nu(\cdot\mid \mSigma({\mX} \cup \{\vx_\star\}),\beta,\rnk),\; i\in[30,000].
    \end{align*}
    \item Estimate the trade-off function \eqref{eq:tf-finite-p} by computing the proportion of the samples $\|\tilde{\rV}_{(i)}^\top \vx_\star\|^2$  that are less than $t_\alpha:$
    \begin{align*}
        \tf{\nu(\cdot\mid \mSigma(\mX),\beta,\rnk)}{\nu(\cdot \mid \mSigma(\mX\cup\{\vx_\star\}), \beta, \rnk)}(\alpha)\approx \frac{1}{30,000}\sum_{i=1}^{30,000} \ind_{\{\|\tilde{\rV}_{(i)}^\top \vx_\star\|^2< t_\alpha\}}.
    \end{align*}
\end{enumerate}
\end{document}